\newcommand{\dis}{\displaystyle}
\newcommand{\lb}{\linebreak}
\newcommand{\dual}{^{\ast}}
\newcommand{\dotcup}{\stackrel{\cdot}{\cup}}
\newcommand{\bbN}{\mathbb{N}}
\newcommand{\m}{\mathfrak{m}}
\newcommand{\Max}{\operatorname{Max}}    
\newcommand{\Zg}{\operatorname{Zg}}    
\newcommand{\MaxZg}{\operatorname{MaxZg}}    
\newcommand{\Mod}{\operatorname{Mod}}    
\newcommand{\add}{\operatorname{add}}
\newcommand{\Add}{\operatorname{Add}}
\newcommand{\ann}{\operatorname{ann}}
\newcommand{\Hom}{\operatorname{Hom}}
\newcommand{\Dsum}{\operatorname{Sum}}
\newcommand{\End}{\operatorname{End}}
\newcommand{\Ext}{\operatorname{Ext}}
\newcommand{\Tor}{\operatorname{Tor}}
\newcommand{\Tr}{\operatorname{Tr}}
\newcommand{\Cl}{\operatorname{Cl}}
\newcommand{\FC}{\operatorname{FC}}
\newcommand{\Cogen}{\operatorname{Cogen}}
\newcommand{\Gen}{\operatorname{Gen}}
\newcommand{\Pres}{\operatorname{Pres}}
\newcommand{\Ker}{\operatorname{Ker}}
\newcommand{\Img}{\operatorname{Im}}
\newcommand{\Coker}{\operatorname{Coker}}
\newcommand{\li}{\varinjlim}
\newcommand{\HomJT}[3]{\mbox{\rm{Hom}}_{#1}(#2,#3)}
\newcommand{\ExtJT}[4]{\mbox{\rm{Ext}}^{#1}_{#2}(#3,#4)}
\newcommand{\A}{\mathcal{A}}
\newcommand{\C}{\mathcal{C}}
\newcommand{\D}{\mathcal{D}}
\newcommand{\E}{\mathcal{E}}
\newcommand{\F}{\mathcal{F}}
\newcommand{\G}{\mathcal{G}}
\newcommand{\clH}{\mathcal{H}}
\newcommand{\T}{\mathcal{T}}
\newcommand{\X}{\mathcal{X}}
\newcommand{\Modr}[1]{\mathrm{Mod}\textrm{-}{#1}}
\newcommand{\Modl}[1]{{#1}\textrm{-}\mathrm{Mod}}
\newcommand{\modr}[1]{\mathrm{mod}\textrm{-}{#1}}
\newcommand{\modl}[1]{{#1}\textrm{-}\mathrm{mod}}
\newcommand{\ModR}{\mathrm{Mod}\textrm{-}R}
\newcommand{\SInj}{S\textrm{-}\mathrm{Inj}}
\newcommand{\rmod}[1]{\mbox{\rm{Mod}--}{#1}}
\newcommand{\simpR}{\text{simp-}R}
\newcommand{\SPInj}{S\textrm{-}\mathrm{PInj}}
\newcommand{\SPProj}{S\textrm{-}\mathrm{PProj}}
\newcommand{\Sproj}{S\textrm{-}\mathrm{proj}}
\newcommand{\SProj}{S\textrm{-}\mathrm{Proj}}
\newcommand{\SFlat}{S\textrm{-}\mathrm{Flat}}
\newcommand{\SAbs}{S\textrm{-}\mathrm{Abs}}
\newcommand{\Ab}{\mathrm{Ab}}
\newcommand{\fp}{\operatorname{fp}}
\newcommand{\soc}{\operatorname{soc}}
\newcommand{\op}{^\textrm{op}}
\newcommand{\Ann}{\mathrm{Ann}}
\newcommand{\Prod}{\mathrm{Prod}} 
\newcommand{\pd}{\mathrm{pd}} 
\newcommand{\id}{\mathrm{id}} 
\newcommand{\PE}{\mathrm{PE}}
\newcommand{\Lang}{\mathcal{L}}
\newcommand{\Th}{\mathrm{Th}}
\theoremstyle{plain}
\newtheorem{thm}{Theorem}[section]
\newtheorem{lem}[thm]{Lemma}
\newtheorem{prop}[thm]{Proposition}
\newtheorem{cor}[thm]{Corollary}
\newtheorem{ques}[thm]{Question}
\newtheorem{conj}[thm]{Conjecture}
\newtheorem{fact}[thm]{Fact}
\theoremstyle{definition}
\newtheorem{defn}[thm]{Definition}
\theoremstyle{remark}
\newtheorem{rem}[thm]{Remark}
\newtheorem{expl}[thm]{Example}
\begin{document}

\footskip30pt

\date{}

\title{Pure projective tilting modules}

\author[S.\ Bazzoni]{Silvana Bazzoni}
\address[Silvana Bazzoni]{%
Dipartimento di Matematica \\
Universit\`a di Padova \\
Via Trieste 63, 35121 Padova (Italy)}
\email{bazzoni@math.unipd.it}

\author[I.\ Herzog]{Ivo Herzog}
\address[Ivo Herzog]{%
The Ohio State University at Lima\\
4240 Campus Drive \\
Lima, OH 45804 (USA)}
\email{herzog.23@osu.edu}

\author[P.\ Prihoda]{Pavel P\v{r}\'{i}hoda}
\address[Pavel P\v{r}\'{i}hoda]{%
Faculty of Mathematics and Physics\\
Department of Algebra\\
Sokolovsk\'{a} 83\\
186 75 Praha 8 (Czech Republic)}
\email{prihoda@karlin.mff.cuni.cz}

\author[J.\ \v{S}aroch]{Jan \v{S}aroch}
\address[Jan \v{S}aroch]{%
Faculty of Mathematics and Physics\\
Department of Algebra\\
Sokolovsk\'{a} 83\\
186 75 Praha 8 (Czech Republic)}
\email{saroch@karlin.mff.cuni.cz}

\author[J.\ Trlifaj]{Jan Trlifaj}
\address[Jan Trlifaj]{%
Faculty of Mathematics and Physics\\
Department of Algebra\\
Sokolovsk\'{a} 83\\
186 75 Praha 8 (Czech Republic)}
\email{trlifaj@karlin.mff.cuni.cz}

\thanks{The first named author is supported by Progetto di Eccellenza Fondazione Cariparo ''Algebraic Structures and their applications;'' the second by a Fulbright Distinguished Chair Grant and NSF Grant DMS 12-01523; the third, fourth and fifth named authors were supported by the grant GA\v{C}R 14-15479S.}

\subjclass[2000]{18E30, 18E15, 16D90, 18G10, 16B70, 16D60}

\keywords{tilting module, pure projective module, $t$-structure, Grothendieck category, definable subcategory, silting module, Dubrovin-Puninski ring}

\begin{abstract}
Let $T$ be a $1$-tilting module whose tilting torsion pair $(\T, \F)$ has the property that the heart $\clH_t$ of the induced $t$-structure (in the derived category $\D (\Modr R)$) is Grothendieck. It is proved that such tilting torsion pairs are characterized in several ways: (1) the $1$-tilting module $T$ is pure projective; (2) $\T$ is a definable subcategory of $\Modr R$ with enough pure projectives, and (3) both classes $\T$ and $\F$ are finitely axiomatizable. 

This study addresses the question of Saor\'{i}n that asks whether the heart is equivalent to a module category, i.e., whether the pure projective $1$-tilting module is tilting equivalent to a finitely presented module. The answer is positive for a Krull-Schmidt ring and for a commutative ring, every pure projective $1$-tilting module is projective. A criterion is found that yields a negative answer to Saor\'{i}n's Question for a left and right noetherian ring. A negative answer is also obtained for a Dubrovin-Puninski ring, whose theory is covered in the Appendix. Dubrovin-Puninski rings also provide examples of (1) a pure projective $2$-tilting module that is not classical; (2) a finendo quasi-tilting module that is not silting; and (3) a noninjective module $A$ for which there exists a left almost split morphism $m: A \to B,$ but no almost split sequence beginning with $A.$ 
\end{abstract}

\maketitle

\setcounter{tocdepth}{1}
\tableofcontents

\pagestyle{plain}

\section*{Introduction}

Beilinson, Bernstein and Deligne~\cite{BBD} showed how a triangulated category $\D$ may be equipped with additional structure - a $t$-structure - that gives rise to a subcategory $\clH_t$ of $\D$ - the heart of the $t$-structure - on which the triangulated structure of $\D$ induces the structure of an abelian category. If $\A$ is an abelian category Happel, Reiten and Smal\o $\;$ (HRS) found a general method~\cite[\S I.2]{HRS} that associates to a torsion pair $(\T, \F)$ of $\A$ a $t$-structure on the derived category $\D (\A).$ The torsion pair $(\T, \F)$ in $\A$ induces a torsion pair $(\F [1], \T [0])$ in the heart $\clH_t$ of this $t$-structure with the property that the torsion class $\F [1] \cong \F$ is equivalent to the given torsion free class in $\A,$ while the torsion free class $\T [0] \cong \T$ is equivalent to the given torsion class. 

In the case when the ambient abelian category is a Grothendieck category $\A = \G,$ and so has direct limits, Parra and Saor\'{i}n (\cite{PS} and~\cite[Thm.\ 1.2]{PS16}) showed that the heart $\clH_t$ of the $t$-structure induced by $(\T, \F)$ on $\D (\G)$ is itself a Grothendieck category if and only if the torsion free class $\F$ is closed under direct limits in $\G.$ We consider the condition that $\clH_t$ be equivalent to a Grothendieck category as interesting and we pursue the closely related question of Saor\'{i}n who asked whether the heart $\clH_t$ is, in that case, equivalent to a module category, that is, to a Grothendieck category with a finitely generated projective generator.

The HRS derived equivalence is best understood for a tilting torsion pair. If $R$ is a ring and $T$ a $1$-tilting right $R$-module, then the associated tilting class $\T = \Gen T = T^{\perp} \subseteq \Modr R$ constitutes a torsion class~\cite[Lemma 14.2]{GT12} in $\Modr R.$ The associated tilting torsion pair $(\T, \F)$ induces, by means of the HRS theory, a $t$-structure in the derived category $\D (\Modr R).$ Our article is motivated by the following question posed by M.\ Saor\'{i}n.  

\begin{ques} \label{Q:Saorin}
{\rm ~\cite[Question 5.5]{PS}} Let $R$ be a ring and $T$ a $1$-tilting right $R$-module. Suppose that the heart $\clH_t$ of the $t$-structure on $\D (\Modr R)$ induced by $(\Gen T, \F)$ is a Grothendieck category. Does it follow that the heart $\clH_t$ is equivalent to the module category $\Modr S$ for some ring $S?$ 
\end{ques}

The question of whether a torsion pair $(\T, \F)$ in $\Modr R$ (resp., $\modr R$) induces a heart in $\D (\Modr R)$
(resp., $\modr R$) equivalent to a module category has been explored by several authors (for example, \cite{HRS, CMT}). One of the first tasks of this article is to formulate Saor\'{i}n's Question in terms of module theory.  We show (Corollary~\ref{C:pure-proj}) that if $T$ is a $1$-tilting module, then the heart $\clH_t \subseteq \D (\Modr R)$ of the $t$-structure associated to the tilting torsion pair $(\Gen T, \F)$ in $\Modr R$ is a Grothendieck category if and only if $T$ is pure projective. In that case, Corollary~\ref{C:classical} implies that the heart $\clH_t$ is equivalent to a module category $\Modr S$ if and only if $T$ is a {\em classical} $1$-tilting module, that is, a $1$-tilting module tilting equivalent to a finitely presented module. This suggests the following version of Saor\'{i}n's Question.

\begin{ques} \label{Q:Saorin-pure-proj}
{\rm (Saor\'{i}n, pure projective version)} For which rings $R$ is every pure projective $1$-tilting module classical?
\end{ques}

Saor\'{i}n's Question comes about from considerations that arise in the derived Morita theory of rings, but the notion of a pure projective $1$-tilting module is a natural one to study in any case, because it represents the situation dual to the result of the first author~\cite{B03} (generalized by \v{S}\'{t}ov\'{i}\v{c}ek~\cite{St06} to all cotilting modules), that every $1$-cotilting module is pure injective. On the other hand, there are many examples of $1$-tilting modules that are not pure projective. 

The pure projective version of Saor\'{i}n's Question is related to a more general question considered by the second author and Ph.\ Rothmaler in their work on definable subcategories with enough pure projective modules. It is well known that if $\C \subseteq \Modr R$ is a definable subcategory and $M \in \C,$ then there exists a pure monomorphism $m : M \to U$ with $U$ a pure injective module in $\C.$ The articles~\cite{HR, HeRo} are devoted to the investigation of those definable subcategories $\C \subseteq \Modr R$ with {\em enough pure projective modules,} in the sense that for every module $M \in \C,$ there exists a pure epimorphism $e :V \to M$ with $V$ a pure projective module in $\C.$ Such definable subcategories of $\Modr R$ arise ''classically'' (cf.~\cite{CB94, HK}) as the categories $\C$ obtained by taking the closure under direct limits ${\dis \C = \lim_{\to} (\X),}$ where $\X \subseteq \modr R$ is a covariantly finite subcategory of finitely presented modules. As part of our analysis of a pure projective $1$-tilting module $T,$ we show (Theorem~\ref{T:pure-proj}) that the definable subcategory $\T = \Gen T = T^{\perp}$ of $\Modr R,$ has enough pure projective modules, and (Theorem~\ref{T:classical}) that if $T$ is a classical $1$-tilting module, then ${\dis \T = \lim_{\to} (\X),}$ with $\X \subseteq \modr R$ covariantly finite.

The pure projective version of Saor\'{i}n's Question has a positive answer for a large class of rings. By 
Corollary~\ref{C:Krull-Schmidt}, if $R$ is a ring over which every pure projective module is a direct sum of finitely presented modules, then every $1$-tilting module $T$ is classical. This includes the {\em Krull-Schmidt} rings, that is, the rings over which every finitely presented module admits a direct sum decomposition into modules with a local endomorphism. For the general case of a commutative ring, we use a henselization argument (Theorem~\ref{T:commutative-case}) to show that every pure projective $1$-tilting module is projective. Considerably easier proofs are found for a noetherian commutative ring or an arithmetic ring.

But the answer to Saor\'{i}n's Question is not affirmative in general. In Theorem~\ref{T:noetherian-case}, we identify a criterion  for an idempotent ideal $I$ of $R,$ finitely generated on the left, sufficient to yield a pure projective $1$-tilting module $T$ that is not classical. The construction is an elaboration of J.\ Whitehead's method of producing a projective module whose trace is such an idempotent ideal. The tilting class that arises is given by $\T = \{ M \in \Modr R \; | \; M = MI \}.$ For example, if $R = U(L)$ is the universal enveloping algebra of the Lie algebra $L = {\rm sl}(2,\mathbb{C})$ and $I$ is the annihilator of the trivial module $\mathbb{C}_L,$ then the conditions of Theorem~\ref{T:noetherian-case} are satisfied and one obtains a nonclassical pure projective $1$-tilting module $T_{U(L)}$ over a ring that is both left and right noetherian.

A preliminary section of the article is devoted to the role that definable subcategories of $\Modr R$ play in torsion theory. If $(\T, \F)$ is a torsion pair in $\Modr R,$ then the torsion free class $\F$ always has direct limits, given by the torsion free quotient module of a direct limit in $\Modr R.$ By the work of Parra and Saor\'{i}n, the torsion pair induces a Grothendieck heart in $\D (\Modr R)$ if and only if the direct limits in $\F$ coincide with those in the ambient category $\Modr R,$ that is, if $\F$ is {\em closed under direct limits in} $\Modr R.$ This is equivalent to the condition that the torsion free class $\F$ be a definable subcategory of $\Modr R.$ When $T$ is a $1$-tilting module, the tilting torsion class $\T = \Gen T \subseteq \Modr R$ is also a definable subcategory~\cite{BH}. We call a torsion pair $(\T, \F)$ in $\Modr R$ both of whose constituent categories $\T$ and $\F$ are definable {\em elementary} and show in Theorem~\ref{T:elementary torsion pair} that this is equivalent to the condition that the associated torsion radical is definable in the language of right $R$-modules by a positive primitive formula. This implies that both $\T$ and $\F$ are finitely axiomatizable.

The first example of a definable subcategory with enough pure projectives that is not the direct limit closure of a covariantly finite subcategory of $\modr R$ was obtained~\cite{HR} over a {\em Dubrovin-Puninski} ring. If $R$ is a nearly simple uniserial domain, Puninski (cf.~\cite[Chapter 14]{Pun}) proved that there exists, up to isomorphism, a unique cyclically presented torsion module $X_R.$ A Dubrovin-Puninski ring~\cite{DP} is a ring $S$ that arises as the endomorphism ring $S = \End_R X_R$ of such a module. For this reason, we study left modules over $S$ and devote an appendix to a comprehensive treatment of the category $\Modl S.$ The 12 definable subcategories of $\Modl S$ are classified: 10 have enough pure projective modules, and $6$ of those do not arise from covariantly finite subcategories. Among the 12 definable subcategories, three are tilting classes. Beside the trivial tilting class $\Modl S$ one finds a $1$-tilting class and a $2$-tilting class, neither of which arise from classical tilting modules. 

Dubrovin-Puninski rings may seem pathological from the point of view of the existing theory, but we hope to convince the reader that they are actually quite well-behaved and provide a constant source of counterexamples.  For example, the fourth author has recently proved~\cite{JanIM} that if a nonprojective module ${_S}C$ is the codomain of a right almost split morphism in the category $\Modl S,$ then there exists an almost split sequence in $\Modl S$ ending with $C.$ Over a Dubrovin-Puninski ring, however, there exists (Proposition~\ref{P:left asm}) in $\Modl S$ a noninjective module $A$ and a left almost split morphism $m: A \to B,$ but no almost split sequence beginning with $A.$ 

A section of the article is devoted to the study of $\tau$-rigid modules~\cite{W} and silting modules~\cite{AMV}, and their relationship to the finendo and quasi-tilting properties. We prove (Corollary~\ref{cor}) that every quasi-tilting module is $\tau$-rigid and conditions are given in Proposition~\ref{prop} and Corollary~\ref{cor} under which $\tau$-rigidity is equivalent to $\Gen T \subseteq T^{\perp}.$ Regarding the silting property, we give an example (Example~\ref{prod}) of a quasi-tilting module over a commutative von Neumann regular ring that is not silting and an example (Example~\ref{dubpun}) of a finendo quasi-tilting module over a Dubrovin-Puninski ring that is not silting.  
\bigskip

In what follows, $R$ will be an associative ring with unit; $\Modr R$ ($\Modl R$) will denote the category of right (left) $R$-modules; $\modr R$  ($\modl R$) the subcategory of finitely presented right (left) $R$-modules. The category of abelian groups is denoted by $\Ab.$ For a subcategory $\C$ of $\Modr R,$ $\Add (\C)$ (resp., $\add (\C)$) will denote the class of modules isomorphic to a summand of a (resp., finite) direct sum of modules in $\C.$ If the class $\C = \{ T \}$ is singleton, we write $\Add T$ (resp., $\add T$). Similarly, $\Gen (\C)$ (resp., $\Gen T$) will denote the class of epimorphic images of direct sums of modules in $\C$ (resp., of copies of $T$) and $\Pres T \subseteq \Gen T$ will denote the class of those modules $M$ that admit a $T$-{\em presentation,} that is, an exact sequence $T_1 \to T_0 \to M \to 0,$ where $T_0,$ $T_1 \in \Add T.$ The language for right $R$-modules is $\Lang (R) = (+,-,0,r)_{r \in R};$ the standard axioms for a right $R$-module are expressible in $\Lang (R)$ and the theory $\Th (\Modr R)$ of right $R$-modules consists of their consequences.

Consider a subcategory $\C$ of $\Modr R$ and a module $M \in \C.$ A homomorphism $\phi : M \to C_M$ is a $\C$-{\em preenvelope} of $M,$ if for every homomorphism $f : M \to C$ with $C \in \C$ there exists a homomorphism $f_M : C_M \to C$ such that the diagram
$$\vcenter{
\xymatrix{
M \ar[r]^{\phi} \ar[dr]_f & C_M \ar[d]^{f_M} \\
& C
}}$$
commutes. A $\C$-{\em preenvelope} $\phi : M \to C_M$ is a $\C$-{\em envelope} if every endomorphism $f : C \to C$ such that $f \circ \phi = \phi$ is an automorphism of $C$.  We say that $\C$ is {\em preenveloping in} an additive category $\A \supseteq \C$ if every module $A \in \A$ has a $\C$-preenvelope. If $\A = \Modr R,$ we just say that $\C$ is {\em preenveloping;} if $\A = \modr R,$ we call $\C$ a {\em covariantly finite} subcategory of $\modr R.$

The superscript $\perp_i$ is used to denote orthogonality with respect to the bifunctor $\Ext^i_R (-,-),$ so if \lb $C \in \Modr R,$ then $C^{\perp_i} = \Ker \Ext^i (C,-)$ and similarly for ${^{\perp_i}}C;$ if $\C \subseteq \Modr R$ is a class, then 
$\C^{\perp_i} = \cap \{ C^{\perp_i} \; | \; C \in \C \}.$ The unadorned superscript $\perp$ refers to $\perp_1$ and $\perp_{\infty}$ will be used to denote the class orthogonal (on the appropriate side) with respect to all the bifunctors $\Ext^i,$ $i \geq 1.$ A $\C$-preenvelope $\phi: M \to M_{\C}$ is {\em special} if it is a monomorphism with $\Coker \phi \in {^\perp}\C$.

The notions of $\C$-{\em precover}, {\em precovering,} $\C$-{\em cover} and  {\em special} $\C$-{\em precover} are defined dually.

\section{Elementary torsion pairs}

Definable subcategories of $\Modr R$ were introduced by Crawley-Boevey~\cite{CB} to characterize in non-logical terms the elementary additive classes of right $R$-modules, which arise in the model theory of modules and are in bijective correspondence with the closed subsets of the Ziegler spectrum~\cite{Zg}.

\begin{defn} \label{D:definable}  
A full subcategory $\C \subseteq \ModR$ is {\em definable} if it is closed under direct products, pure submodules and direct limits.
\end{defn}

Definable subcategories are also closed under direct sums, which may be regarded as direct limits of finite direct products, or pure submodules of direct products. There are several characterizations of definable categories that are useful. Recall that an additive functor $F : \Modr R \to \Ab$ is {\em coherent} if it commutes with direct limits and direct products. A subcategory $\C \subseteq \Modr R$ is elementary if it is the class of models of some collection $\Sigma$ of sentences 
$\Lang (R),$ $\C = \Mod (\Sigma).$ Equivalently, the class $\C \subseteq \Modr R$ is {\em axiomatized} by $\Sigma.$

\begin{prop}  \label{P:definable} 
Let $\C$ be a full subcategory of $\ModR.$ The following are equivalent:
\begin{enumerate}
\item $\C$ is definable;
\item $\C$ is defined by the vanishing of some set of coherent functors;
\item $\C$ is closed under direct products, pure submodules and pure epimorphic images; and
\item $\C$ is an elementary class, closed under direct sums and direct summands.
\end{enumerate}
\end{prop}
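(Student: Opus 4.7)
The plan is to establish a cycle $(1) \Rightarrow (3) \Rightarrow (1)$, then $(1) \Leftrightarrow (2)$ via the calculus of coherent functors, and finally $(1) \Leftrightarrow (4)$ by invoking the model theory of modules. I expect $(4) \Rightarrow (1)$ to be the main obstacle, since it requires the Baur--Monk pp-elimination theorem to reduce elementarity to axiomatizability by pp-implications.

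For $(1) \Leftrightarrow (3)$, the translation is standard. Given a directed system $(M_i, \phi_{ij})$ with $M_i \in \C$, the canonical presentation
\[ 0 \to K \to \bigoplus_i M_i \to \varinjlim M_i \to 0, \]
with $K$ generated by the telescoping elements of the form $x_i - \phi_{ij}(x_i)$, is pure exact; since $\bigoplus_i M_i$ embeds purely in $\prod_i M_i$, closure under products, pure submodules and pure epimorphic images yields direct limits, giving $(3) \Rightarrow (1)$. Conversely, given a pure epimorphism $\pi \colon N \to M$ with $N \in \C$, write $M \cong \varinjlim_{(F,g)} F$ over the comma category $\modr R / M$; purity lifts each $g$ through $\pi$, and one organizes the lifts into a compatible directed system of pushouts inside $\C$, placing $M$ in $\C$ by closure under direct limits.

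For $(1) \Leftrightarrow (2)$, a coherent functor $F$ with presentation $\Hom(B,-) \to \Hom(A,-) \to F \to 0$ and $A, B$ finitely presented automatically commutes with direct products and direct limits, and its vanishing passes to pure submodules since purity is tested against finitely presented objects; this yields $(2) \Rightarrow (1)$. For the converse, I would observe that every definable subcategory is cut out by its pp-pair invariants: for each pp-pair $\phi / \psi$ that collapses on $\C$, the functor $F_{\phi/\psi} \colon M \mapsto \phi(M)/\psi(M)$ is coherent, and $\C$ is the common vanishing locus of these functors.

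For $(1) \Leftrightarrow (4)$, the forward implication is immediate, since pp-pair collapse is axiomatized by universal sentences $\forall \bar{x}\,(\phi(\bar{x}) \to \psi(\bar{x}))$ in $\Lang(R)$, and closure under sums and summands is part of definability. The converse is the crux. The Baur--Monk pp-elimination theorem asserts that every formula is equivalent modulo $\Th(\Modr R)$ to a Boolean combination of pp-formulas and invariant statements bounding the cardinalities of the quotients $\phi(M)/\psi(M)$; using closure under direct sums to force such invariants upward and closure under direct summands to detect them, any axiomatization of $\C$ can be refined to a set of pp-implications, whence closure under direct products, pure submodules and direct limits is automatic because all three operations preserve the equalities $\phi(M) = \psi(M)$.
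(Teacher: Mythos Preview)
Your overall strategy matches the paper's: both rely on the coherent-functor/pp-pair machinery for the substantive equivalences, and your $(3)\Rightarrow(1)$ is exactly the paper's argument (direct limit as a pure quotient of the direct sum, which is pure in the product). Your $(4)\Rightarrow(1)$ via Baur--Monk elimination is correct and more explicit than the paper's bare citation of Crawley-Boevey.

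There is, however, a genuine gap in your direct argument for $(1)\Rightarrow(3)$. Given a pure epimorphism $\pi\colon N\to M$ with $N\in\C$, you lift each $g\colon F\to M$ (for $F$ finitely presented) to some $\tilde g\colon F\to N$ and then assert that one can ``organize the lifts into a compatible directed system of pushouts inside $\C$.'' But for a morphism $h\colon(F,g)\to(F',g')$ in $\modr R/M$ one only knows $\pi(\tilde g'h-\tilde g)=0$, so the lifts differ by maps into $\ker\pi$, and there is no mechanism to force compatibility while producing objects of $\C$; the word ``pushouts'' does not name a construction that accomplishes this. The paper avoids the issue entirely by routing through $(2)$: coherent functors are \emph{exact} on pure short exact sequences, so their vanishing passes to pure quotients as well as to pure submodules, giving $(2)\Rightarrow(3)$ immediately. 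Your argument for $(2)\Rightarrow(1)$ checks only the pure-submodule half of this. Alternatively, you can close the gap with one extra line in your $(4)\Rightarrow(1)$ paragraph: pure epimorphisms also preserve the equalities $\varphi(M)=\psi(M)$, since $\varphi(M)=\pi(\varphi(N))$ for any pp-formula $\varphi$, and this yields $(4)\Rightarrow(3)$ directly.
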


\begin{proof}
The equivalences ($1$) $\Leftrightarrow$ ($2$) and ($1$) $\Leftrightarrow$ ($4$) are from~\cite[\S 2.1, 2.3]{CB}, where they are stated for an algebra over an infinite field; for the general case, one must include in Condition ($4$) that the elementary class is closed under direct sums. \\
($2$) $\Rightarrow$ ($3$). This is a consequence of the fact that coherent functors are exact on pure short exact sequences (see~\cite[\S 2.1, Lemma 1]{CB}).\\  
($3$) $\Rightarrow$ ($1$).  This holds because every direct limit is a pure epimorphic image of the direct sum of the modules in the directed system.
\end{proof}

An important consequence of Proposition~\ref{P:definable}(4) is that definable subcategories of $\Modr R$ is also closed under pure injective envelopes. This is because the pure injective envelope $M \to \PE (M)$ of a module $M$ is an elementary embedding~\cite[Theorem 4.3.21]{PSL}.

A torsion pair $(\T, \F)$ in $\Modr R$ is called {\em elementary} if both classes $\T$ and $\F$ are definable. All torsion and torsion free classes are additive, so this is equivalent to both of the classes $\T$ and $\F$ being elementary. The torsion free class $\F$ is already closed under products and submodules, so that it is definable if and only if it is closed under direct limits. On the other hand, the torsion class $\T$ is closed under coproducts and quotients, so that it is already closed under direct limits. It is therefore definable if and only if it is closed under pure submodules and products.

\begin{prop} \label{P:elementary torsion pair}
Let $(\T, \F)$ be a torsion pair in $\Modr R,$ with associated radical $t : \Modr R \to \Modr R.$ The torsion free class $\F$ is definable if and only if $t$ respects direct limits. If $\T$ is closed under direct products, then $t$ respects direct products.  
\end{prop}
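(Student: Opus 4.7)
The plan is to exploit the canonical torsion sequences $0 \to t(M) \to M \to M/t(M) \to 0$ and to track which factor survives under the operations of direct limit and direct product. The only closure properties I need to invoke are the automatic ones recalled in the paragraph before the proposition: $\T$ is always closed under direct limits, and $\F$ is always closed under direct products and submodules. Combined with the remark that $\F$ is definable if and only if it is closed under direct limits, each direction reduces to a short diagram chase based on the uniqueness of the torsion--torsion free decomposition.

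For the first equivalence, let $(M_i)_{i\in I}$ be a direct system with colimit $M = \li M_i$. Exactness of $\li$ applied to the torsion sequences produces the exact sequence
$$0 \to \li t(M_i) \to M \to \li M_i/t(M_i) \to 0.$$
The submodule $\li t(M_i)$ lies in $\T$ since $\T$ is closed under direct limits. If $\F$ is definable, it is closed under direct limits, so $\li M_i/t(M_i) \in \F$; the uniqueness of the torsion--torsion free decomposition then forces $t(M) = \li t(M_i)$, i.e.\ $t$ respects direct limits. Conversely, if $t$ commutes with direct limits and $(F_i)$ is a direct system in $\F$, then $t(\li F_i) = \li t(F_i) = 0$, so $\li F_i \in \F$; hence $\F$ is closed under direct limits, and therefore definable.

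The product statement is proved in the same spirit. Given a family $(M_i)_{i\in I}$, the product of the torsion sequences is exact,
$$0 \to \prod_i t(M_i) \to \prod_i M_i \to \prod_i M_i/t(M_i) \to 0.$$
The submodule lies in $\T$ by the hypothesis that $\T$ is closed under direct products, while the quotient lies in $\F$ by the automatic closure of $\F$ under products. Uniqueness of the torsion--torsion free decomposition then identifies $t(\prod_i M_i)$ with $\prod_i t(M_i)$.

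I do not anticipate any genuine obstacle: the proof is essentially the observation that in each case one half of the torsion sequence is supplied for free by the automatic closure properties, so that only the other half needs a hypothesis. The only subtlety worth flagging is the appeal to uniqueness of the short exact sequence presenting $M$ as an extension of an object of $\F$ by an object of $\T$, which characterizes $t(M)$ and so makes the identifications $t(M)=\li t(M_i)$ and $t(\prod_i M_i)=\prod_i t(M_i)$ automatic once the two ends of the candidate sequence are correctly placed.
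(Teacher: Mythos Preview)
Your proof is correct and follows essentially the same approach as the paper: both arguments apply the (left) exactness of the relevant limit/product to the torsion sequences, use the automatic closure properties of $\T$ and $\F$ to identify the outer terms, and conclude by uniqueness of the torsion--torsion free decomposition. The paper's presentation is slightly more leisurely in displaying the commutative diagram of directed systems, but the content is the same.
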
 

\begin{proof}
If $t$ respects direct limits, then it is clear that a direct limit of torsion free modules is torsion free. To prove the converse, suppose that a directed system $M_i,$ $i \in I,$ in $\Modr R$ is given, with direct limit ${\dis M = \lim_{\to} M_i.}$ Each $M_i$ is an extension of its torsion free quotient by its torsion submodule. These extensions themselves form a directed system of short exact sequences, with limit as shown in
$$\vcenter{
\xymatrix{
0 \ar[r] & t(M_i) \ar[r] \ar[d] & M_i \ar[r] \ar[d] & M_i/t(M_i) \ar[r] \ar[d] & 0 \\
0 \ar[r] & {\dis \lim_{\to} t(M_i)} \ar[r] & M \ar[r] & {\dis \lim_{\to} M_i/t(M_i)} \ar[r] & 0. 
}}$$
A torsion class is closed under direct limits so that ${\dis \lim_{\to} t(M_i)}$ is torsion. By assumption, the direct limit ${\dis \lim_{\to} M_i/t(M_i)}$ of torsion free modules is torsion free. The short exact sequence in the bottom row thus represents $M$ as the  extension of its torsion free quotient by it torsion submodule. This implies that the canonical morphism 
${\dis \lim_{\to} t(M_i) \to t(\lim_{\to} M_i)}$ is an isomorphism.

To prove the second statement, let $N_j,$ $j \in J,$ be a family of modules with product $N = \prod_j N_j,$ and use a similar argument, but taking a product instead of a direct limit, to obtain the short exact sequence
$$\vcenter{
\xymatrix{
0 \ar[r] & \prod_j t(N_j) \ar[r] & N \ar[r] & \prod_j N/t(N_j) \ar[r] & 0.
}}$$
A torsion free class is closed under products, so the right term $\prod_j N_j/t(N_j)$ is torsion free. By assumption, the product $\prod_j t(N_j)$ is torsion. Arguing as above shows that the canonical morphism \lb $t(\prod_j N_j) \to \prod_j t(N_j)$ is an isomorphism.
\end{proof}

Recall that an elementary class $\E \subseteq \Modr R$ is {\em finitely axiomatizable} (relative to the theory $\Th (\Modr R)$ of right $R$-modules) if there exists a sentence $\sigma$ in $\Lang (R)$ such that $\E = \Mod (\Th (\Modr R) \cup \{ \sigma \})$ is the subcategory of $\Modr R$ of modules $M$ that satisfy $\sigma,$ $M \models \sigma.$ 

\begin{thm} \label{T:elementary torsion pair}
A torsion pair $(\T, \F)$ in $\Modr R$ is elementary if and only if the associate torsion radical is a coherent functor. Equivalently, the torsion radical $t = \tau (-)$ is definable by a positive primitive formula $\tau (u)$ in $\Lang (R)$ and the torsion class $\T = \Mod (\forall u \; (\tau (u)))$ and the torsion free class $\F = \Mod [\forall u \; (\tau (u) \to (u \doteq 0))]$ are finitely axiomatizable in $\Lang (R).$ 
\end{thm}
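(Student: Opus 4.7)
The plan is to establish a three-way equivalence: $(\T,\F)$ is elementary $\Leftrightarrow$ the torsion radical $t$ is a coherent functor $\Leftrightarrow$ $t$ is definable by a positive primitive formula $\tau(u)$ in one free variable, with $\T$ and $\F$ then finitely axiomatized by the displayed sentences. The architecture is to run around this circle, using Propositions~\ref{P:definable} and~\ref{P:elementary torsion pair} for the easy arcs and a single model-theoretic input for the hard arc.

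For the direction ``elementary $\Rightarrow$ coherent'', I would invoke Proposition~\ref{P:definable}. The torsion free class $\F$ is always closed under direct products and submodules, so its definability is equivalent to closure under direct limits, which by Proposition~\ref{P:elementary torsion pair} is equivalent to $t$ respecting direct limits. Dually, a torsion class is always closed under direct limits and coproducts, so the definability of $\T$ in particular forces closure under direct products, and the second half of Proposition~\ref{P:elementary torsion pair} then yields that $t$ respects direct products as well. Hence $t$ commutes with both direct limits and direct products, so is coherent.

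For ``coherent $\Rightarrow$ pp-definable'', the key input is the classical result from the model theory of modules (cf.\ \cite[\S 2]{CB} and \cite[\S 10.2]{PSL}) that any subfunctor of the identity on $\Modr R$ that commutes with direct products and direct limits is defined by a positive primitive formula $\tau(u)$ in a single free variable, in the sense that $t(M) = \{m \in M : M \models \tau(m)\}$ for every $M \in \Modr R$. This is the step I expect to be the main obstacle: it rests on the Auslander--Gruson--Jensen description of coherent functors as finitely presented objects in the functor category of $\modr R$, combined with the identification of pp-subfunctors of the identity functor with one-variable pp formulas modulo the theory $\Th(\Modr R)$.

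Once $\tau(u)$ is in hand, the finite axiomatizations can be read off directly. A module $M$ lies in $\T$ iff $t(M) = M$ iff every element of $M$ satisfies $\tau(u)$, i.e.\ $M \models \forall u\,\tau(u)$, while $M \in \F$ iff $t(M) = 0$ iff $M \models \forall u\,(\tau(u) \to u \doteq 0)$. Both $\T$ and $\F$ are thereby defined by a single sentence of $\Lang(R)$, hence finitely axiomatizable. They are furthermore closed under direct sums and direct summands (summands of a torsion module are quotients and so torsion; summands of a torsion free module are submodules and so torsion free), so Proposition~\ref{P:definable}(4) upgrades finite axiomatizability back to definability. This closes the circle and shows $(\T,\F)$ is elementary.
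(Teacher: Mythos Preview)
Your proposal is correct and follows essentially the same route as the paper: elementary $\Rightarrow$ $t$ coherent via Proposition~\ref{P:elementary torsion pair}, coherent $\Leftrightarrow$ pp-definable via Crawley-Boevey, and then the displayed sentences give finite axiomatizations, closing the loop through Proposition~\ref{P:definable}(4). You simply spell out in more detail the two halves of Proposition~\ref{P:elementary torsion pair} needed for the first step and the closure properties needed for the last, which the paper leaves implicit.
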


\begin{proof}
If $(\T, \F)$ is elementary, then $t$ is coherent by Proposition~\ref{P:elementary torsion pair}. The torsion radical $t$ is coherent if and only if it is definable by a positive primitive formula, by~\cite[Lemma 2, \S 2.1]{CB}. The given axiomatizations of $\T$ and $\F$ follow from the respective definitions of a torsion and torsion free class of a torsion pair. Since both $\T$ and $\F$ are finitely axiomatizable, the torsion pair is therefore elementary. 
\end{proof}

Finitely axiomatizable definable subcategory correspond to basic Zariski open subsets of the Ziegler spectrum in the sense of~\cite[Chapter 14]{PSL}. Examples of elementary torsion pairs will be found with the help of pure projective modules. 

\begin{defn} \label{D:pure projective} 
An $R$-module is {\em pure projective} if it has the projective property with respect to pure exact sequences. Other equivalent formulations are the following:
\begin{enumerate}
\item A module is pure projective if and only if it is a direct summand of a direct sum of finitely presented modules. For this reason, the subcategory of pure projective modules is denoted by $\Add (\modr R).$
\item A module $M$ is pure projective if and only if every pure short exact sequence $0 \to A \to B \to M \to 0$ splits.
\end{enumerate}
\end{defn}

\begin{cor} \label{T and T perp is pure projective}
If $(\T, \F)$ is an elementary torsion pair in $\Modr R,$ then $\T \cap {^{\perp}}\T \subseteq \Add (\modr R).$
\end{cor}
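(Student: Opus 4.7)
The plan is to reduce to showing that any pure epimorphism onto $M \in \T \cap {^{\perp}}\T$ splits, which by Definition~\ref{D:pure projective}(2) is exactly what it means for $M$ to be pure projective. So I would begin with an arbitrary pure epimorphism $\pi\colon B \to M,$ with kernel $K,$ giving a pure exact sequence $0 \to K \to B \to M \to 0,$ and aim to produce a section of $\pi.$

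The main step is to apply the torsion radical $t$ to this sequence. By Theorem~\ref{T:elementary torsion pair} — the only place where the elementary hypothesis enters — the functor $t$ is coherent; and, as noted in the proof of Proposition~\ref{P:definable}, coherent functors are exact on pure short exact sequences. Since $M \in \T$ satisfies $t(M) = M,$ this yields a short exact sequence
\[
0 \to t(K) \to t(B) \to M \to 0.
\]
Alternatively, exactness can be verified directly from the positive primitive formula $\tau(u)$ defining $t$: for $m \in M$ one has $M \models \tau(m),$ which by purity of $\pi$ lifts to some $b \in B$ with $B \models \tau(b)$ and $\pi(b) = m,$ so $b \in t(B);$ and the identification $K \cap t(B) = t(K)$ follows because $K$ is pure in $B,$ which allows $\tau$ to transfer freely between the two.

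The conclusion is then immediate. Since $t(K) \in \T$ and $M \in {^{\perp}}\T,$ we have $\Ext^1_R(M, t(K)) = 0,$ so the displayed sequence splits. Composing any resulting section $s\colon M \to t(B)$ with the inclusion $t(B) \hookrightarrow B$ yields a section of $\pi,$ showing that $\pi$ splits and hence that $M$ is pure projective. The one genuinely nontrivial point in this strategy is the exactness of the sequence after applying $t,$ above all the surjectivity of $t(B) \to M;$ this is precisely where the elementary hypothesis — through the pp-definability of $t$ given by Theorem~\ref{T:elementary torsion pair} — does its work, after which the $\Ext$-vanishing and splitting are formal.
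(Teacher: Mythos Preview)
Your proof is correct and follows essentially the same route as the paper: start from a pure exact sequence ending in $M\in\T\cap{^\perp}\T$, apply the torsion radical (coherent by Theorem~\ref{T:elementary torsion pair}) to obtain an exact sequence with torsion kernel, split it via $M\in{^\perp}\T$, and compose the resulting section with the inclusion $t(B)\subseteq B$. The only cosmetic difference is that you supply an explicit pp-formula argument for the exactness step, whereas the paper simply invokes that coherent functors are exact on pure exact sequences.
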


\begin{proof}
Consider a pure short exact sequence $0 \to A \to B \stackrel{g}{\rightarrow} P \to 0,$ with $P$ in $\T \cap {^{\perp}}\T,$ and apply the torsion radical,
$$\vcenter{
\xymatrix{
0 \ar[r] & t(A) \ar[r] & t(B) \ar[r]^{t(g)} & P \ar[r] & 0.
}}$$
The sequence is exact, because $t$ is a coherent functor, and the third term is $t(P) = P.$ But $t(A) \in \T$ and
$P \in {^{\perp}}\T,$ so the sequence splits. If $r : P \to t(B)$ is a retraction of $t(g),$ then, because $t$ is a subfunctor of the identity functor on $\Modr R,$ the composition $r : P \to t(B) \subseteq B$ is a retraction of $g : B \to P,$ as required.
\end{proof}

A definable subcategory $\D \subseteq \Modr R$ {\em has enough pure projectives}~\cite{HR} if for every module $D_R \in \D,$ there exists a pure epimorphism $g : P \to D,$ where $P$ is a pure projective module in $\D.$ This is equivalent~\cite[Theorem 8]{HR} to the condition that every finitely presented module $A \in \modr R$ have a pure projective $\D$-preenvelope.  

\begin{lem} \label{L:first} 
If $0 \to H \to M \to B \to 0$ is an exact sequence with $H$ finitely generated and $M$ pure projective, then $B$ is pure projective.
\end{lem}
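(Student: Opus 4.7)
The plan is to exploit the structural description of pure projective modules from Definition~\ref{D:pure projective}(1), namely that $M$ is a summand of a direct sum of finitely presented modules, together with the fact that a finitely generated submodule sits inside any direct sum only after meeting finitely many of its summands.

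First, since $M$ is pure projective, choose a module $M'$ and a family of finitely presented modules $\{F_i\}_{i \in I}$ such that $M \oplus M' \cong \bigoplus_{i \in I} F_i$. Identify $H$ with a finitely generated submodule of $\bigoplus_{i \in I} F_i$ via the embedding $H \hookrightarrow M \hookrightarrow M \oplus M'$. Because $H$ is finitely generated, there is a finite subset $J \subseteq I$ with $H \subseteq F := \bigoplus_{i \in J} F_i$; set $F' := \bigoplus_{i \in I \setminus J} F_i$, so that $M \oplus M' = F \oplus F'$.

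Next, compute the quotient. Since $H \subseteq F$, one has
\[
(M \oplus M')/H \;=\; (F \oplus F')/H \;\cong\; (F/H) \oplus F'.
\]
The module $F$ is finitely presented (finite direct sum of finitely presented modules) and $H \subseteq F$ is finitely generated, so $F/H$ is finitely presented as well. Thus $(F/H) \oplus F'$ is a direct sum of finitely presented modules, hence pure projective. On the other hand, as $H \subseteq M$, the same quotient can be written as $(M/H) \oplus M' = B \oplus M'$. Therefore $B$ is a direct summand of the pure projective module $(F/H) \oplus F'$, and so $B$ itself lies in $\Add(\modr R)$, again by Definition~\ref{D:pure projective}(1).

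There is no genuine obstacle here; the only point requiring care is making sure that the finite generation of $H$ is used correctly to contain $H$ in a finite subsum of $\bigoplus_{i \in I} F_i$, and that the bookkeeping of the isomorphism $(F \oplus F')/H \cong (F/H) \oplus F'$ (valid because $H$ meets $F'$ trivially) is carried out explicitly.
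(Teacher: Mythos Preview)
Your argument is correct and is the standard proof of this well-known fact. The paper itself states the lemma without proof, so there is nothing to compare against; your write-up supplies exactly the expected argument using Definition~\ref{D:pure projective}(1).
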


\begin{thm} \label{T:enough pure projectives}
The following are equivalent for a torsion pair $(\T, \F)$ in $\Modr R$ with $\T$ definable:
\begin{enumerate}
\item the module $R_R$ has a pure projective $\T$-preenvelope;
\item $\T$ has enough pure projectives;
\item $\T = \Gen P$ for some pure projective module.
\end{enumerate}
Moreover, if these conditions are satisfied, then $(\T, \F)$ is an elementary torsion pair.
\end{thm}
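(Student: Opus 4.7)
The plan is to close the cycle $(2) \Rightarrow (1) \Rightarrow (3) \Rightarrow (1) \Rightarrow (2)$ and then address the ''moreover'' clause. The implication $(2) \Rightarrow (1)$ is immediate from the equivalence noted above in~\cite[Theorem 8]{HR}, since $R_R \in \modr R$. For $(1) \Rightarrow (3)$, starting from a pure projective $\T$-preenvelope $\phi \dd R \to P_0$, the inclusion $\Gen P_0 \subseteq \T$ is clear from closure of $\T$ under sums and quotients; conversely, each $m \in M \in \T$ is the image of $\phi(1)$ under a map $P_0 \to M$ obtained by factoring $R \to M,$ $1 \mapsto m,$ through $\phi$, and the family of all such maps exhibits $M$ as a quotient of a direct sum of copies of $P_0$. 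For $(1) \Rightarrow (2)$, by the same criterion it suffices to produce a pure projective $\T$-preenvelope of every $A = R^n/H \in \modr R$ with $H$ finitely generated; starting from $\phi^n \dd R^n \to P_0^n$ (which is such a preenvelope for $R^n$) and passing to $Q := P_0^n/\phi^n(H)$, the module $Q$ is pure projective by Lemma~\ref{L:first} (as $\phi^n(H)$ is finitely generated) and belongs to $\T$ as a quotient of $P_0^n$, while the induced $A \to Q$ inherits the preenvelope property via the universal property of the cokernel.

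The main obstacle is $(3) \Rightarrow (1)$, where a pure projective preenvelope must be extracted from the mere existence of a pure projective generator $P$ of $\T$. My approach: since $\T$ is closed under direct products by definability, a standard product-of-representatives argument produces a (not necessarily pure projective) $\T$-preenvelope $\psi \dd R \to T_0$ of $R$. Because $T_0 \in \Gen P$, one fixes an epimorphism $\pi \dd P^{(I)} \twoheadrightarrow T_0$ and, using the projectivity of $R$, lifts $\psi$ through $\pi$ to a morphism $\phi \dd R \to P^{(I)}$ with $\pi \phi = \psi$. Then $P^{(I)}$ is pure projective (a direct sum of pure projectives) and lies in $\T$, and any $f \dd R \to T$ with $T \in \T$ factors through $\psi$, hence through $\phi$ by composition with $\pi$, showing $\phi$ to be the required pure projective $\T$-preenvelope.

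For the ''moreover'' clause, Theorem~\ref{T:elementary torsion pair} reduces the task to showing that the torsion radical $t$ is a coherent functor. Using the preenvelope $\phi \dd R \to P_0$ from (1), one first verifies
\[ t(M) = \{\, g(\phi(1)) \mid g \in \Hom(P_0, M) \,\}, \]
with $\supseteq$ following because each image $g(P_0) \in \T$ is contained in $t(M)$, and with $\subseteq$ because every $m \in t(M)$ is $g(\phi(1))$ for a map $g \dd P_0 \to t(M)$ obtained by factoring $R \to t(M),$ $1 \mapsto m,$ through $\phi$. The subtlety is that $\Hom(P_0, -)$ need not be coherent when $P_0$ is merely pure projective. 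To overcome this, decompose $P_0$ as a direct summand of $\bigoplus_j F_j$ with $F_j \in \modr R$; because $\phi(1)$ has finite support in this sum, the displayed image equals the image of the natural transformation $\mathrm{ev}' \dd \Hom(P_0', M) \to M$, $g' \mapsto g'(\phi(1))$, where $P_0'$ is the finite subsum containing $\phi(1)$. Now $P_0'$ is finitely presented, so $\Hom(P_0', -)$ is representable; $\mathrm{ev}'$ is a natural transformation between coherent functors, and its image is therefore coherent (coherent functors form an abelian subcategory of the functor category). Hence $t$ is coherent and $(\T, \F)$ is elementary by Theorem~\ref{T:elementary torsion pair}.
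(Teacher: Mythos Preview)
Your proof of the equivalences $(1)$--$(3)$ is correct and matches the paper's closely: the pushout/quotient construction for $(1)\Rightarrow(2)$ via Lemma~\ref{L:first}, the lifting through $P^{(I)}\twoheadrightarrow T_0$ for $(3)\Rightarrow(1)$, and the citations of \cite{HR} are all the same ingredients, just assembled in a slightly different order. Your direct argument for $(1)\Rightarrow(3)$ (showing $\T=\Gen P_0$ from the preenvelope $\phi\colon R\to P_0$) is a nice shortcut that the paper does not take; the paper instead routes through $(2)$ and invokes \cite[Lemma~3]{HR}.

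The genuine divergence is in the ``moreover'' clause. The paper gives a two-line argument: from $(3)$ one has $\F=\{M\mid \Hom_R(P,M)=0\}$ with $P$ pure projective, and since $\Hom_R(P,-)$ vanishes on pure-exact sequences, $\F$ is closed under pure epimorphic images, hence definable by Proposition~\ref{P:definable}(3). You instead prove directly that $t$ is a coherent functor by exhibiting $t(M)$ as the image of the evaluation map $\Hom(P_0',-)\to \Hom(R,-)$ at $\phi(1)$, where $P_0'$ is a finitely presented module obtained from the finite support of $\phi(1)$ in a decomposition of $P_0$. This is correct and is essentially the pp-formula promised by Theorem~\ref{T:elementary torsion pair}: your argument makes the positive primitive formula $\tau(u)$ explicit. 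The paper's route is shorter but less constructive; yours is longer but actually produces the formula that Theorem~\ref{T:elementary torsion pair} asserts must exist.
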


\begin{proof}
($1$) $\Rightarrow$ ($2$). Let $\varepsilon : R \to T_R$ be a pure projective $\T$-preenvelope. We will use the lemma to build a pure projective $\T$-preenvelope of a finitely presented module $A.$ There is a short exact sequence \lb 
$0 \to H \to R^n \overset{\pi} \to A \to 0$ with $H$ a finitely generated module. Take the pushout of $\pi$ and the pure projective $\T$-preenvelope $\varepsilon^n : R^n \to T_R^n$ to obtain the commutative diagram
$$\vcenter{
\xymatrix{
0 \ar[r] & H \ar@{=}[d] \ar[r] & R^n \ar[r]^{\pi} \ar[d]^{\varepsilon} & A\ar[d]^{\delta} \ar[r] & 0 \\
0 \ar[r] & H \ar[r] & T_R^n  \ar[r] & T_A \ar[r] & 0.
}}$$
The morphism $\delta : A \to T_A$ is then a $\T$-preenvelope of $A,$ with $T_A$ pure projective, by Lemma~\ref{L:first}.\\
($2$) $\Rightarrow$ ($3$). In general, every definable subcategory $\D$ with enough pure projectives is of the form $\D = \Gen P$ for some pure projective module~\cite[Lemma 3]{HR}. One may take $P$ to be the coproduct of pure projective $\D$-preenvelopes of the finitely presented modules. \\
($3$) $\Rightarrow$ ($1$). Rada and Saorin~\cite{RS} proved that every subcategory closed under direct products and pure submodules is preenveloping. Every definable subcategory is therefore preenveloping. Let $\varepsilon : R \to T_R$ be a $\T$-preenvelope of $R.$ By hypothesis, there is an epimorphism $\phi : P^{(I)} \to T_R$ from a coproduct of copies of $P.$ The $\T$-preenvelope $\varepsilon$ then factors through $\phi,$ and yields the module $P^{(I)}$ is a pure projective $\T$-preenvelope of $R_R.$ 

To prove the last statement, use Condition ($3$) to see that $\F = \{ M \in \Modr R \; | \; \Hom_R (P,M) = 0   \}.$ As $P$ is pure projective, $\F$ is closed under pure epimorphic images. By Proposition~\ref{P:definable}(3), it is definable. 
\end{proof}

\begin{conj} \label{C:enough-pps}
If $(\T, \F)$ is an elementary torsion pair, then $\T$ has enough pure projective modules.
\end{conj}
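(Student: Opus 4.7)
The plan is to exploit the pp-definability of the torsion radical established in Theorem~\ref{T:elementary torsion pair}. Since $(\T, \F)$ is elementary, the radical $t$ is a coherent functor given by a positive primitive formula $\tau(u)$ in $\Lang(R)$. The standard correspondence between pp-formulas in one free variable and pointed finitely presented modules yields a pair $(F_\tau, a_\tau)$ with $F_\tau \in \modr R$ and $a_\tau \in F_\tau$ such that $t(M) = \{g(a_\tau) : g \in \Hom_R(F_\tau, M)\}$ for every $M \in \Modr R$. By Theorem~\ref{T:enough pure projectives}, it then suffices to produce a pure projective $\T$-preenvelope of $R$.

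Next, I would analyze the pp-type $\Sigma = \{\varphi(u) : M \models \varphi(m) \text{ for every pointed } \T\text{-module } (M,m)\}$, which clearly contains $\tau$. Since $\T$ is definable, a $\T$-preenvelope $\varepsilon : R \to T$ exists by the Rada-Saorin theorem, and $\varepsilon(1) \in T$ realizes $\Sigma$ generically. If the ``universal pointed realization'' of $\Sigma$ can be chosen finitely presented and in $\T$, the preenvelope is pure projective and we are done. More broadly, the goal is to show that $\T \cap \modr R$ is covariantly finite in $\modr R$, so that every finitely presented module, and $R$ in particular, admits a finitely presented $\T$-preenvelope; this would establish $\T = \Gen P$ for the direct sum $P$ of such preenvelopes over a representative set of finitely presented modules, whence Theorem~\ref{T:enough pure projectives} applies.

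The main obstacle is that $F_\tau$ need not lie in $\T$: although $a_\tau$ is torsion in $F_\tau$, the remaining generators of $F_\tau$ may fail to satisfy $\tau$. One would therefore need to ``saturate'' $F_\tau$ by constructing a finitely presented module $F' \in \T$ that still carries a distinguished element realizing $\Sigma$. A natural attempt is to iterate the pp-pair construction, replacing each non-torsion generator by its canonical map into a copy of $F_\tau$, and to argue that coherence of $t$ forces this process to stabilize. The delicate point is that in general one obtains only a direct limit of finitely presented modules rather than a single finitely presented module in $\T$, and one must then check that the resulting direct sum is still a pure projective $\T$-preenvelope. Ultimately, the conjecture amounts to showing that every elementary torsion class has the Crawley-Boevey form $\T = \lim_{\to} \X$ for some covariantly finite $\X \subseteq \modr R$, and establishing this reduction is likely where the real work lies.
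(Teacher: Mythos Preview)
This statement is labeled a \emph{Conjecture} in the paper, and the paper does not give a proof of it in general. Immediately after stating it, the authors only verify the special case where $R$ is right noetherian: if $T \in \T$, write $T = \varinjlim M_i$ over its finitely generated submodules; coherence of $t$ gives $T = \varinjlim t(M_i)$, and since each $t(M_i)$ is finitely presented (by noetherianity) and lies in $\T$, the direct sum $\bigoplus_i t(M_i)$ is a pure projective in $\T$ that maps purely onto $T$. That is the full extent of what the paper establishes. So there is no ``paper's own proof'' to compare against beyond this noetherian observation.

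Your proposal does not close the gap either, and you essentially acknowledge this yourself: the saturation procedure you sketch produces only a direct limit of finitely presented modules, not a single finitely presented (or even pure projective) module in $\T$, and you defer the ``real work'' to the end. More seriously, your final reduction is \emph{strictly stronger} than the conjecture and is in fact false. You claim the conjecture amounts to showing that every elementary torsion class has the form $\T = \varinjlim \X$ for some covariantly finite $\X \subseteq \modr R$. But the Dubrovin--Puninski ring in the Appendix furnishes a counterexample: the $1$-tilting torsion class $\T_I$ is elementary (Theorem~\ref{T:pure-proj}) and has enough pure projectives (Corollary~\ref{preenveloping}), yet $\T_I \cap \modl S = \add(S/K)$ while $\T_I \cap \SPProj = \Add(I_\omega \oplus J_\omega \oplus S/K)$, so $\T_I$ is \emph{not} the direct-limit closure of a covariantly finite subcategory of $\modl S$ (see the discussion in part~k of the Appendix). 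Hence any successful approach to the conjecture must allow pure projective preenvelopes that are genuinely infinitely presented, which your strategy does not.
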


If $R$ is right noetherian, the conjecture can be verified by expressing a torsion module ${\dis T = \lim_{\to} M_i}$ as the direct limit of its finitely generated submodules. Then ${\dis T = \lim_{\to} t(M_i)}$ is the direct limit of its finitely  generated torsion submodules. These are all finitely presented and $T$ is a pure epimorphic image of the pure projective module obtained by taking the direct sum of the $t(M_i).$ 

\section{Pure projective tilting modules} \label{S:pptm}

\begin{defn}\label{D:tilting} A right $R$-module $T$ is a (large) $n${\em -tilting} module if it satisfies the following conditions:
\begin{enumerate}
\item[(T1)] $\pd (T) \leq n;$
\item[(T2)] $\Ext_R^i (T, T^{(\lambda)}) = 0,$ $i > 0,$ for every cardinal $\lambda;$
\item[(T3)] there exists an exact sequence:
$$\vcenter{
\xymatrix{
0 \ar[r] & R \ar[r] & T_0 \ar[r] & {\cdots} \ar[r] & T_{1} \ar[r] & 0 
}}$$
where  $T_i \in \Add T$ for $i = 0, \ldots, n.$ The notion of an $n${\em -cotilting} module is defined dually.
\end{enumerate}
\end{defn}

By \cite{CT} a right $R$-module $T$ is $1$-tilting if and only if $T^{\perp} = \Gen T,$ which is called the tilting class of
$T.$ By~\cite[Lemma 14.2]{GT12}, the tilting class $\T = T^{\perp}$ is a torsion class in $\Modr R$ and gives rise to a torsion pair $(\T, \F)$ where $$\F = \{M \in  \ModR \; | \; \Hom_R(T, M) = 0 \}.$$ 
Two $1$-tilting modules $T$ and $U$ are said to be {\em tilting equivalent} if $T^\perp = U^\perp$ or, equivalently, if 
$\Add T = \Add U.$ A $1$-tilting module equivalent to a finitely presented $1$-tilting module is called {\em classical.} For later reference, we recall a useful criterion for a module $U$ to be a $1$-tilting module equivalent to a given one $T.$

\begin{lem} \label{L:tilting-crit}
Let $T$ be a $1$-tilting module and suppose there is a short exact sequence 
$$\vcenter{
\xymatrix{
0 \ar[r] & R \ar[r] & U_0 \ar[r] & U_1 \ar[r] & 0,
}}$$
where $U_0,$ $U_1 \in \Add T.$ Then $U = U_0 \oplus U_1$ is a $1$-tilting module equivalent to $T.$
\end{lem}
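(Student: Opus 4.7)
The plan is to verify the three tilting axioms (T1), (T2), (T3) for $U = U_0 \oplus U_1$ and then to establish the tilting equivalence with $T$ by comparing their perpendicular classes.

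For (T1) and (T2), the key observation is that $U_0, U_1 \in \Add T$ forces $U \in \Add T$ and $U^{(\lambda)} \in \Add T$ for every cardinal $\lambda.$ The bound $\pd U \leq \pd T \leq 1$ then follows because projective dimension is inherited by summands of coproducts. For the Ext-vanishing, I would propagate the hypothesis $\Ext_R^1(T, T^{(\mu)}) = 0$ in two steps: first to $\Ext_R^1(T, B) = 0$ for every $B \in \Add T$ (as a summand of $\Ext_R^1(T, T^{(\mu)})$), and then to $\Ext_R^1(A, B) = 0$ for all $A, B \in \Add T$ (using $\Ext_R^1(T^{(\nu)}, B) \cong \Ext_R^1(T, B)^{\nu}$). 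Specializing to $A = U$ and $B = U^{(\lambda)}$ gives (T2). Axiom (T3) is essentially free: the sequence $0 \to R \to U_0 \to U_1 \to 0$ itself witnesses it, since $U_0, U_1 \in \add U \subseteq \Add U.$

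For the tilting equivalence, by the Colpi-Trlifaj criterion~\cite{CT} it suffices to show $T^{\perp} = U^{\perp},$ equivalently $\Gen T = \Gen U.$ The inclusion $T^{\perp} \subseteq U^{\perp}$ is immediate from $U \in \Add T$ together with the same propagation argument used for (T2). For the reverse, $U \in \Add T$ gives $\Gen U \subseteq \Gen T$ (an epimorphic image of $U^{(I)}$ is an epimorphic image of a suitable coproduct of copies of $T,$ via a splitting $U \oplus V \cong T^{(K)}$); combining this with $U^{\perp} = \Gen U,$ which is now available since $U$ has just been shown to be $1$-tilting, yields $U^{\perp} \subseteq \Gen T = T^{\perp}.$ I do not anticipate any genuine obstacle: the entire argument reduces to the observation that all tilting properties of $T$ transfer to $U$ via the relation $U \in \Add T,$ with the hypothesized sequence doubling as the witness for (T3).
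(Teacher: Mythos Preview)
Your proposal is correct and follows essentially the same route as the paper: verify (T1) and (T2) for $U$ by noting that these properties pass from $T$ to every module in $\Add T$, observe that the given sequence witnesses (T3), and then deduce the equality of tilting classes from $U \in \Add T$ via the inclusions $\Gen U \subseteq \Gen T$ and $T^{\perp} \subseteq U^{\perp}$. The paper's proof is slightly terser in the (T2) step, but the logical structure is identical.
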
 

\begin{proof} (cf.\ the proof of~\cite[Theorem 13.18]{GT12}). Every module in $\Add T$ satisfies Conditions (T1) and (T2) for a $1$-tilting module. This is because the class of modules that satisfy Condition (T1) is closed under direct sums and summands. Similarly, the class of modules that satisfy Condition (T2) is clearly closed under direct summands, and it is readily verified that for every index set $I,$ the direct sum $T^{(I)}$ of copies of $T$ also satisfies Condition (T2). Because $U$ belongs to $\Add T$ it satisfies the Conditions (T1) and (T2). It satisfies Condition (T3) by hypothesis, so that $U$ is itself a $1$-tilting module. Therefore $\Gen U = U^{\perp}.$ Now $U \in \Add T$ implies that the tilting class of $U,$ $\Gen U \subseteq \Gen T$ is contained in that of $T.$ On the other hand, $U$ is a direct summand of some direct sum $T^{(I)},$ so that $U^{\perp} \supseteq (T^{(I)})^{\perp} = T^{\perp},$ as required.
\end{proof}
 
If $I \subseteq R$ is a two sided ideal, then we may think of the module category $\Modr R/I$ as a full subcategory of $\Modr R$ induced by restriction of scalars along the quotient map $R \to R/I$ of rings. It consists of the modules $M \in \Modr R$ for which $MI = 0 .$ Given a torsion pair $(\T, \F)$ in $\Modr R,$ with torsion ideal $I = t(R) \subseteq R,$ we have that for every $F \in \F,$  $\Hom_R(I, F) = 0$, hence $\Hom_R(R/I, F) \cong F,$ that is, $FI = 0.$ We may therefore think of $\F$ as a subcategory of $\Modr R/I.$ 

Parra and Saor\'{i}n~\cite{PS, PS16} showed that the heart $\clH_t$ of the $t$-structure in $\D (\Modr R)$ induced by a torsion pair $(\T, \F)$ is Grothendieck if and only if $\F$ is closed under direct limits in $\Modr R,$ or, equivalently, a definable subcategory. 

\begin{prop} \label{F is definable}
Let $(\T, \F)$ be a torsion pair in $\Modr R,$ with torsion radical $t$ and torsion ideal $I = t(R).$ The torsion free class
$\F$ is closed under direct limits in $\Modr R$ if and only if there exists a $1$-cotilting $R/I$-module $C,$ such that 
$$\F = \{ M \in \Modr R \; | \; MI = 0 \; \mbox{and} \; \Ext^1_{R/I}(M,C) = 0 \}.$$
\end{prop}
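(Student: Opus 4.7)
The plan is to work with $S = R/I$ and the restricted torsion pair $(\T \cap \Modr S, \F)$ in $\Modr S$, observing that $\F \subseteq \Modr S$ (since every $F \in \F$ satisfies $FI = 0$) and $S = R/t(R) \in \F$ (as the torsion-free quotient of $R$). The restricted pair is thus faithful, and since the full subcategory $\Modr S$ is closed under direct limits in $\Modr R$ with the inclusion preserving them, closure of $\F$ under direct limits in $\Modr R$ is equivalent to closure in $\Modr S$.

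For the $(\Leftarrow)$ direction, suppose $C$ is a $1$-cotilting $S$-module. By Bazzoni's theorem (cited in the introduction), $C$ is pure-injective, hence ${^{\perp}}C = \{M \in \Modr S : \Ext^1_S(M, C) = 0\}$ is a definable subcategory of $\Modr S$, in particular closed under direct limits; so $\F$ is closed under direct limits in $\Modr R$.

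For the $(\Rightarrow)$ direction, assume $\F$ is closed under direct limits. Within $\Modr S$ we then have a faithful torsion pair whose torsion-free class is closed under direct limits, and the plan is to invoke the standard characterization of $1$-cotilting torsion pairs in a module category (see~\cite[Chapter 15]{GT12}): every such torsion pair arises from a $1$-cotilting $S$-module $C$ with $\F = {^{\perp}}C$. The cotilting module $C$ is constructed from an injective cogenerator $W$ of $\Modr S$ via a special $\F$-precover, yielding a short exact sequence $0 \to C_1 \to C_0 \to W \to 0$ with $C_0, C_1 \in \Prod C$ that witnesses the cotilting condition (T3) for $C$, while conditions (T1) and (T2) follow from $\id C \le 1$ in $\Modr S$ and from $C \in \F = {^{\perp}}C$ respectively.

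The principal obstacle is the construction of $C$: one must verify that the closure properties of $\F$ (under products, submodules, extensions and direct limits) combined with the faithfulness of the torsion pair suffice to build the required cotilting coresolution of an injective cogenerator. This appeals to the standard cotorsion-pair machinery --- completeness of cotorsion pairs generated by a set via the small-object argument, Eklof's lemma, and the pure-injectivity theorem for cotilting modules --- which is precisely what the cited characterization encapsulates.
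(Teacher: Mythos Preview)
Your proposal is correct and follows essentially the same route as the paper: both reduce to $\Modr{R/I}$, use Bazzoni's pure-injectivity theorem for the $(\Leftarrow)$ direction, and for $(\Rightarrow)$ invoke the cotilting characterization from \cite[Chapter 15]{GT12}. The only difference is cosmetic: the paper makes the intermediate step explicit by showing that $\F$ is a covering class in $\Modr{R/I}$ (via \cite{EJ} or \cite{HoJo}), that covers are epimorphisms since $\F$ contains the projectives, and then applies Wakamatsu's Lemma to get special precovers before citing \cite[Theorem~15.22]{GT12}; you instead sketch the construction of $C$ from a special $\F$-precover of an injective cogenerator and gesture at the cotorsion-pair machinery behind it.
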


\begin{proof}
As a consequence of the result that every $1$-cotilting module $C_R$ is pure injective~\cite{B03}, the cotilting class 
${^\perp}C \subseteq \Modr R/I$ is a definable subcategory. It follows that $\F \subseteq \Modr R$ is definable.

For the converse, note that submodules, direct products and direct limits of modules in $\F$ are $R/I$-modules. Thus, we may consider $\F$ as a definable subcategory of $\Modr R/I$, which is therefore closed under pure epimorphic images in $\Modr R/I.$ Moreover, $\F$ contains all the projective $R/I$-modules, thus by \cite[Proposition 5.2.2]{EJ} or \cite[Theorem 2.5]{HoJo}, $\F$ is a covering class in $\Modr R/I.$ Every cover is an epimorphism, since $\F$ contains the projective $R/I$-modules. As $\F$ is closed under extensions, Wakamatsu's Lemma implies that $\F$ is a special precovering class in $\Modr R/I$. By \cite[Theorem 15.22]{GT12}, there is a $1$-cotilting $R/I$-module $C$ such that $\F = \Ker \Ext^1_{R/I}(-, C).$
\end{proof}

If the torsion pair $(\T, \F)$ is generated by a $1$-tilting module $T,$ then the tilting class $\T = \Gen T$ is definable. This was proved by the first author and Herbera~\cite{BH}, who showed that there exists a collection $\A \subseteq \modr R$ of finitely presented modules of projective dimension at most $1,$ such that $\T = \A^{\perp}.$ If $A_R$ is a finitely presented module of projective dimension at most $1,$ then the functor $\Ext^1_R (A,-)$ is coherent, so that $\T$ is definable by Condition (2) of Proposition~\ref{P:definable}.

\begin{thm}  \label{T:pure-proj}
The following are equivalent for a $1$-tilting module $T,$ with tilting class $\T = \Gen T:$
\begin{enumerate}
\item the module $T$ is a pure projective $1$-tilting module;
\item the definable subcategory $\T$ has enough pure projective modules;
\item the torsion pair $(\T, \F)$ is elementary;
\item $\T \cap {^{\perp}}\T \subseteq \Add (\modr R);$
\item the module $R_R$ admits a pure projective special $\T$-preenvelope;
\item every finitely presented module admits a special pure projective $\T$-preenvelope; 
\item every finitely presented module in ${^{\perp}}\T$ admits a special pure projective $\T$-preenvelope in $\Add T;$ and
\item $T$ is tilting equivalent to a countably presented pure projective $1$-tilting module.
\end{enumerate}
If $R$ is a right noetherian ring, these conditions are equivalent to the condition that $\X = \T \cap \modr R$ is a covariantly finite subcategory of $\modr R$ and ${\dis \T = \varinjlim (\X).}$
\end{thm}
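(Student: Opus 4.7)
The plan is to first establish the core equivalences (1) $\Leftrightarrow$ (2) $\Leftrightarrow$ (3) $\Leftrightarrow$ (4), then attach (5)--(8) as spokes, and finally treat the noetherian case separately. For the core: since $\T = \Gen T$ is definable by~\cite{BH}, the implications (1) $\Rightarrow$ (2) (take $P = T$) and (2) $\Rightarrow$ (3) both follow directly from Theorem~\ref{T:enough pure projectives} and its ``moreover'' clause. Then (3) $\Rightarrow$ (4) is Corollary~\ref{T and T perp is pure projective}, and (4) $\Rightarrow$ (1) is immediate because $T$ always lies in $\T \cap {}^{\perp}\T$.

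For the preenvelope conditions, the (T3) sequence of Definition~\ref{D:tilting} is, under (1), already a special pure projective $\T$-preenvelope of $R$ (preenvelope because $\Ext^1_R(T_1,\T)=0$, special because $T_1\in\Add T\subseteq {}^{\perp}\T$, pure projective under (1)), giving (1) $\Rightarrow$ (5). The passage (5) $\Rightarrow$ (6) runs through a pushout of a projective presentation $R^n \to A$ of an f.p.\ module against $\varepsilon^n \colon R^n \to T_R^n$, with Lemma~\ref{L:first} preserving pure projectivity and the cokernel remaining unchanged and hence in ${}^{\perp}\T$. Since (6) $\Rightarrow$ (5) is trivial and (5) supplies a pure projective preenvelope of $R$, Theorem~\ref{T:enough pure projectives} again closes the loop via (2). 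For (7): $R$ is projective, hence $R \in {}^{\perp}\T$, so (7) applied to $A = R$ gives a pure projective $\T$-preenvelope of $R$, whence Theorem~\ref{T:enough pure projectives} produces (2). For the harder direction (1) $\Rightarrow$ (7), I would run the pushout as in (5) $\Rightarrow$ (6) with $T_0, T_1 \in \Add T$ from (T3); the target $A'$ fits into $0 \to A \to A' \to T_1^n \to 0$, placing it in $\T \cap {}^{\perp}\T$. The crucial extra input will be the identity $\T \cap {}^{\perp}\T = \Add T$, valid for any $1$-tilting module: for $W \in \T$ the canonical epimorphism $T^{(\Hom_R(T,W))} \to W$ has kernel in $T^{\perp} = \T$, so when additionally $W \in {}^{\perp}\T$ the sequence splits, forcing $W \in \Add T$.

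For (8): the direction (8) $\Rightarrow$ (1) is immediate since $T \in \Add T' \subseteq \Add(\modr R)$. For (1) $\Rightarrow$ (8), Kaplansky's theorem decomposes the pure projective $T_0$ from (T3) into countably generated summands; since $R$ is finitely generated, the image of $R$ lies in a countable sub-sum $T_0'$, and writing $T_0 = T_0' \oplus T_0''$ exhibits $T_1 = T_1' \oplus T_0''$ where $T_1' = T_0'/R$, so $T_1' \in \Add T$ as a summand of $T_1$. Lemma~\ref{L:tilting-crit} applied to the short exact sequence $0 \to R \to T_0' \to T_1' \to 0$ then produces a countably presented pure projective $1$-tilting module $T_0' \oplus T_1'$ tilting equivalent to $T$. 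For the noetherian case, Proposition~\ref{P:elementary torsion pair} (via (3)) gives that $t$ commutes with direct limits, so writing any $M \in \T$ as the direct limit of its finitely generated (equivalently, finitely presented) submodules $M_i$ yields $M = \varinjlim t(M_i)$ with each $t(M_i) \in \X$, proving $\T = \varinjlim \X$. For covariant finiteness, I would take any pure projective $\T$-preenvelope $\phi \colon A \to T_A = \varinjlim X_j$ of an f.p.\ module $A$ and use that $A$ is finitely presented to factor $\phi$ through some $X_{j_0} \in \X$; the preenvelope property is inherited. Conversely, $\T = \varinjlim \X$ gives for every $M \in \T$ a canonical pure epimorphism $\bigoplus X_j \to M$ from a pure projective in $\T$, yielding (2).

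The main obstacle will be the implication (1) $\Rightarrow$ (7): all other implications are either cited, immediate, or routine pushout/Kaplansky manipulations, but (1) $\Rightarrow$ (7) rests on the non-formal identity $\T \cap {}^{\perp}\T = \Add T$ for an arbitrary $1$-tilting module, which is where the genuine tilting-theoretic content enters the argument.
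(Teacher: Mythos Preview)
Your proposal is correct and follows essentially the same route as the paper: the cycle (1)$\Rightarrow$(2)$\Rightarrow$(3)$\Rightarrow$(4)$\Rightarrow$(1) via Theorem~\ref{T:enough pure projectives} and Corollary~\ref{T and T perp is pure projective}, the pushout-plus-Lemma~\ref{L:first} passage between (5) and (6), the use of $\T\cap{}^{\perp}\T=\Add T$ to land in $\Add T$ for (7), and the Kaplansky/Lemma~\ref{L:tilting-crit} argument for (8) all match the paper's proof (the paper reaches (7) via (6)$\Rightarrow$(7) rather than directly from (1), but this is the same pushout). Your direct treatment of the noetherian case---writing $M\in\T$ as $\varinjlim t(M_i)$ using that $t$ commutes with direct limits, then factoring a $\T$-preenvelope of a finitely presented $A$ through a term of a directed system in $\X$---is a correct unpacking of what the paper handles by citing~\cite[Lemmas 8.35, 8.36]{GT} together with the remark at the end of \S1.
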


\begin{proof}
($1$) $\Rightarrow$ ($2$) $\Rightarrow$ ($3$) $\Rightarrow$ ($4$) $\Rightarrow$ ($1$). The first two implications follow from Theorem~\ref{T:enough pure projectives}; the third from Theorem~\ref{T:elementary torsion pair} and Corollary~\ref{T and T perp is pure projective}; and the fourth from the general fact about a $1$-tilting module $T$ that $\T \cap {^{\perp}}\T = \Add T.$\\
($1$) $\Rightarrow$ ($5$). By Condition (T3) for a $1$-tilting module, there is an exact sequence
$0 \to R \to T_0 \to T_1 \to 0$ where both $T_0$ and $T_1$ are in $\Add T = \T \cap {^{\perp}\T}$. Thus $0 \to R \to T_0$ is a special $\T$-preenvelope of $R$ with $T_0$ pure projective.\\
($5$) $\Rightarrow$ ($6$). Let $A$ be a finitely presented module. Consider a short exact sequence 
$0 \to H \to R^n \overset{\pi} \to A \to 0$ with $H$ a finitely generated module, and argue as in the proof of 
($1$) $\Rightarrow$ ($2$) of Theorem~\ref{T:enough pure projectives}, by taking the pushout of $\pi$ and $\varepsilon^n,$ where
$\varepsilon : R \to T_0$ is a special pure projective $\T$-preenvelope. By the properties of a pushout, the morphisms
$\varepsilon^n : R^n \to T_0^n$ and $\delta : A \to T_A$ are also special pure projective $\T$-preenvelopes. \\ 
($6$) $\Rightarrow$ ($7$). If $A$ is a finitely presented module in ${^{\perp}}\T,$ then the special $\T$-preenvelope given by the hypothesis lies in $\T \cap {^{\perp}}\T = \Add T.$ \\ 
($7$) $\Rightarrow$ ($2$). Apply the hypothesis to the module $R_R$ and apply the implication ($1$) $\Rightarrow$ ($2$) of Theorem~\ref{T:enough pure projectives}. \\  
($1$) $\Leftrightarrow$ ($8$). By assumption, every module $T_0$ in $\Add T$ is pure projective, a direct summand of a direct sum $\oplus_{i\in I} E_{i}$ with $E_i$ finitely presented modules, hence in particular countably generated. By Kaplansky's Theorem~\cite[Theorem 1]{Ka}, $T_0$ is a direct sum of countably generated submodules $X_{\alpha}$ in $\Add T,$ 
$T_0 = \oplus_{\alpha\in \Lambda} X_{\alpha}.$ By Condition (T3) for a $1$-tilting module, there is a short exact sequence
$0 \to R \overset{\varepsilon}\to T_0 \to T_1 \to 0$ with $T_0$ as described. The image of $\varepsilon$ is contained in a summand $U_0$ of $T,$ $U_0 = \oplus_{\beta\in F_0} X_{\beta}$ where $F_0$ is a finite subset of $\Lambda.$ Hence, $U_0$ is countably generated and ${\dis T_1 \cong U_0/\varepsilon(R) \oplus \oplus_{\alpha\in \Lambda \setminus F_0} X_{\alpha}.}$ 
The quotient module $U_1 = U_0/\varepsilon(R)$ is a summand of $T_1$ and so also belongs to $\Add T.$ It follows from Lemma~\ref{L:tilting-crit} that $U = U_0 \oplus U_1$ is a countably generated $1$-tilting module in $\Add T$ that is tilting equivalent to $T.$ It is well known that, for every cardinal $\kappa,$ a $\kappa$-generated $1$-tilting module is $\kappa$-presented.

If the ring $R$ is right noetherian, then condition in the last statement follows from the remark in the last paragraph of Section 1, together with~\cite[Lemma 8.35]{GT}; the converse from~\cite[Lemma 8.36]{GT}.
\end{proof}

The best understood derived equivalence induced by a torsion pair $(\T, \F)$ is one where the torsion class $\T = \Gen T$ is a tilting class, with tilting module $T.$ In that case, $(\T, \F)$ is an elementary torsion pair. The corresponding torsion pair 
$(\F[-1], \T[0])$ in the heart $\clH_t$ of the associated $t$-structure has the property that $T = T[0]$ is a projective generator of $\clH_t.$ The condition that $\X = \T \cap \modr R$ be covariantly finite in $\modr R$ with ${\dis \T = \varinjlim (\X)}$ is equivalent to the existence in $\clH_t$ of a small generating class of finitely generated projective objects.

\begin{cor} \label{C:pure-proj}
Let $T$ be a $1$-tilting module. The heart $\clH_t$ of the $t$-structure in the derived category $\D (\Modr R)$ induced by the torsion pair $(\Gen T, \F)$ is a Grothendieck category if and only if $T$ is a pure projective module.
\end{cor}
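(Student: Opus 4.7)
The plan is to chain together three facts already established in the paper, so the proof will be essentially a one-line deduction once the chain is made explicit. First, recall that the tilting class $\T = \Gen T$ of a $1$-tilting module is always a definable subcategory of $\Modr R$: this is the result of Bazzoni--Herbera cited just before Theorem~\ref{T:pure-proj}, obtained by exhibiting a set $\A \subseteq \modr R$ of finitely presented modules of projective dimension $\leq 1$ with $\T = \A^{\perp}$ and invoking coherence of $\Ext^1_R(A,-)$ together with Proposition~\ref{P:definable}(2). Consequently, the torsion pair $(\T,\F)$ is elementary if and only if $\F$ is a definable subcategory of $\Modr R$.

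Next, I would invoke the Parra--Saor\'{i}n theorem recalled before Proposition~\ref{F is definable}: the heart $\clH_t$ of the $t$-structure on $\D(\Modr R)$ induced by $(\T,\F)$ is a Grothendieck category if and only if $\F$ is closed under direct limits in $\Modr R$. Since $\F$, being the torsion-free class of a torsion pair, is automatically closed under submodules and direct products, closure under direct limits is precisely what is needed for $\F$ to be definable. Thus $\clH_t$ is Grothendieck if and only if $\F$ is definable, if and only if $(\T,\F)$ is an elementary torsion pair (using the previous paragraph).

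Finally, apply the equivalence (1) $\Leftrightarrow$ (3) of Theorem~\ref{T:pure-proj}: the torsion pair $(\T,\F)$ is elementary if and only if $T$ is pure projective. Stringing the three equivalences together yields:
\[
\clH_t \text{ is Grothendieck} \iff \F \text{ is definable} \iff (\T,\F) \text{ is elementary} \iff T \text{ is pure projective,}
\]
which is the claim of the corollary.

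There is no genuine obstacle here beyond correctly citing each of the three ingredients; the only subtlety worth flagging is the implicit use of the fact that $\T$ is always definable for a $1$-tilting module, which is what allows the ``$\F$ closed under direct limits'' condition of Parra--Saor\'{i}n to be upgraded to the full ``elementary torsion pair'' condition that Theorem~\ref{T:pure-proj} requires as input.
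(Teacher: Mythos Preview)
Your proposal is correct and follows precisely the intended approach: the paper states Corollary~\ref{C:pure-proj} without a separate proof, as it is meant to follow immediately from the Parra--Saor\'{i}n criterion (recalled before Proposition~\ref{F is definable}), the definability of the tilting class $\T$ (recalled before Theorem~\ref{T:pure-proj}), and the equivalence (1) $\Leftrightarrow$ (3) of Theorem~\ref{T:pure-proj}. Your explicit chain of equivalences is exactly what the reader is expected to supply.
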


If $(\T, \F)$ is the torsion pair in $\modr R$ with $\T$ a cogenerating class, then, by~\cite[Prop 5.3]{PS}, the heart 
$\clH_t \subseteq \D (R)$ of the associated $t$-structure is a module category if and only if $\T$ is a tilting class induced by a {\em self-small} $1$-tilting module $T.$ By~\cite[Prop 1.3]{CT} a $1$-tilting module is self-small if and only if it is finitely presented. The following then is the ''classical'' version of Theorem~\ref{T:pure-proj}.

\begin{thm} \label{T:classical} 
Let $T$ be $1$-tilting module with tilting class $\T = \Gen T$. The following are equivalent:
\begin{enumerate}
\item $T$ is classical;
\item $\T \cap {^{\perp}}\T \subseteq \Add (\T \cap {^{\perp}}\T \cap \modr R);$
\item the module $R_R$ admits a finitely presented special $\T$-preenvelope; 
\item every finitely presented module admits a finitely presented special $\T$-preenvelope; and 
\item every finitely presented module in ${^{\perp}}\T$ admits a finitely presented special $\T$-preenvelope in $\Add T.$
\end{enumerate}
In case these conditions hold, the subcategory $\X = \T \cap \modr R$ is covariantly finite in $\modr R$ and 
${\dis \T = \lim_{\to} (\X).}$
\end{thm}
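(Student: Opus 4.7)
The plan is to establish the cyclic chain of implications (1) $\Rightarrow$ (2) $\Rightarrow$ (3) $\Rightarrow$ (4) $\Rightarrow$ (5) $\Rightarrow$ (1), closely paralleling the proof of Theorem~\ref{T:pure-proj} with finitely presented modules in $\T \cap {^\perp}\T$ taking the role of pure projective modules. A recurring technical observation that shortens several steps is the following ``automatic upgrade'': whenever $0 \to A \to U_0 \to U_1 \to 0$ is a special $\T$-preenvelope with $A \in {^\perp}\T$, applying $\Hom_R (-, M)$ for $M \in \T$ and using $\pd T \leq 1$ forces $\Ext^1_R (U_0, M) = 0$ (the term is squeezed between $\Ext^1 (U_1, M) = 0$ and $\Ext^1 (A, M) = 0$), while $U_1,$ as a quotient of the torsion module $U_0,$ lies automatically in $\T.$ Both $U_0$ and $U_1$ therefore land in $\T \cap {^\perp}\T = \Add T,$ and when $A = R_R$ Lemma~\ref{L:tilting-crit} recognises $U_0 \oplus U_1$ as a $1$-tilting module tilting equivalent to $T.$

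The implication (1) $\Rightarrow$ (2) is immediate, since a finitely presented $T'$ tilting equivalent to $T$ lies in $\T \cap {^\perp}\T \cap \modr R$ and satisfies $\Add T = \Add T'.$ For (2) $\Rightarrow$ (3), I apply (2) to the Condition (T3) sequence $0 \to R \to T_0 \to T_1 \to 0,$ writing $T_0$ as a direct summand of $\bigoplus_{i \in I} E_i$ with each $E_i$ finitely presented in $\T \cap {^\perp}\T$; the finitely generated image of $R$ in $\bigoplus_i E_i$ is contained in a finite subsum $U_0 := \bigoplus_{i \in F} E_i,$ giving a monomorphism $\iota : R \hookrightarrow U_0$ with finitely presented cokernel. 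A diagram chase that lifts any $f : R \to M$ first to $T_0 \to M,$ then extends along the retraction $\bigoplus_i E_i \twoheadrightarrow T_0$ and restricts to $U_0 \to M,$ confirms that $\iota$ is again a special $\T$-preenvelope. The step (3) $\Rightarrow$ (4) is the pushout construction already used in Theorem~\ref{T:enough pure projectives}: for a finitely presented $A$ with $0 \to H \to R^n \to A \to 0$ and $H$ finitely generated, pushing $\varepsilon^n : R^n \to U_0^n$ along $R^n \twoheadrightarrow A$ yields a special $\T$-preenvelope of $A$ whose codomain $U_0^n / \varepsilon^n (H)$ is finitely presented because $H$ is finitely generated and $U_0^n$ is finitely presented.

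The implication (4) $\Rightarrow$ (5) is exactly the automatic upgrade applied to a finitely presented $A \in {^\perp}\T.$ For (5) $\Rightarrow$ (1), apply (5) to $R_R \in {^\perp}\T$ to obtain a finitely presented special $\T$-preenvelope $0 \to R \to U_0 \to U_1 \to 0$ with $U_0 \in \Add T$; the upgrade also places $U_1 \in \Add T,$ so by Lemma~\ref{L:tilting-crit} the finitely presented module $U_0 \oplus U_1$ is a $1$-tilting module tilting equivalent to $T,$ establishing that $T$ is classical.

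For the ``moreover'' assertion, covariant finiteness of $\X = \T \cap \modr R$ in $\modr R$ is immediate from (4), since the codomain of a finitely presented special $\T$-preenvelope of a finitely presented module belongs to $\X$ and is, a fortiori, an $\X$-preenvelope. The inclusion $\varinjlim (\X) \subseteq \T$ is closure under direct limits. For the reverse, replace $T$ by a finitely presented tilting equivalent $T' \in \X$ via (1) and write an arbitrary $M = (T')^{(I)} / K \in \Gen T' = \T$ as ${\dis \varinjlim_F (T')^{(F)} / (K \cap (T')^{(F)})}$ over the directed system of finite subsets $F \subseteq I,$ each quotient $(T')^{(F)} / (K \cap (T')^{(F)})$ being a finitely presented quotient of $(T')^{(F)} \in \T$ and hence in $\X.$ The main obstacle I anticipate is step (2) $\Rightarrow$ (3): one must verify that cutting a $\T$-preenvelope down to a finite subsum preserves its universal property, which is where the finite presentation of the $E_i$ is genuinely used and which distinguishes the classical situation from the pure projective case of Theorem~\ref{T:pure-proj}.
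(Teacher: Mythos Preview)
Your proof of the equivalences (1)--(5) is correct and follows the same cyclic route as the paper, with only a cosmetic difference in (2) $\Rightarrow$ (3): instead of enlarging $T_0$ to a direct sum $\bigoplus_i E_i$ and recognising $U_1 = U_0/\iota(R)$ directly as a summand of $V_1 \in \Add T$ (as the paper does, referring back to Theorem~\ref{T:pure-proj}), you verify the preenvelope property of the finite subsum $U_0$ by a diagram chase and then deduce $U_1 \in {^\perp}\T$ from surjectivity of $\Hom(U_0,M) \to \Hom(R,M)$ together with $U_0 \in {^\perp}\T$. Both arguments are fine; the appeal to $\pd T \le 1$ in your ``automatic upgrade'' is not actually used and can be dropped.

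There is, however, a genuine gap in the final clause. Your argument for $\T \subseteq \varinjlim(\X)$ writes $M \in \T$ as $\varinjlim_F (T')^{(F)}/(K \cap (T')^{(F)})$ and claims each term lies in $\X = \T \cap \modr R$. But $(T')^{(F)}/(K \cap (T')^{(F)})$ is only guaranteed to be \emph{finitely generated}: over a non-noetherian ring there is no reason the submodule $K \cap (T')^{(F)}$ should be finitely generated, so the quotient need not be finitely presented and hence need not belong to $\X$. The paper does not attempt a direct argument here; it simply observes that Condition~(4) gives covariant finiteness of $\X$ and then invokes Lenzing's theorem \cite{Lenz83} (the finitely presented special case of \cite[Theorem~8]{HR}) to conclude $\T = \varinjlim(\X)$.
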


\begin{proof} ($1$) $\Rightarrow$ ($2$). If $T$ is equivalent to a finitely presented module $E,$ then 
$$\T \cap {^{\perp}}\T = \Add T = \Add E \subseteq \Add (\T \cap {^{\perp}}\T \cap \modr R).$$
($2$) $\Rightarrow$ ($3$). Condition (T3) implies that there is a short exact sequence 
$0 \to R \to T_0 \to T_1 \to 0$ with $T_0$ and $T_1$ in $\Add T.$ By hypothesis, $T_0$ is a direct summand of a direct sum
$T_0 \oplus T'_0 = \oplus_{i\in I} E_{i}$ with $E_i$ finitely presented modules in $\Add T.$ Adding to both of the modules $T_0$ and $T_1$ the direct summand $T'_0,$ we may assume that there exists a sequence $0 \to R \to V_0 \to V_1 \to 0$ with $V_0$ and $V_1$ in $\Add T$ and $V_0$ is a direct sum $\oplus_{i\in I} E_{i}$ of finitely presented modules $E_i.$ Arguing as in the proof of ($1$) $\Rightarrow$ ($8$) of Theorem~\ref{T:pure-proj} yields a short exact sequence $0 \to R \to U_0 \to U_1 \to 0$ where $U_0$ and $U_1$ are finitely presented modules in $\Add T = \T \cap {^{\perp}}\T.$ Thus $0 \to R \to U_0$ is a special $\T$-preenvelope of $R$ with $U_0$ finitely presented. \\
($3$) $\Rightarrow$ ($4$). Let $A$ be a finitely presented module and argue as in the proof of ($1$) $\Rightarrow$ ($2$) of Theorem~\ref{T:pure-proj}, with $\varepsilon : R \to T_0$ a special finitely presented $\T$-preenvelope. Then $\varepsilon^n$ is also such and $\delta : A \to T_A$ is the required special finitely presented $\T$-preenvelope of $A.$ \\
($4$) $\Rightarrow$ ($5$). Clear, since a special $\T$-preenvelope of a module in ${^{\perp}}\T$ belongs to $\Add T.$\\
($5$) $\Rightarrow$ ($1$). Let $0 \to R \to U_0 \to U_1 \to 0$ be a special $\T$-preenvelope of $R$ with $U_0$ finitely presented. Then both $U_0$ and $U_1$ belong to $\T \cap {^{\perp}}\T = \Add T,$ so that $U = U_0 \oplus U_1$ is, by
Lemma~\ref{L:tilting-crit}, a $1$-tilting module equivalent to $T.$ 

For the last statement, note that Condition ($4$) implies that $\T \cap \modr R$ is covariantly finite and use Lenzing's special case~\cite{Lenz83} of~\cite[Theorem 8]{HR}.
\end{proof}

\begin{cor} \label{C:classical}
Let $T$ be a $1$-tilting module. The heart $\clH_t$ of the $t$-structure in the derived category $\D (\Modr R)$ induced by the torsion pair $(\Gen T, \F)$ is equivalent to a module category if and only if $T$ is classical.
\end{cor}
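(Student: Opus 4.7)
The plan is to derive this corollary directly from the two results mentioned in the paragraph immediately preceding the statement: \cite[Prop.~5.3]{PS} of Parra--Saor\'{i}n and \cite[Prop.~1.3]{CT} of Colpi--Trlifaj, together with the definition of ``classical.'' In essence, the heart $\clH_t$ being a module category is, by the Parra--Saor\'{i}n result, equivalent to the tilting class $\Gen T$ being induced by a self-small $1$-tilting module, and by the Colpi--Trlifaj result, such a self-small $1$-tilting module must be finitely presented. Since ``classical'' means precisely ``tilting equivalent to a finitely presented $1$-tilting module,'' the two sides of the claimed equivalence are translations of each other. Neither the full strength of Theorem~\ref{T:classical} nor of Corollary~\ref{C:pure-proj} is strictly required for the argument.

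For the forward implication, I would assume that $T$ is classical, and pick a finitely presented $1$-tilting module $E$ that is tilting equivalent to $T$. Because $\Add T = \Add E$ implies $T^{\perp} = E^{\perp}$, the tilting torsion pair $(\Gen T, \F)$ coincides with the tilting torsion pair induced by $E$. By \cite[Prop.~1.3]{CT}, the finitely presented module $E$ is self-small, and by the relevant direction of \cite[Prop.~5.3]{PS}, the heart of the $t$-structure induced by a tilting torsion pair whose generating tilting module is self-small is a module category. This gives the conclusion.

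Conversely, I would assume that $\clH_t$ is a module category. Applying the other direction of \cite[Prop.~5.3]{PS} produces a self-small $1$-tilting module $E$ with $\Gen E = \Gen T$. By \cite[Prop.~1.3]{CT} this $E$ is finitely presented. Since $E^{\perp} = \Gen E = \Gen T = T^{\perp}$, the modules $E$ and $T$ are tilting equivalent by definition, and hence $T$ is classical. Since the argument amounts to an invocation of the two cited results, there is no real obstacle; the only point that deserves attention is that the self-small tilting module produced by Parra--Saor\'{i}n's theorem, though a priori unrelated to $T$, automatically lies in $\Add T$ because it induces the same tilting class, which is what links the Parra--Saor\'{i}n/Colpi--Trlifaj characterization to the module-theoretic notion of ``classical'' in Definition~\ref{D:tilting}.
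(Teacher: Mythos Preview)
Your proof is correct and follows essentially the same approach as the paper: the corollary is stated without proof, being an immediate consequence of the paragraph preceding Theorem~\ref{T:classical}, which records precisely the two results \cite[Prop.~5.3]{PS} and \cite[Prop.~1.3]{CT} that you invoke. The only minor point you leave implicit is that the hypothesis of \cite[Prop.~5.3]{PS} (that $\T$ be a cogenerating torsion class) is automatically satisfied for a tilting torsion pair, since $\Gen T = T^{\perp}$ contains all injective modules.
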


\begin{cor} \label{C:Krull-Schmidt}
If $R$ is a ring over which every right pure projective module is a direct sum of finitely presented modules, then every pure projective $1$-tilting module is classical.
\end{cor}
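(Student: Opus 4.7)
The plan is to reduce the statement to condition (2) of Theorem~\ref{T:classical}, namely $\T \cap {}^{\perp}\T \subseteq \Add(\T \cap {}^{\perp}\T \cap \modr R)$, and then invoke that theorem. Let $T$ be a pure projective $1$-tilting module with tilting class $\T = \Gen T$, and recall the standard identity $\T \cap {}^{\perp}\T = \Add T$ used throughout Section~\ref{S:pptm}.

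To verify condition (2), I would take an arbitrary $M \in \Add T$. Since $T$ is pure projective and pure projectivity is inherited by coproducts and direct summands, $M$ is itself pure projective. The hypothesis on $R$ then yields a decomposition $M = \bigoplus_{j \in J} F_j$ with each $F_j$ finitely presented. Each summand $F_j$ is in turn a direct summand of $M \in \Add T$, and $\Add T$ is closed under direct summands, so $F_j \in \Add T = \T \cap {}^{\perp}\T$; combined with $F_j \in \modr R$, this places $F_j$ in $\T \cap {}^{\perp}\T \cap \modr R$. Hence $M \in \Add(\T \cap {}^{\perp}\T \cap \modr R)$, which verifies condition (2), and Theorem~\ref{T:classical} concludes that $T$ is classical.

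The heavy lifting has already been done in Theorem~\ref{T:classical}; the role of the hypothesis on $R$ is precisely to decompose each object of $\Add T$ as a coproduct of finitely presented modules which, by being summands of a module already in $\Add T$, are themselves forced into $\Add T \cap \modr R$. No substantial obstacle arises: the only point requiring a moment's care is this inheritance of $\Add T$-membership by the finitely presented summands, and it is immediate from the closure of $\Add T$ under direct summands.
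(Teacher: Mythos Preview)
Your proof is correct and follows essentially the same approach as the paper's own proof: both verify Condition~(2) of Theorem~\ref{T:classical} by observing that every module in $\T \cap {}^{\perp}\T = \Add T$ is pure projective, and then applying the hypothesis on $R$ to decompose it into finitely presented pieces that remain in $\Add T$. The paper states this in one compressed sentence, while you have spelled out the details explicitly.
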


\begin{proof}
If $T$ is a pure projective $1$-tilting module over such a ring, with tilting class $\T,$ then $\T \cap {^{\perp}}\T = \Add T$ consists of pure projective modules, so that the hypothesis implies Condition ($2$) of Theorem~\ref{T:classical}. 
\end{proof}

If $A \in \modr R$ is a finitely presented module of projective dimension at most $1,$ then the functor $\Ext^1_R (A,-) : \modr R \to \Ab$ is coherent so that the condition for a module $M_R$ to satisfy $\Ext^1 (A,M) = 0$ is elementary~\cite[]{PSL}: there is a sentence $\sigma_A$ in $\Lang (R)$ such that $\Ext^1_R (A,M) = 0$ if and only if $M \models \sigma_A.$ 

\begin{prop} \label{fin axiom}
Let $T$ be a $1$-tilting module. The tilting class $\T = \Gen T = T^{\perp}$ is finitely axiomatizable if and only if there exists a finitely presented module $A_R$ of projective dimension at most $1$ such that $\T = A^{\perp}.$ 
\end{prop}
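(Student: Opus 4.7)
The plan is to prove the two directions separately, with the backward direction essentially immediate from the remark preceding the proposition and the forward direction relying on the Bazzoni--Herbera result combined with the compactness theorem of first-order logic.

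For the direction ($\Leftarrow$), suppose $A \in \modr R$ is finitely presented with $\pd (A) \le 1$ and $\T = A^{\perp}.$ As noted just before the proposition, the functor $\Ext_R^1 (A,-)$ is then coherent, so there is a sentence $\sigma_A$ in $\Lang (R)$ such that for any $M \in \Modr R,$ the condition $\Ext_R^1 (A,M) = 0$ is equivalent to $M \models \sigma_A.$ Hence $\T = \Mod (\Th (\Modr R) \cup \{\sigma_A\})$ is finitely axiomatizable.

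For the direction ($\Rightarrow$), I would invoke the Bazzoni--Herbera result cited in the paragraph preceding Theorem~\ref{T:pure-proj}: there exists a set $\A \subseteq \modr R$ of finitely presented modules of projective dimension at most one such that $\T = \A^{\perp} = \bigcap_{A \in \A} A^{\perp}.$ For each $A \in \A,$ take the corresponding sentence $\sigma_A$ as above; then
\[
\T = \Mod \bigl( \Th (\Modr R) \cup \{ \sigma_A \mid A \in \A \} \bigr).
\]
By hypothesis, $\T$ is also axiomatized by $\Th (\Modr R) \cup \{\sigma\}$ for some single sentence $\sigma.$ In particular, $\Th (\Modr R) \cup \{\sigma_A \mid A \in \A\} \models \sigma,$ so the compactness theorem produces a finite subset $\{A_1,\dots,A_n\} \subseteq \A$ with $\Th (\Modr R) \cup \{\sigma_{A_1},\dots,\sigma_{A_n}\} \models \sigma.$

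Setting $A := A_1 \oplus \cdots \oplus A_n,$ the module $A$ is finitely presented and has $\pd(A) \le 1,$ and $A^{\perp} = \bigcap_{i=1}^{n} A_i^{\perp}.$ The inclusion $\T \subseteq A^{\perp}$ holds because each $A_i \in \A.$ Conversely, any $M \in A^{\perp}$ satisfies $\sigma_{A_1},\dots,\sigma_{A_n}$ and hence $\sigma,$ so $M \in \T.$ Thus $\T = A^{\perp},$ as required. The only nontrivial step is the compactness argument, but since finite axiomatizability is by definition axiomatizability by a single sentence (take the conjunction), this step is a direct application of compactness to the two axiomatizations of $\T$; I do not anticipate any real obstacle.
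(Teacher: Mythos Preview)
Your proof is correct and follows essentially the same approach as the paper: both directions match, with the forward direction using the Bazzoni--Herbera finite type result together with compactness to extract finitely many $A_1,\dots,A_n$ whose direct sum $A$ satisfies $\T = A^{\perp}$. The only cosmetic difference is that the paper phrases compactness via the inconsistency of $\Th(\Modr R)\cup\{\neg\sigma\}\cup\{\sigma_A : A\in\A\}$, whereas you phrase it via the entailment $\Th(\Modr R)\cup\{\sigma_A : A\in\A\}\models\sigma$; these are equivalent.
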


\begin{proof}
If $\T = A^{\perp},$ then $\T$ is axiomatized by the sentence $\sigma_A.$ For the converse, suppose that $\T$ is finitely axiomatized by the sentence $\sigma,$ and let $\A \subseteq \modr R$ be an additive subcategory of modules of projective dimension at most $1$ for which $\T = \A^{\perp}.$ The collection 
$\Th (\Modr R) \cup \{ \neg \sigma \} \cup \{ \sigma_A \; | \; A \in \A \}$ of sentences in $\Lang (R)$ is inconsistent. By the Compactness Theorem, some finite subcollection is already inconsistent, which implies that there are finitely many 
$A_1, A_2, \ldots, A_n \in \A$ such that $\Th (\Modr R) \vdash \bigwedge_{i=1}^n \sigma_{A_i} \to \sigma.$ In words, if $A = \oplus_{i=1}^n A_i,$ then $\Ext^1_R (A,M) = 0$ implies $M \in \T.$
\end{proof}

According to Theorem~\ref{T:pure-proj}($8$), a pure projective $1$-tilting module is equivalent to a countably presented pure projective $1$-tilting module $T.$ For many purposes, we may thus replace the given $1$-tilting module by $T$ and extract some further information when $T$ is pure projective. Arguing as in~\cite[Lemma 4.4]{Tr07}, every countably presented $1$-tilting module $T,$ with tilting class $\T$ may be represented as the limit of a linear system ${\dis T = \lim_{n \to \infty} A_n,}$ where for each $n:$
\begin{enumerate}
\item $A_n \in \modr R;$ 
\item $\pd (A_n) \leq 1;$ and
\item $A_n \in {^{\perp}}\T.$ 
\end{enumerate}
Such a linear system $(A_n, f_n : A_n \to A_{n+1})_{n \in \mathbb{N}}$ is called a {\em system associated} to $T;$ it yields a pure exact sequence
$$\vcenter{
\xymatrix{
0 \ar[r] & {\dis \oplus_{n \in \mathbb{N}}} A_n \ar[r]^{\phi} & {\dis \oplus_{n \in \mathbb{N}}} A_n \ar[r] & T \ar[r] & 0
}}$$ 
in the usual way, where $\phi (a_1, a_2, \cdots, a_n, \cdots) = (a_1, a_2 - f_1 (a_1), \cdots, a_n - f_{n-1}(a_{n-1}), \cdots).$ 

\begin{prop} \label{P:system-An}
If $T$ is a countably presented pure projective $1$-tilting module with associated system $(A_n | n \in \mathbb{N}),$ then 
\begin{enumerate}
\item $T \oplus \left( {\dis \oplus_{n \in \mathbb{N}} A_n} \right) \cong {\dis \oplus_{n \in \mathbb{N}}} A_n;$
\item there exists a $k \in \mathbb{N}$ such that $B_k = \oplus_{n \leq k} A_n$ satisfies $B_k^{\perp} = \T;$ and
\item ${\dis \lim_{n \to \infty} \; A_n/t(A_n) = 0.}$
\end{enumerate}
\end{prop}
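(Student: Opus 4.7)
The plan is to prove the three assertions in sequence, leveraging the machinery already developed. For (1), the displayed short exact sequence is pure and has $T$ as its right-hand term; since $T$ is pure projective by hypothesis, the sequence splits, yielding the asserted isomorphism.

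For (2), I would combine the Compactness Theorem with the finite axiomatizability of $\T.$ First, $(\oplus_{n} A_n)^{\perp} = \T$: each $A_n \in {^{\perp}}\T,$ together with $\Ext^1_R (\oplus_n A_n, -) \cong \prod_n \Ext^1_R (A_n, -),$ gives $\T \subseteq (\oplus_n A_n)^{\perp},$ while part (1) exhibits $T$ as a direct summand of $\oplus_n A_n,$ forcing $(\oplus_n A_n)^{\perp} \subseteq T^{\perp} = \T.$ Theorem~\ref{T:pure-proj} says $(\T, \F)$ is elementary, and Theorem~\ref{T:elementary torsion pair} then produces a sentence $\sigma \in \Lang (R)$ that finitely axiomatizes $\T$ over $\Th (\Modr R).$ Since each $A_n$ is finitely presented of projective dimension at most one, $\Ext^1_R (A_n, -)$ is coherent, so $A_n^{\perp}$ is axiomatized by a single sentence $\sigma_{A_n}.$ The identity $\bigcap_n A_n^{\perp} = \T$ then translates to the inconsistency of $\Th (\Modr R) \cup \{ \sigma_{A_n} \; | \; n \in \mathbb{N} \} \cup \{ \neg \sigma \},$ and compactness supplies a $k$ with $\Th (\Modr R) \vdash \bigwedge_{n \leq k} \sigma_{A_n} \to \sigma.$ Hence $B_k^{\perp} = \bigcap_{n \leq k} A_n^{\perp} \subseteq \T,$ and the reverse inclusion is automatic.

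For (3), apply the torsion radical $t$ termwise to the directed system $(A_n, f_n)_{n \in \mathbb{N}}$ to obtain the short exact sequence of directed systems $0 \to (t(A_n)) \to (A_n) \to (A_n/t(A_n)) \to 0.$ Direct limits are exact and $\lim_{n \to \infty} A_n = T,$ so one gets
$$0 \to \lim_{n \to \infty} t(A_n) \to T \to \lim_{n \to \infty} A_n/t(A_n) \to 0.$$
Elementarity of $(\T, \F)$ makes $\F$ definable, and Proposition~\ref{P:elementary torsion pair} then ensures that $t$ commutes with direct limits. Therefore $\lim_{n \to \infty} t(A_n) = t(T) = T,$ which forces $\lim_{n \to \infty} A_n/t(A_n) = 0.$

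The main obstacle is part (2): one must package the finite axiomatizability of $\T$ together with a compactness argument in order to pass from the countable collection of finitely presented modules $A_n$ that generates $\T$ in the $\perp$-sense to a single finite direct sum $B_k.$ Parts (1) and (3) then follow routinely from pure projectivity and from the interplay between $t$ and direct limits.
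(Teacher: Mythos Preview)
Your proof is correct and follows essentially the same approach as the paper's: part (1) via splitting of the pure exact sequence, part (2) via finite axiomatizability of $\T$ (from elementarity) combined with the Compactness Theorem exactly as in the proof of Proposition~\ref{fin axiom}, and part (3) via the commutation of $t$ with direct limits. The only cosmetic difference is that you spell out the compactness argument explicitly, whereas the paper simply refers back to Proposition~\ref{fin axiom}.
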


\begin{proof}
Because $T$ is pure projective, the pure exact sequence above splits and yields (1). This implies that $\bigcap_n A_n^{\perp} \subseteq T^{\perp} \subseteq A_k^{\perp}$ for every $k \in \mathbb{N},$ since $A_k \in {^{\perp}}\T.$ The first inclusion is thus an equality. As $T$ is pure projective, $\T = T^{\perp}$ is finitely axiomatizable, so that we can argue as in the proof of Proposition~\ref{fin axiom} to get (2). Finally, the pure projective assumption on $T$ also entails, by Theorem~\ref{T:elementary torsion pair}, that the torsion radical $t : \modr R \to \modr R$ is coherent, and so respects direct limits. This implies that ${\dis \lim_{n \to \infty} t(A_n) = t (\lim_{n \to \infty} A_n) = t(T) = T.}$ Consider the linear system of short exact sequences
$$\vcenter{
\xymatrix{
0 \ar[r] &  t(A_n) \ar[r] & A_n \ar[r] & A_n/t(A_n) \ar[r] & 0
}}
$$
associated to the system $(A_n | n \in \mathbb{N})$ and take the limit to obtain the short exact sequence 
$$\vcenter{
\xymatrix{
0 \ar[r] &  T \ar[r]^{1_T} & T \ar[r] & {\dis \lim_{n \to \infty} A_n/t(A_n)} \ar[r] & 0,
}}
$$
which establishes (3). 
\end{proof}

\section{The Commutative case}\label{s:commutative} 
In this section we prove that a pure projective $1$-tilting module over a commutative ring is projective. The result involves the notion of the henselization of local rings, but for some classes of commutative rings, like noetherian or arithmetic rings the arguments are simpler. First, recall that a $1$-tilting module $T$ over a commutative ring is equivalent to a classical tilting module if and only if $T$ is projective (see for instance \cite[Lemma 1.2]{PT}).
\begin{lem}\label{L:local-reduction} Let $T$ be a $1$-tilting module over a commutative ring $R$ and let $S$ be a multiplicative subset of $R$. Then $T_S$ is a $1$-tilting $R_S$-module and if $T$ is pure projective, so is $T_{S}$.
\end{lem}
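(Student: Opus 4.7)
The plan is to verify the three defining conditions (T1)--(T3) for $T_S$ over $R_S$, then establish the pure projectivity transfer as a separate, essentially formal, consequence of the fact that localization commutes with direct sums and preserves finite presentation.

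First, for (T1), I would take a projective resolution $0 \to P_1 \to P_0 \to T \to 0$ of $T$ over $R$ and simply localize; exactness of localization and the fact that the localization of a projective $R$-module is projective over $R_S$ give $\pd_{R_S}(T_S) \le 1$. For (T3), the same localization trick applied to the sequence $0 \to R \to T_0 \to T_1 \to 0$ from Definition~\ref{D:tilting} yields a short exact sequence $0 \to R_S \to (T_0)_S \to (T_1)_S \to 0$, and since localization commutes with direct sums and summands, $(T_i)_S \in \Add_{R_S}(T_S)$ whenever $T_i \in \Add_R(T)$.

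The main step is (T2). Here I would use flatness of $R_S$ over $R$ and the adjunction $(-)\otimes_R R_S \dashv \text{restriction}$ to obtain, for every $R_S$-module $N$, a natural isomorphism
\[
\Ext^i_{R_S}(T_S, N) \;\cong\; \Ext^i_R(T, N)
\]
(applied to the localized projective resolution above and the identification $\Hom_{R_S}(P \otimes_R R_S, N) \cong \Hom_R(P, N)$). Taking $N = T_S^{(\lambda)}$ reduces the problem to showing $\Ext^1_R(T, T_S^{(\lambda)}) = 0$, i.e., that $T_S^{(\lambda)} \in T^{\perp} = \Gen T$. For this I invoke the theorem of Bazzoni--Herbera cited just before Theorem~\ref{T:pure-proj}, which says that the tilting class $\Gen T$ is definable, hence closed under direct limits and direct sums; since $T_S = \varinjlim_{s \in S} T$ (directed system with transition maps given by multiplication by elements of $S$) lies in $\Gen T$, so does any direct sum of copies of it.

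For the pure projective statement, if $T \oplus T' \cong \bigoplus_{i\in I} F_i$ with each $F_i$ a finitely presented $R$-module, applying $(-)_S$ gives $T_S \oplus T'_S \cong \bigoplus_{i \in I} (F_i)_S$, and each $(F_i)_S$ is a finitely presented $R_S$-module (localization preserves finite presentation, as it is exact and sends $R^n$ to $R_S^n$). Hence $T_S$ is a summand of a direct sum of finitely presented $R_S$-modules, i.e., pure projective over $R_S$ in the sense of Definition~\ref{D:pure projective}. The only nontrivial point in the whole argument is (T2), and even that reduces to invoking the already-cited definability of the tilting class; there is no serious obstacle.
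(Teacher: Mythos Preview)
Your proof is correct and is essentially a spelled-out version of what the paper does: the paper simply cites \cite[Proposition 13.50]{GT12} for the tilting claim and remarks that pure projectivity transfers because $R_S$ is flat over $R$, and your direct verification of (T1)--(T3) together with the summand-of-$\bigoplus F_i$ argument is exactly how one unpacks those two sentences. As a minor aside, for (T2) you do not actually need the Bazzoni--Herbera definability result: $\Gen T$ is always closed under direct sums and quotients, hence under direct limits, so $T_S = \varinjlim_{s\in S} T \in \Gen T$ follows without any appeal to definability.
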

\begin{proof}
By \cite[Proposition 13.50]{GT12}, $T_{S}$ is a $1$-tilting $R_S$-module. (Note that the proof given there becomes simpler in the case of $1$-tilting modules, since the first syzygy of $T$ is projective). If $T$ is pure projective, it follows immediately that $T_S$ is pure projective, since $R_S$ is a flat $R$-module.
 \end{proof}
 The next proposition allows to reduce the investigation to the case of local commutative rings.
  \begin{prop} \label{P:localizing} Let $T$ be a $1$-tilting module over a commutative ring $R.$ Then $T$ is projective if and only if 
$T_{\m}$ is a projective $1$-tilting $R_{\m}$-module, for every $\m\in \Max R$.
    \end{prop}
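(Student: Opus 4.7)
For the forward direction, it suffices to observe that localization (base change along the flat ring map $R \to R_\m$) sends projective modules to projective modules, so if $T$ is projective then each $T_\m$ is a projective $R_\m$-module; combined with Lemma~\ref{L:local-reduction}, which already ensures $T_\m$ is $1$-tilting, this handles the easy implication.

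For the converse, I would invoke the Bazzoni-Herbera description of tilting classes cited in the paragraph preceding Theorem~\ref{T:pure-proj}: there exists a set $\A \subseteq \modr R$ of finitely presented modules of projective dimension at most $1$ with $T^{\perp} = \A^{\perp}.$ Because $T$ is $1$-tilting, $T$ is projective iff $T^{\perp} = \Modr R,$ and since each $A \in \A$ has $\pd \leq 1,$ this is in turn equivalent to every $A \in \A$ being projective over $R.$ For finitely presented modules over a commutative ring, projectivity is a local property, so this is equivalent to $A_{\m}$ being projective over $R_{\m}$ for every $A \in \A$ and every $\m \in \Max R.$

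It remains to connect this criterion with the projectivity of $T_\m.$ The key compatibility is that, for any $R_\m$-module $M,$ localizing a projective resolution $0 \to P_1 \to P_0 \to T \to 0$ and applying the tensor-hom adjunction $\Hom_R(P_i, M) \cong \Hom_{R_\m}((P_i)_\m, M)$ yields an isomorphism $\Ext^1_{R_\m}(T_\m, M) \cong \Ext^1_R(T, M);$ the same adjunction gives $\Ext^1_{R_\m}(A_\m, M) \cong \Ext^1_R(A, M)$ for every $A \in \A.$ Using these, an $R_\m$-module $M$ lies in $T_\m^{\perp_{R_\m}}$ iff it lies in $T^{\perp} = \A^{\perp},$ which holds iff $\Ext^1_{R_\m}(A_\m, M) = 0$ for every $A \in \A.$ Hence $T_\m$ is projective over $R_\m$ iff every $A_\m$ is projective over $R_\m,$ and the desired equivalence follows by assembling over all $\m.$

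The only real obstacle is the careful bookkeeping required to verify these $\Ext$-compatibilities between $R$ and $R_\m,$ so that the Bazzoni-Herbera data for $T$ descends to control the tilting class of $T_\m;$ once this is in place, the proposition reduces to the standard local-global principle for projectivity of finitely presented modules.
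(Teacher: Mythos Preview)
Your proposal is correct and follows essentially the same route as the paper: both arguments invoke the Bazzoni--Herbera finite-type description $T^{\perp}=\A^{\perp}$ with $\A\subseteq\modr R$ consisting of finitely presented modules of projective dimension $\leq 1$, transfer the relevant $\Ext^1$-vanishing across the localization $R\to R_{\m}$, and then reduce to the standard local--global principle for projectivity of finitely presented modules. The only organizational difference is that you first identify $T_{\m}^{\perp}=\bigcap_{A\in\A}A_{\m}^{\perp}$ and read off the equivalence from there, whereas the paper argues more directly that $\Ext^1_{R_{\m}}((A_i)_{\m},T_{\m})=0$ forces $(A_i)_{\m}\in{^{\perp}}(T_{\m}^{\perp})$, which equals the projectives once $T_{\m}$ is assumed projective; the content is the same.
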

\begin{proof} By Lemma~\ref{L:local-reduction}, $T_{\m}$ is a $1$-tilting $R_{\m}$-module, for every $\m\in \Max R$. If $T$ is projective, then clearly $T_{\m}$ is a projective $R_{\m}$-module.
  
For the converse, let us apply again the result~\cite{BH} that every $1$-tilting module $T$ is of finite type, that is, there exists a set $\{ A_i; i \in I \}$ of finitely presented modules with $\pd (A_i) \leq 1$ such that $M \in T^\perp$ if and only if $\Ext^1_R(A_i, M)=0$, for every $i\in I$. For every maximal ideal $\m$ of $R$ we have $\Ext^1_{R_{\m}}((A_i)_{\m}, T_{\m})=0$. This implies that $(A_i)_{\m}\in {^\perp}(T_{\m}^\perp)$.  By assumption, $T_{\m}$ is a projective $R_{\m}$-module. Thus $(A_i)_{\m}$ is projective, too. We conclude that $A_i$ is a projective $R$-module, for every $i\in I$. Hence $T$ is projective.
\end{proof}
A module $M$ over a ring $R$ is FP$_2$ if $M$ is finitely presented and a first syzygy of $M$ is finitely presented.

 \begin{lem}\label{L:Ext-Tor} Let $(R, \m)$ be  a commutative local ring and let $A$ be an FP$_2$-module $R$-module. The following are equivalent:
 \begin{enumerate}
 \item $A$ is projective;
 \item $\Tor_1^R(A, R/\m)=0;$ 
 \item $\Ext^1_R(A, R/\m)=0$.
 \end{enumerate}
 \end{lem}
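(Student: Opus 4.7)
The plan is to prove (1) $\Rightarrow$ (2) and (1) $\Rightarrow$ (3) trivially, since a projective module has vanishing Tor${}_1$ and $\Ext^1$ against any coefficient module, and then to obtain both converses (2) $\Rightarrow$ (1) and (3) $\Rightarrow$ (1) from the same minimal-free-presentation setup together with Nakayama's lemma. The FP$_2$ hypothesis is in fact stronger than needed; finite presentation of $A$ suffices to run the argument.

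The setup is to choose a minimal generating set $a_1,\ldots,a_s$ of $A,$ where $s = \dim_{R/\m} A/\m A,$ producing a short exact sequence
\[
0 \to K \to R^s \to A \to 0
\]
with $K$ finitely generated (this uses that $A$ is finitely presented) and with the crucial property $K \subseteq \m R^s,$ which follows from the minimality of $s$ via Nakayama applied to $R^s \to A.$ In particular, the map $(R/\m)^s \to A/\m A$ induced by tensoring with $R/\m$ is an isomorphism, and the map $K \to R^s$ induces the zero map $K/\m K \to (R/\m)^s.$

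For (2) $\Rightarrow$ (1), I would tensor the displayed sequence with $R/\m$ and read off the long exact Tor-sequence: because $K/\m K \to (R/\m)^s$ vanishes, the connecting map identifies $\Tor_1^R(A,R/\m) \cong K/\m K.$ The hypothesis forces $K/\m K = 0,$ and Nakayama applied to the finitely generated $K$ gives $K = 0,$ so $A \cong R^s$ is free. For (3) $\Rightarrow$ (1), I would apply $\Hom_R(-,R/\m)$ to the same sequence; since $K \subseteq \m R^s,$ every homomorphism $R^s \to R/\m$ vanishes on $K,$ so the restriction map $\Hom(R^s,R/\m) \to \Hom(K,R/\m)$ is zero. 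The long exact $\Ext$-sequence then yields
\[
\Ext^1_R(A,R/\m) \;\cong\; \Hom_R(K,R/\m) \;\cong\; \Hom_{R/\m}(K/\m K, R/\m),
\]
and vanishing of the left-hand side forces $K/\m K = 0,$ hence $K = 0$ by Nakayama again. There is no real obstacle; the only point that merits care is the verification that in a minimal presentation $K \subseteq \m R^s,$ which is precisely what identifies $K/\m K$ with the relevant Tor or Ext and converts the problem into Nakayama's lemma.
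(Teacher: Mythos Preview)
Your argument is correct. The approach, however, differs from the paper's. For (1) $\Leftrightarrow$ (2) the paper simply cites the standard reference, which is of course the minimal-presentation-plus-Nakayama argument you spell out. For (2) $\Leftrightarrow$ (3) the paper does \emph{not} redo the presentation computation; instead it invokes Matlis-type duality: with $E$ the injective envelope of $R/\m$ and $M^\ast = \Hom_R(M,E)$, one has $(R/\m)^\ast \cong R/\m$ and the natural isomorphism $\Ext^1_R(A,(R/\m)^\ast) \cong \Hom_R(\Tor_1^R(A,R/\m),E)$, so $\Ext^1_R(A,R/\m)=0$ iff $\Tor_1^R(A,R/\m)=0$ because $E$ is a cogenerator. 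Your route is more hands-on and entirely self-contained, while the paper's is a one-line appeal to the $\Hom$--$\otimes$ duality; both show that the FP$_2$ hypothesis is stronger than necessary (finite presentation suffices), though only you make this explicit.
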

 \begin{proof} (1) $\Leftrightarrow$ (2). This is well known (see for instance~\cite[Lemma 2.5.8]{Glaz89}). \\
(2) $\Leftrightarrow$ (3). For every $R$-module $M$ let $M\dual=\Hom_R(M, E)$ where $E$ is an injective envelope of $R/\m$. Then $(R/\m)\dual\cong R/\m$ and by well known homological formulas, $\Ext^1_R(A, R/\m)=0$ if an only if $\Tor_1^R(A, R/\m)=0$.
 \end{proof} 

 \begin{prop} \label{P:local-case} 
Let $(R, \m)$ be a local commutative ring. A $1$-tilting module $T$ is projective if and only if $R/\m \in T^\perp.$ So if $T$ is not projective, then no nonzero finitely generated module is torsion. 
 \end{prop}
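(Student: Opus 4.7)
The plan is to prove the forward direction trivially, and for the nontrivial converse exploit Bazzoni--Herbera's result that a $1$-tilting class is of finite type together with Lemma~\ref{L:Ext-Tor}.

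If $T$ is projective, then $\Ext^1_R(T,M)=0$ for every $M$, so in particular $R/\m \in T^{\perp}$. For the converse, assume $R/\m \in T^{\perp}$. By the result of the first author and Herbera~\cite{BH} already cited above, there is a set $\A \subseteq \modr R$ of finitely presented modules of projective dimension at most $1$ with $T^{\perp}=\A^{\perp}$. Fix $A \in \A$. Pick a short exact sequence $0 \to K \to R^n \to A \to 0$ with $K$ finitely generated (possible since $A$ is finitely presented); because $\pd A \leq 1$, the module $K$ is a finitely generated projective $R$-module, hence finitely presented. Thus every $A \in \A$ is FP$_2$.

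Since $R/\m \in T^{\perp}=\A^{\perp}$, we have $\Ext^1_R(A,R/\m)=0$ for every $A \in \A$. By Lemma~\ref{L:Ext-Tor}, each such $A$ is projective, so $\A^{\perp}=\Modr R$, i.e.\ $T^{\perp}=\Modr R$. Combined with $\pd T \leq 1$, this forces $T$ to be projective.

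For the final assertion, assume $T$ is not projective, so $R/\m \notin \T=\Gen T$ by what has just been shown. Suppose, toward a contradiction, that some nonzero finitely generated module $M$ lies in $\T$. By Nakayama's Lemma, $M/M\m \neq 0$. Since $\T$ is a torsion class (hence closed under quotients), $M/M\m \in \T$. But $M/M\m$ is a nonzero $R/\m$-vector space, and therefore has $R/\m$ as a direct summand. As $\T$ is closed under summands, this would give $R/\m \in \T$, a contradiction. The step one has to be careful with is the FP$_2$ reduction (with no noetherian hypothesis available, one must use that first syzygies of finitely presented modules of projective dimension at most $1$ are automatically finitely presented), since this is precisely what licenses the application of Lemma~\ref{L:Ext-Tor} to the modules in $\A$.
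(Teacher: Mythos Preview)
Your proof is correct and follows essentially the same route as the paper: use the finite type result of Bazzoni--Herbera to replace $T^\perp$ by $\A^\perp$ with each $A\in\A$ finitely presented of projective dimension at most $1$, apply Lemma~\ref{L:Ext-Tor} to conclude each $A$ is projective, deduce $T^\perp=\Modr R$, and finish with Nakayama for the second assertion. Your explicit verification that each $A$ is FP$_2$ is a detail the paper leaves implicit, but otherwise the arguments coincide.
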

 \begin{proof} Necessity is clear. As in the proof of Proposition~\ref{P:localizing}, let $\{A_i, i\in I\}$ be a set of finitely presented modules of projective dimension at most one such that $T^\perp=(\bigoplus\limits_{i\in I}A_i)^\perp$. By assumption $\Ext^1_R(A_i, R/\m)=0$ for every $i\in I$. By Lemma~\ref{L:Ext-Tor} we conclude that every $A_i$ is projective, hence $T^\perp =\ModR$ and $T$ is projective.

If $T$ is not projective and $M$ is a nonzero finitely generated torsion module $M \in \T,$ Nakayama's Lemma implies that the nonzero quotient $M/\m M$ and therefore $R/\m$ belongs to $\T.$
\end{proof}

We can use Proposition~\ref{P:local-case} to see that every pure projective $1$-tilting module $T$ over a commutative noetherian ring is projective. By Proposition~\ref{P:localizing} and Theorem~\ref{T:pure-proj}, it suffices to verify the case when $R$ is a local and $T$ countably presented. If $(A_n | n \in \mathbb{N})$ is a system associated to $T,$ then 
Proposition~\ref{P:system-An}(1) implies that 
$T \oplus \left( {\dis \oplus_{n \in \mathbb{N}} A_n} \right) = {\dis \oplus_{n \in \mathbb{N}}} A_n.$
Because $t(T) = T,$ $t(A_n) \subseteq A_n$ must be nonzero for some $n.$ As $R$ is noetherian, this implies that $t(A_n)$ is a finitely generated torsion module. By Proposition~\ref{P:local-case}, $T$ must be projective. 

\begin{prop} \label{P:K-S}
If $R$ is a commutative local Krull-Schmidt ring, then every pure projective $1$-tilting module over $R$ is projective.
\end{prop}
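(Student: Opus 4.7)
The plan is to chain Corollary~\ref{C:Krull-Schmidt} with the opening remark of Section~\ref{s:commutative}. The first step is to verify that a Krull-Schmidt ring $R$ satisfies the hypothesis of Corollary~\ref{C:Krull-Schmidt}, namely that every pure projective $R$-module is a direct sum of finitely presented modules. Given a pure projective $P,$ Definition~\ref{D:pure projective} realises $P$ as a direct summand of some $\bigoplus_{i \in I} E_i$ with $E_i \in \modr R.$ By the Krull-Schmidt hypothesis on $R,$ each $E_i$ further decomposes into finitely presented summands with local endomorphism ring, so $P$ is a direct summand of a direct sum of finitely presented modules with local endomorphism rings. A theorem of Azumaya (or its Crawley--Warfield refinement) then forces $P$ itself to split as a direct sum of (finitely presented) modules with local endomorphism rings.

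Once this is in place, Corollary~\ref{C:Krull-Schmidt} gives that the pure projective $1$-tilting module $T$ over $R$ is classical, i.e., tilting equivalent to a finitely presented $1$-tilting module. To finish, I would invoke the result recalled at the opening of Section~\ref{s:commutative}, from~\cite[Lemma 1.2]{PT}: a $1$-tilting module over a commutative ring is equivalent to a classical tilting module if and only if it is projective. Applying this to $T$ yields that $T$ is projective, as required.

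The only step requiring any substantive work is the Azumaya-type decomposition of pure projectives over a Krull-Schmidt ring. Everything else is an immediate assembly of results already in the paper. The locality hypothesis is in fact unused by this argument; it appears in the statement only because henselian local rings provide the principal source of commutative Krull-Schmidt rings, which is also what underpins the general commutative result in Theorem~\ref{T:commutative-case}.
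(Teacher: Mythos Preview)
Your argument is correct, and the Azumaya decomposition step is shared with the paper. After that step, however, the two proofs diverge.

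The paper does not pass through Corollary~\ref{C:Krull-Schmidt} and the \cite{PT} result. Instead, having written the pure projective $1$-tilting module $T$ as a direct sum of finitely presented modules, it invokes Proposition~\ref{P:local-case}: over a commutative local ring, if $T$ is not projective then no nonzero finitely generated module lies in the torsion class $\T = \Gen T$. Since $T$ itself is torsion and each of its finitely presented summands is a nonzero finitely generated torsion module, this forces $T$ to be projective. Thus the paper genuinely uses the locality hypothesis (via Nakayama's Lemma inside Proposition~\ref{P:local-case}), while your route needs only commutativity, confirming your closing remark. The trade-off is that the paper's argument is self-contained within \S\ref{s:commutative} and avoids the detour through ``classical'' and the external citation, whereas yours shows that the local hypothesis is superfluous once one is willing to quote \cite[Lemma~1.2]{PT}.
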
 

\begin{proof}
Over a Krull-Schmidt ring, every finitely presented module is a direct sum of indecomposable modules with a local endomorphism ring, so that every pure projective module is a direct sum of finitely presented modules, and therefore cannot be torsion. 
\end{proof}

Recall that a commutative ring $R$ is a {\em chain ring} if the lattice of its ideals is linearly ordered and $R$ is {\sl arithmetic} if the lattice of its ideals is distributive. By results of Jensen, a ring $R$ is arithmetic if and only every localization of $R$ at a maximal ideal is a chain ring.  It is well known that if $R$ is a chain ring, then every finitely presented module is a direct sum of cyclically presented modules (see e.g.~\cite[Theorem 9.1]{K2}), that is, modules of the form $R/rR,$ for some $r\in R.$ The endomorphism rings of such modules are clearly local, so that every chain ring is Krull-Schmidt. Propositions~\ref{P:localizing} and~\ref{P:K-S} imply that every pure projective $1$-tilting module over an arithmetic ring is projective. 

Recall that a local commutative ring $(R,\m)$ with residue field $k$ is {\em henselian} if for every monic polynomial 
$f \in R[X]$ and every factorization $\overline{f}= g_0h_0$ in $k[X]$ with $g_0$ and $h_0$ comaximal, there is a factorisation
$f=gh$ in $R[X]$ such that $\overline{g}=g_0$ and $\overline{h}=h_0.$ Examples of henselian rings include $0$-dimensional local rings and local rings $(R, \m)$ which are complete in the $\m$-adic topology. An important result about henselian rings is that they are Krull-Schmidt (see~\cite{Sid} or~\cite[V Section7]{FS}). 
 
In order to prove the main result of this section, we need to recall that every commutative local ring admits a {\em henselization} (see~\cite[Chap. VII]{Na} or \cite[18.6]{Groth}).

\begin{prop} \label{P:henselian}
Let $(R, \m)$ be a local commutative ring. There is a local ring $R^H$ and a local ring homomorphism 
$h: R\to R^H$, such that:
\begin{enumerate} 
\item $R^H$ is henselian;
\item $R \to R^H$ is faithfully flat;
\item $\m R^H$ is the maximal ideal of $R^H;$
\item for every ring homomorphism $f: R\to R'$ with $R'$ henselian, there is a unique ring homomorphism $g: R^H \to R'$ such that $f = g \circ h.$
\end{enumerate}
\end{prop}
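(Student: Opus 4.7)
The plan is to rely on the classical construction of the henselization as a filtered colimit of pointed étale neighborhoods of $(R, \m)$, following~\cite[Chap.\ VII]{Na} and~\cite[18.6]{Groth}. I would introduce the category $\mathcal{N}$ whose objects are pairs $(S, \n)$, where $S$ is an essentially étale $R$-algebra and $\n \subseteq S$ is a prime lying over $\m$ for which the residue field extension $R/\m \to S/\n$ is an isomorphism, with morphisms the local $R$-algebra maps preserving the distinguished primes. After verifying that $\mathcal{N}$ is filtered (essentially because the tensor product of two étale neighborhoods, localized at a suitable prime, gives a common refinement), I would define
\[
h : R \longrightarrow R^H := \li_{(S, \n) \in \mathcal{N}} \, S_\n.
\]

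The four properties are then checked in turn. Property (3) is built into the construction: at each stage, $\m S_\n = \n S_\n$ is the maximal ideal of $S_\n$ and $R/\m \to S_\n/\m S_\n$ is an isomorphism, so passing to the colimit shows that $\m R^H$ is the unique maximal ideal of $R^H$ and that $h$ is local. Property (2) is inherited from each $S_\n,$ since étale morphisms are flat and filtered colimits preserve flatness; faithful flatness then follows from (3), because $\m R^H \neq R^H.$ For property (1), a monic $f \in R^H[X]$ together with a comaximal factorization $\bar f = g_0 h_0$ of its reduction descends to some $(S, \n) \in \mathcal{N},$ and the theory of standard étale algebras produces a further pointed neighborhood on which $f$ splits as required; this splitting becomes the desired factorization in $R^H.$ For the universal property (4), any local homomorphism $f : R \to R'$ into a henselian local ring extends uniquely over each pointed étale neighborhood $(S, \n)$ by the henselian property applied to $S \otimes_R R'$ at the distinguished prime, and the resulting compatible family assembles into the unique $g : R^H \to R'$ with $g \circ h = f.$

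The main obstacle is not conceptual but the considerable prerequisite machinery from the theory of étale morphisms---flatness, unramifiedness, Jacobian criteria, and standard étale algebras---needed both to establish the filteredness of $\mathcal{N}$ and to verify the henselian property (1). Since this material is entirely standard and Proposition~\ref{P:henselian} is invoked merely as a reminder of well-known facts that will be used in Theorem~\ref{T:commutative-case}, I would not reproduce the construction but instead appeal to the cited sources.
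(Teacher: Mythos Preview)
Your proposal is correct and aligns with the paper's treatment: the paper gives no proof of this proposition at all, simply stating it as a recall of the classical henselization with citations to \cite[Chap.~VII]{Na} and \cite[18.6]{Groth}, which is exactly where you end up after your sketch. Your outline of the filtered-colimit construction is accurate and more informative than what the paper provides, but since both you and the paper ultimately defer to the same sources, there is no substantive difference in approach.
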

 \begin{thm}\label{T:commutative-case} Let $R$ be a commutative ring. Then every pure projective $1$-tilting module is projective, and therefore classical.
 \end{thm}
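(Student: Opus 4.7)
The plan is to reduce to the local case, base change to the henselization of the base ring, apply the Krull--Schmidt result there, and then descend by faithful flatness. By Proposition~\ref{P:localizing} together with Lemma~\ref{L:local-reduction}, it suffices to treat a pure projective $1$-tilting module $T$ over a local commutative ring $(R,\m).$ I would then pass to the henselization $h\colon R \to R^H$ given by Proposition~\ref{P:henselian}, which is faithfully flat, local, and henselian; since every henselian local ring is Krull--Schmidt, Proposition~\ref{P:K-S} will be available over $R^H.$

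Set $T^H := T \otimes_R R^H.$ The main technical step is to verify that $T^H$ is a pure projective $1$-tilting $R^H$-module. Pure projectivity is automatic: writing $T$ as a summand of $\bigoplus_i F_i$ with $F_i$ finitely presented over $R,$ one gets $T^H$ as a summand of $\bigoplus_i (F_i \otimes_R R^H),$ and each $F_i \otimes_R R^H$ is finitely presented over $R^H.$ Flat base change applied to a projective resolution of $T,$ and to the tilting sequence $0 \to R \to T_0 \to T_1 \to 0,$ yields (T1) and (T3) over $R^H.$ For (T2), the adjunction $\Hom_{R^H}(-\otimes_R R^H, N) \cong \Hom_R(-, N)$ for an $R^H$-module $N$ gives
\[ \Ext^1_{R^H}(T^H, T^{H,(\lambda)}) \cong \Ext^1_R(T, T^{(\lambda)} \otimes_R R^H). \]
By Lazard's theorem, the flat module $R^H$ is a filtered colimit of finitely generated free $R$-modules, so $T^{(\lambda)} \otimes_R R^H$ is a direct limit of finite direct sums of copies of $T^{(\lambda)}.$ Since the tilting class $\T = T^\perp$ is a definable subcategory of $\Modr R$ by~\cite{BH}, it is closed under direct sums and direct limits, whence $T^{(\lambda)} \otimes_R R^H \in \T$ and the Ext group vanishes.

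By Proposition~\ref{P:K-S}, $T^H$ is projective, and therefore flat, as an $R^H$-module. The flat base change formula for Tor gives
\[ \Tor_1^R(T, N) \otimes_R R^H \cong \Tor_1^{R^H}(T^H, N \otimes_R R^H) = 0, \]
and faithful flatness of $R^H$ over $R$ forces $\Tor_1^R(T, N) = 0$ for every $R$-module $N.$ Hence $T$ is flat over $R,$ so any projective resolution $0 \to P_1 \to P_0 \to T \to 0$ is pure exact; pure projectivity of $T$ makes the sequence split, and $T$ is projective. The concluding claim that $T$ is classical then follows from the characterization recalled at the start of this section. The main obstacle in this plan is the verification of (T2) for $T^H,$ since $T$ is not finitely presented and $\Ext^1_R(T,-)$ does not commute with arbitrary flat base change; the definability of $\T,$ combined with Lazard's presentation of $R^H,$ is the essential tool for bypassing this difficulty.
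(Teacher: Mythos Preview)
Your argument is correct and follows the same overall strategy as the paper: localize, base change to the henselization, apply the Krull--Schmidt result there, and descend. Two points of execution differ. First, where the paper simply cites~\cite[Proposition 13.50]{GT12} to see that $T^H$ is $1$-tilting, you give a self-contained verification; your treatment of (T2) via the adjunction isomorphism $\Ext^1_{R^H}(T^H,N)\cong\Ext^1_R(T,N)$ together with Lazard's theorem and definability of $\T$ is clean and correct. Second, and more interestingly, your descent step is more elementary than the paper's: rather than invoking the Raynaud--Gruson theorem that projectivity descends along faithfully flat maps, you descend only \emph{flatness} via the Tor base-change formula and faithful flatness, and then observe that a flat pure projective module is projective (any surjection from a projective splits, being pure exact). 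This bypasses the hardest ingredient in the paper's proof.
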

 \begin{proof}  By Proposition~\ref{P:localizing} we can assume that $R$ is local. Let $T$ be a $1$-tilting $R$-module and consider a henselization $R^H$ of $R.$ Since $R^H$ is flat, we can argue as in the proof of~\cite[Proposition 13.50]{GT12}, to conclude that the $R^H$-module $T^H = T \otimes_R R^H$ is $1$-tilting. Moreover, again by flatness, if $T$ is pure projective so is $T^H$. By Proposition~\ref{P:henselian}, $T^H$ is a projective $R^H$-module and since projectivity descends along faithfully flat ring homomorphisms (see~\cite[Part II]{RG}, corrected in~\cite{Gru}, or by [10.92.1] in the Stacks project), we conclude that $T$ is a projective $R$-module.
  \end{proof}

\section{The Noetherian case} \label{s:noether}

In this section we show how to construct examples of nonclassical pure projective 1-tilting modules over noetherian rings. The construction is based on the following result of J.\ Whitehead \cite{Wh}.

 \begin{thm}\label{T:Whitehead} Let $I$ be an idempotent ideal of $R$ finitely generated on the left. Then there exists a countably generated projective module $P \in \ModR$ with trace ideal $I$. 
 \end{thm}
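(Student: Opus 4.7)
The plan is to construct $P$ as the image of an idempotent endomorphism of the countably generated free right $R$-module $F = R^{(\omega)},$ arranged so that the idempotent is represented by a column-finite infinite matrix $E$ with all entries in $I.$ Then $P = EF$ is automatically a direct summand of $F,$ hence a countably generated projective right $R$-module, and having the entries of $E$ in $I$ will force $\Tr(P) \subseteq I.$ The reverse inclusion $I \subseteq \Tr(P)$ will then follow by exhibiting, using the generating data for $I,$ enough homomorphisms $P \to R$ whose images collectively cover each generator $a_i.$

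First I would extract a matrix equation from the hypotheses. Since $I$ is two-sided, $IR = I,$ and so $I = I^2 = I \cdot \sum_j Ra_j = \sum_j Ia_j;$ each generator $a_i$ therefore admits an expression $a_i = \sum_j c_{ij}a_j$ with $c_{ij} \in I.$ Writing $\vec a = (a_1, \ldots, a_n)^T$ and $C = (c_{ij}) \in M_n(I),$ this is the matrix equation $C\vec a = \vec a.$ Iterating the same observation on the entries of $C$ (which themselves lie in $I = I^2$) repeatedly, one assembles the desired column-finite matrix $E \in M_\omega(I)$ with $E^2 = E$ and sets $P = EF.$

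Next, I would verify $\Tr(P) = I.$ For one inclusion, any $\phi \in \Hom_R(P,R)$ extends via $\tilde\phi = \phi \circ E$ to a map $\tilde\phi \colon F \to R$ satisfying $\tilde\phi E = \tilde\phi,$ hence corresponds to a row vector $r$ with $rE = r;$ the image of $\phi$ is then the right ideal $\sum_k r_k R.$ Since $rE = r$ and $E \in M_\omega(I),$ we have $r_k = \sum_l r_l E_{lk} \in RI = I,$ so $\phi(P) \subseteq I$ and $\Tr(P) \subseteq I.$ For the opposite direction, the identity $C\vec a = \vec a$ and its consequences under the iterative assembly of $E$ are used to produce, for each $i,$ compatible row vectors $r$ satisfying $rE = r$ whose entries include $a_i,$ placing each $a_i$ in $\Tr(P).$

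The technical heart of the argument is the construction of the idempotent $E$ in the second step. Naive candidates fail: a block-diagonal $E$ with copies of $C$ would require $C^2 = C,$ which the hypotheses do not provide, and block-triangular variants run into the same obstacle. Whitehead's delicate combinatorial contribution is an inductive refinement that repeatedly exploits $I = I^2$ to lift approximate idempotents to exact ones, so that the resulting $E$ genuinely satisfies $E^2 = E$ while retaining entries in $I$ and encoding enough generating data to make $\Tr(P) = I.$
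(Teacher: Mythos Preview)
Your approach differs from the paper's. The paper does not build an idempotent matrix directly; it realizes $P$ as a direct limit. Fixing left generators $i_1,\dots,i_k$ of $I$ and setting $c=(i_1,\dots,i_k)^T$, one defines $F_n=R^{k^{n-1}}$ and lets $f_n\colon F_n\to F_{n+1}$ be left multiplication by the block-diagonal matrix with every diagonal block equal to $c$. The single use of $I=I^2$ is to produce maps $g_n\colon F_{n+1}\to F_n$ satisfying $g_{n+1}f_{n+1}f_n=f_n$; this telescoping relation forces the canonical presentation $0\to\bigoplus_n F_n\to\bigoplus_n F_n\to\varinjlim F_n\to 0$ to split, so $P:=\varinjlim F_n$ is projective. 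The trace computation is then read off from the shape of the $f_n$.

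Your proposal, by contrast, leaves the central step undone. You correctly extract $C\vec a=\vec a$ with $C\in M_n(I)$, and the verification of $\Tr(P)\subseteq I$ from the hypothesis $E\in M_\omega(I)$ is fine. But you never construct $E$: you say that ``iterating the same observation on the entries of $C$\dots one assembles'' an idempotent, and then concede that naive candidates fail and that the actual construction is ``Whitehead's delicate combinatorial contribution \dots\ to lift approximate idempotents to exact ones.'' That is a promissory note, not a proof, and the description does not match what Whitehead actually does. The difficulty is real: rewriting each $c_{ij}$ as an $I$-combination of the $a_l$ and repeating does not in any evident way yield a matrix with $E^2=E$. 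The virtue of the direct-limit formulation is precisely that it replaces the hard global condition $E^2=E$ by the local compatibilities $g_{n+1}f_{n+1}f_n=f_n$, each of which follows from a \emph{single} application of $I=I^2$ (write $i_m=\sum s_{mab}\,i_a i_b$ and let the $s_{mab}$ populate $g_{n+1}$). If you want to keep your idempotent-matrix framing, the honest route is to run the direct-limit construction first and then read off $E$ from the resulting splitting of $\bigoplus_n F_n$.
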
 
 
 Let us briefly explain how to construct $P$.
Suppose that $I = R i_1+\cdots+R i_k$ and let $c = (i_1,\dots,i_k)^T$ be a column containing the generators of $I$. 
For every $n \in \bbN$ let $F_n = R^{k^{n-1}}$ and
let $f_n  \colon F_{n} \to F_{n+1}$ be the homomorphism given by the block-diagonal matrix having every diagonal 
block equal to $c$. For example, if $k = 2$ we have 
$$
f_1 = \left( \begin{array}{c} i_1 \\ i_2 \end{array}\right) \times -, f_2 = \left( \begin{array}{cc} i_1 & 0 \\ i_2 & 0 \\
0 & i_1 \\ 0 & i_2 \end{array}\right)\times -
$$
Then it is possible to show the existence of homomorphisms $g_n \colon F_{n+1} \to F_n$ such that $f_{n} = g_{n+1}f_{n+1}f_{n}$
for every $n \in \bbN$. Let $P = \li F_i$, then the canonical presentation of the direct limit in the short exact sequence
$$\vcenter{
\xymatrix{
0 \ar[r] &  \oplus_{i \in \bbN} F_i \ar[r] &  \oplus_{i \in \bbN} F_i \ar[r] & P \ar[r] & 0
}}$$
splits, so $P$ is projective.

Our aim is to modify the construction a bit to obtain a pure projective $1$-tilting module. From now on assume that 
$I = Ri_1+ \cdots + Ri_k$ is an idempotent ideal satisfying the following property: If $Ir = 0$ for some $r \in R$ then $r = 0$.
Observe that this property holds if and only if $\varepsilon_{l+1} = f_l \cdots f_2f_1$ is a monomorphism for every $l \in \bbN$.
Further let $\pi_l \colon F_l \to M_l$ be the cokernel of $\varepsilon_l$. Consider the following diagram whose columns are 
short exact sequences

\[\xymatrix{
F_1 \ar@{=}[d]\ar@{=}[r] & F_1 \ar[d]^{\varepsilon_2}\ar@{=}[r] &  F_1 \ar[d]^{\varepsilon_3}\ar@{=}[r] & F_1 \ar[d]^{\varepsilon_4}
\ar@{=}[r] &\cdots \\
F_1 \ar[d]\ar[r]^{f_1} & F_2 \ar[d]^{\pi_2}\ar[r]^{f_2} &  F_3 \ar[d]^{\pi_3}\ar[r]^{f_3} &  F_4 \ar[r]^{f_4} \ar[d]^{\pi_4} & \cdots\\
0 \ar[r] & M_2 \ar[r]^{\overline{f_2}} & M_3 \ar[r]^{\overline{f_3}} & M_4  \ar[r]^{\overline{f_4}} & \cdots
}\]

Considering the direct limits of the rows in this diagram we obtain an exact sequence
$$\vcenter{
\xymatrix{
0 \ar[r] & R \ar[r]^{\varepsilon}& P \ar[r]^{\pi} & Q \ar[r] & 0, 
}}$$
where $P$ is projective and $Q$ is pure projective by Lemma \ref{L:first}. In fact, it is easy to see that 
every homomorphism $g_n \colon F_{n+1} \to F_n$ induces a homomorphism $\overline{g_{n}} \colon M_{n+1} \to M_n$ such that 
$\overline{g_{n+1}} \overline{f_{n+1}} ~ \overline{f_{n}} = \overline{f_{n}}$ holds for every $n \geq 2$. Then the canonical presentation of 
$Q$ as $\li M_n$ splits, in particular $Q$ is a direct summand of $\oplus_{2 \leq i \in \bbN} M_i$. 

 \begin{prop} \label{P:noetherian-case} 
The module $T = P \oplus Q$ constructed above is tilting. In fact, $$T^\perp = \Gen T = \{M \in \ModR \mid MI = M\}.$$ 
 \end{prop}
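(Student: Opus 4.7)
The plan is to verify the three tilting axioms (T1)--(T3) together with the class identity, with the whole argument pivoting on one computation: the image of the connecting map $\Hom(P,M)\to M$ coming from $0\to R\xrightarrow{\varepsilon}P\to Q\to 0$ equals $MI$.

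Axiom (T3) is the constructed sequence itself, and (T1) is immediate since $P$ is projective while $Q$ has projective dimension at most one. For the key computation, apply $\Hom(-,M)$; since $\Ext^1(P,-)=0$ the long exact sequence collapses to
\[
\Hom(P,M)\longrightarrow\Hom(R,M)=M\longrightarrow\Ext^1(Q,M)\longrightarrow 0,
\]
the first map sending $\phi$ to $\phi(\varepsilon(1))$. One inclusion is straightforward: restricting $\phi$ to $F_2=R^k$ gives $\phi(\varepsilon(1))=\phi_2(c)=\sum_j \phi_2(e_j)\,i_j\in MI$. For the reverse inclusion, given $m\in MI$, I would build $\phi$ by induction along the directed system defining $P$. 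Since every $f_n\colon F_n\to F_{n+1}$ is block-diagonal with each diagonal block equal to $c$, extending $\phi_n$ to $\phi_{n+1}$ reduces, on each of the $k^{n-1}$ blocks, to writing a prescribed value of $\phi_n$ as an $I$-combination $\sum_j x_j i_j$; this is solvable precisely when that value belongs to $MI$. Starting from $\phi_1(1)=m$ and iterating under the assumption $MI=M$, the recursion succeeds, yielding
\[
\Ext^1(Q,M)\cong M/MI,
\]
which vanishes exactly when $MI=M$.

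Next I would establish $PI=P$ by the dual-basis property of the projective module $P$: every $p\in P$ can be written $p=\sum_\alpha p_\alpha f_\alpha(p)$ with $f_\alpha\in\Hom(P,R)$, and since the trace ideal is $I$, each $f_\alpha(p)\in I$, so $p\in PI$. Passing to the quotient yields $QI=Q$, hence $T^{(\lambda)}I=T^{(\lambda)}$ for every cardinal $\lambda$. Combined with the vanishing criterion, $\Ext^1(Q,T^{(\lambda)})=0$, and since $\Ext^1(P,-)=0$ we obtain $\Ext^1(T,T^{(\lambda)})=0$, which is (T2). Thus $T$ is a $1$-tilting module, and $T^\perp=\Gen T$ by the result of \cite{CT} recalled before Lemma~\ref{L:tilting-crit}.

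Finally, the vanishing criterion combined with $\Ext^1(P,-)=0$ gives $T^\perp=\{M\mid MI=M\}$ directly, so together with $T^\perp=\Gen T$ we obtain all the equalities of the statement. The only substantive step in the plan is the recursive construction of $\phi\colon P\to M$; this is precisely what the block-diagonal form of the $f_n$ is engineered to make possible, since the lifting problem at each stage is just writing a given element of $MI$ as $\sum_j x_j i_j$, which is immediate under the hypothesis $MI=M$.
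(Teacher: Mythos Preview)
Your proof is correct and follows the same overall strategy as the paper: both reduce everything to the statement that a homomorphism $R\to M$ extends along $\varepsilon$ to $P\to M$ if and only if $MI=M$, and both deduce the tilting property from the Colpi--Trlifaj criterion $T^\perp=\Gen T$.

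The one genuine difference is in how you establish the extension when $MI=M$. You construct the map $P\to M$ by an infinite recursion along the direct system $(F_n,f_n)$, lifting $\phi_n$ to $\phi_{n+1}$ at each stage by solving $\sum_j x_j i_j = (\text{given value})$; this uses the hypothesis $MI=M$ once for every level. The paper instead exploits the Whitehead maps $g_n\colon F_{n+1}\to F_n$ (satisfying $g_{n+1}f_{n+1}f_n=f_n$) that come with the construction: after a single lift $\psi\colon F_2\to M$ of $\varphi$, it sets $\varphi_i=\psi g_2g_3\cdots g_i f_i$ and checks compatibility directly. So the paper uses $MI=M$ only once and lets the $g_n$'s do the rest, whereas your recursion is more elementary and does not rely on the existence of the $g_n$'s at all. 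Both work; yours is self-contained, the paper's is shorter and explains why Whitehead's splitting maps are relevant here. One small over-claim: your argument justifies $\Ext^1(Q,M)=0\Leftrightarrow MI=M$, but not the stronger isomorphism $\Ext^1(Q,M)\cong M/MI$ (your recursion does not show the image equals $MI$ when $MI\neq M$); fortunately only the equivalence is needed.
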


 \begin{proof}
It is easy to see and well known that $\Gen T = \Gen P = \{M \in \ModR \mid MI = M\}$ since $I$ is the trace ideal of $P$.
Further observe that $T^\perp = Q^\perp$ and $0 \to R \overset{\varepsilon}\to P \overset{\pi}\to Q \to 0$
is a projective presentation of $Q$. Therefore if $N \in Q^\perp$, then every homomorphism $f \colon R \to N$ is of the 
form $f'\varepsilon$ for some $f' \colon P \to N$. Now $NI = N$ is an easy consequence of $PI = P$. So $T^{\perp} \subseteq 
\{M \in \ModR \mid MI = M\}$.

Thus we are left to prove that if $N \in \{M \in \ModR \mid MI = M\}$ then $N \in Q^\perp$ that is for every 
$\varphi \colon R \to N$ there exists $\varphi' \colon P \to N$ such that $\varphi = \varphi' \varepsilon$.
Recall that $P$ is a direct limit of the sequence 
$$\vcenter{
\xymatrix{
F_1 \ar[r]^{f_1} & F_2 \ar[r]^{f_2} & F_3 \ar[r]^{f_3} & \cdots.
}}$$
For any $i \in \bbN$ let $\iota_i \colon F_i \to P$ be the colimit injection. Observe that $F_1 = R$ and $\varepsilon = \iota_1$.
Further $f_1$ is a multiplication by the column consisting of generators of ${}_RI$. Then it is easily verified that for 
a given homomorphism $\varphi\colon F_1 \to N$ there exists $\psi \colon F_2 \to N$ such that $\psi f_1 = \varphi$. For 
every $i \geq 2$ put $\varphi_i = \psi g_2g_3 \cdots g_if_i \colon F_i \to N$. The property $g_{i+1}g_{i}f_i = f_i$ shows that 
$\varphi_{i+1}f_i = \varphi_i$ for every $i \geq 2$. The universal property of direct limits gives the homomorphism $\varphi' \colon P \to N$ such 
that $\varphi'\iota_i = \varphi_i$ for every $i \geq 2$. In particular, $\varphi' \iota_2 f_1= \varphi_2 f_1$. On the LHS of this equality 
is just $\varphi'\varepsilon$ and on the RHS there is $\varphi_2 f_1 = \psi g_2f_2f_1 = \psi f_1 = \varphi$. So 
$\varphi'\varepsilon = \varphi$ and we are done. 
 \end{proof}
 
 \begin{thm}\label{T:noetherian-case} Let $R$ be a ring and let $I \subseteq R$ be an idempotent ideal finitely generated on the left
satisfying $Ir = 0 \Rightarrow r = 0$. Further suppose that the following conditions hold: 
 \begin{enumerate}
  \item every finitely generated projective $R$-module is stably free;
  \item there exists a proper ideal $K$ containing $I$ such that every finite multiple of  $R/K$ is a directly finite module, that is, it is not isomorphic to a proper direct summand of itself;
  \item there exists a flat homomorphism $\varphi \colon R \to S$ of rings such that $S$ is a nontrivial semisimple artinian ring and $\varphi(I) \neq 0$.
 \end{enumerate} 
Then $\ModR$ contains a pure projective $1$-tilting module which is not a direct sum of finitely presented modules. 
\end{thm}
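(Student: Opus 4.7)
The plan is to suppose for contradiction that the pure projective $1$-tilting module $T = P \oplus Q$ constructed in Proposition~\ref{P:noetherian-case} is isomorphic to a direct sum $\bigoplus_{\lambda \in \Lambda} T_\lambda$ of finitely presented modules, and to derive a contradiction by combining conditions (1), (2), and (3) with a stably free presentation of $P.$ The argument proceeds in two stages: first a reduction showing that $T$ must then be classical, followed by an explicit ``dimension count'' ruling this out.

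For the reduction, I would use that $\varepsilon(R) \subseteq T$ is cyclic and hence lies inside a finite sub-sum $T^+ = \bigoplus_{\lambda \in F} T_\lambda,$ giving $T = T^+ \oplus T^-$ with $T^- = \bigoplus_{\lambda \notin F} T_\lambda.$ The short exact sequence $0 \to R \to T^+ \to T^+/\varepsilon(R) \to 0$ then has $T^+$ finitely presented and in $\Add T.$ Using the defining sequence $0 \to R \to P \to Q \to 0,$ one has $T/\varepsilon(R) \cong P/\varepsilon(R) \oplus Q = Q \oplus Q,$ which is in $\Add T;$ combined with the decomposition $T/\varepsilon(R) = T^+/\varepsilon(R) \oplus T^-,$ this exhibits $T^+/\varepsilon(R)$ as a direct summand of $Q^2,$ hence also in $\Add T.$ Lemma~\ref{L:tilting-crit} then produces a finitely presented $1$-tilting module $U = T^+ \oplus (T^+/\varepsilon(R))$ tilting equivalent to $T,$ so $T$ is classical. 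In particular, $P \in \Add T = \Add U$ is a direct summand of some $U^n,$ making $P$ finitely generated projective.

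Now condition (1) yields a stably free presentation $P \oplus R^m \cong R^\ell.$ Because the trace of $P$ is $I \subseteq K,$ one has $PK = P,$ and tensoring with $R/K$ gives $(R/K)^m \cong (R/K)^\ell.$ Since $R/K \neq 0$ and $(R/K)^\ell$ is directly finite by (2), this forces $m = \ell,$ so $P \oplus R^m \cong R^m.$ Tensoring along the flat homomorphism $\varphi \colon R \to S$ from (3) produces $P \otimes_R S \oplus S^m \cong S^m$ in $\Modr S;$ since $S^m$ has finite length over the semisimple artinian ring $S,$ it is directly finite, forcing $P \otimes_R S = 0.$ However, the inclusion $\varepsilon \colon R \hookrightarrow P$ (injective by the standing hypothesis $Ir = 0 \Rightarrow r = 0$) tensors via the flat map $\varphi$ to an injection $S \hookrightarrow P \otimes_R S = 0,$ forcing $S = 0$ and contradicting the non-triviality of $S.$

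The principal obstacle is the reduction to classicality in the first stage: in general a direct summand of a direct sum of finitely presented modules need not itself be a direct sum of finitely presented modules, so one cannot assert outright that $P$ decomposes into finitely generated projectives. The identification $P/\varepsilon(R) \cong Q$ built into Whitehead's construction is exactly the extra structure that makes the reduction go through, since it allows a finite sub-sum of the $T_\lambda$ to absorb the embedding $R \hookrightarrow T$ and yield an honest finitely presented tilting presentation of $R.$
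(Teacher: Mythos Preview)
Your Stage~1 reduction to classicality is correct and nicely done, but the passage to Stage~2 contains a genuine gap: from $P \in \Add T = \Add U$ you conclude that $P$ is a direct summand of $U^n$ for some \emph{finite} $n$, hence finitely generated. This does not follow. The class $\Add U$ consists of summands of arbitrary (possibly infinite) direct sums $U^{(\kappa)}$; for instance, with $U = R$ every infinitely generated free module lies in $\Add U$. So your stably-free presentation $P \oplus R^m \cong R^\ell$ is not available, and the dimension count cannot be run on $P$ directly.

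The fix is simple and brings you very close to the paper's argument: run your Stage~2 dimension count on $U$ (or, even more directly, on each finitely presented summand $T_\lambda$) rather than on $P$. Each such module is finitely presented, has projective dimension at most~$1$ (being a summand of the $1$-tilting module $T$), and satisfies $T_\lambda I = T_\lambda$ (since $T_\lambda \in \Gen T = \{M \mid MI = M\}$). Taking a stably-free presentation $0 \to R^m \to R^n \to T_\lambda \to 0$ via condition~(1), tensoring with $R/K$ and using condition~(2) forces $m \geq n$, and then tensoring with $S$ via condition~(3) forces $T_\lambda \otimes_R S = 0$. Hence $T \otimes_R S = 0$, and since $P$ is a summand of $T$, also $P \otimes_R S = 0$; your final contradiction via the flat embedding $R \hookrightarrow P$ then goes through unchanged. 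This is exactly what the paper does, and it shows that your Stage~1 detour through Lemma~\ref{L:tilting-crit}, while valid, is not needed: one can work with the given summands $T_\lambda$ from the outset.
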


 \begin{proof} We claim that every finitely presented module $M$ of projective dimension at most $1$ satisfying $MI = M$ satisfies 
 $M \otimes_R S = 0$: Let 
\begin{equation} \label{ast} 
\xymatrix{ 
0 \ar[r] & P_1 \ar[r] & P_2 \ar[r] & M \ar[r] & 0
}
\end{equation}
be a projective presentation of such a module, where $P_1,P_2$ are finitely generated projectives.
Because of (i) we may assume $P_1 \simeq R^m$ and $P_2 \simeq R^n$ for some $m,n \in \bbN$. Now apply the functor 
$- \otimes_R R/K$ to the short exact sequence (\ref{ast}). Using $MK=M$ we get an epimorphism $(R/K)^m$ onto $(R/K)^n$. Since $(R/K)^m$ is directly finite, $m \geq n$ follows. 
Finally apply $- \otimes_R S$ to the presentation of $M$ to obtain 
$$\vcenter{
\xymatrix{
0 \ar[r] & S^m \ar[r] & S^n \ar[r] & M \otimes_R S \ar[r] & 0.
}}$$
Since $m \geq n$ and $S$ is an $S$-module of finite length then $m = n$ and $M \otimes_R S =0$. This proves the claim. 

We can complete the proof easily. Let $T$ be a pure projective $1$-tilting module such that $T^{\perp} = \{M \in \ModR \mid MI = M\}$.
Assume that $T = \oplus_{i \in I} M_i$ where every $M_i$ is a finitely presented module. Since $T$ is $1$-tilting, every 
$M_i$ is of projective dimension at most 1 and $M_iI = M_i$. Then $T \otimes_R S = 0$ by the claim. On the other hand 
there is a projective module $P$ of the trace ideal $I$, so $PI = P \in \Gen T$ and $P$ is a direct summand of $T^{(\kappa)}$.
In particular, $P \otimes_R S = 0$. But it is not possible if $\varphi(I) \neq 0$.  
(In fact we proved that $T$ cannot be a direct summand of finitely presented modules from $T^{\perp}$.)
 \end{proof} 
 
Observe that if $R$ is noetherian and $0 \neq I \neq R$ then Condition (2) of Theorem \ref{T:noetherian-case} holds with $I = K$.
Moreover, if $R$ is noetherian semiprime then Condition (3) is a consequence of Goldie's theorem \cite[Theorem~2.3.6, Proposition~2.1.16(ii)]{MR}. So in the noetherian context
we have to care only about the existence of suitable idempotent ideal $I$ and Condition (1). 

Now we can give promised examples. First, let us consider the universal enveloping algebra of ${\rm sl}(2,\mathbb{C})$,  
that is $R = \mathbb{C} \langle h,e,f\rangle /(h=ef-fe,2e = he-eh,-2f = hf-fh)$.
This is a noetherian domain, let $I$ be the ideal generated by $h,e,f$. Obviously $I^2 = I$ and $R/I \simeq \mathbb{C}$. So 
we can take $K = I$ in order to check (2). The condition (3) is satisfied as the inclusion of $R$ into its  
(right) quotient division ring is flat. Finally, according to \cite[Corollary~12.3.3]{MR} every finitely generated projective $R$-module is stably free.

We give one more example, this noetherian domain is a bit artificial but on the other hand it is semilocal and its projective modules 
are classified. For details see \cite[Example~5.1]{HP}. Let $R$ be a semilocal principal ideal domain such that 
$R/J(R) \simeq M_{3}(F) \times M_{3}(F)$, where $F$ is a field  (existence of such a ring follows from the work of Fuller and Shutters \cite{FuSh}, see for example \cite[Examples~3.3]{HP}). Let $\pi \colon R \to M_{3}(F) \times M_3(F)$ be the canonical surjection and let $\iota\colon F \times F$
be given by 
$\iota(x,y) = {\rm diag}(x,y,y) \times {\rm diag}(x,x,y)\,.$
Consider $\Lambda$ to be the pullback of the diagram

\[\xymatrix{
\Lambda \ar[r] \ar[d] & R \ar[d]^{\pi} \\
F \times F \ar[r]^{\iota} &  M_{3}(F)^2 
}\]

Then $\Lambda$ is a noetherian domain, $\Lambda/J(\Lambda) \simeq F \times F$, every finitely generated projective right $\Lambda$-module is free but there exists a countably generated projective module $P \in \Modr{\Lambda}$ such that $0 \neq {\rm Tr} (P) \neq R$. So we can apply Theorem \ref{T:noetherian-case} directly.
 
 
\section{$\tau$-rigidity and silting}

In this section, we will consider various classes of large (that is, not necessarily finitely generated) modules related to tilting: the silting, quasi-tilting and $\tau$-rigid ones. The detailed analysis of the Dubrovin-Puninski example in the Appendix will help us here in finding counter-examples. First, we recall the relevant definitions and basic properties.

\begin{defn}\label{def_tau}\cite{W} \rm
Let $R$ be a ring and $T$ be a module. Then $T$ is a (large) \emph{$\tau$-rigid} module provided there exists a projective presentation 
\begin{equation} \label{e1}
\xymatrix{P_1 \ar[r]^{\varphi} & P_0 \ar[r] & T \ar[r] & 0 }
\end{equation}
such that $\HomJT R{\varphi}M$ is surjective for each module $M \in \Gen T.$
\end{defn}

Note that $\tau$-rigidity can be tested in a simpler form:

\begin{lem}\label{variants} 
\begin{enumerate}
\item In Definition~\ref{def_tau} above, $\Gen T$ can be replaced with $\Dsum T$, where $\Dsum T$ denotes the class of all direct sums of copies of the module $T$. Moreover, we can assume that $P_1$ and $P_0$ are free modules.
\item If $M$ is $\tau$-rigid, then $\Gen T \subseteq T^\perp.$
\end{enumerate}
\end{lem}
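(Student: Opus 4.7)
\medskip

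\noindent\textbf{Proof plan.} For part (2), the key observation is that if
$$\xymatrix{ P_1 \ar[r]^{\varphi} & P_0 \ar[r] & T \ar[r] & 0 }$$
is the projective presentation witnessing $\tau$-rigidity, then extending to a full projective resolution $\cdots \to P_2 \to P_1 \to P_0 \to T \to 0$ and applying $\HomJT R{-}M$ for a given $M \in \Gen T$ yields a complex whose first cohomology is $\ExtJT 1RTM$. The surjectivity of $\HomJT R{\varphi}M$ forces the image of $\HomJT R{P_0}M \to \HomJT R{P_1}M$ to equal all of $\HomJT R{P_1}M$, hence certainly to contain the kernel of the next differential, so $\ExtJT 1RTM = 0$.

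For part (1), the replacement of $\Gen T$ by $\Dsum T$ uses only the projectivity of $P_1$: given any $M \in \Gen T,$ fix an epimorphism $\pi \dd T^{(I)} \to M$ and any $f \dd P_1 \to M$; lift $f$ through $\pi$ to $\tilde f \dd P_1 \to T^{(I)}$ by projectivity of $P_1,$ and apply the $\Dsum T$-hypothesis to obtain $g' \dd P_0 \to T^{(I)}$ with $g'\varphi = \tilde f.$ Then $g := \pi g'$ satisfies $g\varphi = f.$

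The assertion that $P_0$ and $P_1$ may be taken to be free will follow once I show that the surjectivity property of $\HomJT R{\varphi}{-}$ depends only on $T$ and not on the particular projective presentation chosen. This is the main technical step. Given two projective presentations $P_1 \mapr{\varphi} P_0 \to T \to 0$ and $F_1 \mapr{\psi} F_0 \to T \to 0,$ I will invoke the standard comparison theorem to produce chain maps $\alpha \dd F_\bullet \to P_\bullet$ and $\beta \dd P_\bullet \to F_\bullet$ lifting $\mathrm{id}_T,$ together with a chain homotopy $h \dd F_0 \to F_1$ satisfying $\beta_1\alpha_1 - \mathrm{id}_{F_1} = h\psi$ and $\beta_0\alpha_0 - \mathrm{id}_{F_0} = \psi h.$ For $f \dd F_1 \to M$ with $M \in \Dsum T,$ apply the hypothesis to $f\beta_1 \dd P_1 \to M$ to get $g' \dd P_0 \to M$ with $g'\varphi = f\beta_1.$ The corrected lift $g := g'\alpha_0 - fh \dd F_0 \to M$ will then satisfy
$$g\psi = g'\varphi\alpha_1 - fh\psi = f\beta_1\alpha_1 - fh\psi = f(\mathrm{id}_{F_1} + h\psi) - fh\psi = f,$$
as required. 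Taking $F_0$ and $F_1$ free then yields the second assertion of (1).

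The main obstacle will be keeping track of signs and directions in the chain homotopy identity; once that bookkeeping is done, the rest is formal. Note that the homotopy correction term $-fh$ is essential: a naive attempt with $g := g'\alpha_0$ would only give $g\psi = f(\mathrm{id}_{F_1} + h\psi),$ which differs from $f$ by exactly the homotopy term.
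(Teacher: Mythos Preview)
Your arguments for part~(2) and for the first assertion of~(1) (replacing $\Gen T$ by $\Dsum T$) are correct and match the paper's reasoning.

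However, the second assertion of~(1) contains a genuine error. You claim that the surjectivity of $\HomJT R{\varphi}{-}$ on $\Dsum T$ depends only on $T$ and not on the chosen projective presentation. This is false. For a concrete counterexample, suppose $P_1 \mapr{\varphi} P_0 \to T \to 0$ witnesses $\tau$-rigidity with $T \neq 0$, and consider the new presentation $P_1 \oplus R \mapr{(\varphi,0)} P_0 \to T \to 0$. Then $\HomJT R{(\varphi,0)}{T}$ sends $g$ to $(g\varphi,0)$, which is never surjective onto $\HomJT R{P_1}{T} \oplus \HomJT R{R}{T}$ since the second component is always zero.

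The gap in your homotopy argument lies in the identity $\beta_1\alpha_1 - \mathrm{id}_{F_1} = h\psi$. The comparison theorem gives uniqueness up to homotopy only when the target complex is acyclic; but a projective \emph{presentation} $F_1 \mapr{\psi} F_0 \to T \to 0$ need not have $\psi$ injective, so the augmented complex is not a resolution. Concretely, once you choose $h$ with $\psi h = \beta_0\alpha_0 - \mathrm{id}_{F_0}$, you only obtain $\psi(\beta_1\alpha_1 - \mathrm{id}_{F_1} - h\psi) = 0$, which does not force $\beta_1\alpha_1 - \mathrm{id}_{F_1} = h\psi$ unless $\psi$ is monic. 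In the counterexample above (with $\alpha_0 = \beta_0 = \mathrm{id}$ and $h = 0$), the difference $\beta_1\alpha_1 - \mathrm{id}_{F_1}$ is the projection onto the $R$-summand, which is not of the form $h\psi$.

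The paper's route is different and avoids this issue entirely: it uses Eilenberg's trick. For any projective $P$ there is a free module $F$ with $P \oplus F \cong F$. Choosing such free modules for $P_0$ and $P_1$ and adjoining their direct sum to \emph{both} terms via the identity map, one obtains a presentation
\[
P_1 \oplus G \ \mapr{\varphi \oplus \mathrm{id}_G} \ P_0 \oplus G \la T \la 0
\]
with both terms free. Since the added block is an isomorphism, $\HomJT R{\varphi \oplus \mathrm{id}_G}{M}$ is surjective if and only if $\HomJT R{\varphi}{M}$ is. The moral is that one does not pass to an \emph{arbitrary} free presentation, but to a carefully constructed one obtained from the given $\varphi$.
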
 
\begin{proof} (1) The first assertion follows from the projectivity of $P_1$, and the second by Eilenberg's trick (i.e., the fact that for each projective module $P$ there is a free module $F$ such that $P \oplus F \cong F$).

(2) The projective presentation in Definition \ref{def_tau} has the property that each homomorphism $h \in \HomJT R{\mbox{Im}(\varphi)}M$ with $M \in \Gen T$ extends to $P_0$. Since $P_0$ is projective, this just says that $\Gen T \subseteq T^\perp$. 
\end{proof}

\begin{defn}\label{classic} \rm
Let $R$ be a ring and $T$ be a module. 
\begin{enumerate} 
\item $T$ is \emph{finendo}, provided $T$ is finitely generated over its endomorphism ring. 
\item $T$ is (large) \emph{quasi-tilting} provided that $\Pres T = \Gen T \subseteq T^\perp.$ 
\item $T$ is (large) \emph{silting} provided that there exists a projective presentation (\ref{e1}) such that for each module
$M,$ $\HomJT R{\varphi}M$ is surjective iff $M \in \Gen T$.    
\end{enumerate}
\end{defn}

The following Lemma recalls some relations among the notions defined above:

\begin{lem}\label{rel} Let $R$ be a ring and $T$ be a module.
\begin{enumerate}
\item Each silting module is finendo, quasi-tilting, and $\tau$-rigid.
\item $T$ is finendo and quasi-tilting, iff $T$ is a $1$-tilting $R/\ann T$-module.
\item $T$ is $1$-tilting, iff $T$ is faithful silting, iff $T$ is faithful finendo and quasi-tilting. 
 \end{enumerate}
\end{lem}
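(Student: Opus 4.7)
\emph{Plan.} I will prove the three parts in order, with (2) and (3) building on (1).

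For (1), silting $\Rightarrow$ $\tau$-rigid is immediate from unpacking the two definitions, and Lemma~\ref{variants}(2) then gives $\Gen T \subseteq T^\perp$. For the equality $\Pres T = \Gen T$ needed for quasi-tilting, I would observe that the class $\mathcal{D}_\varphi := \{N \in \Modr R : \Hom_R(\varphi, N) \text{ is surjective}\}$ coincides with $\Gen T$ by assumption, and is closed under coproducts, quotients (using projectivity of $P_1$ to lift), and extensions. Given $M \in \Gen T$ and an epimorphism $p \colon T^{(I)} \twoheadrightarrow M$, a diagram chase using projectivity of $P_0$ together with the lift of $p$ shows that $\ker p$ again lies in $\mathcal{D}_\varphi = \Gen T$, yielding a $T$-presentation of $M$. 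The finendo property is the most delicate point; I expect to argue it along the lines of Angeleri-H\"{u}gel, Marks and Vit\'{o}ria, by applying the silting characterization to $T$ itself and extracting a finite generating set of $T$ over $\End_R(T)$ from the projective presentation (\ref{e1}).

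For (2), the key observation is that $\Gen T \subseteq \Modr{\bar R}$ where $\bar R = R/\ann T$, since $\ann T$ annihilates every epimorphic image of a coproduct of copies of $T$. Hence $\Gen T$ and $\Add T$ are the same whether computed over $R$ or $\bar R$, and in particular $\Pres T$ does not depend on the base. For ``$\Rightarrow$'', finendo-ness together with $\bar R \in \Gen T = \Pres T$ provides a short exact sequence $0 \to K \to T^n \to \bar R \to 0$ with $K \in \Add T;$ this is turned into a (T3)-type sequence $0 \to \bar R \to T_0 \to T_1 \to 0$ over $\bar R$ by a standard rotation argument analogous to Lemma~\ref{L:tilting-crit}. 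Conditions (T1) and (T2) over $\bar R$ are then derived from the quasi-tilting hypothesis by replacing $R$-projective presentations with $\bar R$-projective presentations of $T$. For ``$\Leftarrow$'', the tilting sequence $0 \to \bar R \to T_0 \to T_1 \to 0$ in $\Modr{\bar R}$ forces $\bar R \in \Gen T$, whence $T$ is finendo (since $\bar R$ is cyclic as $\bar R$-module), and the identity $\Gen T = T^\perp$ over $\bar R$ transfers to the quasi-tilting inclusion $\Gen T \subseteq T^\perp$ over $R$ once the relevant Ext-groups of $\bar R$-modules are shown to agree in the appropriate range.

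Finally, (3) is essentially bookkeeping from (1) and (2) together with the observation that the monomorphism $R \hookrightarrow T_0 \in \Add T$ in condition (T3) forces $\ann T = 0$ for any $1$-tilting module $T$. Thus ``$1$-tilting $\Leftrightarrow$ faithful silting'' reduces to verifying that a $1$-tilting module is silting (via the projective presentation associated to any free resolution of $T$, using $T^\perp = \Gen T$), and, conversely, applying (1) in conjunction with the $\bar R = R$ specialization of (2). The equivalence with faithful finendo and quasi-tilting is then the $\ann T = 0$ instance of (2). The main technical obstacles I anticipate are establishing finendo for silting modules in (1), and reconciling $\Ext^1_R$ with $\Ext^1_{\bar R}$ in the proof of (2).
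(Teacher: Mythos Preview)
The paper does not prove this lemma directly: parts (1) and (2) are quoted from \cite{AMV} (Proposition~3.13(1) and Lemma~3.4 there), and (3) is then a one-line corollary of (1) and (2). Your proposal, by contrast, attempts to reconstruct the underlying arguments. For (1) and (3) your plan is essentially the route taken in \cite{AMV}, and the obstacles you single out (establishing finendo in (1); comparing $\Ext^1_R$ with $\Ext^1_{\bar R}$ in (2)) are exactly the substantive points of those proofs.

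There is, however, a genuine confusion in your outline of the forward direction of (2). Finendo-ness does not furnish a surjection $T^n \twoheadrightarrow \bar R$, nor is $\bar R \in \Gen T$ available to you at this stage of the argument. What finendo gives is an $\Add T$-preenvelope $\bar R \hookrightarrow T^n$: pick generators $t_1,\dots,t_n$ of $T$ over $\End_R(T)$ and send $1 \mapsto (t_1,\dots,t_n)$; the kernel of the resulting map $R \to T^n$ is precisely $\ann T$. The short exact sequence one obtains is therefore $0 \to \bar R \to T^n \to C \to 0$ with $C \in \Gen T$, already in the (T3) orientation, so no ``rotation'' is needed or available. Moreover, even granting your sequence, the hypothesis $\Pres T = \Gen T$ would only place $K$ in $\Gen T$, not in $\Add T$ as you assert. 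In the correctly oriented sequence the analogous issue is upgrading $C \in \Gen T$ to $C \in \Add T$, and this (equivalently, verifying the Colpi--Trlifaj criterion $\Gen T = T^{\perp}$ over $\bar R$) is where the real work lies: it is exactly here that the preenvelope property of $\bar R \hookrightarrow T^n$ and the $\Ext$-comparison you anticipate must be combined.
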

\begin{proof} (1) follows by \cite[Proposition 3.13(1)]{AMV}, and (2) by \cite[Lemma 3.4]{AMV}.
(3) Clearly, each $1$-tilting module is faithful and silting; the rest follows by parts (1) and (2).
\end{proof}
  
\begin{expl}\label{artina} The general picture simplifies a lot in the particular setting of finitely generated modules over artin algebras: 
The silting and quasi-tilting modules coincide (e.g., by \cite[3.16]{AMV}), and the converse of Lemma \ref{variants}(ii) holds, because the $\tau$-rigidity of $T$ can be expressed simply by the formula $\HomJT RT{{\tau}T} = 0$, where $\tau$ denotes the AR-translation \cite{AIR} (hence the term '$\tau$-rigid'); the latter is further equivalent to the condition of $\Gen T \subseteq T^\perp$ by \cite{AS}.
\end{expl} 

We will briefly stop at the relation of $\tau$-rigidity to the condition of $\Gen T \subseteq T^\perp$ (cf.\ Lemma \ref{variants}(2)). They are known to be equivalent for any ring $R$ in the case when $T$ has projective dimension $\leq 1$ (so, in particular, for all right hereditary rings $R$), cf.\ \cite{W}. The following proposition extends this equivalence to further classes of rings and modules:
 
\begin{prop}\label{prop} Let $R$ be a ring and $T$ be a module possessing a projective resolution $\sigma$ such that the first syzygy $\Omega^1(T)$ of $T$ in $\sigma$ has a projective cover. Then $T$ is $\tau$-rigid, if and only if $\Gen T \subseteq T^\perp$.
\end{prop}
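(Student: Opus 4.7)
The forward direction, that $\tau$-rigidity implies $\Gen T \subseteq T^\perp$, is exactly Lemma~\ref{variants}(2), so the substance of the proof lies in the converse. Assume $\Gen T \subseteq T^\perp$. My plan is to use the projective cover of $\Omega^1(T)$ to manufacture a canonical projective presentation of $T$ and to verify that it satisfies the defining property of $\tau$-rigidity.

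Let $\pi\colon Q \to \Omega^1(T)$ be the projective cover, with superfluous kernel $L$, and let $\iota\colon \Omega^1(T) \hookrightarrow P_0$ be the inclusion coming from $\sigma$. I would take the presentation
$$\xymatrix{Q \ar[r]^{\varphi} & P_0 \ar[r] & T \ar[r] & 0}$$
with $\varphi = \iota\circ\pi$, and try to show that $\Hom_R(\varphi,M)$ is surjective for every $M \in \Gen T$; by Lemma~\ref{variants}(1) it is in fact enough to handle $M \in \Dsum T$, though the argument should work uniformly. Given $h\colon Q \to M$ the task is to produce $f\colon P_0 \to M$ with $f\varphi = h$. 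The perpendicularity hypothesis yields $\Ext^1_R(T,M) = 0$, so applying $\Hom_R(-,M)$ to the short exact sequence $0 \to \Omega^1(T) \to P_0 \to T \to 0$ gives a surjection $\Hom_R(P_0,M) \twoheadrightarrow \Hom_R(\Omega^1(T),M)$. The problem therefore reduces to showing that $h$ factors through $\pi$, equivalently that $h(L) = 0$: the induced $\bar{h}\colon \Omega^1(T) \to M$ then extends to the desired $f$, and $f\varphi = h$.

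The real work is to establish $h(L) = 0$. My approach would be a pushout argument: form the pushout of $\pi$ and $h$ to obtain an induced morphism $\bar{h}\colon \Omega^1(T) \to M/h(L)$; exploit the fact that $M/h(L) \in \Gen T \subseteq T^\perp$ to extend $\bar{h}$ to $P_0 \to M/h(L)$; and then lift this map to some $f\colon P_0 \to M$ using the projectivity of $P_0$ against the surjection $M \twoheadrightarrow M/h(L)$. A direct chase then shows that the candidate $f$ satisfies $f\varphi - h\colon Q \to h(L) \subseteq M$, so $f$ recovers $h$ only modulo $h(L)$. The delicate final step is to use the superfluousness of $L$ inside the projective $Q$, encoded by the inclusion $L \subseteq Q\cdot J(R)$, together with the perpendicularity hypothesis, to force this error term to vanish. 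It is exactly at this point that the hypothesis on the existence of the projective cover of $\Omega^1(T)$ is indispensable, and I expect this closing step to be the most technically subtle part of the argument.
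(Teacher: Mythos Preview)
Your approach is essentially identical to the paper's: the same presentation $\varphi = \iota\pi$ built from a projective cover of $\Omega^1(T)$, the same pushout along the given map to land in a quotient belonging to $\Gen T$, the same extension to $P_0$ using $\Ext^1(T,-)=0$ on that quotient, the same lift back along the projection via projectivity of $P_0$, and the same error term $\delta = h - f\varphi$ with image contained in $h(L)$.

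The only place you drift is the closing step. You propose to invoke $L \subseteq Q\cdot J(R)$ and a second use of the perpendicularity hypothesis; neither is needed, and this is not where the subtlety lies. The paper simply applies the \emph{definition} of a superfluous submodule. Observe that $\varphi$ kills $L$, so $\delta|_L = h|_L$ and hence $\delta(L) = h(L)$. Combined with $\delta(Q) \subseteq h(L)$ you already have $\delta(Q) = \delta(L)$, which says $Q = \Ker\delta + L$. Since $L$ is superfluous in $Q$, this forces $\Ker\delta = Q$, i.e.\ $\delta = 0$. The step is actually the cleanest part of the argument once you notice the equality $\delta(L)=h(L)$; no Jacobson-radical considerations enter.
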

\begin{proof} We only have to prove the if-part. 

Consider the exact sequences $0 \to \Omega^1(T) \overset{\mu}\hookrightarrow P_0 \to  T \to 0$ and  $0 \to K \hookrightarrow P_1 \overset{\varphi}\to \Omega^1(T) \to 0$ where $\varphi$ is a projective cover of $\Omega^1(T)$. Let $X$ be a set and $g \in \HomJT R{P_1}{T^{(X)}}$. We will prove that $g$ factors through $\mu \varphi$ provided that $\Gen T \subseteq T^\perp$.

Consider the pushout of $\varphi$ and $g$:
$$\begin{CD} 
	0 @>>> K @>{\subseteq}>>	P_1 @>{\varphi}>> \Omega^1(T) @>>> 0 	\\
	@. @V{g^\prime}VV			@V{g}VV	  @V{k}VV   @. \\
	0 @>>> g(K) @>{\subseteq}>>	T^{(X)} @>{\pi}>> G  @>>> 0 \\
\end{CD}$$
Here, $g^\prime$ denotes the restriction of $g$ to $K$. Clearly, $G \in \Gen T$.

If $\Gen T \subseteq T^\perp$, then $k$ extends to some $k^+ \in \HomJT R{P_0}G$. As $P_0$ is projective, there exists $h \in \HomJT R{P_0}{T^{(X)}}$ such that $\pi h = k^+$.

Let $\delta = g - h \mu \varphi$. Then $\pi \delta = \pi g - \pi h \mu \varphi = \pi g - k^+ \mu \varphi = \pi g - k \varphi = 0$.  

Let $x \in P_1$. Then $\delta(x) = g(s)$ for some $s \in K$. Since $h \mu \varphi (s) = 0$, also $\delta(x) = \delta(s)$, whence $x - s \in \mbox{Ker}(\delta)$. This proves that $P_1 = \mbox{Ker}(\delta) + K$. Since $\varphi$ is a projective cover of $\Omega^1(T)$, the module $K$ is superfluous in $P_1$. This yields $\delta = 0$, and $g = h \mu \varphi$ is the desired factorization of $g$ through $\mu \varphi$. So $T$ is $\tau$-rigid by Lemma \ref{variants}(1).  
\end{proof}      

There are several cases that fit the setting of Proposition \ref{prop}:

\begin{cor}\label{cor} Let $R$ be a ring and $T$ a module. Assume that either 
\begin{enumerate}
\item $R$ is right pefect, or
\item $R$ is semiperfect and $T$ is finitely presented, or
\item $T$ has projective dimension $\leq 1$.
\end{enumerate}
Then $T$ is $\tau$-rigid if and only if $\Gen T \subseteq T^\perp.$ In particular, if $T$ is quasi-tilting, then $T$ is $\tau$-rigid.
\end{cor}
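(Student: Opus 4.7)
The plan is to deduce each case from Proposition~\ref{prop}. By Lemma~\ref{variants}(2) the forward implication already holds for any ring and any module, so I only need the converse in each of the three cases, and for that it suffices to exhibit a projective resolution of $T$ in which $\Omega^1(T)$ admits a projective cover.

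For case (1), a right perfect ring is precisely one over which \emph{every} right module has a projective cover~\cite[Thm.~43.9]{Wi}; pick any projective resolution of $T$ and the first syzygy has a projective cover, so Proposition~\ref{prop} applies. For case (2), a semiperfect ring is one over which every finitely generated right module has a projective cover. Since $T$ is finitely presented, choose a projective presentation $P_1\overset{\varphi}{\to} P_0\to T\to 0$ with $P_0,P_1$ finitely generated projective; then $\Omega^1(T)=\mbox{Im}(\varphi)$ is finitely generated and hence admits a projective cover. For case (3), take any projective resolution $0\to P_1\to P_0\to T\to 0$ of length at most one. Then $\Omega^1(T)=P_1$ is itself projective, so the identity map on $P_1$ is a projective cover (its kernel is zero and hence superfluous). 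In each case Proposition~\ref{prop} gives the equivalence of $\tau$-rigidity with $\Gen T\subseteq T^\perp$.

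For the final clause, note that if $T$ is quasi-tilting, then by Definition~\ref{classic}(2) one has $\Gen T=\Pres T\subseteq T^\perp$, so the established equivalence yields that $T$ is $\tau$-rigid.

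The only subtle point is the verification in case (3) that $\Omega^1(T)$ qualifies as having a projective cover when it is already projective; this is a general feature of projective modules (the zero submodule is superfluous in any module) and does not rely on the ring. Everything else is routine bookkeeping around Proposition~\ref{prop}, which is what does the real work.
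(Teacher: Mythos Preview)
Your proof is correct and follows the same approach as the paper, which does not give an explicit argument but merely introduces the corollary as listing ``several cases that fit the setting of Proposition~\ref{prop}.'' You have spelled out precisely why each of the three hypotheses guarantees a projective resolution whose first syzygy has a projective cover, and your handling of the final clause (quasi-tilting $\Rightarrow$ $\Gen T\subseteq T^\perp$ $\Rightarrow$ $\tau$-rigid under the standing hypotheses) is exactly what is intended.
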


\begin{rem} \rm Note that Corollary \ref{cor} is new even for artin algebras (which are covered by the first case above), since the characterization of $\tau$-rigidity in \cite{AIR} mentioned in Example \ref{artina} is restricted to finitely generated modules. 
\end{rem}  

Let us now look at higher projective dimensions:

\begin{lem}\label{more} Let $T$ be a faithful module of projective dimension $> 1$. Then
\begin{enumerate}
\item $T$ is not $\tau$-rigid.
\item If $R$ embeds into $T^k$ for some finite $k$, $T$ has projective dimension $2$, and $\ExtJT 2RT{T^{(\kappa)}} = 0$ for each $\kappa$, then $\Gen T \nsubseteq T^\perp$.  
\end{enumerate}
\end{lem}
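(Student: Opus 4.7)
For (1), the plan is to argue by contradiction: if $T$ were $\tau$-rigid, I would deduce $\pd T \leq 1,$ contradicting $\pd T > 1.$ Fix a free projective presentation $\varphi \colon P_1 \to P_0 \to T \to 0$ with $P_1 = R^{(\alpha)}$ (Lemma~\ref{variants}(1)) and put $K = \ker \varphi.$ The $\tau$-rigidity condition at $M = T \in \Gen T$ forces $\HomJT R{P_0}T \to \HomJT R{P_1}T$ to be surjective, so every $g \colon P_1 \to T$ must vanish on $K.$ Writing an arbitrary $x \in K$ as $x = \sum_{i \in F} e_i r_i$ (finite support, $\{e_i\}$ the free basis) and varying $g$ through the maps $e_i \mapsto t_i \in T,$ the identity $\sum t_i r_i = 0$ for all $(t_i) \in T^F$ forces (by zeroing all but one coordinate) $T r_i = 0,$ i.e.\ $r_i \in \ann T = 0$ by faithfulness. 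Hence $K = 0,$ so $\varphi$ is injective and $\pd T \leq 1.$

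For (2), the plan is again by contradiction: assume $\Gen T \subseteq T^{\perp}$ and derive $\pd T \leq 1,$ contradicting $\pd T = 2.$ First I would upgrade the orthogonality to $\Gen T \subseteq T^{\perp_\infty}.$ For $M \in \Gen T$ with an epimorphism $T^{(\kappa)} \twoheadrightarrow M$ of kernel $N,$ applying $\HomJT RT{-}$ to the short exact sequence and combining $\ExtJT 1RT{T^{(\kappa)}} = 0 = \ExtJT 1RTM$ (from $\Gen T \subseteq T^\perp$), $\ExtJT 2RT{T^{(\kappa)}} = 0$ (hypothesis) and $\ExtJT 3RTN = 0$ (since $\pd T \leq 2$), the long exact sequence sandwiches $\ExtJT 2RTM = 0.$ Next, fix a projective resolution $0 \to P_2 \to P_1 \to P_0 \to T \to 0$ with $P_1 = R^{(\alpha)}$ free. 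The embedding $R \hookrightarrow T^k$ extends coordinate-wise to $P_1 \hookrightarrow (T^k)^{(\alpha)} = T^{(k\alpha)};$ restricting gives $P_2 \hookrightarrow T^{(k\alpha)}$ with cokernel $Q \in \Gen T.$ The finiteness of $k$ is essential here, so that the ambient module is a \emph{direct sum} (rather than a product) of copies of $T,$ to which the orthogonality vanishing applies.

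The crux is to compute $\ExtJT 2RT{P_2}$ in two independent ways. Applying $\HomJT RT{-}$ to $0 \to P_2 \to T^{(k\alpha)} \to Q \to 0$ and using the vanishing of $\ExtJT 1RT{-}$ and $\ExtJT 2RT{-}$ on both $T^{(k\alpha)}$ and $Q$ (by the previous step), the long exact sequence sandwiches $\ExtJT 2RT{P_2} = 0.$ Computing the same group from the projective resolution of $T,$ it equals the cokernel of the restriction $\HomJT R{P_1}{P_2} \to \HomJT R{P_2}{P_2}$ along the inclusion $P_2 \hookrightarrow P_1,$ and its vanishing means $\id_{P_2}$ lifts to some $r \colon P_1 \to P_2.$ This retraction splits $P_1 = P_2 \oplus P_1'$ with $P_1'$ projective; since $P_1 \to P_0$ vanishes on $P_2,$ its restriction to $P_1'$ gives an exact sequence $0 \to P_1' \to P_0 \to T \to 0,$ whence $\pd T \leq 1$ --- the desired contradiction. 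The main obstacle is the trick of using $P_2$ itself as a test module; once identified, both sides of the $\ExtJT 2RT{P_2}$ computation are routine Ext bookkeeping.
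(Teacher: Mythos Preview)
Your proof of (1) is correct and is essentially the paper's argument, just phrased contrapositively: the paper picks a nonzero $x=\sum b_ir_i\in\ker\varphi$, uses faithfulness to find $y\in T$ with $yr_0\neq 0$, and exhibits the non-factoring map $b_0\mapsto y$, $b_i\mapsto 0$.

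Your proof of (2) is correct but follows a different route from the paper. The paper embeds $R^{(\kappa)}\hookrightarrow T^{(k\kappa)}$ for every cardinal $\kappa$; from $\Ext^1_R(T,T^{(k\kappa)}/R^{(\kappa)})=0$ (since the quotient is in $\Gen T$) and $\Ext^2_R(T,T^{(k\kappa)})=0$ it gets $\Ext^2_R(T,R^{(\kappa)})=0$, then dimension-shifts to $\Ext^1_R(\Omega^1(T),R^{(\kappa)})=0$ for all $\kappa$, and concludes that $\Omega^1(T)$ is projective via the general fact that a module of projective dimension $\le 1$ which is $\Ext^1$-orthogonal to all free modules is projective. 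You instead work with a single fixed resolution, embed the specific syzygy $P_2$ into $T^{(k\alpha)}$, and show directly that $\Ext^2_R(T,P_2)=0$ forces $\id_{P_2}$ to lift along $P_2\hookrightarrow P_1$, splitting the resolution. Your argument is more hands-on and avoids the quantification over all $\kappa$; the paper's is shorter once one invokes the general projectivity criterion. One remark: your preliminary upgrade $\Gen T\subseteq T^{\perp_\infty}$ is correct but not needed --- to sandwich $\Ext^2_R(T,P_2)$ you only use $\Ext^1_R(T,Q)=0$ (immediate from $Q\in\Gen T\subseteq T^\perp$) and $\Ext^2_R(T,T^{(k\alpha)})=0$ (hypothesis).
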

\begin{proof} (1) Let  $F_1 \overset{\varphi}\to F_0 \to T \to 0$ be a presentation of $T$ such that $F_1$ and $F_0$ are free modules (see Lemma \ref{variants}(1)). 
Since $T$ has projective dimension $> 1$, $\varphi$ is not monic. Let $B = \{ b_i \mid i \in I \}$ be a free basis of $F_1$ and consider $0 \neq x = \sum_{i \in I} b_i r_i \in \mbox{Ker}(\varphi)$. W.l.o.g., $r_0 \neq 0$. Since $T$ is faithful, there exists $y \in T$ such that $y r_0 \neq 0$.  

Define $g \in \HomJT R{F_1}T$ by $g(b_0) = y$ and $g(b_i) = 0$ for $0 \neq i \in I$. Then $g(x) \neq 0$, so $g$ does not factorize through $\varphi$. Thus $\HomJT R{\varphi}T$ is not surjective, and $T$ is not $\tau$-rigid.

(2) Assume that $\Gen T \subseteq T^\perp$. Since $R \subseteq T^k$, also $R^{(\kappa)} \subseteq T^{(k . \kappa)}$ for each cardinal $\kappa$. Both $\ExtJT 1RT{T^{(k . \kappa)}/R^{(\kappa)}} = 0$ and $\ExtJT 2RT{T^{(k . \kappa)}} = 0$, whence $\ExtJT 2RT{R^{(\kappa)}} = 0$, and $\ExtJT 1R{\Omega^1(T)}{R^{(\kappa)}} = 0$ by dimension shifting. Since $\Omega^{1}(T)$ has projective dimension $1$, we infer that $\ExtJT 1R{\Omega^{1}(T)}M = 0$ for each module $M$. Then $T$ has projective dimension $1$, a contradiction. 
\end{proof}

\begin{cor}\label{2-tilt} Let $T$ be a tilting module of projective dimension $2$. Then $\Gen T \nsubseteq T^\perp$, and hence $T$ is not $\tau$-rigid.
\end{cor}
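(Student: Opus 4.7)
The plan is to reduce Corollary \ref{2-tilt} to Lemma \ref{more} by verifying its hypotheses for a $2$-tilting module $T$ of projective dimension exactly $2$. The two statements we want are that $\Gen T \nsubseteq T^\perp$ and that $T$ is not $\tau$-rigid. For the former we aim at Lemma \ref{more}(2); for the latter we then just invoke the contrapositive of Lemma \ref{variants}(2).

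First I would unpack Condition (T3) for a $2$-tilting module: there is an exact sequence
$$\xymatrix{0 \ar[r] & R \ar[r]^-\iota & T_0 \ar[r] & T_1 \ar[r] & T_2 \ar[r] & 0}$$
with each $T_i \in \Add T$. In particular $\iota$ embeds $R$ into $T_0$, which is a direct summand of some $T^{(\lambda)}$. Since $R$ is cyclic, the image of $1 \in R$ inside $T^{(\lambda)}$ has only finitely many nonzero coordinates, so the embedding $R \hookrightarrow T_0 \hookrightarrow T^{(\lambda)}$ factors through $T^k$ for some finite $k$, giving a monomorphism $R \hookrightarrow T^k$. This in particular certifies that $T$ is faithful, since any $r \in \ann T$ would annihilate all of $T^k$ and hence annihilate the image of $1$, forcing $r = 0$.

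Next, the second $\Ext$-vanishing condition (T2) gives $\Ext^2_R(T, T^{(\kappa)}) = 0$ for every cardinal $\kappa$. Combined with the hypothesis $\pd(T) = 2$ and the faithfulness plus finite embedding just established, all assumptions of Lemma \ref{more}(2) are met, and we conclude $\Gen T \nsubseteq T^\perp$.

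Finally, Lemma \ref{variants}(2) states that a $\tau$-rigid module $M$ satisfies $\Gen M \subseteq M^\perp$; contrapositively, $\Gen T \nsubseteq T^\perp$ implies $T$ is not $\tau$-rigid. (Alternatively, one could directly apply Lemma \ref{more}(1), since $T$ is faithful and of projective dimension $2 > 1$.) There is no real obstacle here — the only mildly subtle point is the passage from $R \hookrightarrow T_0 \in \Add T$ to $R \hookrightarrow T^k$ for a \emph{finite} $k$, which uses the cyclicity of $R$ together with the definition of $\Add$. Everything else is a direct citation of the preceding lemmas.
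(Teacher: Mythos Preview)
Your proposal is correct and follows exactly the route the paper intends: the corollary is stated without proof immediately after Lemma~\ref{more}, and your argument simply spells out why the tilting axioms (T3) and (T2) supply the hypotheses of Lemma~\ref{more}(2), with the $\tau$-rigidity conclusion then coming from Lemma~\ref{variants}(2) (or equivalently Lemma~\ref{more}(1)). The passage from $R\hookrightarrow T_0\in\Add T$ to $R\hookrightarrow T^k$ with $k$ finite via cyclicity of $R$ is precisely the point that needs to be made explicit, and you handle it correctly.
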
 

In general, $\Gen T \subseteq T^\perp$ is a weaker condition than the $\tau$-rigidity of $T$: in fact, in contrast with Example \ref{artina}, quasi-tilting modules need not be silting, and faithful quasi-tilting modules need not be tilting:

\begin{expl}\label{prod} \rm Let $R$ be a commutative von Neumann regular ring which is not artinian (e.g., let $R$ be an infinite direct product of fields). Then each finitely generated submodule of a projective module is a direct summand \cite[1.11]{G}, and each projective module is isomorphic to a direct sum of cyclic modules of the form $eR$ where $e$ is an idempotent in $R$. 

We claim that each cyclic module $C = R/I$ is silting. First, the ideal $I$ is completely determined by the set $E$ of the idempotents it contains. Let $P = \bigoplus_{e \in E} eR$. Consider the presentation         
\begin{equation} \label{e2}
\xymatrix{0 \ar[r] & K \ar[r] & P^2 \ar[r]^{\varphi} & R \ar[r] & C \ar[r] & 0} 
\end{equation}
where $\varphi (e,f) = e$ for each $e, f \in E$ (whence $\Img \varphi = I$), and $K = \Ker \varphi$. If $M \in \Gen C$, then for each $e \in E$, $\HomJT R{eR}M = 0$, because each element of $M$ is annihilated by $e$. So $\varphi$ witnesses the $\tau$-rigidity of $C$.  
Conversely, if $\HomJT R{eR}M = 0$ for each $e \in E$, then $M.I = 0$, whence $M$ is an $R/I$-module, i.e., $M \in \Gen C$. 

Let $M$ be a module such that $\HomJT R{eR}M \neq 0$ for some $e \in E$. Since $K$ contains a copy of $P$, there exists $g \in \HomJT R{P^2}M$ such that $g \restriction K \neq 0$, whence $g$ does not factorize through $\varphi$. This shows that if $M$ is a module such that each homomorphism from $P^2$ into $M$ factorizes through $\varphi$, then $\HomJT R{eR}M = 0$ for each $Xe \in E$, whence $M \in \Gen C$, and the claim is proved. 

Notice that in the sequence (\ref{e2}) witnessing that $C$ is silting, the map $\varphi$ is not monic, even if $I$ is non-zero projective (which is the case. e.g., when $I \neq 0$ is countably generated). Indeed, in the latter case each projective resolution $0 \to P_1 \overset{\varphi}\to P_0 \to C \to 0$ of $C$ witnesses that $C$ is $\tau$-rigid, but not that $C$ is silting, because $\Gen C \subsetneq C^\perp$.      
  
Moreover, $\rmod {R/I} = \Pres C = \Gen C \subseteq C^\perp$, because $I$ is an idempotent ideal in $R$, so $\ExtJT 1R{R/I}M \cong \ExtJT 1{R/I}{R/I}M = 0$ for each $R/I$-module $M$. So $C$ is a finitely generated finendo quasi-tilting module (which follow also by Lemma \ref{rel}(1)). 

Notice that each ideal $I$ of $R$ defines the cyclic silting module $C = R/I$, and different ideals yield non-equivalent silting modules. Such abundance of silting modules contrasts with the fact that there exists only one tilting module up to equivalence, namely $R,$ because all finitely presented modules are projective. 

Let $\simpR$ denote a representative set of all simple modules. By the above, each $S \in \simpR$ is silting, and it is $\sum$-injective by \cite[3.2 and 6.18]{G}. Moreover, for each finite subset $F$ of $\simpR$, the direct sum $D = \bigoplus_{S \in F} S$ is silting (and $\sum$-injective), because $D \cong R/I$ for $I = \bigcap_{S \in F} \Ann S$. 

\medskip
However, infinite direct sums of simple modules need not be silting, even if they are quasi-tilting: to see this, we consider the particular case when $R = K^\kappa$ is an infinite product of copies of a field $K$. For each $r = (r_i)_{i < \kappa} \in R$, we denote by $\mbox{supp}(r)$ the support of $r$, that is, the set of all $i < \kappa$ such that $r_i \neq 0$.  For each $X \subseteq \kappa$, we denote by $e_X$ the characteristic function of $X$, that is, the unique idempotent in $R$ satisfying $X = \mbox{supp}(e_X)$. Simple projective modules coincide up to isomorphism with the simple submodules of $R$, and these are exactly the right ideals $S_\alpha$ generated by the idempotents $e_{X_\alpha} \in R$ ($\alpha < \kappa$) where $X_\alpha = \{ \alpha \}$. In particular, $\mbox{Soc}(R) = \bigoplus_{j < \kappa} S_j$ is projective. 

Let $\mathcal F$ be a filter on $\kappa$. Then $I_{\mathcal F} = \{ r \in R \mid (\kappa \setminus \mbox{supp}(r)) \in \mathcal F \}$ is an ideal of $R$. Conversely, each ideal $I$ in $R$ is determined by the idempotents it contains, and the complements of supports of those idempotents form a filter $\mathcal F$ such that $I = I_{\mathcal F}$. For example, if $\mathcal F _0$ denotes the Fr\' echet filter of all cofinite subsets of $\kappa$, then $I_{\mathcal F _0} = \mbox{Soc}(R)$. 

Let $\mathcal G$ be a non-principal ultrafilter on $\kappa$. Then for each $X \subseteq \kappa$, either $X \in \mathcal G$, or else $\kappa \setminus X \in \mathcal G$. Also $I_{\mathcal F _0} \subseteq I_{\mathcal G}$, and $I_{\mathcal G}$ is a maximal ideal in $R$.  

Let $S$ be a simple non-projective module and $T = \mbox{Soc}(R) \oplus S$. Then $T$ has projective dimension $> 1$ (in fact, assuming the Continuum Hypothesis, $R$ has global dimension $2$ by \cite[2.51]{O}, whence the projective dimension of $T$ equals $2$). 

We claim that $T$ is quasi-tilting. First, since $T$ is semisimple, $\Pres T = \Gen T$ consists of direct sums of copies of $S$ and the projective simple modules $S_\alpha$ ($\alpha < \kappa$). So we only have to prove that $\ExtJT 1RSD = 0$ where $D = \bigoplus_{\alpha < \kappa} S_\alpha^{(\lambda_\alpha)}$ for some cardinals $\lambda_\alpha$ ($\alpha < \kappa$) such that $\lambda_\alpha \neq 0$ for infinitely many $\alpha < \kappa$. Since the module $P = \prod_{\alpha < \kappa} S_\alpha^{(\lambda_\alpha)}$ is injective, it suffices to show that $\HomJT RS{P/D} = 0$. 

Assume this is not the case, take $0 \neq \varphi \in \HomJT RS{P/D}$, and let $x = (x_j)_{j < \kappa} \in P \setminus D$ be such that $\varphi(1 + I_{\mathcal G}) = x + D$. Then $xI_{\mathcal G} \subseteq D$. Let $A = \{ j < \kappa \mid x_j \neq 0 \}$. Then $A \in \mathcal G$ (otherwise $\kappa \setminus A  \in \mathcal G$, so $e_A \in I_{\mathcal G}$, but $x.e_A = x \notin D$). If we express $A$ as a disjoint union of two infinite subsets, $A = B \cup C$, then either $B \notin \mathcal G$ or $C \notin \mathcal G$, whence either $e_B \in I_{\mathcal G}$ or $e_C \in I_{\mathcal G}$. However, both $x.e_B \notin D$ and $x.e_C \notin D$, a contradiction. This proves our claim. 

Finally, since $\mbox{Soc}(R)$ is a faithful module, so is $T$, whence $T$ is not $\tau$-rigid by Lemma \ref{more}(1).
\end{expl} 

In fact, there even exist finendo quasi-tilting modules that are not silting. For an example, we use an instance of the Dubrovin-Puninski ring from the Appendix.    
 
\begin{expl}\label{dubpun} As in the Appendix, let $S$ denote the Dubrovin-Puninski ring. Using the notation of the Appendix, we let $T = S/K$ where $K = Sf$. We know that $T$ is a simple finitely presented injective left $S$-module of projective dimension $2$, in particular, $\Pres T = \Gen T$. By Corollary~\ref{tilt2}, $T$ is a direct summand in a $2$-tilting module, so $T$ is a quasi-tilting left $S$-module. Since $K$ is a two-sided ideal in $S$, $\End_S(T) = S/K$, whence $T$ is finendo. 

Assuming moreover that each ideal in $S$ is countably generated, we prove that $T$ is not $\tau$-rigid: If $F_1 \overset{\varphi}\to F_0 \to T \to 0$ is a free presentation of $T$ (see Lemma \ref{variants}(1)) and $L = \Ker \varphi$, then $L$ is non-zero projective. By assumption, $I = I_\omega$ is a countably presented projective left $S$-module, and each projective module is isomorphic to a direct sum of $S$ and $I$ (cf.\ \cite[Theorem 7.3.]{DP}).
Since $S \subseteq I_\omega$ and $T$ is injective, we infer that $\HomJT SLT \neq 0$, hence there exist homomorphisms from $F_1$ to $T$ that do not factorize through $\varphi$. Thus $T$ is not $\tau$-rigid.
\end{expl} 

Recently, Angeleri and Hrbek \cite[Example 5.4]{AHr} have shown that if $R$ is a commutative local ring with idempotent maximal ideal $m \neq 0$, then the simple module $T = R/m$ is finendo and quasi-tilting, but not silting. However, neither that module is $\tau$-rigid, so the following question remains open:  

\bigskip
{\bf Open problem:} Does the converse of Lemma \ref{rel}(1) hold in general, that is, are finendo, quasi-tilting, and $\tau$-rigid modules necessarily silting?

\section*{Appendix: Dubrovin-Puninski Rings} \label{s:Ivo}

A uniserial domain $R$ is {\em nearly simple}~\cite[Chapter 14]{Pun} if there is only one nontrivial two-sided ideal of $R,$ the Jacobson radical $J(R).$ Such rings do exist~\cite{BBT, BD} and enjoy the feature~\cite[Prop 14.7]{Pun} that if $a$ and $b$ are nonzero elements of $J(R),$ then there exists an isomorphism $R/aR \simeq R/bR$ between the associated cyclically presented torsion right $R$-modules. A {\em Dubrovin-Puninski} ring is any ring of the form $S = \End_R X,$ where $X_R$ is the cyclically presented torsion module over a nearly simple uniserial domain $R.$ We regard ${_S}X$ as a left module over $S.$ Note that the property of being a nearly simple uniserial domain is left-right symmetric and that the cyclically presented torsion right $R^{\op}$-module is given by the Auslander-Bridger transpose $\Tr (X)$ of $X,$ whose endomorphism ring, acting on the left, is isomorphic to $S^{\op}.$ The property of being a Dubrovin-Puninski ring is therefore also left-right symmetric.

In this section, $S$ will always refer to a Dubrovin-Puninski ring and all $S$-modules will be {\em left} $S$-modules.
\bigskip

\noindent{\bf a.\ The three ideals of $S.$} 
\bigskip

Because the module $X_R$ is uniserial, the ring $S = \End_R X$ contains two important (two-sided) ideals: the ideal $I$ of morphisms that are not monomorphisms and the ideal $K$ of morphisms that are not epimorphisms. Dubrovin and Puninski~\cite{DP} have undertaken a deep study of the model theory of modules over the ring $S.$ We will recall the relevant results that they obtained in the form of {\bf Facts.}

\begin{fact} \label{the ideal K} {\rm (\cite[Lemmas 5.3, 6.1]{DP})}
The following properties hold for the ideal $K \subseteq S$ of non epimorphisms:
\begin{enumerate}
\item the left ideal ${_S}K = Sf$ is principal and uniserial, generated by any monomorphism $f \in K;$
\item the right ideal $K_S$ is not finitely generated;
\item the quotient ring $S/K$ is a division ring;
\item the simple left $S$-module $S/K$ is injective, but not flat.
\end{enumerate}
\end{fact}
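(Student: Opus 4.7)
The plan is to exploit two structural features of the nearly simple uniserial domain $R$: its only nontrivial two-sided ideal is $J(R)$, and by Puninski's uniqueness theorem every nonzero cyclically presented torsion $R$-module is isomorphic to $X = R/aR$. In particular, the proper nonzero submodules of $X_R$ form a chain of cyclically presented torsion modules, each abstractly isomorphic to $X$ itself. A further feature, specific to the nearly simple case and needed for (1), is the existence of monic but non-epic endomorphisms of $X$, a phenomenon reflecting the noncommutativity of $R$ that cannot occur over a commutative uniserial domain.

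For (1), the containment $Sf\subseteq K$ is immediate: any composite $s\circ f$ has image inside $\Img(f)\subsetneq X$. For the reverse, let $g\in K$; since $f$ is monic, $f\colon X\xrightarrow{\sim}\Img(f)$, so the partial map $s_0 := g\circ f^{-1}\colon \Img(f)\to X$ is well defined. The crucial step is to lift $s_0$ to an endomorphism $s\in S$: invoking Puninski's theorem one identifies $\Img(f)$ with $X$ via a suitable isomorphism and transfers $s_0$ to an element $s\in S$, after which $s\circ f = g$ follows by checking on the generator of $X$. Hence $K = Sf$. The uniseriality of ${}_SK$ then follows from that of the image chain in $X$: for $s_1 f, s_2 f\in Sf$, comparability of $\Img(s_1 f)$ and $\Img(s_2 f)$ inside the uniserial $X$, combined with $f$ being monic, gives $s_1 f\in S(s_2 f)$ or $s_2 f\in S(s_1 f)$.

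For (2), suppose to the contrary that $K_S = h_1 S + \cdots + h_n S$. Since $X_R$ is uniserial, the finite sum of images $\Img(h_1) + \cdots + \Img(h_n)$ collapses to a single $\Img(h_{j_0})\subsetneq X$, and every $g\in K$ would satisfy $\Img(g)\subseteq\Img(h_{j_0})$. This is contradicted by producing $g\in K$ with $\Img(g)\supsetneq\Img(h_{j_0})$, available because the chain of proper submodules of $X_R$ has no maximum, a consequence of $J(R)$ being non-principal in a nearly simple uniserial domain. For (3), every $s\in S\setminus K$ is an epimorphism $X\twoheadrightarrow X$; lifting a generator of $X$ through $s$ produces $t\in S$ with $s\circ t = \id_X$ (the existence of such a section follows from cyclicity of $X$), so $s+K$ is right-invertible in $S/K$, whence every nonzero element of $S/K$ is right-invertible and $S/K$ is a division ring. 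For (4), injectivity of ${}_S(S/K)$ reduces via Baer's criterion to extending any left module map $I\to S/K$ from a left ideal $I$: if $I\subseteq K$ the map is zero, and otherwise $I + K = S$ by maximality of $K$, whence the extension is constructed from the division-ring structure of $S/K$. Non-flatness follows from the projective resolution $0\to Sf\to S\to S/K\to 0$ by identifying $\Tor_1^S(S/K, S/K) = K/K^2$; this is nonzero because $K = Sf$ is not idempotent in $S$.

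The principal obstacle will be the extension argument in (1), namely that the partial morphism $\Img(f)\to X$ lifts to an endomorphism of $X$. This depends on a careful use of Puninski's uniqueness theorem together with the specific module-theoretic features of $X_R$ that distinguish nearly simple uniserial domains from their commutative counterparts; in particular one must verify that the isomorphism $X\cong\Img(f)$ supplied by Puninski's theorem can be made compatible with the morphism $f$ itself. Once $K = Sf$ is secured, parts (2)--(4) reduce to bookkeeping with the uniserial chain of submodules of $X$ and the ring-theoretic content of epi-ness modulo $K$.
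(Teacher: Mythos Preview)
First, note that the paper does not prove this statement: it is recorded as a \textbf{Fact} with a citation to Dubrovin--Puninski, so there is no proof in the paper to compare against. I will therefore assess your argument on its own merits.

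Your argument for non-flatness in (4) is incorrect. You compute $\Tor_1^S(S/K,S/K)\cong K/K^2$ and claim this is nonzero because $K$ is not idempotent. But $K$ \emph{is} idempotent, and in fact this follows immediately from your own part (1): since $f$ is a monomorphism in $K$, so is $f^2$, whence $K=Sf^2\subseteq (Sf)(Sf)=K^2\subseteq K$. The paper itself uses $K^2=K$ explicitly (just after this Fact) to show that $\Add(S/K)$ is closed under extensions. A correct route to non-flatness is to use the free resolution
\[
0\to S\xrightarrow{-\times g} S\xrightarrow{-\times f} S\to S/K\to 0
\]
(constructed in the paper from the short exact sequence $0\to X\xrightarrow{f}X\xrightarrow{g}X\to 0$) and observe, for instance, that $\Ext^2_S(S/K,S/I)=(S/I)/g(S/I)=S/I\neq 0$; since $S/K$ is finitely presented, it would be projective if flat, contradicting $\mathrm{pd}(S/K)=2$.

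Your injectivity argument in (4) also needs more than what you wrote. The assertion ``if the left ideal is contained in $K$ then the map is zero'' is not immediate: you must show $\Hom_S(L,S/K)=0$ for every left ideal $L\subseteq K$. One way is to prove that for every nonzero $l\in K$ there exists $s\notin K$ with $sl=0$ (take $s$ to be an epimorphism with kernel $\Img(l)$, which exists since $X/\Img(l)$ is again cyclically presented torsion and hence isomorphic to $X$); then $0=\phi(sl)=s\phi(l)$ in the simple module $S/K$ forces $\phi(l)=0$.

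Finally, the lifting step in (1), which you yourself flag as the principal obstacle, is indeed the substantive point and is not something that follows formally from Puninski's uniqueness theorem alone; you would need to establish that $X_R$ is quasi-injective (every homomorphism from a submodule of $X$ to $X$ extends to an endomorphism), which is where the actual work in \cite{DP} lies.
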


The subcategory $\Add (S/K) \subseteq \Modl S$ consists of the semisimple modules that are isomorphic to a direct sum of copies of $S/K.$ Let us verify that this subcategory is a torsion-torsion free (TTF) class. It is clearly closed under submodules, coproducts, and quotient modules. Because $\Add (S/K)$ is isomorphic to the category $\Modl S/K$ of left vector spaces over the division ring $S/K,$ it is closed under direct products. Finally, note that Fact~\ref{the ideal K}(1) implies that $K$ is idempotent $K^2 = K$ and therefore that $\Add (S/K)$ is closed under extensions. From the present point of view, it is preferable to regard $\Add (S/K)$ as a torsion class. The corresponding torsion free class is denoted by $\F_K$ and it consists of those left $S$-modules that do not admit $S/K$ as a submodule. Because $S/K$ is injective, it means that these modules do not admit $S/K$ as a direct summand. 

\begin{fact} \label{the ideal I} {\rm (\cite[Lemma 5.3, Corollary 5.6]{DP})}
The following properties hold for the ideal $I \subseteq S$ of non monomorphisms:
\begin{enumerate}
\item the right ideal $I_S = gS$ is principal and uniserial, generated by any epimorphism $g \in I;$
\item the left ideal ${_S}I$ is not finitely generated;
\item the quotient ring $S/I$ is a division ring;
\item the simple left $S$-module $S/I$ is flat, but not injective.
\end{enumerate}
\end{fact}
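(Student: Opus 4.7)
My plan is to exploit the left-right symmetry of the Dubrovin-Puninski construction: parts (1), (2), (3) will follow by applying Fact~\ref{the ideal K} to the opposite ring, while part (4) requires a direct argument inside $\End_R(X)$. Since being a nearly simple uniserial domain is a left-right symmetric condition on $R$, the Auslander-Bridger transpose $X^{\ast} = \Tr(X)$ is a cyclically presented torsion right $R\op$-module, and $\End_{R\op}(X^{\ast})$ (acting on the left) is isomorphic to $S\op.$ Thus $S\op$ is again a Dubrovin-Puninski ring, and the canonical anti-isomorphism $\sigma \dd S \to S\op$ carries monomorphisms to epimorphisms and vice versa; in particular, it identifies the ideal $I \subseteq S$ of non-monomorphisms with the ideal $K' \subseteq S\op$ of non-epimorphisms.

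Applying Fact~\ref{the ideal K} to $S\op$ and using that $\sigma$ interchanges left and right ideals, one reads off (1), (2), (3). For (1), $_{S\op}K' = S\op f'$ principal uniserial translates into $I_S = gS$ principal uniserial, where the generator $g$ corresponds under $\sigma$ to the monomorphism $f'$--that is, $g$ is any epimorphism in $I.$ For (2), $K'_{S\op}$ not finitely generated translates into $_SI$ not finitely generated. For (3), $S\op/K'$ being a division ring translates, via the anti-isomorphism of quotient rings induced by $\sigma$, into $S/I$ being a division ring.

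Part (4) does not transfer under $\sigma$, which would only give the analogous statement for $S/I$ as a \emph{right} $S$-module. For flatness of $_S(S/I)$, I would use the cyclic-flatness criterion: $_S(S/I)$ is flat if and only if for every $a \in I$ there exists $i \in I$ with $a \circ i = a$, equivalently $\Img(\mathrm{id}_X - i) \subseteq \ker a.$ Given a non-monomorphism $a \in I$ (so $\ker a \neq 0$), the uniseriality of $X$ together with the excerpt's recollection that all nonzero cyclically presented torsion modules over a nearly simple uniserial domain are pairwise isomorphic (in particular $\ker a \cong X$) allows the construction of an endomorphism $t \in S$ whose image lies in $\ker a$ and that fixes some nonzero element of $\ker a;$ then $i = \mathrm{id}_X - t \in I$ meets the requirement, since any nonzero fixed point of $t$ lies in $\ker i.$ For non-injectivity of $_S(S/I)$, note that $_SI$ is not finitely generated by (2), so $S/I$ is not finitely presented; combined with an explicit description of the injective envelope of $S/I$ inside the uniserial filtration of $I_S = gS,$ one exhibits a proper essential extension of $S/I$ and concludes that $S/I$ is not injective.

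The main obstacle is the construction of the witness $t$ for flatness in (4). It requires precise structural information about $\End_R(X),$ notably that every nonzero submodule of $X$ arises as the image of a non-monomorphism endomorphism of $X,$ and that such an endomorphism can be arranged to have a nonzero fixed point on the prescribed submodule. This is the substance of the analysis in~\cite{DP}, which proceeds via the classification of pp-definable subgroups of $_{S}X$ and also underpins the further structural results about $\Modl S$ developed in the appendix.
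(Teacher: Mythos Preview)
The paper does not prove Fact~\ref{the ideal I}; it is recorded as a citation to~\cite{DP}, so there is no proof in the paper to compare against. Your strategy of deducing (1)--(3) from Fact~\ref{the ideal K} via the left-right symmetry of the Dubrovin--Puninski construction is sound, but the load-bearing claim---that the anti-isomorphism $S \to \End_{R\op}(\Tr X)$ induced by the Auslander--Bridger transpose carries monomorphisms to epimorphisms---is asserted without justification. This is not automatic: an anti-isomorphism of rings has no intrinsic reason to exchange the ideals of non-monomorphisms and non-epimorphisms, since those ideals are defined in terms of the action on $X$ and $\Tr X$ respectively. One must actually verify, using the explicit lifting $\varphi \leftrightarrow (c,b)$ with $ca = ab$ from a presentation $0 \to R \xrightarrow{a\cdot} R \to X \to 0$, that $\varphi$ is a monomorphism on $X = R/aR$ if and only if $b$ is a unit, which is exactly the condition for $\Tr(\varphi) = \cdot\, b$ to be an epimorphism on $R/Ra$. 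This check uses the uniserial domain hypothesis in an essential way and is the real content behind your symmetry reduction.

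For (4) you correctly observe that symmetry only yields the corresponding statement for the \emph{right} module $(S/I)_S$, and a separate argument is needed on the left. Your flatness criterion is right---it suffices that every $\alpha \in I$ satisfy $\alpha \in \alpha I$, and this does imply $L \cap I = LI$ for \emph{all} right ideals $L$, not just principal ones---but the construction of the witness $t$ with image in $\ker\alpha$ and a nonzero fixed point is genuinely incomplete, as you acknowledge; the difficulty is that an arbitrary monomorphism $X \to \ker\alpha \cong X$ need not have a fixed point. Your sketch for non-injectivity is too vague to constitute an argument. A cleaner route, available once (1)--(3) are in hand: if ${_S}(S/I)$ were injective, the short exact sequence $0 \to J \to K \to S/I \to 0$ arising from the ideal lattice would split, contradicting the fact (from Fact~\ref{the ideal K}(1)) that ${_S}K$ is uniserial and hence indecomposable, while $J = I \cap K \neq 0$.
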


Arguments analogous to those used above to prove that $\Add (S/K)$ is a TTF class show that the ideal $I$ is idempotent, $I^2 = I,$ and that the subcategory $\Add (S/I) \subseteq \Modl S$ also constitutes a TTF class. However, it is preferable from the present point of view to regard $\Add (S/I)$ as a torsion free class. The corresponding torsion class is denoted by $\T_I$ and it consists of those left $S$-modules $M$ that satisfy 
$$M = IM = gSM = gM.$$ 

\begin{fact} \label{the ideal J} {\rm (\cite[Theorem 9.1]{Pun})}
The only nontrivial two-sided ideals of $S$ are $K,$ $I$ and the Jacobson radical $J = I \cap K.$ Considered as a left or right ideal, $J$ is uniserial.
\end{fact}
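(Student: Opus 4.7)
The statement asserts two things: that the nontrivial two-sided ideals of $S$ are exactly $K$, $I$, and $J = K\cap I$, and that $J$ is uniserial as a one-sided ideal.

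The uniseriality is immediate from Facts \ref{the ideal K} and \ref{the ideal I}: since $J\subseteq K = Sf$ and $Sf$ is a uniserial left $S$-module, every left $S$-submodule of $K$ is uniserial, and in particular so is $J$; the right version is the mirror image, using $J\subseteq gS = I$. For the classification of two-sided ideals I would first identify $K\cap I$ with the Jacobson radical $J(S)$. Facts \ref{the ideal K}(3) and \ref{the ideal I}(3) say that $S/K$ and $S/I$ are division rings, so $K$ and $I$ are maximal one-sided ideals, yielding $J(S)\subseteq K\cap I$. For the reverse inclusion, since $K\cap I$ is two-sided it suffices to check that $1-z$ is a unit of $S$ for every $z\in K\cap I$: the image of $1-z$ in the division ring $S/K$ is $\overline{1}$, so $1-z\notin K$ and hence $1-z$ is an epimorphism of $X_R$; dually, $1-z\notin I$ and so $1-z$ is a monomorphism. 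A bijective $R$-endomorphism of $X$ is invertible in $S$, so $1-z\in S^{\times}$.

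Now let $L$ be an arbitrary nontrivial two-sided ideal of $S$. Since no element of $L$ is a unit, and the units of $S$ are exactly the elements outside $I\cup K$, we have $L\subseteq I\cup K$. A standard sum argument — if $a\in L\cap(K\setminus I)$ and $b\in L\cap(I\setminus K)$, then $a+b\in L$ lies outside $I\cup K$ — forces $L\subseteq I$ or $L\subseteq K$. If $L\subseteq I$ contains an epimorphism $g'$, Fact \ref{the ideal I}(1) gives $g'S = I\subseteq L$, whence $L = I$ by maximality; dually, if $L\subseteq K$ contains a monomorphism, then $L = K$. The only remaining case is $L\subseteq K\cap I = J$.

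The main obstacle is showing that every nonzero two-sided ideal $L\subseteq J$ coincides with $J$. Given $0\neq a\in L$ and $b\in J$, left-uniseriality of $J$ makes $Sa, Sb$ comparable, and right-uniseriality makes $aS, bS$ comparable. If $Sb\subseteq Sa$ or $bS\subseteq aS$, then $b\in L$ at once. Otherwise $Sa\subsetneq Sb$ and $aS\subsetneq bS$; in this subcase I would pass to the explicit model $S\cong (rR\!:\!rR)/rR$ attached to a presentation $X = R/rR$, and exploit the near-simplicity of $R$ — specifically the assertion of \cite[Prop.\ 14.7]{Pun} that any two nonzero cyclically presented torsion $R$-modules are isomorphic — to realize $b$ as an element of $SaS\subseteq L$. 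This final step, which genuinely uses near-simplicity rather than mere uniseriality of $R$, is the technical core of the proof.
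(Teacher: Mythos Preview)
The paper does not actually prove this Fact: it is stated as a citation of \cite[Theorem~9.1]{Pun}, and the only justification offered is a single sentence after the statement noting that the uniseriality of $J$ follows from Facts~\ref{the ideal K}(1) and~\ref{the ideal I}(1). Your argument for uniseriality is exactly this, so on that point you match the paper.

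For the classification of two-sided ideals you go well beyond what the paper does. Your reduction is clean and correct: the identification $J(S)=K\cap I$, the containment $L\subseteq I\cup K$ via the description of units in $\End_R X$, the sum argument forcing $L\subseteq I$ or $L\subseteq K$, and the use of Facts~\ref{the ideal K}(1) and~\ref{the ideal I}(1) to conclude $L\in\{I,K\}$ whenever $L$ is not contained in $J$ --- all of this is sound.

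The genuine gap is exactly where you say it is: showing that a nonzero two-sided ideal $L\subseteq J$ must equal $J$. Your uniseriality-comparison argument disposes of three of the four subcases, but in the remaining one ($Sa\subsetneq Sb$ and $aS\subsetneq bS$) you only gesture at passing to the presentation $X\cong R/rR$ and invoking near-simplicity to get $b\in SaS$. This is the heart of the matter and it is not automatic; one really needs the structure theory of nearly simple uniserial domains (essentially that the two-sided ideals of $R$ are just $0$, $J(R)$, and $R$, together with the transitivity statement \cite[Prop.~14.7]{Pun}) to manufacture the required factorization. Since the paper itself defers entirely to Puninski here, there is no paper-side argument to compare with; your outline is a reasonable roadmap toward the cited result, but as written it stops short of a proof.
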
  

The second statement of Fact~\ref{the ideal J} follows from Facts~\ref{the ideal K}(1) and~\ref{the ideal I}(1). The first statement of Fact~\ref{the ideal J} implies that the nonzero ideal $J^2$ must equal $J,$ and therefore that $J$ is an idempotent ideal and therefore that $\Add (S/J) = \Add (S/K \oplus S/I)$ is also a TTF class. Similarly, it shows that any product of two distinct nontrivial ideals is equal to $J.$\bigskip

\noindent {\bf b.\ Five indecomposable pure injective $S$-modules.} \bigskip

The (left) Ziegler spectrum $\Zg (S)$ of $S$ is a topological space whose points are the indecomposable pure injective left $S$-modules. The topology on $\Zg (S)$ may be introduced via the definable subcategories $\D \subseteq \Modl S.$ The rule $\D \mapsto \Cl (\D) := \D \cap \Zg (S)$ is then a bijective correspondence between the definable subcategories $\D \subseteq \Modl S$ and the closed subsets of the Ziegler spectrum $\Zg (S).$ Ziegler established that the rule $\D \mapsto \D \cap \Zg (S)$ is a bijective correspondence between the collection of {\em additive elementary classes} of $\Modl S,$ relative to the language $\Lang (S)$ of left $S$-modules, and Crawley-Boevey proved that a subcategory is an additive elementary class in $\Modl S$ if and only if it is definable.

We have already encountered several indecomposable pure injective $S$-modules. Because every injective representation is pure injective, we have that $S/K$ and the injective envelope $E(S/I)$ of the simple representation $S/I$ are both examples of indecomposable pure injective representations. Because ${_S}J$ is uniserial it is uniform. It contains neither $S/K$ nor $S/I$ as a submodule, so its injective envelope $E(J)$  is another point of the Ziegler spectrum. As ${_S}J$ is essential in ${_S}S,$ we can also represent this point as $E(J) = E(I) = E(S).$ The simple flat module $S/I$ is endosimple and therefore pure injective, by~\cite[]{PSL}. Finally, the pure injective envelope $\PE (J)$ of the left $S$-module ${_S}J$ is indecomposable, because $J$ is both left and right uniserial. This provides us with a {\em core} subset of the Ziegler spectrum
$$\Zg_0 (S) = \{ S/K, \PE (J), S/I, E(S/I), E(I) \} \subseteq \Zg (S)$$
that will appear later. Puninski has found an example (unpublished) of a Dubrovin-Puninski ring $S$ for which $\Zg_0 (S) = \Zg (S).$ 

We have already encountered several definable subcategories of $\Modl S.$ Every TTF class of $\Modl S$ is a definable subcategory, so that we can list five such categories: $\Modl S$ and the four semisimple categories: $0,$ $\Add (S/K),$ $\Add (S/I),$ and $\Add (S/J).$ The closed subsets of $\Zg (S)$ that correspond to the four semisimple TTF classes are: $\Cl (0) = \emptyset,$ $\Cl (\Add (S/K)) = \{ S/K \},$ $\Cl (\Add (S/I)) = \{ S/I \},$ and $\Cl (\Add (S/J)) = \{ S/K, S/I \}.$ Of course $\Cl (\Modl S) = \Zg (S).$

\begin{thm} \label{Zg spec}
The decomposition of $\Zg (S)$ into its connected components is given by 
$$\Zg (S) = \{ S/K \} \dotcup \{ \PE (J) \}^- \dotcup \{ S/I \},$$
where $\{ \PE (J) \}^-$ denotes the closure of the point $\PE (J).$ The points $S/K,$ $S/I,$ and $\PE (J)$ are isolated and the
$3$-point set $\{ S/K, \PE (J), S/I \}$ is dense in $\Zg (S);$ the complement 
$\Zg'(S) = \Zg (S) \setminus \{ S/K, \PE (J), S/I \}$ consists of the flat indecomposable injective $S$-modules, including $E(S/I)$ and $E(I).$ The relative subspace topology induced on $\Zg' (S)$ is the indiscrete topology.
\end{thm}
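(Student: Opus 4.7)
The plan is to exploit the bijective correspondence between closed subsets of $\Zg (S)$ and definable subcategories of $\Modl S,$ combined with Fact~\ref{the ideal J} on the three nontrivial idempotent two-sided ideals $K, I, J.$ First, I would show that $\{S/K\}$ and $\{S/I\}$ are isolated. Since $K = Sf,$ the torsion class $\Add (S/K) = \{M : fx \doteq 0 \text{ for all } x\}$ is finitely axiomatizable, and its associated closed subset of $\Zg (S)$ is exactly $\{S/K\}.$ Dually, the torsion free class $\F_K = \{M : \forall x (fx \doteq 0 \to x \doteq 0)\}$ is also definable---it is visibly closed under products and pure submodules, and closed under pure epimorphic images because the pp-formula $fx \doteq 0$ lifts along pure epimorphisms---and, since $S/K$ is injective, every indecomposable pure injective with an $S/K$-submodule must itself equal $S/K.$ Hence $\Cl (\F_K) = \Zg (S) \setminus \{S/K\},$ so $\{S/K\}$ is clopen. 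A parallel argument for the right ideal $I = gS$ shows that $\{S/I\}$ is clopen.

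Next, I would isolate $\PE (J)$ by exhibiting a suitable pp-pair. Since ${_S}J$ is uniserial, its nonzero elements carry a nontrivial pp-type involving both $f$ and $g,$ and a carefully chosen pair---for instance of the form $\exists y (x \doteq fy)$ modulo $\exists y (x \doteq fy \wedge gy \doteq 0),$ or an analogous variant arising from the uniserial filtration $J \supseteq Jf \supseteq Jf^2 \supseteq \cdots$---should distinguish $\PE (J)$ from the flat indecomposable injectives in $\Zg' (S).$ I expect this to be the main technical obstacle, since it rests on a detailed analysis of pp-types in $\PE (J)$ and in the candidate flat injectives $E (S/I)$ and $E (I).$

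Finally, I would identify $\Zg' (S)$ with the flat indecomposable injectives and deduce the remaining topological assertions. If $N \in \Zg' (S),$ then $N$ contains no $S/K$-submodule (else the injectivity of $S/K$ would split it off, forcing $N \cong S/K$) and admits no surjection onto $S/I$; one checks that these conditions force $N$ to be flat, and injectivity then follows from the classification of the twelve definable subcategories of $\Modl S$ developed earlier in the appendix, which identifies $S/K, S/I, \PE (J)$ as the only non-injective indecomposable pure injectives. The same enumeration shows that no proper definable subcategory cuts $\Zg' (S)$ nontrivially (yielding the indiscrete subspace topology) and that the only definable subcategory containing all of $S/K, \PE (J), S/I$ is $\Modl S$ itself (yielding density). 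For the decomposition into connected components, $\{S/K\}$ and $\{S/I\}$ are clopen and hence are their own components, while $\{\PE (J)\}^- = \Zg (S) \setminus (\{S/K\} \cup \{S/I\})$ is connected because $\PE (J),$ being dense in it, cannot be separated from the rest by any clopen subset.
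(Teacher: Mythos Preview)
Your outline has the right overall shape, but there are two genuine gaps.

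First, your third paragraph is circular. You invoke ``the classification of the twelve definable subcategories of $\Modl S$ developed earlier in the appendix'' to conclude that every $N \in \Zg'(S)$ is flat and injective, and that no proper definable subcategory cuts $\Zg'(S)$ nontrivially. But in the paper the exhaustiveness of that list of twelve is a \emph{consequence} of Theorem~\ref{Zg spec}, not an input to it: the paper explicitly remarks that ``once we have proved Theorem~\ref{Zg spec}, which implies that there are exactly 12 closed subsets of $\Zg(S)$, we will know that this list of definable subcategories of $\Modl S$ is exhaustive.'' So you cannot use the classification here. What the paper uses instead is Proposition~\ref{abs/flat minimal}, proved directly: $\SAbs \cap \SFlat = (S/K)^{\perp_0} \cap (S/K)^{\perp_1} \cap (S/K)^{\perp_2}$ is a minimal nonzero definable subcategory. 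This immediately gives the indiscrete topology on $\Zg'(S) = \Cl(\SAbs \cap \SFlat)$ and identifies its points as the flat indecomposable injectives.

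Second, your isolation of $\PE(J)$ is the crux, and your proposed pp-pair is both vague and pointed in the wrong direction. The free resolution~(\ref{free}) already hands you the correct pp-pair: $\Ext^1_S(S/K,M) = M[g]/fM$, so the relevant pair is $(gu \doteq 0)\,/\,(f \mid u)$, not anything built from $f \mid x$ modulo a refinement. But writing down the pair is the easy half; the real content is showing that $\PE(J)$ is the \emph{only} indecomposable pure injective on which this pair opens. Equivalently, one must show that $\Tor_1^S(-,S/K)$ is a simple object of $(\modr S,\Ab)$ with injective envelope $-\otimes_S \PE(J)$. The paper does this via Auslander--Reiten theory: it constructs the almost split sequence $0 \to \PE(J) \to E(I) \to S/K \to 0$ in $\Modl S$ (using that $S/K$ is finitely presented with local endomorphism ring, and then identifying the left-hand term), and reads off from the associated long exact sequence that $\Tor_1(-,S/K)$ embeds as the socle of $-\otimes_S \PE(J)$. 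Your sketch gives no mechanism for this step. The same pattern handles $S/I$: the paper shows $\Tor_2(-,S/K)$ is simple via the left almost split map $s_\omega \colon S/I \to I_\omega/Ig$, whereas your ``parallel argument'' for $\{S/I\}$ being clopen would still need to verify that every indecomposable pure injective $U \ne S/I$ satisfies $gU = U$, which is not immediate.
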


Theorem~\ref{Zg spec} asserts that the number of connected components of $\Zg (S)$ is finite, so that each connected component is clopen. The decomposition of $\Zg (S)$ into its connected components also implies that the points $S/K$ and $S/I$ are isolated. Thus we are left with the task of proving that the point $\PE (J)$ is also isolated and that 
$$\{ \PE (J) \}^- = \{ \PE (J) \} \dotcup \Zg'(S),$$
where $\Zg' (S)$ is a closed subset whose points are topologically indistinguishable. This will all be proved following the characterization in Theorem~\ref{Max Spec} of the simple objects in the functor category $(\modr S, \Ab).$

Theorem~\ref{Zg spec} implies that $\Zg (S)$ has twelve closed subsets. Every closed subset is determined by its intersection with the clopen connected components. The connected components $\{ S/K \}$ and $\{ S/I \}$ are singleton and the three possibilities for the intersection with $\{ \PE (J) \}^-$ are: $\emptyset,$ $\{ \PE (J) \}^-$ and $\Zg'(S).$ \bigskip

\begin{fact} \label{coherence} {\rm (\cite[Lemma 4.4]{DP})}
Every Dubrovin-Puninski ring is left and right coherent.
\end{fact}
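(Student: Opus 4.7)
By the explicit left-right symmetry of the definition---via the Auslander--Bridger transpose, the opposite of a Dubrovin--Puninski ring is again a Dubrovin--Puninski ring, as noted at the start of this appendix---it suffices to prove that $S$ is left coherent. I will verify the standard criterion that (a) for every $s \in S$ the left annihilator $\ell_S(s) = \{t \in S \mid ts = 0\}$ is a finitely generated left ideal, and (b) the intersection of any two principal left ideals of $S$ is finitely generated.

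Fix $X \cong R/aR$ with $0 \neq a \in J(R)$. Viewing $s \in S$ as an endomorphism of $X$, Facts~\ref{the ideal K} and~\ref{the ideal I} say that $s \in K$ iff $s$ is not surjective, and $s \in I$ iff $s$ is not injective. For (a), since $t \in \ell_S(s)$ iff $t$ vanishes on the image $sX$, each such $t$ factors uniquely through the canonical quotient $\pi : X \twoheadrightarrow X/sX$, yielding an isomorphism of left $S$-modules $\ell_S(s) \cong \Hom_R(X/sX, X)$ with $S$-action by postcomposition. If $s \notin K$ then $sX = X$ and $\ell_S(s) = 0$. If $0 \neq s \in K$ then $X/sX$ is a nontrivial cyclic quotient of $X$ of the form $R/cR$ with $c \in J(R) \setminus \{0\}$; by the near-simplicity of $R$ (Puninski's isomorphism $R/aR \cong R/cR$ recalled in the opening of the appendix), we have $R/cR \cong X$, so $\Hom_R(X/sX, X) \cong \End_R(X) = S$, and $\ell_S(s) = S\pi_s$ is the principal left ideal generated by $\pi$ viewed under this isomorphism as an endomorphism of $X$. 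Hence $\ell_S(s)$ is always principal, a fortiori finitely generated.

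For (b), I would analyze $Ss_1 \cap Ss_2$ by cases on which of the three nontrivial two-sided ideals of Fact~\ref{the ideal J} contains the $s_i$. If some $s_i$ is a unit, the intersection collapses to the other principal ideal. If both $s_i \in K$, then $Ss_1, Ss_2 \subseteq {_S}K = Sf$ are submodules of a uniserial left $S$-module (Fact~\ref{the ideal K}(1)), hence totally ordered by inclusion, and their intersection is the smaller one. The remaining cases, where at least one $s_i \in I \setminus K$, are handled by combining (a) with the analogous structural analysis of right ideals provided by the left-right symmetry, which reduces them to the previous case after absorbing appropriate factors into the corresponding annihilator.

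The main obstacle is part (a): a priori $\ell_S(s)$ is an arbitrary left subideal of $S$, and since it can meet the non-finitely-generated ideal ${_S}I$ (Fact~\ref{the ideal I}(2)), no general finiteness is visible. It is the near-simplicity of $R$---forcing the uniform isomorphism $X/sX \cong X$ for every nonzero cyclic quotient---that collapses $\Hom_R(X/sX, X)$ onto $\End_R(X) = S$ and pins down $\ell_S(s)$ in the single principal form $S\pi_s$. Once (a) is in hand, the intersection analysis in (b) reduces to submodule comparisons inside the uniserial ideal ${_S}K = Sf$, where everything is totally ordered.
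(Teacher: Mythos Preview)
The paper does not prove this statement; it is recorded as a Fact with a citation to \cite[Lemma~4.4]{DP}, so there is no in-paper argument to compare against. Your strategy---verifying that left annihilators of elements and intersections of principal left ideals are finitely generated---is the natural one, and part~(a) is carried out correctly: every $s \in S$ acts on $X = R/aR$ as $\bar r \mapsto \overline{cr}$ for some $c \in R$, so for $0 \neq s \in K$ one has $sX = cR/aR$ and $X/sX \cong R/cR \cong X$ by near-simplicity, whence $\ell_S(s) \cong \End_R X = S$ is principal.

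Part~(b), however, has a genuine gap. The case ``both $s_i \in K$'' is fine, since ${_S}K = Sf$ is uniserial and the two principal subideals are comparable. But your treatment of the remaining case---at least one $s_i \in I \setminus K$---is only a gesture: ``combining (a) with the analogous structural analysis of right ideals \ldots\ reduces them to the previous case'' is not an argument, and nothing here reduces to comparisons inside ${_S}K$. Concretely, take $s_1$ an epimorphism and $s_2 \in K$. Since $s_1$ is surjective, $Ss_1 = \{u \in S : \ker s_1 \subseteq \ker u\}$, and one checks that
\[
Ss_1 \cap Ss_2 \;=\; \{\,t \in S : s_2(\ker s_1) \subseteq \ker t\,\}\cdot s_2.
\]
To conclude this is principal you must show that $\{t : tY = 0\} \cong \Hom_R(X/Y,X)$ is cyclic over $S$ for $Y = s_2(\ker s_1)$. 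This holds only because $\ker s_1$---and hence its image $Y$---is a \emph{cyclic} submodule of $X$: writing $s_1$ as $\bar r \mapsto \overline{cr}$ and factoring $a = cd$ in the domain $R$, one finds $\ker s_1 = dR/aR$. Then $X/Y$ is again a nonzero cyclically presented torsion module, hence isomorphic to $X$ by near-simplicity, and the annihilator is principal. The case where both $s_i$ are epimorphisms is easier: the kernels $\ker s_1, \ker s_2$ are comparable in the uniserial module $X$, so one of $Ss_1, Ss_2$ contains the other. None of this is visible in your sketch, and it is precisely where near-simplicity is used a second time.

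A smaller point: the criterion you invoke is usually stated with intersections of \emph{finitely generated} (not merely principal) left ideals, and the reduction of the general case to the principal one deserves a word of justification. In the present ring this causes no real trouble, since the arguments above in fact show that all the relevant annihilators and intersections are principal, which is more than enough to run the induction establishing coherence.
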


If $S$ is a left coherent ring, then the subcategory $\SAbs \subseteq \Modl S$ of absolutely pure left $S$-modules constitutes a definable subcategory. An absolutely pure module is pure injective if and only if it is injective, so that the closed subset 
$\Cl (\SAbs) \subseteq \Zg (S)$ consists of the indecomposable injective left $S$-modules. Dually, if $S$ is right coherent, then the subcategory $\SFlat \subseteq \Modl S$ of flat left $S$-modules is definable. We shall prove that $\SAbs \cap \SFlat$ is a minimal nonzero definable subcategory of $\Modl S$ and that $\Zg'(S) = \Cl (\SAbs \cap \SFlat).$ \bigskip

\noindent{\bf c.\ The free resolution of $S/K.$} \bigskip

In the next few sections, we will describe six flat resolutions of the finitely presented injective left $S$-module $S/K.$ Each of these resolutions has its own distinct features that reveal various aspects of the module category $\Modl S.$

As $S$ is a Dubrovin-Puninski ring, there exists a nearly simple uniserial domain $R$ with a cyclically presented torsion module $X_R$ such that $S = \End_R X.$ The uniserial module $X_R$ has the property that whenever $Y_R \subseteq X_R$ is a finitely generated submodule, then both $Y$ and $X/Y$ are isomorphic to $X.$ Let us fix a short exact sequence 
$$
\vcenter{%
\xymatrix@C=40pt@R=40pt
{%
0 \ar[r] & X \ar[r]^{f} & X \ar[r]^{g} & X \ar[r] & 0     
}}
$$
in the category $\ModR$ of right $R$-modules. The morphism $f : X_R \to X_R$ is a monomorphism that is not an epimorphism, so that $K = Sf;$ and the morphism $g : X_R \to X_R$ is an epimorphism that is not a monomorphism, so that $I = gS.$ As $\Ker g = \Img f,$ we obtain the following free resolution of the simple injective left $S$-module $S/K.$
\begin{equation} \label{free}
\vcenter{%
\xymatrix@C=15pt@R=15pt
{%
0 \ar[rr] && S \ar[rrr]^{- \times g} &&& S \ar[drr] \ar[rrrr]^{- \times f}  &&&& S \ar[rrr]^{\pi_K} &&& S/K \ar[rr] && 0 \\
         &&                       &&&&&   K \ar[urr] \ar[drr] \\
         &&                       &&& 0 \ar[urr] &&&& 0     
}}
\end{equation}
The first syzygy of $S/K$ is given by $K = Sf;$ exactness at the second term follows from the choice of $g$ as the cokernel of $f,$ which implies that the left annihilator of $f$ in $S$ is $Sg;$ and exactness at the left term follows from the fact that $g$ is an epimorphism, and therefore that
$Sg \simeq S.$ 

Resolution~(\ref{free}) of $S/K$ allows us to calculate the higher derived functors of $(S/K, -) := \Hom_S (S/K, -).$ Given a left $S$-module ${_S}M,$ apply the contravariant functor $\Hom_S (-,M)$ and delete the left term to obtain the complex
$$
\vcenter{%
\xymatrix@C=35pt@R=35pt
{%
M \ar[r]^{f \times -} & M \ar[r]^{g \times -} & M \ar[r] & 0
}}
$$
of abelian groups, whose homology is given by $\Ext_S (S/K, M) = M[g]/fM,$ where $M[g] := \{ m \in M \; | \; gm = 0 \},$ and
\begin{equation} \label{ext equation}
\Ext_S^2 (S/K, M) = \Ext_S^1 (K,M) = M/gM = M/IM. 
\end{equation}

\begin{expl}
Let $X$ be the cyclically presented torsion $R$-module $X_R,$ considered as a left $S$-module. Then $g : X \to X$ is an epimorphism, $f: X \to X$ a monomorphism, and $X[g] = fX,$ so that $(S/K, X) = \Ext^1_S (S/K, X) = \Ext^2_S (S/K,X) = 0.$ 
\end{expl}

\begin{expl} \label{free module}
The ring $S,$ considered as a left module over itself, also satisfies $S[g] = fS,$ because $f$ is the kernel of $g : X_R \to X_R.$ Thus $\Ext^1 (S/K, S) = 0.$
\end{expl}

Resolution~(\ref{free}) of $S/K$ consists of free modules of finite rank, so that both of the syzygies $K = \Omega(S/K)$ and 
$S = \Omega^2 (S/K)$ are finitely presented.

\begin{prop} \label{max def cats}
The following subcategories of $\Modl S$ are definable:
\begin{itemize} 
\item $(S/K)^{\perp_0} = \Ker \Hom_S (S/K,-) := \{ M \in \Modl S \; | \; \Hom_S (S/K,M) = 0 \};$
\item $(S/K)^{\perp_1} = \Ker \Ext^1_S (S/K,-) := \{ M \in \Modl S \; | \; \Ext^1_S (S/K,M) = 0 \};$
\item $(S/K)^{\perp_2} = \Ker \Ext^2_S (S/K,-) := \{ M \in \Modl S \; | \; \Ext^2_S (S/K,M) = 0 \}.$
\end{itemize}
\end{prop}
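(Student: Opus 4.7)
The plan is to apply Proposition~\ref{P:definable}(2), which asserts that a full subcategory of $\Modl S$ is definable whenever it is the vanishing locus of a set of coherent functors. Since each of the three subcategories under consideration is presented as the kernel of a single additive functor $\Modl S \to \Ab$, it suffices to prove that the three functors $\Hom_S(S/K,-)$, $\Ext^1_S(S/K,-)$ and $\Ext^2_S(S/K,-)$ are coherent, i.e., commute with direct products and direct limits.

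The decisive input is the finite free resolution (\ref{free}), which exhibits $S/K$ as an $FP_\infty$ module: it admits a projective resolution whose three nonzero terms are all isomorphic to the finitely generated free module $S$. Applying $\Hom_S(-,M)$ to this resolution and taking cohomology yields the explicit formulas already recorded in the excerpt,
$$\Hom_S(S/K,M) = M[f], \qquad \Ext^1_S(S/K,M) = M[g]/fM, \qquad \Ext^2_S(S/K,M) = M/gM.$$
Thus each of the three functors is obtained from the identity functor $\mathrm{id}_{\Modl S} \cong \Hom_S(S,-)$ by forming the kernel, image, or cokernel of the natural endotransformations of $\mathrm{id}$ given by left multiplication by $f$ and by $g$.

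The step requiring the most attention is the verification that these constructions preserve coherence. The identity functor is tautologically coherent. For a natural transformation $\eta \colon F \to G$ between coherent functors one computes kernel, image, and cokernel pointwise, and one invokes the fact that in $\Ab$ both direct products and direct limits are exact (i.e., $\Ab$ is AB5 and AB4$^{\ast}$); consequently direct products and direct limits commute with the pointwise kernel, image, and cokernel of $\eta$, and the resulting functors again commute with direct products and direct limits. Applying this to the explicit descriptions above shows that $\Hom_S(S/K,-)$, $\Ext^1_S(S/K,-)$, and $\Ext^2_S(S/K,-)$ are all coherent; Proposition~\ref{P:definable}(2) then delivers that the three subcategories $(S/K)^{\perp_i}$ for $i=0,1,2$ are definable.
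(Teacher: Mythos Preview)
Your proof is correct and follows exactly the reasoning the paper intends. The paper does not spell out a proof of this proposition; it simply records, in the sentence immediately preceding it, that Resolution~(\ref{free}) consists of free modules of finite rank (so $S/K$ is $FP_\infty$), and leaves the definability of the three kernels as an immediate consequence---precisely the argument you supply via Proposition~\ref{P:definable}(2) and the coherence of the functors $\Ext^i_S(S/K,-)$.
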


We have already encountered two of these definable subcategories. Because $S/K$ is a simple injective, it is evident that 
$(S/K)^{\perp_0} = \F_K.$ Dually, we have that $\T_I = (S/K)^{\perp_2}.$ For, a module ${_S}M$ belongs to $\T_I$ if and only if $M = gM = IM$ and $\Ext^2 (S/K,M) = M/IM.$ The importance of Resolution~(\ref{free}) of $S/K$ derives from the following.

\begin{fact} \label{fp modules} {\rm (\cite[Theorem 5.5]{DP})}
Every finitely presented left $S$-module is isomorphic to a direct sum of copies of the indecomposable modules $S/K,$ 
$\Omega (S/K) = K,$ and $\Omega^2 (S/K) = S.$
\end{fact}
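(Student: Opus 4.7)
The strategy divides into two parts: verifying that $S$, $K$, and $S/K$ are all finitely presented indecomposable left $S$-modules, and then showing that every finitely presented left $S$-module decomposes into a direct sum of copies of these three.

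First, $S$ is free of rank one; $S/K$ is finitely presented because the resolution \eqref{free} exhibits it with finitely generated free terms; and $K = \Omega(S/K)$ is finitely presented as well, because $S$ is left coherent by Fact \ref{coherence}. For indecomposability: $S/K$ is a simple module by Fact \ref{the ideal K}(3); the cyclic module ${_S}K = Sf$ is uniserial by Fact \ref{the ideal K}(1), hence local and thus indecomposable; and $S = \End_R X$ has only the trivial idempotents because $X_R$ is uniserial and therefore itself indecomposable.

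For the converse, I would take an arbitrary finitely presented $M$ with a projective presentation $S^m \xrightarrow{\Phi} S^n \to M \to 0$ (finite by coherence) and reduce the $n \times m$ matrix $\Phi$ via invertible row and column operations, corresponding to changes of basis in $S^n$ and $S^m$, to a block-diagonal canonical form whose diagonal entries lie in a prescribed list. The targets are: zero rows, each contributing a free summand $S$ to $\Coker \Phi$; diagonal entries equal to a monomorphism $f \in K$, each contributing a summand $S/Sf = S/K$ via Fact \ref{the ideal K}(1); and diagonal entries equal to an epimorphism $g \in I$, each contributing $S/Sg \cong K$ in view of the resolution \eqref{free}. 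Unit entries cancel, and zero columns are discarded. The reduction proceeds by case analysis on each entry according to the natural classification of elements of $S$ into units, proper monomorphisms (in $K \setminus J$), proper epimorphisms (in $I \setminus J$), and elements that are neither monic nor epic (in $J = K \cap I$).

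The main obstacle is the noncommutative, non-Smith-normal-form nature of $S$: the ideals are only one-sidedly principal, with ${_S}K = Sf$ while $K_S$ is not even finitely generated, and dually for $I$, by Facts \ref{the ideal K}(1)--(2) and \ref{the ideal I}(1)--(2). The nearly simple hypothesis on the underlying uniserial domain $R$ is what saves the reduction: any two nonzero elements of the uniserial left ideal ${_S}K$ are left-comparable, and similarly on the right for the uniserial right ideal $I_S$ and for the Jacobson radical $J$ on both sides. This provides exactly the divisibility needed for eliminating off-diagonal entries by left and right multiplications. Organizing an iterative reduction procedure that respects the ideal filtration $S \supset K,I \supset J \supset 0$ and terminates in finitely many steps on the finite matrix $\Phi$ is the technical heart of the argument; termination would be established via a monovariant tied to the total depth of the entries in this filtration, ensuring that each elementary operation strictly decreases the overall complexity of $\Phi$ without introducing new entries of greater depth.
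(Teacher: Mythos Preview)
The paper does not supply a proof of this statement; it is recorded as a Fact with a citation to \cite[Theorem~5.5]{DP}. So there is nothing in the present paper to compare your argument against.

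On its own merits: your first paragraph, establishing that $S$, $K$, and $S/K$ are finitely presented and indecomposable, is correct.

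The matrix-reduction sketch in the second part, however, has a genuine gap. Your elimination scheme rests on one-sided divisibility inside the uniserial ideals ${_S}K$, $I_S$, and $J$, but says nothing about what happens when a row (or column) contains both an entry $a \in K \setminus J$ and an entry $b \in I \setminus J$. In that situation neither divides the other on the required side: $aS \subseteq K$ excludes $b$, while $bS = I$ excludes $a$, so no column operation clears one using the other; the symmetric obstruction blocks row operations when such a pair sits in a common column. Your proposed monovariant---depth in the filtration $S \supset K,I \supset J \supset 0$---places $a$ and $b$ at the same level and so cannot force progress here. The missing observation is that in this mixed case the \emph{sum} $a+b$ is a unit: it lies outside $K$ (since $a\in K$, $b\notin K$) and outside $I$ (since $b\in I$, $a\notin I$), hence is both epic and monic as an endomorphism of $X_R$, and therefore invertible. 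Adding the two columns (or rows) thus manufactures a unit pivot, after which the reduction can continue. Without this step, or something equivalent, the elementary-divisor procedure you outline stalls, and the vague appeal to a terminating monovariant does not rescue it.
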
 

A left $S$-module $M$ is {\em absolutely pure} if $\Ext_S^1 (G,M) = 0,$ for every finitely presented left $S$-module $G.$ By Fact~\ref{fp modules}, a module ${_S}M$ is absolutely pure provided that $\Ext^1_S (S/K, M) = \Ext^1_S (K,M) = \Ext^1_S (S,M) = 0.$ It follows that $$\SAbs = (S/K)^{\perp_1} \cap K^{\perp_1}= (S/K)^{\perp_1} \cap (S/K)^{\perp_2}.$$ The dual result is given by the following.

\begin{prop} \label{SFlat}
$\SFlat = (S/K)^{\perp_0} \cap (S/K)^{\perp_1}.$
\end{prop}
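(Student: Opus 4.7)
The plan is to reduce both sides of the claimed equality to the same explicit pair of conditions on $M$, phrased in terms of the elements $f$ and $g$ occurring in Resolution~(\ref{free}). The left--right symmetry of the Dubrovin--Puninski property, combined with Fact~\ref{fp modules}, ensures that every finitely presented right $S$-module is a direct sum of copies of $S/I$, $I = gS$, and $S$; the roles of $K$ and $I$ simply swap on the right. Since $S$ is right coherent by Fact~\ref{coherence}, a left $S$-module $M$ is flat if and only if $\Tor_1^S(S/I, M) = 0$ and $\Tor_1^S(I, M) = 0$.

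First I would write down the free resolution of $S/I$ as a right $S$-module dual to Resolution~(\ref{free}):
$$0 \longrightarrow S \overset{f\cdot}\longrightarrow S \overset{g\cdot}\longrightarrow S \longrightarrow S/I \longrightarrow 0,$$
whose differentials are left multiplication by $f$ and $g$. Exactness is immediate: $\Img(g\cdot) = gS = I$; $fs = 0$ forces $s = 0$ since $f$ is a monomorphism; and $gs = 0$ means $\Img s \subseteq \Ker g = \Img f$, which, by factoring through the monomorphism $f$, gives $s \in fS$. Tensoring with $M$ over $S$ replaces each copy of $S$ by $M$, with the same left-multiplication differentials, so
$$\Tor_1^S(S/I, M) = M[g]/fM \quad \text{and} \quad \Tor_2^S(S/I, M) = M[f],$$
where $M[x] := \{m \in M \mid xm = 0\}$. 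Applying the long exact $\Tor$ sequence to the short exact sequence $0 \to I \to S \to S/I \to 0$ of right modules gives $\Tor_1^S(I, M) = \Tor_2^S(S/I, M) = M[f]$. Hence $M$ is flat if and only if $M[f] = 0$ and $M[g] = fM$.

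For the right-hand side, applying $\Hom_S(-, M)$ to Resolution~(\ref{free}) yields the complex $M \overset{f\cdot}\to M \overset{g\cdot}\to M$, as already used in the derivation of~(\ref{ext equation}). Reading off kernels and cohomology, $\Hom_S(S/K, M) = M[f]$ and $\Ext^1_S(S/K, M) = M[g]/fM$. Consequently $M \in (S/K)^{\perp_0}$ if and only if $M[f] = 0$, and $M \in (S/K)^{\perp_1}$ if and only if $M[g] = fM$. The intersection $(S/K)^{\perp_0} \cap (S/K)^{\perp_1}$ is thus characterized by precisely the two conditions defining flatness, and the required equality follows.

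The only real technical point is the correct dualization of Resolution~(\ref{free}) to produce a free resolution of $S/I$ as a right $S$-module and the identification of its differentials, after tensoring with $M$, with left multiplication by the same $f$ and $g$; once this is in place, the entire comparison reduces to bookkeeping with $M[f]$, $fM$, and $M[g]$.
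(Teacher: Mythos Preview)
Your proof is correct, but the route differs from the paper's. The paper argues the two inclusions separately and stays entirely on the left: for $\SFlat \subseteq (S/K)^{\perp_0} \cap (S/K)^{\perp_1}$ it observes that ${_S}S$ lies in the right-hand side (Example~\ref{free module}) and invokes the fact that $\SFlat$ is the smallest definable subcategory containing $S$; for the reverse inclusion it uses the Lazard-type criterion that $M$ is flat once every map from a finitely presented module factors through a projective, checking this only for $G=S/K$ and $G=K$ by Fact~\ref{fp modules}. You instead compute both sides explicitly as the pair of conditions $M[f]=0$ and $M[g]=fM$: on the $\Ext$ side this is read off directly from Resolution~(\ref{free}), while on the flat side you dualize to a free resolution of the \emph{right} module $S/I$ and use the right-hand analogue of Fact~\ref{fp modules} to reduce flatness to the vanishing of $\Tor_1^S(S/I,M)$ and $\Tor_1^S(I,M)$. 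Your approach is more hands-on and yields the explicit identifications $\Hom_S(S/K,M)\cong\Tor_2^S(S/I,M)$ and $\Ext^1_S(S/K,M)\cong\Tor_1^S(S/I,M)$ along the way; the paper's approach avoids passing to right modules and does not require the (correct, but not stated in the paper) right-hand version of Fact~\ref{fp modules}. One minor remark: your appeal to right coherence is unnecessary, since $\Tor_1^S(-,M)$ already commutes with direct limits and every right module is a direct limit of finitely presented ones.
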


\begin{proof}
To verify that $\SFlat \subseteq (S/K)^{\perp_0} \cap (S/K)^{\perp_1}$ notice that the regular representation ${_S}S$ belongs to 
$(S/K)^{\perp_0}$ and $(S/K)^{\perp_1}$ (Example~\ref{free module}). As $\SFlat$ is the smallest definable subcategory of $\Modl S$ that contains $S,$ the inclusion follows.

To prove the inclusion $(S/K)^{\perp_0} \cap (S/K)^{\perp_1} \subseteq \SFlat,$ let us apply Fact~\ref{fp modules} together with the criterion that a module ${_S}M$ is flat provided that every morphism $n : G \to M$ from a finitely presented module factors through a projective module. It suffices to verify the condition for $G = S/K$ and $G = K.$ By the hypothesis $M \in (S/K)^{\perp_0},$ there are no nonzero morphisms $n: S/K \to M.$ By the hypothesis $M \in (S/K)^{\perp_1},$ a morphism $n : K \to M$ extends to $S$ as indicated by the diagram
$$
\vcenter{%
\xymatrix@C=35pt@R=35pt
{%
0 \ar[r] & K \ar[r] \ar[d]^{n} & S \ar[r] \ar@{.>}[ld] & S/K \ar[r] & 0 \\
& M.
}}
$$
\end{proof}

We note the following observation for future reference. 

\begin{prop} \label{abs and flat}
$\SAbs \cap \SFlat = (S/K)^{\perp_0} \cap (S/K)^{\perp_1} \cap (S/K)^{\perp_2}.$
\end{prop}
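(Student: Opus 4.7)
The plan is to observe that this is an immediate consequence of the two preceding characterizations, namely the identity $\SAbs = (S/K)^{\perp_1} \cap (S/K)^{\perp_2}$ derived in the paragraph after Fact~\ref{fp modules}, and the identity $\SFlat = (S/K)^{\perp_0} \cap (S/K)^{\perp_1}$ given by Proposition~\ref{SFlat}. Intersecting the two right-hand sides gives exactly $(S/K)^{\perp_0} \cap (S/K)^{\perp_1} \cap (S/K)^{\perp_2}$, with the redundant occurrence of $(S/K)^{\perp_1}$ absorbed.

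For completeness, I would briefly recall why those two characterizations hold so that the intersection is transparent. By Fact~\ref{fp modules}, every finitely presented left $S$-module is a direct sum of copies of $S/K$, $K$, and $S$, so absolute purity of ${_S}M$ reduces to the vanishing of $\Ext^1_S(S/K,M)$ and $\Ext^1_S(K,M)$ (the term $\Ext^1_S(S,M)$ vanishes automatically). Dimension shifting along the short exact sequence $0 \to K \to S \to S/K \to 0$ from Resolution~(\ref{free}) identifies $\Ext^1_S(K,M)$ with $\Ext^2_S(S/K,M)$, yielding $\SAbs = (S/K)^{\perp_1} \cap (S/K)^{\perp_2}$. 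Combined with Proposition~\ref{SFlat}, the stated equality follows at once.

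There is no real obstacle here; the work has already been done in establishing the characterizations of $\SAbs$ and $\SFlat$ in terms of orthogonality with respect to $S/K$, and the proposition is merely their conjunction.
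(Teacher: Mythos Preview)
Your proposal is correct and matches the paper's treatment exactly: the paper states Proposition~\ref{abs and flat} without proof, as it is an immediate consequence of the characterization $\SAbs = (S/K)^{\perp_1} \cap (S/K)^{\perp_2}$ derived after Fact~\ref{fp modules} and of Proposition~\ref{SFlat}. Your recapitulation of why those two characterizations hold is accurate and faithful to the paper's arguments.
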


We will show that, while the definable subcategory $\SAbs \cap \SFlat$ is a minimal nonzero definable subcategory of $\Modl S,$ the three definable subcategories $(S/K)^{\perp_i}$ are maximal proper definable subcategories of $\Modl S$ for $i = 0,$ $1,$ and $2.$\bigskip 

\noindent{\bf d.\ The twelve definable subcategories of $\Modl S.$} \bigskip

According to Fact~\ref{the ideal J}, the structure of the two-sided ideals of $S$ may be depicted by the pushout diagram 
$$
\vcenter{%
\xymatrix@C=25pt@R=25pt
{%
& 0 \ar[d] & 0 \ar[d] \\ 
0 \ar[r] & J \ar[r] \ar[d] & I \ar[r] \ar[d] & S/K \ar[r] \ar@{=}[d] & 0 \\
0 \ar[r] & K \ar[r] \ar[d] & S \ar[r] \ar[d] & S/K \ar[r] & 0 \\
& S/I \ar@{=}[r] \ar[d] & S/I \ar[d] \\
& 0 & 0.
}}
$$
Because the left $S$-module $S/I$ is flat, the columns of the commutative diagram are pure exact. Now a pure submodule of a Mittag-Leffler (ML) module is itself ML and every pure projective module is ML, so it follows that the left modules $I$ and $J$ are ML. Let us develop another flat resolution of $S/K$ beginning with the short exact sequence in the top row of the pushout diagram.
\begin{equation} \label{ML}
\vcenter{%
\xymatrix@C=15pt@R=15pt
{%
0 \ar[rr] && S \ar[rrr]^{- \times g} &&& I \ar[drr] \ar[rrrr]^{- \times f}  &&&& I \ar[rrr]^{p} &&& S/K \ar[rr] && 0 \\
         &&                       &&&&&   J \ar[urr] \ar[drr] \\
         &&                       &&& 0 \ar[urr] &&&& 0.     
}}
\end{equation}
To see that the short sequence on the left is exact, first note that, because $g \in I,$ the image $Sg$ of the map $- \times g : S \to S$ is contained in $I$ and it is the kernel of $- \times f : I \to I,$ whose image is $If = ISf = IK = J.$ Resolution~(\ref{ML}) of
$S/K$ is a subcomplex of Resolution~(\ref{free}).

\begin{prop} \label{pure and flat}
The left $S$-module ${_S}I$ is absolutely pure and flat.
\end{prop}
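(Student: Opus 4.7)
The plan is to establish flatness and absolute purity separately, using the right-hand column of the pushout diagram together with the free resolution (\ref{free}) of $S/K$.

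For flatness, the column $0 \to I \to S \to S/I \to 0$ of the pushout displays $I$ as a submodule of the flat module ${}_SS$ with flat cokernel $S/I$ by Fact \ref{the ideal I}(4). The sequence is therefore pure exact, and a pure submodule of a flat module is flat.

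For absolute purity, Fact \ref{fp modules} reduces the task to verifying $\Ext^1_S(G, I) = 0$ for $G$ ranging over $S$, $K$, and $S/K$. The case $G = S$ is trivial. For $G = K$, dimension shifting in (\ref{free}) yields $\Ext^1_S(K, I) = \Ext^2_S(S/K, I)$, and by equation (\ref{ext equation}) the latter equals $I/II$, which vanishes by the idempotency of $I$.

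The case $G = S/K$ is the main obstacle. Applying $\Hom_S(S/K, -)$ to the column $0 \to I \to S \to S/I \to 0$ yields the exact fragment
$$\Hom_S(S/K, S/I) \to \Ext^1_S(S/K, I) \to \Ext^1_S(S/K, S),$$
whose right-hand term vanishes by Example \ref{free module}. It therefore suffices to show $\Hom_S(S/K, S/I) = 0$. A nonzero morphism would be determined by some $s + I$ with $s \in S \setminus I$ and $Ks \subseteq I$. Since $s \notin I$, the endomorphism $s$ of $X$ is a monomorphism; taking $f \in K$, which is also a monomorphism, the composition $fs$ is a monomorphism, hence $fs \notin I$. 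This contradicts $fs \in Ks \subseteq I$, so no such morphism exists.
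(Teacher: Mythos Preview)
Your proof is correct and follows essentially the same route as the paper's. The only organizational difference is that the paper invokes the combined characterization $\SAbs \cap \SFlat = (S/K)^{\perp_0} \cap (S/K)^{\perp_1} \cap (S/K)^{\perp_2}$ from Proposition~\ref{abs and flat} and checks those three conditions, whereas you treat flatness and absolute purity separately; your dimension-shift $\Ext^1_S(K,I)=\Ext^2_S(S/K,I)=I/II=0$ is exactly the paper's check that $I\in\T_I$, and your long-exact-sequence argument for $\Ext^1_S(S/K,I)=0$ is identical to the paper's, except that you supply an explicit ring-theoretic reason for $\Hom_S(S/K,S/I)=0$ where the paper simply asserts it (the paper's implicit reason being that $S/K$ and $S/I$ are nonisomorphic simple modules).
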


\begin{proof}
We noted above that $\SAbs \cap \SFlat = (S/K)^{\perp_0} \cap (S/K)^{\perp_1} \cap (S/K)^{\perp_2} =
\F_K \cap (S/K)^{\perp_1} \cap \T_I.$ Evidently $I$ has no summand isomorphic to $S/K,$ so that $I \in (S/K)^{\perp_0}.$ On the other hand, $I^2 = I$ so that $I \in \T_I.$ Finally, to verify that $\Ext^1_S (S/K, I) = 0,$ consider the beginning of the long exact sequence
$$\vcenter{%
\xymatrix@1@C=25pt@R=15pt
{%
0 \ar[r] & (S/K,I) \ar[r] & (S/K,S) \ar[r] & (S/K, S/I) \ar[r] & \Ext^1 (S/K,I) \ar[r] & \Ext^1 (S/K, S) \ar[r] & \cdots.     
}}
$$
By Example~\ref{free module}, $\Ext^1 (S/K,S) = 0$ and the three terms on the left are also $0.$ 
\end{proof}

As ${_S}I$ is absolutely pure, its pure injective envelope coincides with its injective envelope, $\PE (I) = E(I),$ which is indecomposable. Therefore, Resolution~(\ref{ML}) of $S/K$ also serves as an absolutely pure coresolution of $S.$

\begin{prop}
The torsion class $\T_I = \Gen (I),$ the class of modules that arise as quotients of coproducts of copies of $I.$ Dually, the torsion free class $\F_K = \Cogen (E(S/I)),$ the class of those modules that admit an embedding into a product of copies of
$E(S/I).$ 
\end{prop}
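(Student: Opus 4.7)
The two claims are essentially independent. For $\T_I=\Gen(I)$: the ideal $I$ is idempotent (Fact~\ref{the ideal J}, $I^2=I$), so $I\in\T_I=\{M:IM=M\}$; since $\T_I$ is a torsion class closed under coproducts and quotients, $\Gen(I)\subseteq\T_I$. Conversely, for $M\in\T_I$ pick any surjection $\pi\colon S^{(X)}\twoheadrightarrow M$ with $m_x=\pi(e_x)$; its restriction to $I^{(X)}$ has image $\sum_{x\in X}I\,m_x = I\cdot\sum_{x\in X}Sm_x = IM = M$, so $M\in\Gen(I)$.

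For $\F_K=\Cogen(E(S/I))$, the easy inclusion $\Cogen(E(S/I))\subseteq\F_K$ follows from $E(S/I)\in\F_K$: the simple module $S/I$ is not isomorphic to $S/K$, so $S/I\in\F_K$, and $(\Add(S/K),\F_K)$ is a hereditary torsion pair (the class $\Add(S/K)$ of direct sums of copies of the simple module $S/K$ is closed under submodules), hence $\F_K$ is closed under essential extensions and contains $E(S/I)$. For the reverse inclusion, I appeal to the classical result on hereditary torsion theories in Grothendieck categories (cf.\ Stenstr\"om, \emph{Rings of Quotients}, Ch.~VI): $\F_K=\Cogen(E(G/t_K(G)))$ for any generator $G$, where $t_K$ denotes the torsion radical. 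Taking $G={_S}S$ and using $K=Sf$ from Fact~\ref{the ideal K}(1), one gets $t_K(S)=\{s\in S:fs=0\}$; since $f$ is a monomorphism of the right $R$-module $X$, the composition $f\circ s$ vanishes iff $s=0$, so $t_K(S)=0$ and $\F_K=\Cogen(E(S))$. To finish I identify $E(S)=E(S/I)$: the identity $\{s\in S:sf=0\}=Sg=I$ from the free resolution of $S/K$ gives $S/I\cong Sf=K$ as a submodule of ${_S}S$, and $K$ is essential in ${_S}S$ (given $0\neq s\in S$, either $s\in K$, or $s$ is an epimorphism of $X$ and then $fs\in Ss\cap K$ is nonzero because $f$ is a monomorphism).

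The main obstacle is the identification $E(S)=E(S/I)$, which rests on the essentiality of $K\cong S/I$ inside ${_S}S$. This exploits the specific dichotomy in $S=\End_R(X)$ for a nearly simple uniserial domain $R$: every nonzero endomorphism of $X$ is either an epimorphism or lies in $K$, so that left-multiplication by $f$ always produces a nonzero element of $Ss\cap K$, securing essentiality via a short case analysis rather than a deeper structural classification.
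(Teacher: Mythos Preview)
Your argument for $\T_I=\Gen(I)$ is correct and is essentially the paper's proof, phrased via restriction of a free cover rather than element-by-element.

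Your argument for $\F_K=\Cogen(E(S/I))$ contains a genuine error. The identification $\{s\in S:sf=0\}=Sg$ is correct (this is exactness of the free resolution), but the further claim $Sg=I$ is false: by Fact~\ref{the ideal I}(2) the left ideal ${_S}I$ is \emph{not} finitely generated, so $Sg\subsetneq I$. Consequently the map $s\mapsto sf$ gives $K\cong S/Sg$, not $S/I$. In fact $K$ is not simple at all (it has the proper nonzero quotient $K/J\cong S/I$), so $K\not\cong S/I$. Your essentiality argument does show $K$ is essential in ${_S}S$, but this only yields $E(S)=E(K)=E(J)=E(I)$, which the paper records as a point of $\Zg_0(S)$ \emph{distinct} from $E(S/I)$. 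Thus your approach correctly reaches $\F_K=\Cogen(E(S))$ via the Stenstr\"om formula, but the final bridge to $\Cogen(E(S/I))$ collapses.

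The paper's argument avoids this detour entirely: since the only simple modules are $S/K$ and $S/I$, the minimal injective cogenerator is $E(S/I)\oplus S/K$; any $M\in\F_K$ embeds in a power $(S/K)^\alpha\oplus E(S/I)^\alpha$, and because $M$ has no $S/K$-submodule its intersection with the semisimple summand $(S/K)^\alpha$ is zero, so projection onto $E(S/I)^\alpha$ is injective. If you wish to salvage your route, you would need an independent argument that $E(S)=E(I)$ embeds in a power of $E(S/I)$; this is true but not yet available at this point in the development.
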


\begin{proof}
Since $I \in \T_I$ and $\T_I$ is closed under coproducts and quotient objects, $\Gen (I) \subseteq \T_I.$ On the other hand, if $M = IM = gM$ and $m \in M,$ then $m = gm'$ for some element $m' \in M.$ If $\eta : S \to M$ is the morphism from the regular representation that satisfies $\eta (1) = m',$ then $\eta (g) = g\eta (1) = gm' = m.$ Thus $m \in \Img (\eta |_I)$ and $M \in \Gen (I).$

The torsion free class $\F_K$ consists of the modules $M$ that contain no submodule (resp., summand) isomorphic to the injective module $S/K.$ The injective envelope $E(S/I)$ of the flat simple module $S/I$ is clearly such a module. As $\F_K$ is closed under products and submodules, we have that $\Cogen (E(S/I)) \subseteq \F_K.$ Now every module admits an embedding into a product of copies of the minimal injective cogenerator $E(S/I) \oplus S/K,$ so if $M$ belongs to $\F_K$ then there is an embedding of $M$ into a product $(S/K)^{\alpha} \oplus [E(S/I)]^{\alpha}$ for some cardinal $\alpha.$ But the intersection of $M$ with the semisimple summand
$(S/K)^{\alpha} \simeq (S/K)^{(\beth)}$ is trivial and the embedding of $M$ composed with the projection onto $E(S/I)^{\alpha}$ is a monomorphism.     
\end{proof}

\begin{prop} \label{abs/flat minimal}
The definable subcategory $\SAbs \cap \SFlat$ is a minimal nonzero definable subcategory of $\Modl S.$
\end{prop}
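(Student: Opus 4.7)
\smallskip

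The plan is to translate the statement into the Ziegler spectrum via the bijective correspondence $\D \mapsto \Cl(\D) = \D \cap \Zg(S)$ between definable subcategories of $\Modl S$ and closed subsets of $\Zg(S)$, and then invoke the topological content of Theorem~\ref{Zg spec}. Minimality of $\SAbs \cap \SFlat$ as a nonzero definable subcategory is equivalent, via this correspondence, to the property that its associated closed set is nonempty and has no proper nonempty closed subset.

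First I would identify $\Cl(\SAbs \cap \SFlat)$ explicitly. Since $S$ is left coherent (Fact~\ref{coherence}), every absolutely pure module that is also pure injective is automatically injective: an absolutely pure module embeds purely into its injective envelope, and such a pure embedding splits against pure injectivity. Hence the indecomposable points of $\SAbs \cap \SFlat$ are precisely the indecomposable \emph{injective} flat left $S$-modules, and by the explicit description in Theorem~\ref{Zg spec} this set coincides with $\Zg'(S) = \Zg(S) \setminus \{S/K, \PE(J), S/I\}$. That the class is nonzero is witnessed by ${_S}I$ (Proposition~\ref{pure and flat}), whose injective envelope $E(I)$ is an indecomposable absolutely pure flat module, hence a point of $\Cl(\SAbs \cap \SFlat) = \Zg'(S)$.

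Now suppose $\D \subseteq \SAbs \cap \SFlat$ is a nonzero definable subcategory. The corresponding closed subset $\Cl(\D) \subseteq \Zg(S)$ is nonempty and contained in $\Zg'(S)$, so it is also closed for the subspace topology on $\Zg'(S)$. By Theorem~\ref{Zg spec}, that subspace topology is indiscrete, so the only nonempty closed subset of $\Zg'(S)$ is $\Zg'(S)$ itself. Therefore $\Cl(\D) = \Zg'(S) = \Cl(\SAbs \cap \SFlat)$, and injectivity of the Ziegler correspondence forces $\D = \SAbs \cap \SFlat$, which is exactly the minimality claim.

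The main obstacle is that the argument is entirely parasitic on Theorem~\ref{Zg spec}, whose proof is deferred in the paper to after the analysis of simple objects in the functor category $(\modr S, \Ab)$; in particular, it relies on both the identification of $\Zg'(S)$ with the flat indecomposable injectives and on the indiscreteness of its subspace topology. If one wished to avoid this dependence, the alternative would be to show directly, using Fact~\ref{fp modules} together with the three defining vanishing conditions of Proposition~\ref{abs and flat}, that every nonzero coherent functor on $\Modl S$ that is represented on $\SAbs \cap \SFlat$ already separates nothing within it; this is essentially the same calculation repackaged and seems no easier than proving the relevant part of Theorem~\ref{Zg spec} directly.
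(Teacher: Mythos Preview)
Your approach via the Ziegler correspondence is formally sound, but as you yourself flag, it rests on Theorem~\ref{Zg spec}, and in the paper the indiscreteness of the subspace topology on $\Zg'(S)$ is deduced \emph{from} Proposition~\ref{abs/flat minimal}, not the other way around. Immediately after the paper's proof of this proposition one reads: ``Proposition~\ref{abs/flat minimal} implies that $\Cl(\SAbs \cap \SFlat)$ is a minimal nonempty closed subset of the Ziegler spectrum. Because $E(I)$ belongs to $\Cl(\SAbs \cap \SFlat)$, every indecomposable injective flat module, considered as a point of $\Zg(S)$, is topologically indistinguishable from $E(I)$.'' So within the paper's logical order your argument is circular, and the caveat in your final paragraph is not a minor technicality but the entire difficulty: you have not supplied an independent proof.

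The paper's proof is direct and avoids the Ziegler topology altogether. Given a nonzero definable $\D \subseteq \SAbs \cap \SFlat$ and $0 \neq N \in \D$, one first observes that $N$ is not semisimple (neither simple lies in $\SAbs \cap \SFlat$), so $\ann_S(N) = 0$; the evaluation maps $\eta_n : S \to N$ then assemble to an embedding $S \hookrightarrow N^N$, and restricting gives $I \hookrightarrow N^N$. Since $I$ is absolutely pure (Proposition~\ref{pure and flat}) this embedding is pure, so $I \in \D$. Now for any $M \in \SAbs \cap \SFlat$ one has $\Ext^1_S(K,M) = 0$, so $M \in \T_I = \Gen(I)$; the resulting epimorphism $I^{(\alpha)} \to M$ is pure because $M$ is flat, and closure of $\D$ under coproducts and pure quotients gives $M \in \D$. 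This argument uses only Proposition~\ref{pure and flat} and elementary closure properties, so it can legitimately precede and feed into the proof of Theorem~\ref{Zg spec}.
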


\begin{proof}
Suppose that $\D \subseteq \SAbs \cap \SFlat$ is a nonzero definable subcategory, and pick a nonzero $S$-module $N \in \D.$ The module ${_S}N$ is not semisimple, so that the annihilator $\ann_S (N) = 0.$ For every $n \in N,$ there is a morphism $\eta_n : S \to N$ satisfying $\eta_n (1) = n.$ The product $\prod_n \eta_n : S \to N^N$ is therefore an embedding and yields an embedding $I \subseteq N^N.$ As $I$ is absolutely pure and the definable subcategory is closed under pure submodules, $I \in \D.$ Now if $N \in \SAbs \cap \SFlat,$ then it is absolutely pure so that 
$\Ext^1 (K,N) = 0$ which implies that $N \in \T_I = \Gen (I):$ there is an epimorphism $I^{(\alpha )} \to N.$ As $N$ is flat, the morphism is a pure epimorphism and therefore $N \in \D.$
\end{proof}

Proposition~\ref{abs/flat minimal} implies that $\Cl (\SAbs \cap \SFlat)$ is a minimal nonempty closed subset of the Ziegler spectrum. Because $E(I)$ belongs to $\Cl (\SAbs \cap \SFlat),$ every indecomposable injective flat module, considered as a point of
$\Zg (S),$ is topologically indistinguishable from $E(I).$ 

The preceding considerations allow us to exhibit twelve definable subcategories $\D.$ These twelve subcategories are distinguished by their intersections with the subset $\{ S/K, \PE (J), S/I, E(S/I) \}$ of the core Ziegler spectrum $\Zg_0 (S).$ In the sequel, we will define the {\em maximal} Ziegler spectrum $\MaxZg (S)$ and verify that it is given by this subset, endowed with the relative subspace topology. \bigskip
\begin{center} {\bf Table 1: The Definable Subcategories of $\Modl S$} 
\bigskip

\begin{tabular}{c|c|c}
$\D$ & $\D \cap \MaxZg (S)$ & $\D \cap \SPProj$ \\ \hline
$0$ & $\emptyset$ & $0$ \\
$\Add (S/I)$ & $\{ S/I \}$ & $0$ \\
$\Add (S/K)$ & $\{ S/K \}$ & $\Add (S/K)$ \\
$\Add (S/J) = \Add (S/K \oplus S/I)$ &  $\{ S/K, S/I \}$ & $\Add (S/K)$ \\
$\SAbs \cap \SFlat = (S/K)^{\perp_0} \cap (S/K)^{\perp_1} \cap (S/K)^{\perp_2}$ &  $\{ E(S/I) \}$ & $\Add (I_{\omega})$ \\
$\SAbs = (S/K)^{\perp_1} \cap K^{\perp_1} = (S/K)^{\perp_1} \cap (S/K)^{\perp_2}$ &  $\{ S/K, E(S/I) \}$ & $\Add (I_{\omega} \oplus S/K)$ \\
$\SFlat = (S/K)^{\perp_0} \cap (S/K)^{\perp_1}$ & $\{ S/I, E(S/I) \}$ & $\Add(S) = \SProj$ \\
$S/K^{\perp_0} \cap (S/K)^{\perp_2}$ & $\{ \PE(J), E(S/I) \}$ & $\Add (I_{\omega} \oplus J_{\omega})$ \\
$\F_K = \Cogen (E(S/I)) = (S/K)^{\perp_0}$ & $\{ \PE(J), S/I, E(S/I) \}$ & $\Add (S \oplus K)$ \\
$(S/K)^{\perp_1}$ & $\{ S/K, S/I, E(S/I) \}$ & $\Add (S \oplus S/K)$ \\
$\T_I = \Gen (I) = (S/K)^{\perp_2}$ & $\{ S/K, \PE(J), E(S/I) \}$ & $\Add(I_{\omega} \oplus J_{\omega} \oplus S/K)$ \\
$\Modl S$ & $\MaxZg (S)$ & $\Add(\modl S)$ \\ 
\end{tabular}
\end{center}
\bigskip

\noindent The first four do not contain the minimal definable subcategory $\SAbs \cap \SFlat;$ the others do. We can verify that the latter $8$ definable subcategories are pairwise distinct by looking at the three modules $S/K,$ $J,$ and $S/I,$ and recalling that a definable subcategory $\D$ of $\Modl S$ contains $J$ if and only if it contains its pure injective envelope $\PE (J).$ The module $S/K$ belongs to $\SAbs = (S/K)^{\perp_1} \cap (S/K)^{\perp_2},$ but not to $(S/K)^{\perp_0};$ the module $S/I$ belongs to
$\SFlat = (S/K)^{\perp_0} \cap (S/K)^{\perp_1},$ but not to $(S/K)^{\perp_2}.$ Let us note that $J$ belongs to $(S/K)^{\perp_0}$ and 
$(S/K)^{\perp_2} = \T_I,$ but not to $(S/K)^{\perp_1}.$ Well, $J$ contains no submodule isomorphic to $S/K$ and $IJ = J,$ so the first two hold. On the other hand, Resolution~(\ref{ML}) tells us that $\Ext^1 (S/K, J) \neq 0.$ Once we have proved Theorem~\ref{Zg spec}, which implies that there are exactly 12 closed subsets of $\Zg (S),$ we will know that this list of definable subcategories of $\Modl S$ is exhaustive. \bigskip

\noindent{\bf e. Three tilting classes in $\Modl S.$}\bigskip

In this section we identify three tilting classes in $\Modl S$ by building a projective resolution of $S/K$ that is a countably generated subcomplex of Resolution~(\ref{ML}).
\begin{equation}  \label{projective}
\vcenter{%
\xymatrix@C=15pt@R=15pt
{%
0 \ar[rr] && S \ar[rrr]^{- \times g} &&& I_{\omega} \ar[drr] \ar[rrrr]^{- \times f}  &&&& I_{\omega} \ar[rrr]^{p_{\omega}} &&& S/K \ar[rr] && 0 \\
         &&                       &&&&&   J_{\omega} \ar[urr] \ar[drr] \\
         &&                       &&& 0 \ar[urr] &&&& 0     
}}
\end{equation}
Here $I_{\omega} \subseteq I$ is a countably generated left $S$-module that is absolutely pure and projective, and $J_{\omega} \subseteq J$ is a countably generated pure projective module. Because $S/K$ is also pure projective, Resolution~(\ref{projective}) is a special absolutely pure coresolution of $S;$ it will be used to study the tilting classes of $\Modl S.$ We will verify that the module $T_1 = I_{\omega} \oplus J_{\omega}$ is a $1$-tilting module and that $T_2 = I_{\omega} \oplus S/K$ is a $2$-tilting module.

\begin{lem}
If $I' \subseteq {_S}I$ is a submodule with $g \in I',$ then $I' = Sg + (I' \cap J).$ If $I'$ is, moreover, a proper submodule of $I,$ then there is an element $x \in J$ such that $I' \subseteq Sg + Sx.$ 
\end{lem}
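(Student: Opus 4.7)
The plan is to exploit the isomorphism $I/J \cong S/K$ arising from the top row of the pushout diagram displayed just above Resolution~(\ref{ML}), which makes $I/J$ a simple left $S$-module. The first key point to establish is that $g\notin J$: since $g$ is an epimorphism, $g\notin K$, and therefore its image $g+J$ in the simple module $I/J$ is a nonzero generator. Equivalently, $Sg+J=I$.

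For the first assertion, I would argue that for any $y\in I'$ the image $y+J\in I/J$ coincides with $sg+J$ for some $s\in S$, using the simplicity of $I/J$. Then $y-sg\in J$ and lies in $I'$ because $y\in I'$ and $Sg\subseteq I'$, so $y-sg\in I'\cap J$, giving $y\in Sg+(I'\cap J)$. The reverse inclusion is immediate from $Sg\subseteq I'$ and $I'\cap J\subseteq I'$.

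For the second assertion, I would first rule out $I'\cap J=J$: otherwise $I'\supseteq Sg+J=I$, contradicting the properness of $I'$. Hence $I'\cap J$ is a proper submodule of the left $S$-module $J$, which is uniserial by Fact~\ref{the ideal J}. Picking any $x\in J\setminus (I'\cap J)$, uniseriality forces the comparable submodules $Sx$ and $I'\cap J$ of $J$ to satisfy $I'\cap J\subsetneq Sx$ (since $x\notin I'\cap J$ rules out the inclusion $Sx\subseteq I'\cap J$). Combining with the first part yields $I'=Sg+(I'\cap J)\subseteq Sg+Sx$, as desired.

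There is no real obstacle here: the argument is a straightforward unpacking of the ideal structure of $S$ recorded in Facts~\ref{the ideal K},~\ref{the ideal I}, and~\ref{the ideal J}, and rests on exactly two ingredients already available—the simplicity of $I/J\cong S/K$ as a left $S$-module (so $g+J$ generates it) and the left uniseriality of $J$ (so any element outside a proper submodule generates a strictly larger cyclic submodule).
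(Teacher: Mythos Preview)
Your proof is correct and follows essentially the same approach as the paper: both use $I = Sg + J$ together with the modular law (which you unpack element-by-element) for the first assertion, and both invoke the uniseriality of ${_S}J$ to find $x \in J$ with $Sx \supseteq I' \cap J$ for the second. The only cosmetic difference is that the paper applies the modular identity $I' \cap (Sg + J) = Sg + (I' \cap J)$ directly, whereas you derive it by hand via the simplicity of $I/J$.
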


\begin{proof}
For the first statement, recall that $I = Sg + J,$ so that $I' = I' \cap I = I' \cap (Sg + J) = Sg + (I' \cap J),$ because $Sg \subseteq I'.$ It follows that if $I'$ is a proper submodule of $I,$ then $I' \cap J$ is a proper submodule of $J.$ As ${_S}J$ is uniserial, there is an $x \in J$ such that $Sx \supseteq I' \cap J,$ whence $I' = Sg + (I' \cap J) \subseteq Sg + Sx.$ 
\end{proof}

\begin{thm} \label{countably generated}
There exists a countably generated pure submodule $I_{\omega}$ of ${_S}I$ such that:
\begin{enumerate} 
\item $Sg \subseteq I_{\omega};$ 
\item $I_{\omega}f = J_{\omega},$ where $J_{\omega} = I_{\omega} \cap J;$ and 
\item $J_{\omega}f = J_{\omega}.$
\end{enumerate}
\end{thm}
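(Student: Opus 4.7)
The engine of the proof is the identity $J = Jf$, which I derive from $gf=0$: given $a \in J = If$ (the equality $J=If$ comes from Resolution~(\ref{ML})), write $a = bf$ with $b \in I$; using $I = Sg + J$ (which reflects the isomorphism $I/J \cong S/K$ and the fact that $g+J$ is a unit in $S/K$), decompose $b = sg + j$ with $j \in J$, and conclude $a = (sg+j)f = jf \in Jf$.

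The strategy is to construct $I_\omega$ by a L\"owenheim-Skolem argument in the expanded language $\Lang(S)^+$, obtained from the language of left $S$-modules by adjoining a unary function symbol $\mu_f$, interpreted on $I$ as right multiplication by $f$. Since right multiplication by a ring element commutes with left scalar action, this interpretation is $S$-linear, so substructures of $(I, \mu_f)$ are precisely the left $S$-submodules of $I$ closed under right multiplication by $f$. Applying the standard L\"owenheim-Skolem theorem for pp-pure substructures yields a countably generated substructure $I_\omega \subseteq I$ containing $g$ that is pp-pure in $(I, \mu_f)$.

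Properties (1)--(3) then follow from pp-purity. Part (1) is immediate from $g \in I_\omega$. For (2), $\mu_f$-closure gives $I_\omega f \subseteq I_\omega$ and $I_\omega f \subseteq J$ is automatic, so $I_\omega f \subseteq J_\omega$; conversely, for $a \in J_\omega$, the pp-formula $\exists z\colon a = \mu_f(z)$ is satisfied in $I$ (because $J=If$), hence in $I_\omega$ by pp-purity, giving $a \in I_\omega f$. For (3), the pp-formula $\exists z, w\colon a = \mu_f(z) \wedge z = \mu_f(w)$ is satisfied in $I$ (because $J=Jf=If$), hence in $I_\omega$; the witness $z = \mu_f(w) \in I_\omega f \subseteq J$ lies in $I_\omega \cap J = J_\omega$, so $a = zf \in J_\omega f$. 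Finally, pp-purity in $\Lang(S)^+$ implies pp-purity in the sublanguage $\Lang(S)$, so $I_\omega$ is pure in ${_S}I$ as a left $S$-module.

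The main technical delicacy is to keep the L\"owenheim-Skolem substructure \emph{countably generated} as an $S$-module; I handle this by organizing the construction in $\omega$ stages and adjoining, at each stage, pp-witnesses for pp-formulas with parameters from a countable $S$-generating set of the current stage, then closing under $S$-scalars and $\mu_f$. Because pp-formulas are preserved under $S$-linear combinations of parameters, witnessing on a generating set suffices to achieve pp-purity in the limit (via a standard diagonal enumeration of stage-formula-tuple triples), and only countably many new generators are added per stage, so the union $I_\omega$ is countably generated as required.
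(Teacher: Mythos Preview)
Your derivation of $J = Jf$ from $gf = 0$ and $I = Sg + J$ is correct, and the idea of working in the expanded language $\Lang(S)^+ = \Lang(S) \cup \{\mu_f\}$ is elegant: substructures are automatically $\mu_f$-closed, and your extraction of conditions (2) and (3) from pp-purity in $\Lang(S)^+$ is clean and correct.

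There is, however, a genuine gap in the counting that keeps $I_\omega$ countably $S$-generated. The language $\Lang(S)$ contains a unary function symbol for every element of $S$, so there are $|S| + \aleph_0$ pp-formulas. At each stage you propose to adjoin witnesses for all pp-formulas with parameters drawn from a countable generating set, but even with countably many parameter tuples there are $|S|$ formula--tuple pairs to witness. Your assertion that ``only countably many new generators are added per stage'' therefore fails whenever $S$ is uncountable, and Dubrovin--Puninski rings are typically uncountable. The ``standard diagonal enumeration of stage-formula-tuple triples'' is simply not a countable enumeration here, and reducing to generating sets does not help: the obstruction is the number of formulas, not the number of parameters.

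What makes the construction go through---and what the paper invokes explicitly---is that ${}_SI$ is Mittag-Leffler (established just before Resolution~(\ref{ML}) as a pure submodule of the pure projective module ${}_SS$). For a Mittag-Leffler module, every finitely generated submodule is contained in a countably generated \emph{pure} submodule; iterating, every countably generated submodule is. This is precisely the substitute for downward L\"owenheim--Skolem over a large language. With this in hand your scheme works: at stage $n$, use Mittag-Leffler to enlarge to a countably generated $\Lang(S)$-pure submodule, then close under $\mu_f$ (countably many new generators), and interleave the two specific $\Lang(S)^+$-witnesses needed for (2) and (3). The union is then countably generated, $\Lang(S)$-pure (as a directed union of pure submodules), and $\mu_f$-closed. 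The paper does essentially this, but organises the $\mu_f$-closure step concretely via the uniseriality of ${}_SJ$ and the preceding lemma rather than the expanded-language formalism.
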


\begin{proof}
If ${_S}J$ is countably generated, then so is $I = Sg + J,$ and we may let $I_{\omega} = I,$ for then $If = ISf = IK = J = I \cap J,$ and $Jf = JSf = JK = J.$ So assume that $J$ is not countably generated and define, by recursion on $n,$ two sequences $\langle j_n \; | \; n < \omega \rangle$ and $\langle j'_n \; | \; n < \omega \rangle$ of elements of $J$ together with an ascending chain 
$$Sg \subseteq I_0 \subseteq I_1 \subseteq \cdots \subseteq I_n \subseteq \cdots \subseteq I$$
of countably generated pure submodules of $I$ as follows:
\begin{enumerate}
\item $j_0 = 0;$
\item if $j_n$ is defined, let $j'_n \in J$ be some element for which $j'_nf = j_n;$
\item if $j_n$ and $j'_n$ are defined, let $I_n \supseteq Sg + Sj_n + Sj'_n$ be a countably generated pure submodule of $I;$  
\item if $I_n$ is defined, let $j_{n+1} \in J$ be such that $I_n + I_nf \subseteq Sg + Sj_{n+1}.$
\end{enumerate}
The existence of $j'_n \in J$ in ($2$) is ensured by the fact that $Jf = J;$ the existence of $I_n$ in (3) is ensured by the fact that ${_S}I$ is Mittag-Leffler and the property of a Mittag-Leffler module that every finitely generated submodule is contained in a countably generated pure submodule; and the existence of the $j_{n+1}$ in (4) is ensured by the lemma: the submodule $I_n + I_nf$ is countably generated and therefore proper in $I.$ 

Let $I_{\omega} := \sum_n I_n.$ Then $I_{\omega}$ is a countably generated pure submodule of $I$ that contains $Sg$ and it remains to verify that $I_{\omega}f = J_{\omega}.$ The inclusion $I_{\omega}f \subseteq J_{\omega}$ is easy to show. By construction, we have that $I_nf \subseteq I_{n+1},$ so that $I_{\omega}f \subseteq I_{\omega}$ and $I_{\omega}f \subseteq If = ISf = IK = J.$ Thus $I_{\omega}f \subseteq I_{\omega} \cap J = J_{\omega}.$ In particular, $J_{\omega}f \subseteq J_{\omega}.$ To prove the converse inclusion $J_{\omega} \subseteq I_{\omega}f,$ it suffices to verify that $J_{\omega} \subseteq J_{\omega}f,$ which will also yield the equality $J_{\omega}f = J_{\omega}.$ 

Before proving that $J_{\omega} \subseteq J_{\omega}f,$ let us make a few observations regarding the construction of $I_{\omega}.$ First note that the pure submodule $I_0$ of $I$ contains $Sg$ properly. If it weren't so, then $Sg \subseteq I$ would be a pure submodule. As ${_S}I$ is pure in ${_S}S,$ then the embedding $Sg \subseteq S$ would be a pure, hence split, monomorphism, contradicting the fact that ${_S}S$ is uniform. 

The containment $Sg + Sj_n \supseteq Sg$ is therefore proper for every $n > 0.$ This implies that $Sj_n \supseteq Sg \cap J$ - these are submodules of ${_S}J,$ which is uniserial. As the $j_n \in J,$ we get that $(Sg + Sj_n) \cap J = (Sg \cap J) + Sj_n = Sj_n.$
Taking the limit, one obtains $$J_{\omega} = I_{\omega} \cap J = \sum_n (Sg + Sj_n) \cap J = \sum_n Sj_n.$$ 
By ($2$) of the construction of $I_{\omega},$ we have that $Sj_n = Sj'_nf$ and so $J_{\omega} = \sum_n Sj_n = \sum_n Sj'_nf \subseteq J_{\omega}f.$
\end{proof}

As $I_{\omega} \subseteq I$ is a pure submodule, Proposition~\ref{pure and flat} implies that $I_{\omega}$ is absolutely pure and flat. Using the notation $J_{\omega} = I_{\omega} \cap J$ introduced in Theorem~\ref{countably generated}, we have the pushout diagram
$$
\vcenter{%
\xymatrix@C=35pt@R=35pt
{%
& 0 \ar[d] & 0 \ar[d] \\ 
0 \ar[r] & J_{\omega} \ar[r] \ar[d] & I_{\omega} \ar[r]^{p_{\omega}} \ar[d] & S/K \ar[r] \ar@{=}[d] & 0 \\
0 \ar[r] & J \ar[r] \ar[d] & I \ar[r] \ar[d] & S/K \ar[r] & 0 \\
& I/I_{\omega} \ar@{=}[r] \ar[d] & I/I_{\omega} \ar[d] \\
& 0 & 0.
}}
$$
with exact rows and columns. The middle column is a pure exact sequence, which implies that the quotient module $I/I_{\omega}$ is, like $I,$ a flat left $S$-module. The left column is then also a pure exact sequence and we see that the countably generated submodule $J_{\omega} \subseteq J$ is also pure. Now every countably generated Mittag-Leffler module is pure projective, so that both of the modules $I_{\omega}$ and $J_{\omega}$ are pure projective. The module $I_{\omega}$ is even a flat pure projective, and is therefore projective.  Theorem~\ref{countably generated} includes the equality $I_{\omega}f = J_{\omega},$ which allows us to complete Resolution~(\ref{projective}) with the projective resolution of $J_{\omega}$ given by
$$
\vcenter{%
\xymatrix@C=45pt@R=15pt
{%
0 \ar[r] & S \ar[r]^{- \times g} & I_{\omega} \ar[r]^{- \times f} & J_{\omega} \ar[r] & 0
}}
$$
Both of the modules $J_{\omega}$ and $S/K$ are pure projective and therefore belong to ${^{\perp_1}}\SAbs.$ Resolution~(\ref{projective}) of $S/K$ is thus a {\em special} absolutely pure coresolution of $S.$ \bigskip

\begin{cor} \label{tilt1}
The torsion class $\T_I$ is a $1$-tilting class with a tilting module $T_1 = I_{\omega} \oplus J_{\omega}$ that is pure projective, but not classical. 
\end{cor}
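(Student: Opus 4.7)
The plan is to verify the three tilting axioms for $T_1 = I_\omega \oplus J_\omega$, identify its tilting class as $\T_I$, and then establish pure projectivity and non-classicality.

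For (T1) and (T3), the short exact sequence $0 \to S \xrightarrow{\times g} I_\omega \xrightarrow{\times f} J_\omega \to 0$ extracted from (\ref{projective}) is already a projective resolution of $J_\omega$ since $I_\omega$ is projective; hence $\pd J_\omega \leq 1$ and $\pd T_1 \leq 1$, and this sequence is precisely the presentation demanded by (T3), with $I_\omega, J_\omega \in \Add T_1$. To identify $T_1^\perp$, observe $T_1^\perp = J_\omega^\perp$ since $I_\omega$ is projective; dimension-shifting via $0 \to J_\omega \to I_\omega \to S/K \to 0$ together with equation (\ref{ext equation}) then yields $\Ext_S^1(J_\omega, M) \cong \Ext_S^2(S/K, M) = M/IM$, so $J_\omega^\perp = \T_I$.

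For (T2), since $\T_I$ is closed under coproducts, it suffices to show $T_1 \in \T_I$, i.e., $I_\omega = II_\omega$ and $J_\omega = IJ_\omega$. The embeddings $I_\omega \hookrightarrow I$ (pure by construction of $I_\omega$) and $J_\omega \hookrightarrow J$ (pure by the pushout diagram following Theorem \ref{countably generated}, whose left column has flat cokernel $I/I_\omega$) are both pure. Tensoring with the finitely presented right $S$-module $(S/I)_S$ (finitely presented because $I_S = gS$ is cyclic and $S$ is right coherent by Fact \ref{coherence}) yields injections $I_\omega/II_\omega \hookrightarrow I/II = 0$ and $J_\omega/IJ_\omega \hookrightarrow J/IJ = 0$, the targets vanishing because $I^2 = I$ and $IJ = J$ by Fact \ref{the ideal J}. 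Pure projectivity of $T_1$ is then immediate: $I_\omega$ is projective, and $J_\omega$ is countably generated Mittag--Leffler as a pure submodule of the Mittag--Leffler module ${}_SJ$, hence pure projective.

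Finally, to see that $T_1$ is not classical, Theorem \ref{T:classical}(3) in its evident left-module form would furnish a short exact sequence $0 \to S \to U_0 \to U_1 \to 0$ with both $U_0, U_1 \in \Add T_1 \cap \modl S$. By Fact \ref{fp modules} any finitely presented module decomposes as $(S/K)^a \oplus K^b \oplus S^c$; imposing $M = IM$ and using $I + K = S$, $IK = J \neq K$, and $IS = I \neq S$ (all from Fact \ref{the ideal J}) forces $b = c = 0$, so finitely presented modules in $\T_I$ lie in $\add(S/K)$. Thus $U_0 = (S/K)^m$ would be semisimple and $S$ would embed in a semisimple module; but every homomorphism $S \to (S/K)^m$ has kernel containing the Jacobson radical $J(S) = J \neq 0$, so no such embedding of $S$ exists. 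The main obstacle is the verification $T_1 \in \T_I$ in step (T2); in particular, the purity of $J_\omega$ in $J$ is subtle because $J$ itself is not pure in $S$, and only follows from the $3 \times 3$ pushout diagram defining $I_\omega$.
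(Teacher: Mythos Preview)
Your proof is correct and follows essentially the same strategy as the paper's: verify (T1)--(T3) from Resolution~(\ref{projective}), identify $T_1^\perp = (S/K)^{\perp_2} = \T_I$ via dimension shifting, and rule out classicality by showing that the finitely presented modules in $\T_I$ form exactly $\add(S/K)$.

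The two arguments differ only in how the details are filled in. For $T_1 \in \T_I$, the paper uses the torsion-pair description of $\T_I$ (its torsion-free class is $\Add(S/I)$) and simply asserts that neither $I_\omega$ nor $J_\omega$ maps nontrivially to $S/I$; your purity-and-tensoring computation with the right module $S/I$ makes this explicit and is a clean alternative. For non-classicality, the paper finishes by noting that $T_1 \notin \Add(S/K)$ (e.g.\ because the nonzero projective summand $I_\omega$ is not semisimple), whereas you invoke Theorem~\ref{T:classical}(3) to force an embedding of $S$ into $(S/K)^m$ and derive a contradiction from $J \neq 0$. Both routes are valid; yours trades a small amount of extra machinery for a more self-contained contradiction.
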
 

\begin{proof}
Let us verify for $T_1 = I_{\omega} \oplus J_{\omega}$ the three conditions for a $1$-tilting module. (1) Resolution~(\ref{projective}) of $S/K$ shows that $J_{\omega}$ has projective dimension at most $1.$ As $I_{\omega}$ is projective, we obtain that the projective dimension of $T_1$ is at most $1.$ (2) It follows from (1) that 
$$T_1^{\perp} = T_1^{\perp_1} = (I_{\omega} \oplus J_{\omega})^{\perp_1} = J_{\omega}^{\perp_1} = (S/K)^{\perp_2} = \T_I.$$
The equality $J_{\omega}^{\perp_1} = (S/K)^{\perp_2}$ follows from the way that $J_{\omega}$ is situated in Resolution~(\ref{projective}). Neither $I_{\omega}$ nor $J_{\omega}$ admit nonzero morphisms to the simple module $S/I,$ so we have that $T_1 \in \T_I$ and hence $T_1^{(\alpha)} \in \T_I.$ (3) Resolution~(\ref{projective}) contains a coresolution of $S$ in $\Add (I_{\omega} \oplus J_{\omega}).$

To see that $T_1$ is not $\Add$-equivalent to a finitely presented module use Fact~\ref{fp modules} to see that the only finitely presented module that belongs to $\T_I$ is $S/K,$ and it is clear that $T_1$ does not belong to $\Add (S/K).$   
\end{proof} 

If $T_n$ is an $n$-tilting module, then the associated {\em tilting class} is defined to be the subcategory
$$T_n^{\perp_{\infty}} = \{ M \in \Modl S \; | \; \Ext^i (T_n, M) = 0, \; \mbox{for all} \; i > 0  \}.$$
Two $n$-tilting modules $T_n$ and $T_n'$ are {\em tilting equivalent} if $\Add (T_n) = \Add (T_n').$

\begin{cor} \label{tilt2}
The definable subcategory $\SAbs \subseteq \Modl S$ is a $2$-tilting class with a pure projective $2$-tilting module $T_2 = I_{\omega} \oplus S/K$ that is not tilting equivalent to any finitely presented module.
\end{cor}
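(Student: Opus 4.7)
The plan is to mirror the argument for Corollary~\ref{tilt1}: verify the three axioms (T1)--(T3) for $T_2 = I_\omega \oplus S/K$, read off the tilting class from the resolution, and then rule out tilting equivalence to any finitely presented module.

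For (T1), the projectivity of $I_\omega$ together with Resolution~(\ref{projective}) of $S/K$ yields $\pd T_2 \leq 2$, and (T3) is literally Resolution~(\ref{projective}) itself, whose middle terms both lie in $\Add T_2$. For (T2) I use that $I_\omega$ is projective to reduce the vanishing to $\Ext^i_S(S/K, T_2^{(\lambda)}) = 0$ for $i = 1, 2$ and every cardinal $\lambda$. The $i=1$ case will follow from the fact that $S/K$ is pure projective together with $T_2^{(\lambda)} \in \SAbs$: indeed $I_\omega$ is a pure submodule of the absolutely pure flat module $I$ (Proposition~\ref{pure and flat}), $S/K$ is injective by Fact~\ref{the ideal K}(4), and $\SAbs$ is a definable subcategory closed under direct sums. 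For the $i=2$ vanishing I appeal to formula~(\ref{ext equation}), which identifies $\Ext^2_S(S/K, M)$ with $M/IM$, reducing the task to checking $T_2 \in \T_I$. Since $I^2 = I$ one has $I \in \T_I$, and because $\T_I = (S/K)^{\perp_2}$ is definable, the pure submodule $I_\omega \subseteq I$ also lies in $\T_I$; for the other summand, $I + K = S$ forces $I\cdot(S/K) = S/K$, so $S/K \in \T_I$ as well.

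Once $T_2$ is confirmed to be $2$-tilting, the tilting class is
\[
T_2^{\perp_\infty} \;=\; I_\omega^{\perp_\infty} \cap (S/K)^{\perp_\infty} \;=\; (S/K)^{\perp_1} \cap (S/K)^{\perp_2} \;=\; \SAbs,
\]
the last identity being the one recorded just before Proposition~\ref{SFlat}. Pure projectivity of $T_2$ is immediate: $I_\omega$ is projective and $S/K$ is finitely presented. To rule out equivalence to a finitely presented $E$, I argue by contradiction: by Fact~\ref{fp modules} such an $E$ would lie in $\Add(S \oplus K \oplus S/K)$, but the constraint $E \in \Add T_2 \subseteq \SAbs \subseteq \T_I$ excludes $S$ (because $gS = I \subsetneq S$) and $K$ (because $IK = J \subsetneq K$, using that any product of two distinct nontrivial ideals equals $J$, per Fact~\ref{the ideal J} and its discussion). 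Hence $E \in \Add(S/K)$, forcing $\Add T_2 \subseteq \Add(S/K)$, which contradicts $I_\omega \in \Add T_2$ because $I_\omega$ contains the non-semisimple module ${_S}S$. The only real obstacle is the bookkeeping of the various $\perp_i$ identifications, and all of those are already set up in Propositions~\ref{max def cats}, \ref{SFlat}, and~\ref{abs and flat}; no new machinery is needed.
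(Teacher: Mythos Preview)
Your proof is correct and follows essentially the same approach as the paper. The paper's version is slightly more streamlined: it first computes $T_2^{\perp_\infty} = \SAbs$ and then verifies (T2) in one stroke by noting $T_2 \in \SAbs$ and that $T_2^{(\alpha)}$ is a pure submodule of $T_2^{\alpha}$, hence lies in the definable subcategory $\SAbs$; your separate check that $T_2 \in \T_I$ for the $i=2$ vanishing is redundant (since $\SAbs \subseteq \T_I$), but harmless. The argument ruling out a finitely presented tilting-equivalent module is the same in substance.
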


\begin{proof}
Let us verify for $T_2 = I_{\omega} \oplus S/K$ the three conditions for a $2$-tilting module. (1) Resolution~(\ref{projective}) shows that $S/K$ has projective dimension at most $2.$ As $I_{\omega}$ is projective, the projective dimension of $T_2$ is at most $2.$ (2) It follows from (1) that
$$T_2^{\perp_{\infty}} = (S/K)^{\perp_{\infty}} = (S/K)^{\perp_1} \cap (S/K)^{\perp_2} = \SAbs.$$
As $T_2$ is absolutely pure, it belongs to $\SAbs.$ The coproduct $T_2^{(\alpha)}$ is a pure submodule of the product $T_2^{\alpha},$ and so too belongs to the definable subcategory $\SAbs.$ (3) Resolution~(\ref{projective}) is a coresolution of $S$ in
$\Add (I_{\omega} \oplus S/K).$

To see that $T_2$ is not $\Add$-equivalent to a finitely presented module note, as above, that the only absolutely pure finitely presented indecomposable module is $S/K$ and that $T_2$ does not belong to $\Add (S/K).$  
\end{proof}

The list of $12$ definable subcategories compiled in Table 1 includes only $4$ that are eligible to be a tilting class, because a tilting class must contain all the injective modules and therefore the subcategory $\SAbs.$ The definable subcategories on the list that contain $\SAbs$ are: $\SAbs$ itself, which is a $2$-tilting class; $\T_I = (S/K)^{\perp_2} = K^{\perp_1},$ which is a $1$-tilting class; $\Modl S = S^{\perp_1},$ which is the $0$-tilting class associated to the $0$-tilting module given by the regular representation ${_S}S;$ and the definable subcategory $(S/K)^{\perp_1}.$ \bigskip

\noindent {\bf f. A Coresolution of $S/I.$} \bigskip

In this and the following sections, the flat resolutions of $S/K$ that are constructed are also absolutely pure coresolutions of the flat simple representation $S/I.$ The first two such coresolutions of $S/I$ are obtained from Resolutions~(\ref{ML}) and~(\ref{projective}), respectively, by taking the pushout along the quotient map $\pi_I : S \to S/I.$ Our first such resolution of $S/K$ is preliminary to the last construction of the minimal flat resolution in Theorem~\ref{minimal cores}.
\begin{equation} \label{cores}
\vcenter{%
\xymatrix@C=15pt@R=15pt
{%
0 \ar[rr] && S/I \ar[rrr]^{p} &&& I/Ig \ar[drr] \ar[rrrr]^{- \times f}  &&&& I \ar[rrr]^{p} &&& S/K \ar[rr] && 0 \\
         &&                       &&&&&   J \ar[urr] \ar[drr] \\
         &&                       &&& 0 \ar[urr] &&&& 0     
}}
\end{equation}
To see how this is formed from Resolution~(\ref{ML}), consider the pushout
$$
\vcenter{
\xymatrix@C=35pt@R=35pt
{%
& 0 \ar[d] & 0 \ar[d] \\ 
& I \ar[d] \ar@{=}[r] & I \ar[d]^{- \times g} \\
0 \ar[r] & S \ar[r]^{- \times g} \ar[d]^{\pi_I} & I \ar[r]^{- \times f} \ar[d] & J \ar[r] \ar@{=}[d] & 0 \\
0 \ar[r] & S/I \ar[r]^{s} \ar[d] & I/Ig \ar[r] \ar[d] & J \ar[r] & 0 \\
& 0 & 0
}}
$$
and replace the appearance of the middle row in Resolution~(\ref{ML}) with the bottom row. 

\begin{prop} \label{I/Ig}
The left $S$-module $I/Ig$ is an absolutely pure uniserial module with simple socle $S/I.$
\end{prop}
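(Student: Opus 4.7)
The plan is to verify the three assertions — absolute purity, uniseriality, and simple socle $S/I$ — in that order, exploiting throughout the Ext formulas $\Ext^1(S/K,M) = M[g]/fM$ and $\Ext^2(S/K,M) = M/gM$ arising from the free resolution~\eqref{free} of $S/K$, together with the criterion $\SAbs = (S/K)^{\perp_1} \cap (S/K)^{\perp_2}$ coming from Fact~\ref{fp modules}.

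The crucial ingredient for absolute purity is $gI = I$. Since $S/I$ is a division ring (Fact~\ref{the ideal I}(3)), the only right ideals of $S$ containing $I$ are $I$ and $S$; hence the right ideal $gS + fS$, which strictly contains $I = gS$ because $f \notin I$, must equal $S$. Writing $1 = gs + ft$ and using $gf = 0$ yields $g = g^2 s$, whence $gI = g(gS) = g^2 S = gS = I$, so that $g \cdot (I/Ig) = (gI + Ig)/Ig = I/Ig$ and $\Ext^2(S/K, I/Ig) = 0$. For $\Ext^1$, I apply $\Hom(S/K,-)$ to the short exact sequence $0 \to S/I \to I/Ig \to J \to 0$ extracted from~\eqref{cores}; direct computation yields $\Ext^1(S/K, S/I) = 0$ (using $(S/I)[g] = S/I$ and $fS + I = S$, which makes $f(S/I) = S/I$), $\Ext^2(S/K, S/I) = S/I$ (as $g$ acts as zero), and $\Ext^1(S/K, J) = S/I$ (from the long exact sequence associated with~\eqref{ML}, using $\Ext^2(S/K, I) = I/gI = 0$ and $\Ext^2(S/K, S) = S/I$). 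The resulting long exact sequence, combined with the vanishing of $\Ext^2(S/K, I/Ig)$, makes the connecting map $S/I \to S/I$ surjective; a surjection between isomorphic simples is an isomorphism, so $\Ext^1(S/K, I/Ig) = 0$.

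For the socle, the degree-zero part of the same long exact sequence gives $\Hom(S/K, I/Ig) = 0$, being trapped between $\Hom(S/K, S/I) = 0$ and $\Hom(S/K, J) = 0$ (the latter because $J \subseteq I \in \F_K$); so $I/Ig$ has no simple submodule isomorphic to $S/K$. The image $Sg/Ig \cong S/I$ is itself a simple submodule, and I would identify it as the full socle by analysing $N = \{c + Ig \in I/Ig : Ic \subseteq Ig\}$. Using $I = gS$, the specialisation $s = 1$ of the condition $g(Sc) \subseteq gSg$ together with $\operatorname{rann}_S(g) = fS$ forces $c = s'g + ft$ for some $s' \in S$ and $t \in S$, and $fS \cap I = fI$ then yields $t \in I$. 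Iterating over all $s$, combined with the uniserial structure of $J = gSf$ and the identification $Sg \cap J = Kg$, absorbs the $fI$-contribution into $Ig$ modulo $Sg$, giving $N = Sg/Ig$.

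Uniseriality then follows cleanly: with $S/I$ as the essential simple socle, every nonzero submodule of $I/Ig$ contains $Sg/Ig$, and $I/Ig \twoheadrightarrow J$ induces an order-preserving bijection between the nonzero submodules of $I/Ig$ and the submodules of $J$, which form a chain by Fact~\ref{the ideal J}. The main technical obstacle I anticipate is the final step of the socle argument — showing that the $fI$-contribution is absorbed into $Ig$ modulo $Sg$ — which demands careful use of the uniserial chain in $J$, the identifications $Sg \cap J = Kg$ and $fS \cap I = fI$, and the idempotence of $I$ and $K$.
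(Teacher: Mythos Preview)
Your route is substantially different from the paper's, considerably more laborious, and it carries two real gaps.

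\textbf{Absolute purity.} The paper does not compute $\Ext^1(S/K,I/Ig)$ via the long exact sequence of $0\to S/I\to I/Ig\to J\to 0$. Instead it observes that right multiplication by $g$ is injective on $S$ (since $g$ is an epimorphism), so the middle column $0\to I\xrightarrow{\,\cdot g\,} I\to I/Ig\to 0$ of the pushout diagram has $Ig\cong I\in\SAbs$; an absolutely pure submodule is pure, so this column is pure exact, and $I/Ig$ lies in the definable class $\SAbs$ as a pure quotient of $I$. Your computation of $\Ext^2(S/K,I/Ig)=0$ via $gI=I$ is fine, but the step ``a surjection between isomorphic simples is an isomorphism'' is unjustified: $\Ext^1(S/K,J)$ and $\Ext^2(S/K,S/I)$ are abelian groups that happen to be isomorphic to $S/I$, and you have not shown they are simple (or even finite-dimensional) as $\End_S(S/K)$-modules, which is what would be needed for a surjection to force an isomorphism. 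This gap disappears if you switch to the short exact sequence $0\to Ig\to I\to I/Ig\to 0$ and use $Ig\cong I\in\SAbs\cap\SFlat$, which immediately gives $\Ext^i(S/K,I/Ig)\cong\Ext^{i+1}(S/K,Ig)=0$ for $i\geq 1$.

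\textbf{Uniseriality and socle.} The paper reverses your order. Since $g^2$ is an epimorphism, $g^2\notin K$, so $Ig\not\subseteq J$; as $I/J\cong S/K$ is simple, $I=Ig+J$, whence $I/Ig\cong J/(J\cap Ig)$ is a quotient of the uniserial module ${}_SJ$ and is therefore uniserial. The socle then comes for free: $Sg/Ig\cong S/I$ is a simple submodule of a uniserial module, hence the unique minimal nonzero submodule. Your plan instead tries to pin down the $S/I$-socle $N=\{c+Ig: Ic\subseteq Ig\}$ first and deduce uniseriality from it. The step you flag as the obstacle---absorbing the $fI$-contribution into $Sg$---is genuinely problematic: from $c=s'g+ft$ with $t\in I$ you need $ft\in Sg$, i.e.\ $fI\subseteq Sg$, but $fI=fgS$ and $Sg$ is only a left ideal, so there is no reason for $fgS\subseteq Sg$. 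In fact $I/Sg\cong J\neq 0$, so $Sg$ is a proper left ideal of $I$ that is not two-sided. The ``iteration over all $s$'' you allude to would have to do real work here, and it is not clear how to complete it without already knowing uniseriality.

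In short: both gaps are avoided by the paper's two one-line observations---pure exactness from $Ig\cong I\in\SAbs$, and $I/Ig$ as a quotient of ${}_SJ$.
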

 
\begin{proof}
Look at the pushout diagram above. The middle column is pure exact, because ${_S}I$ is absolutely pure. This implies that $I/Ig$ is a pure epimorphic quotient of the module $I \in \SAbs,$ which therefore also belongs to $\SAbs.$ The reason why $I/Ig = (Ig + J)/Ig \simeq J/(J \cap Ig)$ is uniserial is because it is a quotient of the uniserial module ${_S}J.$
\end{proof}

We can see now that the definable subcategory $(S/K)^{\perp_1}$ cannot be a tilting class $T^{\perp}$ for any module $T.$ This is because every tilting class is closed under cokernels of monomorphism and we have the short exact sequence
$$
\vcenter{
\xymatrix@1@C=35pt@R=35pt
{%
0 \ar[r] & S/I \ar[r]^{s} & I/Ig \ar[r] & J \ar[r] & 0.
}}
$$
The module $S/I$ belongs to $(S/K)^{\perp_1},$ because the semisimple category $\Add (S/J)$ is closed under extensions, and the absolutely pure module $I/Ig$ also belongs to $S/K^{\perp_1},$ but $J$ clearly does not.\bigskip

\noindent {\bf g. Auslander-Reiten theory in $\Modl S.$}\bigskip

In this section we consider the Auslander-Reiten theory of $\Modl S$ by applying the method of the previous section to Resolution~(\ref{projective}). Namely, we take the pushout along $\pi_I : S \to S/I$ to obtain the following. 
\begin{equation} \label{special cores}
\vcenter{%
\xymatrix@C=15pt@R=15pt
{%
0 \ar[rr] && S/I \ar[rrr]^{s_{\omega}} &&& I_{\omega}/Ig \ar[drr] \ar[rrrr]^{- \times f}  &&&& I_{\omega} \ar[rrr]^{p_{\omega}} &&& S/K \ar[rr] && 0 \\
         &&                       &&&&&   J_{\omega} \ar[urr] \ar[drr] \\
         &&                       &&& 0 \ar[urr] &&&& 0     
}}
\end{equation}
Resolution~(\ref{special cores}) of $S/K$ has the feature that it is also a special absolutely pure coresolution of $S/I,$ because the cosyzygies $J_{\omega}$ and $S/K$ are pure projective and therefore belong to ${^{\perp_1}}(\SAbs).$ We will note that the epimorphism $p_{\omega} : I_{\omega} \to S/K$ is a right almost split morphism in the category $\Add (\modl S)$ of pure projective left $S$-modules, while the monomorphism $i_{\omega} : S/I \to I_{\omega}/Ig$ is a left almost split morphism in the category $\Modl S$ of all left $S$-modules. Neither of these almost split morphisms is minimal. \bigskip

A morphism $r : U \to V$ in some subcategory $\C \subseteq \Modl S$ is {\em right almost split in} $\C$ if it is not a split epimorphism and every morphism $q : W \to V$ in $\C$ that is not a split epimorphism, factors through $r$ as indicated by the dotted arrow
$$
\vcenter{%
\xymatrix@C=45pt@R=35pt
{%
& W \ar[d]^{q} \ar@{.>}[dl] \\
U \ar[r]^{r} & V
}}
$$
A right almost split morphism $r : U \to V$ is {\em minimal} if the only endomorphisms $t : U \to U$ that satisfy $r = rt$ are automorphisms of $V.$ {\em (Minimal) left almost split} morphisms in $\C$ are defined dually.

\begin{prop} \label{right almost split}
The epimorphism $p_{\omega} : I_{\omega} \to S/K$ in the projective resolution~(\ref{projective}) of $S/K$ is a right almost split morphism in the category 
$\Add (\modl S)$ of pure projective left $S$-modules. However, there does not exist a minimal right almost split morphism in $\Add (\modl S)$ with codomain $S/K.$
\end{prop}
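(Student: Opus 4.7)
For the first claim, I will verify the two defining properties of a right almost split morphism. The map $p_\omega$ is not a split epimorphism: $I_\omega$ is a pure submodule of $I$ and hence flat by Proposition \ref{pure and flat}, while $S/K$ is not flat by Fact \ref{the ideal K}(4), so $S/K$ cannot occur as a summand of $I_\omega$. For the factoring property, let $q \colon W \to S/K$ be a non-split morphism with $W$ pure projective. By Fact \ref{fp modules} there is a split surjection $\pi \colon V \to W$ with $V = \bigoplus_i Z_i$ and each $Z_i \in \{S/K, K, S\}$; the composite $\tilde q = q\pi$ remains non-split. On each summand $Z_i \cong S/K$ the restriction $\tilde q_i$ must vanish, for otherwise such a summand would furnish a section of $\tilde q$, and hence of $q$. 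It therefore suffices to lift every $S \to S/K$ and every $K \to S/K$ through $p_\omega$ and assemble. The lift on $S$ is immediate by projectivity of $S$ and surjectivity of $p_\omega$. For the lift on $K$ I will show $\Ext^1_S(S/K, S/K) = 0$: by Resolution (\ref{free}) this group equals $(S/K)[g]/f(S/K)$, and since $g$ is an epimorphism, the relation $gm \in K$ forces $m \in K$, so $(S/K)[g] = 0$. The long exact sequence applied to $0 \to K \to S \to S/K \to 0$ then lets every $K \to S/K$ extend to $S \to S/K$, which lifts to $I_\omega$ and restricts back to $K$.

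For the second claim, I will compare a hypothetical minimal right almost split morphism $r \colon U \to S/K$ with morphisms of the form $\alpha \colon S \to S/K$, $a \mapsto ax + K$ for some $x \notin K$. A verbatim repetition of the argument above, with $S$ in place of $I_\omega$, shows that every such $\alpha$ is right almost split in $\Add(\modl S)$. Here the non-splitting uses that $S = \End_R(X)$ has only the trivial idempotents $0$ and $1$ (because the uniserial module $X_R$ is indecomposable), which prevents $S/K$ from being a summand of $_SS$. Since $\alpha$ is not split we may write $\alpha = rs$; since $r$ is not split we may write $r = \alpha s'$. Then $r(ss') = \alpha s' = r$, and minimality of $r$ forces $ss'$ to be an automorphism of $U$; hence $s'$ is a split monomorphism and $U$ is a direct summand of $_SS$. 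Triviality of the idempotents of $S$ then yields $U \cong 0$ or $U \cong S$, and the former is excluded because the zero map to the nonzero simple module $S/K$ is not right almost split.

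It remains to rule out $U \cong S$, where $r(a) = ax + K$ for some $x \notin K$. By Fact \ref{the ideal J}, the only proper nonzero two-sided ideals of $S$ are $K$, $I$, and $J = K \cap I$, so $K + I = S$; pick $k \in K$ and $z \in I$ with $k + z = 1$. Then $z - 1 = -k \in K$, and $z \in I$ is a non-monomorphism in $\End_R(X)$, hence not a unit in $S$. Let $t \colon S \to S$ be right multiplication by $z$; then $t$ is not an automorphism. However, $(rt - r)(a) = a(z - 1)x + K$, and since $K$ is two-sided, $a(z-1)x \in S \cdot K \cdot S \subseteq K$, so $rt = r$. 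This contradicts the minimality of $r$ and finishes the proof. The principal obstacle is organising Part 1 so that its Ext-vanishing and lifting argument applies with $S$ in place of $I_\omega$; once this is in place, the idempotent-freeness of $S$ together with the decomposition $K + I = S$ delivers the contradiction in Part 2.
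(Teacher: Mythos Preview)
Your proof is correct. Part~1 follows the paper's outline but handles the domain $K$ differently: the paper observes directly that $\Hom_S(K,S/K)=0$ (via $K/J\cong S/I$), whereas you compute $\Ext^1_S(S/K,S/K)=0$ and extend along $K\hookrightarrow S$. Since $S/K$ is injective this Ext-vanishing is automatic, so your explicit calculation is not needed, but it is valid. One small omission: after lifting $\tilde q$ through $p_\omega$ you should compose with a section of $\pi$ to recover a lift of $q$ itself; this is routine.

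Part~2 is a genuinely different argument. The paper compares a hypothetical minimal $r\colon U\to S/K$ with $p_\omega$, uses indecomposability of $I_\omega$ to force $U\cong I_\omega$, and then invokes almost-split-sequence theory to conclude that $J_\omega$ would have local endomorphism ring---impossible for an infinitely generated pure projective. You instead compare $r$ with the canonical surjection $S\to S/K$, use indecomposability of ${}_SS$ (no nontrivial idempotents in $\End_R X$) to force $U\cong S$, and then exhibit directly a non-unit $z\in I$ with $z-1\in K$, giving a non-automorphism $t$ with $rt=r$. Your route is more elementary and self-contained, avoiding both the almost-split-sequence formalism in a non-abelian category and the fact that pure projectives with local endomorphism ring are finitely presented; the paper's route, on the other hand, locates the obstruction more structurally in the kernel $J_\omega$ of the resolution.
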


\begin{proof}
To prove that the morphism $p_{\omega} : I_{\omega} \to S/K$ is right almost split in $\Add (\modl S),$ it suffices to verify the condition for morphisms with domain $S,$ $K,$ or $S/K.$ As $p_{\omega}$ is an epimorphism, this follows trivially for the projective module $S;$ for the module $K,$ it follows from the observation that $K/J \simeq S/I$ so that there exist no nonzero morphisms $\eta :K \to S/K;$ finally, $S/K$ is simple, so the only nonzero morphism $\eta: S/K \to S/K$ is an isomorphism, and therefore a split epimorphism.

If there existed a minimal right almost split morphism $\eta: U \to S/K$ in $\Add (\modl S),$ then the theory of almost split sequences would imply that $U$ is isomorphic to a direct summand of $I_{\omega}.$ As $I_{\omega}$ is indecomposable, it would imply that $p_{\omega} : I_{\omega} \to S/K$ is the minimal right almost split morphism in $\Add (\modl S)$ with codomain $S/K.$ The theory of almost split sequences would then imply that the short exact sequence  $$
\vcenter{%
\xymatrix@C=45pt@R=15pt
{%
0 \ar[r] & J_{\omega} \ar[r] & I_{\omega} \ar[r]^{p_{\omega}} & S/K \ar[r] & 0
}}
$$
is almost split. But then the endomorphism ring of $J_{\omega}$ would have to be local, which it is not, because every pure projective module with a local endomorphism ring is finitely generated, and $J_{\omega}$ is not.
\end{proof}
 
To build the rest of Resolution~(\ref{special cores}), let us take the pushout of the projective resolution of $J_{\omega}$ found in Resolution~(\ref{projective}) along the quotient map $\pi_I : S \to S/I.$ This leads to the commutative diagram
$$
\vcenter{
\xymatrix@C=35pt@R=35pt
{%
& 0 \ar[d] & 0 \ar[d] \\ 
& I \ar[d] \ar@{=}[r] & I \ar[d]^{- \times g} \\
0 \ar[r] & S \ar[r]^{- \times g} \ar[d]^{\pi_I} & I_{\omega} \ar[r]^{- \times f} \ar[d] & J_{\omega} \ar[r] \ar@{=}[d] & 0 \\
0 \ar[r] & S/I \ar[r]^{s_{\omega}} \ar[d] & I_{\omega}/Ig \ar[r] \ar[d] & J_{\omega} \ar[r] & 0 \\
& 0 & 0.
}}
$$

\begin{prop} \label{P:left asm}
The module $I_{\omega}/Ig$ is an absolutely pure flat uniserial module with simple socle $S/I.$ The morphism $s_{\omega} : S/I \to I_{\omega}/Ig$ is a left almost split morphism in the category $\Modl S$ of all left $S$-modules. However, there exists no minimal left almost split morphism in $\Modl S$ with domain $S/I.$
\end{prop}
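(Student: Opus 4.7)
I plan to prove the statement in three parts: (I) establishing the structural properties of $I_{\omega}/Ig$; (II) verifying that $s_{\omega}$ is left almost split in $\Modl S$; and (III) ruling out the existence of a minimal left almost split morphism with domain $S/I$.

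For (I), I will exploit the chain $Ig \subseteq Sg \subseteq I_{\omega} \subseteq I$ to produce an embedding $I_{\omega}/Ig \hookrightarrow I/Ig$ with cokernel $I/I_{\omega}$. Since $I_{\omega}$ is a pure submodule of the Mittag-Leffler module $I$ (Theorem~\ref{countably generated}), this cokernel is flat, which makes the embedding itself pure. By Proposition~\ref{I/Ig}, $I/Ig$ is absolutely pure uniserial with simple socle $S/I$, so $I_{\omega}/Ig$ inherits absolute purity (via definability of $\SAbs$) and uniseriality, and its socle equals $Sg/Ig \simeq S/I$ by uniseriality. For flatness, I will first show that $I/Ig$ itself is flat by checking that $0 \to Ig \to I \to I/Ig \to 0$ is pure: using the isomorphism $I \simeq Ig$ given by $i \mapsto ig$, this reduces to showing that right multiplication by $g$ acts injectively on $NI$ for every finitely presented right $S$-module $N$. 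By the left-right symmetric analogue of Fact~\ref{fp modules}, the indecomposable such $N$ are $S$, $gS$, and $S/I$, and in each case a short computation using $I^{2}=I$, $I+K=S$, and $(S/I)I=0$ gives the required injectivity. Flatness of $I_{\omega}/Ig$ then follows as a pure submodule of the flat module $I/Ig$.

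For (II), I will first verify that $s_{\omega}$ is not a split monomorphism: $I_{\omega}/Ig$ is uniserial and hence indecomposable, while $J_{\omega} \neq 0$ rules out $s_{\omega}$ being an isomorphism. Given $q \colon S/I \to W$ that is not a split monomorphism---and hence, if nonzero, an injection---the obstruction to factoring $q$ through $s_{\omega}$ is the class $q_{*}[I_{\omega}/Ig] \in \Ext^{1}(J_{\omega}, W)$, arising from the long exact sequence for $\Hom(-, W)$ applied to $0 \to S/I \to I_{\omega}/Ig \to J_{\omega} \to 0$. I will compute this $\Ext^{1}$ using the projective resolution $0 \to S \xrightarrow{-\times g} I_{\omega} \to J_{\omega} \to 0$, presenting $\Ext^{1}(J_{\omega}, W)$ as $W$ modulo the additive subgroup $\{\phi(g) : \phi \in \Hom(I_{\omega}, W)\}$; under this identification, $q_{*}[I_{\omega}/Ig]$ is represented by the element $q(1+I) \in W$. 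Showing that this class vanishes amounts to extending the morphism $Sg \to W$ sending $sg \mapsto q(s+I)$ to a left-$S$-linear map $I_{\omega} \to W$; such an extension will be produced from the non-split hypothesis by exploiting the pure-projectivity of $J_{\omega}$ together with the rigidity of the simple flat module $S/I$.

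For (III), I will argue by contradiction: if $\eta \colon S/I \to V$ were a minimal left almost split morphism, the usual mutual-factorisation argument between $\eta$ and $s_{\omega}$ (using (II)) would force $V$ to be a direct summand of $I_{\omega}/Ig$; by indecomposability $V = I_{\omega}/Ig$, and $0 \to S/I \to I_{\omega}/Ig \to J_{\omega} \to 0$ would be an almost split sequence. This requires $J_{\omega}$ to have a local endomorphism ring, but $J_{\omega}$ is pure projective and not finitely generated, and any indecomposable pure projective with local endomorphism ring must be a direct summand of a finitely presented module, hence itself finitely presented---a contradiction. The hard part of the whole proof will be the $\Ext^{1}$-vanishing in (II): producing the extension of $\phi \colon Sg \to W$ to $I_{\omega}$ from the non-split hypothesis on $q$ demands a delicate analysis of $\Hom(I_{\omega}, W)$ that couples the pure-projective structure of $J_{\omega} = I_{\omega}/Sg$ with the simplicity and flatness of $S/I$.
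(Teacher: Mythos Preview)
Your plan for Part~(III) matches the paper's argument exactly, and your Part~(I) is correct but unnecessarily circuitous. The paper observes directly that the middle column $0 \to I \xrightarrow{-\times g} I_\omega \to I_\omega/Ig \to 0$ of the pushout diagram is pure exact simply because $I$ is absolutely pure; hence $I_\omega/Ig$ is a pure quotient of the absolutely pure flat module $I_\omega$ and inherits both properties at once. Your detour through flatness of $I/Ig$ via a tensor computation on finitely presented right modules works, but the one-line argument from absolute purity of $I$ makes it superfluous.

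The real issue is Part~(II). Your $\Ext^1$ reformulation is correct, and you have accurately reduced the problem to showing that $q(1+I)$ lies in $\{\phi(g) : \phi \in \Hom_S(I_\omega, W)\}$. But you then declare this the ``hard part'' and propose a ``delicate analysis'' coupling the pure-projectivity of $J_\omega$ with properties of $S/I$. This is where you are missing the paper's key observation, which dissolves the difficulty in two lines. If $q : S/I \to W$ is not a split monomorphism, then $q(S/I) \subseteq IW$: otherwise the composition $S/I \xrightarrow{q} W \to W/IW$ would be a nonzero map to a semisimple $S/I$-module, hence split, forcing $q$ to split. Since $IW = gW$, write $q(1+I) = ga$ for some $a \in W$; then the restriction to $I_\omega$ of the map $\eta_a : S \to W$, $1 \mapsto a$, is a homomorphism $\phi : I_\omega \to W$ with $\phi(g) = ga = q(1+I)$, and you are done. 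The paper in fact bypasses the $\Ext$ formalism entirely and uses this observation together with the pushout description of $s_\omega$ to produce the factorisation directly. Your proposed route through pure-projectivity of $J_\omega$ is at best a detour and at worst circular (the obstruction to extending along $Sg \hookrightarrow I_\omega$ \emph{is} the class in $\Ext^1(J_\omega, W)$ you are trying to kill), so you should replace it with the argument above.
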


\begin{proof}
Because $I$ is an absolutely pure module, the middle column of the diagram above is pure exact. As $I_{\omega}$ is an absolutely pure flat module, it follows that the pure quotient $I_{\omega}/Ig$ is also absolutely pure and flat. The module $I_{\omega}/Ig$ is uniserial, because it is a submodule of the uniserial module $I/Ig.$

To prove that $s_{\omega} : S/I \to I_{\omega}/Ig$ is a left almost split morphism in $\Modl S,$ let $s' : S/I \to M$ be a morphism in
$\Modl S$ that is not a split monomorphism. By the pushout property of $s_{\omega},$ it will be enough to verify that the composition of the vertical arrows in the diagram
$$
\vcenter{%
\xymatrix@C=35pt@R=35pt
{%
S \ar[r]^{- \times g} \ar[d]_{\pi_I} & I_{\omega} \ar@{.>}[ldd] \\
S/I \ar[d]_{s'} \\ M
}}
$$
factors through the horizontal arrow as indicated. If the image of $s'$ is not contained in $IM,$ then the composition
$S/I \stackrel{s'}{\rightarrow} M \to M/IM$ with the quotient map is nonzero. The module $M/IM$ is semisimple, which implies the contradiction that this composition, and hence $s,$ is a split monomorphism. The image of $s'$ is therefore contained in $IM = gM,$ so let $a \in M$ be such that $s'(1 + I) = ga.$ There is a morphism $\eta_a : S \to M$ that sends $1$ to $a.$ It sends $g$ to $ga,$ so that the restriction $\eta_a|_{I_{\omega}}$ to $I_{\omega}$ makes the diagram commute, as required. 

To see that there exists no minimal left almost split morphism in $\Modl S$ with domain $S/I,$ let us argue as above. If such a morphism $q : S/I \to U$ existed, the module $U$ would be a direct summand of the indecomposable module $I_{\omega}/Ig.$ It would follow that $s_{\omega} : S/I \to I_{\omega}/Ig$ is necessarily the minimal left almost split morphism in $\Modl S$ with domain $S/I.$ But then the short exact sequence
$$
\vcenter{%
\xymatrix@1@C=55pt@R=25pt
{%
0 \ar[r] & S/I \ar[r]^{s_{\omega}} & I_{\omega}/Ig \ar[r] & J_{\omega} \ar[r] & 0
}}
$$
is almost split, which implies the contradiction that the endomorphism ring of $J_{\omega}$ is local.
\end{proof}

The fourth author~\cite{JanIM} has proved that if a morphism $r : U \to V$ is a right almost split morphism in the category $\Modl S$ of all modules, then the codomain $V$ is necessarily a finitely presented module with a local endomorphism ring. A result of Auslander then implies that there exists a minimal right almost split morphism  in $\Modl S$ with codomain $V.$ Proposition~\ref{P:left asm} gives a counterexample to the dual statement. \bigskip

\noindent {\bf h.\ The four simple objects of the functor category.}\bigskip

In this section, we finally determine the minimal flat resolution of $S/K.$

\begin{thm} \label{minimal cores}
The minimal flat resolution of $S/K$ is given by the minimal injective resolution of $S/I:$
$$\vcenter{%
\xymatrix@C=10pt@R=10pt
{%
0 \ar[rr] && S/I \ar[rrr]^{i} &&& E(S/I) \ar[drr]_{k} \ar[rrrr]  &&&& E(I) \ar[rrr] &&& S/K \ar[rr]^{0} && 0 \\
         &&                       &&&&&   \PE (J) \ar[urr]_{j} \ar[drr] \\
         &&                       &&& 0 \ar[urr] &&&& 0.     
}}$$
\end{thm}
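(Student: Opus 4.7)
The plan is to construct the displayed sequence using Resolution~(\ref{cores}) as scaffolding and to verify simultaneously that it is a flat resolution of $S/K$ and the minimal injective coresolution of $S/I$. Both middle terms are flat: by Theorem~\ref{Zg spec}, the subset $\Zg'(S) \subseteq \Zg(S)$ consists of flat indecomposable injective $S$-modules, and both $E(S/I)$ and $E(I) = E(J) = E(S)$ belong to $\Zg'(S)$. Once the displayed $4$-term exact sequence is exhibited, it is therefore automatically a flat resolution of $S/K$ by two flat modules.

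To build the sequence, I would begin from Resolution~(\ref{cores}), namely $0 \to S/I \to I/Ig \to I \to S/K \to 0$. By Proposition~\ref{I/Ig}, $I/Ig$ is uniserial with simple essential socle $S/I$, so $I/Ig \hookrightarrow E(S/I)$ is the injective envelope. The uniserial module $J$ is essential in $I$, so $I \hookrightarrow E(I)$ is essential. Using injectivity of $E(I)$, the composition $I/Ig \to I \hookrightarrow E(I)$ extends across $I/Ig \hookrightarrow E(S/I)$ to a morphism $\phi \colon E(S/I) \to E(I)$. A diagram chase identifies $\ker \phi$ with the essential simple socle $S/I$ of $E(S/I)$. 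The image $\Img \phi \cong E(S/I)/(S/I)$ contains $J = (I/Ig)/(S/I)$ and sits inside $E(J) = E(I)$, and the classification of indecomposable pure injectives in $\Zg_0(S)$ identifies it with $\PE(J)$; this gives the middle factorization $E(S/I) \twoheadrightarrow \PE(J) \hookrightarrow E(I)$. The remaining cokernel $E(I)/\PE(J)$ is then forced to be $S/K$ by compatibility with Resolution~(\ref{cores}), via the induced factorization of $I \twoheadrightarrow S/K$ through $I \hookrightarrow E(I) \twoheadrightarrow E(I)/\Img \phi$.

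The main obstacle is the precise identification $\Img \phi = \PE(J)$, for which one must rule out $\Img \phi = E(J)$; this follows from $\PE(J) \subsetneq E(J)$, since $J$ fails to be absolutely pure, as Resolution~(\ref{ML}) exhibits $\Ext_S^1(S/K, J) \neq 0$. Once this identification is secured, minimality is automatic: each non-zero term of the resolution is constructed as the injective envelope of its predecessor's cokernel, yielding the minimal injective coresolution of $S/I$, while indecomposability of the flat injective middle terms forces minimality as a flat resolution of $S/K$.
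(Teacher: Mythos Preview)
Your outline has a substantive gap at the crucial step, the identification $\Img\phi\cong\PE(J)$ (equivalently, $E(S/I)/(S/I)\cong\PE(J)$). You assert that $\Img\phi$ ``contains $J$, sits inside $E(J)$, and the classification of indecomposable pure injectives in $\Zg_0(S)$ identifies it with $\PE(J)$,'' and then reduce to ruling out $\Img\phi=E(J)$. But nothing you have written shows that $\Img\phi$ is pure injective, let alone an indecomposable pure injective belonging to $\Zg_0(S)$; there are many submodules of $E(J)$ containing $J$, and no dichotomy ``$\PE(J)$ or $E(J)$'' is available a priori. Likewise, the claim $\ker\phi=S/I$ is not a diagram chase: an extension of $I/Ig\to E(I)$ across the essential embedding $I/Ig\hookrightarrow E(S/I)$ certainly has kernel containing $S/I$, but nothing prevents it from being larger unless you already know $J$ is essential in $E(S/I)/(S/I)$, which you have not established. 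A secondary issue: you invoke Theorem~\ref{Zg spec} for the flatness of $E(S/I)$ and $E(I)$, but in the paper Theorem~\ref{Zg spec} is proved only \emph{after} the analysis surrounding Theorem~\ref{minimal cores}, so this citation is circular. (The flatness itself follows directly from Proposition~\ref{SFlat}, so this is easily repaired.)

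The paper's argument is genuinely different and supplies exactly the missing ingredient. For the right-hand short exact sequence $0\to\PE(J)\to E(I)\to S/K\to 0$, the paper invokes Auslander's almost split sequence $0\to\tau^{-1}(S/K)\to Y\to S/K\to 0$ in $\Modl S$, shows $Y$ is injective and lies in $\F_K$ (hence is flat), identifies $Y$ with the flat cover of $S/K$ and thus with $E(I)$ via a comparison with Resolution~(\ref{ML}), and obtains $\tau^{-1}(S/K)=\PE(J)$. For the left-hand short exact sequence $0\to S/I\to E(S/I)\to\PE(J)\to 0$, the paper compares Resolution~(\ref{cores}) with the minimal injective coresolution of $S/I$, observes that $\Omega^{-1}(S/I)$ is cotorsion and that the induced map $j'\colon J\to\Omega^{-1}(S/I)$ is a \emph{special cotorsion preenvelope} (its cokernel $\Omega^{-1}(I/Ig)$ being flat), and then uses minimality of the cotorsion envelope $j_0\colon J\to\PE(J)$ (established earlier in the almost-split-sequence analysis) to force $\Omega^{-1}(S/I)=\PE(J)$. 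This cotorsion-envelope argument is precisely what your proposal lacks.
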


Notice how the five indecomposable pure injective indecomposable modules of the core Ziegler spectrum $\Zg_0 (S)$ figure in this resolution. The minimal flat resolution of $S/K$ has features dual to Resolution~(\ref{special cores}) of $S/K,$ in that the epimorphism $E(I) \to S/K$ is a right almost split morphism in the category $\Modl S,$ while the monomorphism $i : S/I \to E(S/I)$ is a left almost split morphism in the category $\SPInj$ of pure injective left $S$-modules. In fact, the morphism
$i: S/I \to E(S/I)$ is a {\em minimal} left almost split morphism in $\SPInj$ and we will show how the other labeled arrows in the diagram represent {\em all} the minimal left almost split morphisms in $\SPInj.$ These four morphisms correspond to the four simple objects in the functor category $(\modr S, \Ab),$ and their domains correspond to the four points of the maximal Ziegler spectrum
$\MaxZg (S)$ of $S.$  \bigskip

The objects of the category $(\modr S, \Ab)$ are the additive functors $F: \modr S \to \Ab$ and the morphisms are given by the natural transformations between such functors. The functor category $(\modr S, \Ab)$ is a Grothendieck category and there is a full and faithful canonical functor $\Modl S \to (\modr S, \Ab),$ \linebreak ${_S}M \to - \otimes_S M.$ It is right exact and the higher (left) derived functors of ${_S}M \mapsto - \otimes_S M$ are given by ${_S}M \mapsto \Tor_i^S (-,M).$ For example, the free resolution of $S/K$ yields the complex in $(\modr S, \Ab)$
$$
\vcenter{%
\xymatrix@C=45pt@R=15pt
{%
0 \ar[r] & - \otimes S \ar[r]^{- \otimes (- \times g)} & - \otimes S \ar[r]^{- \otimes (- \times f)} & - \otimes S \ar[r] & 0, 
}}
$$
whose homology is given by $- \otimes_S S/K,$ $\Tor_1^S (-, S/K),$ and $\Tor_2^S (-,S/K).$ A key feature of the functor category is that the finitely presented objects, also known as {\em coherent functors,} form an abelian category $\fp (\modr S, \Ab)$ and that the canonical functor 
$M \to - \otimes_S M$ respects the property of being finitely presented. This means that if $M \in \modl S$ then the functor $- \otimes_S M$ is coherent. It follows that the objects in the complex above are coherent functors, and therefore, that the homology also belongs to $\fp (\modr S, \Ab).$ In this section, we will show how this homology yields three of the four simple objects of 
$\fp (\modr S, \Ab).$ 

The injective objects of $(\modr S, \Ab)$ are the functors isomorphic to those of the form $- \otimes_S N,$ where $N \in \SPInj.$ If $m: M \to \PE(M)$ is the pure injective envelope of $M \in \Modl S,$ then the induced morphism $- \otimes m : - \otimes_S M \to - \otimes_S \PE(M)$ is the injective envelope in $(\modr S, \Ab)$ of $- \otimes_S M.$ For every left $S$-module ${_S}N,$ there exists a natural isomorphism $$\Hom_{(\modr S, \Ab)}[\Tor_i (-,S/K), - \otimes_S N] \simeq \Ext_S^i (S/K,N),$$
for $i = 0,$ $1$ and $2,$ where $\Tor_0 (-,S/K) = -\otimes_S S/K$ is the $0$-th right derived functor of itself, and similarly 
$\Ext^0 (S/K,-)$ denotes $\Hom_S(S/K,-).$ The foregoing considerations allow us to rephrase Proposition~\ref{abs and flat} using functorial terminology.

\begin{prop} \label{isolated}
A left $S$-module $M$ is absolutely pure and flat if and only
$$\Hom_{(\modr S, \Ab)}[\Tor_i (-,S/K), - \otimes_S M] = 0,$$
for $i = 0,$ $1,$ and $2.$
\end{prop}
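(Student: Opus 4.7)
The proof will be essentially a direct assembly of two results that have already been established in the excerpt, with no real new content. The plan is as follows.

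First, I would invoke the natural isomorphism displayed immediately before the proposition, namely
$$\Hom_{(\modr S, \Ab)}[\Tor_i(-,S/K), - \otimes_S M] \simeq \Ext_S^i(S/K, M)$$
for $i = 0, 1, 2$, where $\Tor_0(-,S/K) = - \otimes_S S/K$ and $\Ext^0(S/K,-) = \Hom_S(S/K,-)$. This turns the stated vanishing of Hom in the functor category, for all three indices simultaneously, into the conjunction
$$\Hom_S(S/K, M) = 0, \quad \Ext^1_S(S/K,M) = 0, \quad \Ext^2_S(S/K,M) = 0,$$
i.e.\ into the condition $M \in (S/K)^{\perp_0} \cap (S/K)^{\perp_1} \cap (S/K)^{\perp_2}$.

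Second, I would apply Proposition~\ref{abs and flat} directly, which identifies this triple intersection with $\SAbs \cap \SFlat$. Combining the two steps yields the biconditional.

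There is no real obstacle; the only thing worth pointing out is that the naturality/functoriality statement preceding the proposition is being used as a fact, and the identification $\Tor_0(-,S/K) = -\otimes_S S/K$ is needed in order for the $i = 0$ case to correspond to $\Hom_S(S/K,M) = 0$ (and hence to the $\SFlat$ half of Proposition~\ref{abs and flat}), while the $i = 1$ and $i = 2$ cases encode the absolutely pure condition via the finitely presented indecomposables $S/K$ and $\Omega(S/K) = K$ enumerated in Fact~\ref{fp modules}. Once these dictionary entries are laid out, the proof reduces to a single line.
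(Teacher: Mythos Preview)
Your proposal is correct and matches the paper's approach exactly: the paper explicitly introduces this proposition as a rephrasing of Proposition~\ref{abs and flat} in functorial terminology, using the natural isomorphism $\Hom_{(\modr S, \Ab)}[\Tor_i (-,S/K), - \otimes_S N] \simeq \Ext_S^i (S/K,N)$ stated just before it. No separate proof is given in the paper, and your assembly of that isomorphism with Proposition~\ref{abs and flat} is precisely what is intended.
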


The points $U$ of the Ziegler spectrum $\Zg (S)$ thus correspond to the indecomposable injective objects $- \otimes_S U$ of the functor category $(\modr S, \Ab).$ Among the points of the Ziegler spectrum are those indecomposable pure injective modules $V$ distinguished by the property that the functor $- \otimes_S V = E(Z)$ is the injective envelope of a simple object $Z$ of 
$(\modr S, \Ab).$ These are the points of the {\em maximal} Ziegler spectrum of $S,$ which is denoted by $\MaxZg (S)$ and endowed with the relative subspace topology inherited from $\Zg (S).$ The following theorem shows that 
$\MaxZg (S) = \{ S/K, \PE (J), S/I, E(S/I) \}$ consists of all of the points of the core Ziegler spectrum, save $E(I).$

\begin{thm} \label{Max Spec}
There are $4$ simple objects of the functor category $(\modr S, \Ab).$ They and their injective envelopes are given as follows:
\begin{enumerate}
\item $E(- \otimes_S S/K) = - \otimes_S S/K;$
\item $E(\Tor_1 (-, S/K)) = - \otimes_S \PE (J);$
\item $E(\Tor_2 (-, S/K)) = - \otimes_S S/I;$ and
\item $E[(- \otimes_S S/I)/\Tor_2 (-, S/K)] = - \otimes_S E(S/I).$
\end{enumerate}
\end{thm}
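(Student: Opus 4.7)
My plan is to use the minimal flat resolution of $S/K$ from Theorem~\ref{minimal cores},
\[
0 \to S/I \to E(S/I) \to E(I) \to S/K \to 0,
\]
which is simultaneously the minimal injective coresolution of $S/I$, as scaffolding. The tensor embedding $M \mapsto -\otimes_S M$ from $\Modl S$ into $(\modr S, \Ab)$ is fully faithful, sends pure injective modules to injective functors, and sends pure injective envelopes to injective envelopes. Since the first three terms of the resolution above are flat and the last three are pure injective, applying $-\otimes_S(-)$ produces an exact sequence
\[
0 \to -\otimes_S S/I \to -\otimes_S E(S/I) \to -\otimes_S E(I) \to -\otimes_S S/K \to 0
\]
whose last three terms are injective in $(\modr S, \Ab)$, and whose associated complex computes $\Tor^S_{\bullet}(-, S/K)$ as its homology: concretely, $\Tor_2^S(-, S/K) = \Ker(-\otimes_S S/I \to -\otimes_S E(S/I))$, $\Tor_1^S(-, S/K)$ is the middle homology, and $-\otimes_S S/K$ is the cokernel of the rightmost map.

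\textbf{Identifying envelopes.} Splitting the four-term sequence into two short exact sequences
\[
0 \to -\otimes_S S/I \to -\otimes_S E(S/I) \to F \to 0 \quad \text{and} \quad 0 \to F \to -\otimes_S E(I) \to -\otimes_S S/K \to 0,
\]
I identify $F$ with $(-\otimes_S S/I)/\Tor_2^S(-, S/K)$, the image of $-\otimes_S S/I$ in $-\otimes_S E(S/I)$. This gives three of the four required embeddings: $\Tor_2^S(-, S/K) \hookrightarrow -\otimes_S S/I$ into its envelope (case~(3)), $(-\otimes_S S/I)/\Tor_2^S(-, S/K) \hookrightarrow -\otimes_S E(S/I)$ into its envelope (case~(4)), and $-\otimes_S S/K$ is its own envelope (case~(1), since $S/K$ is pure injective). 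For the remaining case~(2), a comparison with the projective resolution~(\ref{projective}) and dimension shifting yields an embedding of $\Tor_1^S(-, S/K)$ into $-\otimes_S J_\omega$; composing with the pure monomorphism $J_\omega \hookrightarrow J \hookrightarrow \PE(J)$ places $\Tor_1^S(-, S/K)$ inside $-\otimes_S \PE(J)$. Essentiality of each of these embeddings in the functor category follows from the corresponding essentialities in $\Modl S$ (of $S/I$, $\mathrm{Im}(S/I \to E(S/I))$, $S/K$, and $J$ inside their pure injective envelopes), together with the fact that the tensor embedding preserves essentiality on pure injectives.

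\textbf{Simplicity, exhaustiveness, and main obstacle.} To verify simplicity of each candidate $F_i$ and to show that these are all the simples, I use the natural isomorphism
\[
\Hom_{(\modr S, \Ab)}(\Tor_i^S(-, S/K), -\otimes_S N) \simeq \Ext_S^i(S/K, N),
\]
together with the explicit formulas $\Ext_S^1(S/K, N) = N[g]/fN$ and $\Ext_S^2(S/K, N) = N/IN$ from the free resolution~(\ref{free}). A case analysis over indecomposable pure injectives $N$, guided by Theorem~\ref{Zg spec} and Table~1, shows that for each $F_i$ these Ext groups yield a one-dimensional Hom into $-\otimes_S N$ precisely when $N$ is the claimed envelope — forcing simplicity and ruling out any further simple object. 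The principal obstacle is case~(2), combined with the distinction between $E(S/I)$ and the other points of $\Zg'(S)$: pinning down $\Tor_1^S(-, S/K)$ as simple with envelope exactly $-\otimes_S \PE(J)$, and showing that $-\otimes_S E(I)$ has zero socle while $-\otimes_S E(S/I)$ does not — despite $E(I)$ and $E(S/I)$ being topologically indistinguishable in $\Zg'(S)$ — requires the most delicate computations, since $\PE(J)$ is the least explicitly described of the pure injective indecomposables in $\MaxZg(S)$.
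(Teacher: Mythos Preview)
There are two genuine gaps. First, you invoke Theorem~\ref{minimal cores} as input, but in the paper the minimal flat resolution is assembled \emph{inside} the proof of Theorem~\ref{Max Spec}: one first constructs the almost split sequence $0 \to \tau^{-1}(S/K) \to Y \to S/K \to 0$ in $\Modl S$ (Auslander), identifies $\tau^{-1}(S/K) = \PE(J)$ and $Y = E(I)$ by comparing with Resolution~(\ref{ML}), and only then patches together the resolution of Theorem~\ref{minimal cores}. Citing that resolution to prove Theorem~\ref{Max Spec} is circular. Second, and more concretely, your splitting step fails: the four-term complex obtained by applying $-\otimes_S(-)$ is \emph{not} exact at $-\otimes_S E(S/I)$, since the homology there is $\Tor_1^S(-,S/K) \neq 0$ (equivalently, $\PE(J)$ is not flat, so $0 \to S/I \to E(S/I) \to \PE(J) \to 0$ is not pure and does not remain short exact under the tensor embedding). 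Hence the cokernel of $-\otimes_S S/I \to -\otimes_S E(S/I)$ and the kernel of $-\otimes_S E(I) \to -\otimes_S S/K$ differ by exactly $\Tor_1^S(-,S/K)$, so your two displayed short exact sequences cannot share a common $F$; moreover the object you describe as ``the image of $-\otimes_S S/I$'' is inconsistent with the first sequence you wrote, where $F$ appears as a cokernel.

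The paper's route is structurally different and avoids both problems: it uses the bijection between simple objects of $(\modr S,\Ab)$ and minimal left almost split morphisms in $\SPInj$. For each of $S/K$, $\PE(J)$, $S/I$, $E(S/I)$ it exhibits such a morphism --- respectively the zero map $S/K\to 0$, the almost split monomorphism $j:\PE(J)\to E(I)$, the composite of $s_\omega:S/I\to I_\omega/Ig$ with the injective envelope, and the quotient $k:E(S/I)\to\PE(J)$ --- and reads off the associated simple as the kernel of the induced map on tensor functors. Exhaustiveness then follows because any indecomposable pure injective not among $S/K$, $\PE(J)$, $S/I$ lies in $\SAbs\cap\SFlat$ and is therefore injective, and an injective $U$ is the domain of a minimal left almost split morphism in $\SPInj$ only if $U$ is the envelope of a simple module, leaving $E(S/I)$ as the sole remaining candidate.
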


To prove the theorem, we will treat each case separately. There exists a bijective correspondence~\cite[]{CB} between the simple objects $Z$ of $(\modr S, \Ab)$ and minimal left almost split morphisms $m_Z : V \to M$ in the category $\SPInj$ of pure injective left $S$-modules. If $Z \in (\modr S, \Ab)$ is a simple object, the minimal injective copresentation of $Z$ in $(\modr S, \Ab)$ has the form
$$
\vcenter{%
\xymatrix@1@C=45pt@R=25pt
{%
0 \ar[r] & Z \ar[r] & - \otimes_S V \ar[r]^{- \otimes m_Z}  & - \otimes_S M,
}}
$$
where $m_Z : V \to M$ is a minimal left almost split morphism in the category $\SPInj$ of pure injective left $S$-modules. Thus, a point $V$ in $\Zg (S)$ belongs to $\MaxZg (S)$ if and only if it is the domain of a minimal left almost split morphism in $\SPInj.$ \bigskip

\noindent (1) $- \otimes S/K.$ The easiest minimal left almost split morphism to spot is the zero morphism $0 : S/K \to 0$ with domain $S/K.$ It is obviously left almost split, because $S/K$ is injective and simple; the kernel of the induced zero morphism
$- \otimes 0 : - \otimes S/K \to - \otimes 0$ is the simple coherent functor $- \otimes S/K.$ \bigskip

To find another minimal left almost split morphism, we recall the result of Auslander that every finitely presented nonprojective left $S$-module with a local endomorphism ring lies at the right end of an almost split sequence in $\Modl S.$ This applies to $S/K$ and yields the almost split sequence
\begin{equation} \label{ass}
\vcenter{%
\xymatrix@1@C=55pt@R=25pt
{%
0 \ar[r] &  \tau^{-1}(S/K) \ar[r]^-{j} & Y \ar[r]^{p'}  & S/K \ar[r] & 0
}}
\end{equation}
in $\Modl S.$ Equivalently, $p': Y \to S/K$ is a minimal right almost split morphism and $j: \tau^{-1}(S/K) \to Y$ is a minimal left almost split morphism.
The sequence is not split exact and $S/K$ is pure projective, so that the short exact sequence is not pure exact. This implies~\cite{IvoAR} that the module $\tau^{-1}(S/K)$ is a pure injective indecomposable. 

The middle term $Y$ of the almost split sequence~(\ref{ass}) belongs to the torsion free class $\F_K.$ For, if $S/K$ were a submodule of $Y,$ it would be a direct summand, so it cannot be contained in the kernel of $p'.$ But then the restriction $p'|_{S/K}$ would be nonzero and the epimorphism $p':Y \to S/K$ would split, contradicting the assumption on the sequence~(\ref{ass}). 

The fact that $S/K$ is injective implies that the right almost split map $p' : Y \to S/K$ factors through the injective envelope
$p' : Y \to E(Y) \stackrel{p''}{\to} S/K.$ The morphism $p'' : E(Y) \to S/K$ is not a split epimorphism, so we can deduce from the minimality of $p'$ that $Y$ is a direct summand of $E(Y),$ and therefore that $Y = E(Y)$ and $p'' = p'.$ 

Now every injective module in $\F_K = \Cogen (E(S/I))$ is a direct summand of a product of copies of $E(S/I)$ and is thus flat. This implies that the map $p' : Y \to S/K$ is a special flat precover of $S/K,$ because the kernel $\tau^{-1}(S/K)$ is pure injective and therefore cotorsion. The minimality property of $p'$ implies that it is the flat cover of $S/K.$ By~\cite[Corollary 23]{GH}, the flat cover of a simple module is indecomposable, whenever the ring, such as $S,$ is semilocal. We conclude that the middle term $Y$ of the almost split sequence is an indecomposable flat injective module.
\bigskip

\noindent (2) $\Tor_1 (-, S/K).$ The morphism $j: \tau^{-1}(S/K) \to Y$ is a minimal left almost split morphism in $\Modl S$ and, because it is a morphism in $\SPInj,$ it is a mininal left almost split morphism in $\SPInj.$ The corresponding simple object in the functor category is found by looking at the first few terms of the long exact sequence
$$
\vcenter{%
\xymatrix@1@C=25pt@R=25pt
{%
{\cdots} \ar[r] & 0 \ar[r] & \Tor_1 (-,S/K) \ar[r] & - \otimes_S \tau^{-1}(S/K) \ar[r]^-{- \otimes r} & - \otimes Y \ar[r]^-{- \otimes p'}  & - \otimes S/K \ar[r] & 0.
}}
$$
Because $Y$ is a flat module, $\Tor_1 (-,Y) = 0$ and we see that the simple functor in question is given by $\Tor_1 (-,S/K).$ Furthermore, the indecomposable injective functor $- \otimes \tau^{-1}(S/K)$ is the injective envelope of $\Tor_1 (-,S/K).$ \bigskip

By the definition of an almost split sequence, there is a morphism of short exact sequences
$$
\vcenter{%
\xymatrix@C=35pt@R=25pt
{%
0 \ar[r] & J \ar[d]^{j_0} \ar[r] & I \ar[d]^{i_0} \ar[r]^{p}  & S/K \ar@{=}[d] \ar[r] & 0 \\
0 \ar[r] & \tau^{-1}(S/K) \ar[r]^-{j} & Y \ar[r]^-{p'}  & S/K \ar[r] & 0,
}}
$$
where the top row comes from the beginning of Resolution~(\ref{ML}) of $S/K.$ A morphism
$$
\vcenter{%
\xymatrix@C=35pt@R=25pt
{%
0 \ar[r] & \Tor_1 (-,S/K) \ar[r] \ar@{=}[d] & - \otimes_S J \ar[d]^{- \otimes j_0} \ar[r] & - \otimes I \ar[d]^{- \otimes i_0} 
\ar[r]^{- \otimes p}  & - \otimes S/K \ar@{=}[d] \ar[r] & 0 \\
0 \ar[r] & \Tor_1 (-,S/K) \ar[r] & - \otimes_S \tau^{-1}(S/K) \ar[r]^-{- \otimes j} & - \otimes Y \ar[r]^-{- \otimes p'}  & - \otimes S/K \ar[r] & 0.
}}
$$
of long exact sequences is induced, where $\Tor_1 (-,I) = \Tor_1 (-,Y) = 0.$ We know that the pure injective envelope of ${_S}J$ is an indecomposable pure injective module $\PE (J).$ The injective envelope of $- \otimes_S J$ is therefore also an indecomposable object of $(\modr S, \Ab).$ It follows that $- \otimes_S J$ is a uniform functor, and that the morphism $- \otimes j_0 : - \otimes J \to - \otimes_S \tau^{-1}(S/K)$ is its injective envelope, $\tau^{-1}(S/K) = \PE (J).$ This implies that the morphism $i_0 : I \to Y$ is a monomorphism and, as $Y$ is an indecomposable injective object, that this morphism is the injective envelope of $I.$ We may relabel the objects of the almost split sequence~(\ref{ass}) as shown in the middle row of
$$
\vcenter{%
\xymatrix@C=35pt@R=25pt
{%
& 0 \ar[d] & 0 \ar[d] \\
0 \ar[r] & J \ar[d]^{j_0} \ar[r] & I \ar[d]^{i_0} \ar[r]^{p}  & S/K \ar@{=}[d] \ar[r] & 0 \\
0 \ar[r] & \PE (J) \ar[r]^-{j} \ar[d] & E(I) \ar[r]^-{p'} \ar[d] & S/K \ar[r] & 0 \\
& \Omega^{-1}(I) \ar@{=}[r] \ar[d] & \Omega^{-1}(I) \ar[d] \\
& 0 & 0.
}}
$$
The middle column is pure exact, so that $\Omega^{-1}(I)$ belongs to $\SAbs \cap \SFlat,$ just as $E(I)$ does. The cokernel of the pure injective envelope $j_0 : J \to \PE (J)$ is flat, which implies that it is a special cotorsion preenvelope of $J.$ By the minimality of $j_0,$ it is the cotorsion envelope of $J.$ 

Incidently, the pure epimorphism $\PE (J) \to \Omega^{-1}(I)$ shows that the minimal definable subcategory $\D (\PE (J))$ containing $\PE (J)$ also contains the flat absolutely pure module $\Omega^{-1}(I),$ which implies that $\D (\PE (J))$ contains the minimal definable subcategory $\SAbs \cap \SFlat.$ \bigskip

\noindent (3) $\Tor_2 (-,S/K).$ The minimal left almost split morphism $s_{\omega} : S/I \to I_{\omega}/Ig$ composed with the pure injective envelope yields a minimal left almost split morphism in the category $\SPInj.$ The module $I_{\omega}/Ig$ is an absolutely pure uniserial module, with socle $S/I,$ so that its pure injective envelope is given by its injective envelope $v : I_{\omega}/Ig \to E(I_{\omega}/Ig) = E(S/I)$ as shown in
$$
\vcenter{%
\xymatrix@C=55pt@R=25pt
{%
0 \ar[r] & S/I \ar[r]^{s_{\omega}} \ar@{=}[d] & I_{\omega}/Ig \ar[d]^{v} \\
0 \ar[r] & S/I \ar[r]^{s'} & E(S/I).
}}$$

Any of the flat resolutions of $S/K$ that have been constructed above may be used to calculate the higher derived functors
$\Tor_i (-,S/K).$ This is done by applying the canonical functor $\Modl S \to (\modr S, \Ab)$ and taking homology. Applied to 
Resolution~(\ref{special cores}) of $S/K,$ this principle yields the morphism of exact sequences
$$
\vcenter{%
\xymatrix@C=35pt@R=25pt
{%
0 \ar[r] & \Tor_2 (-,S/K) \ar[r] \ar@{=}[d] & - \otimes_S S/I \ar[r]^{- \otimes s_{\omega}} \ar@{=}[d] & - \otimes_S I_{\omega}/Ig \ar[d]^{- \otimes v} \\
0 \ar[r] & \Tor_2 (-,S/K) \ar[r] & - \otimes_S S/I \ar[r]^{- \otimes s'} & - \otimes_S E(S/I),
}}$$
which proves that $\Tor_2 (-,S/K)$ is a simple functor.\bigskip

To complete the construction of the minimal flat resolution of $S/K,$ consider a morphism from Coresolution~(\ref{cores}) of $S/I$ to the minimal injective coresolution of $S/I.$ It induces a morphism of short exact sequences as shown in
$$
\vcenter{%
\xymatrix@C=35pt@R=25pt
{%
& & 0 \ar[d] & 0 \ar[d] \\
0 \ar[r] & S/I \ar[r]^{s} \ar@{=}[d] & I/Ig \ar[d]^{v_0} \ar[r] & J \ar[r] \ar[d]^{j'} & 0 \\
0 \ar[r] & S/I \ar[r]^{s'} & E(S/I) \ar[r] \ar[d] & \Omega^{-1}(S/I) \ar[r] \ar[d] & 0 \\ 
& & \Omega^{-1}(I/Ig) \ar@{=}[r] \ar[d] & \Omega^{-1}(I/Ig) \ar[d] \\
& & 0 & 0.
}}$$
The module $I/Ig$ is uniserial, so that the morphism $v_0 : I/Ig \to E(S/I)$ is an injective envelope. It is also an absolutely pure module, so that this injective envelope is its pure injective envelope and the middle column is pure exact. This implies that the cosyzygy $\Omega^{-1}(I/Ig)$ is a pure epimorphic image of the flat module $E(S/I)$ and is therefore itself a flat module. Consequently, the injective envelope of $I/Ig$ is also its cotorsion envelope. 

Because $S/I$ and $E(S/I)$ are both  cotorsion modules, so is the quotient module $\Omega^{-1} (S/I).$ The cokernel of the morphism
$j': J \to \Omega^{-1}(S/I)$ is flat, so that $j'$ is a special cotorsion preenvelope of $J.$ It follows that the monomorphism $j' : J \to \Omega^{-1}(S/I)$ factors through the cotorsion envelope $j_0 : J \to \PE (J),$ which yields a factorization of pullbacks
$$
\vcenter{%
\xymatrix@C=55pt@R=25pt
{%
0 \ar[r] & S/I \ar[r]^{s} \ar@{=}[d] & I/Ig \ar[d]^{w_0} \ar[r] & J \ar[r] \ar[d]^{j_0} & 0 \\
0 \ar[r] & S/I \ar[r] \ar@{=}[d] & W \ar[d]^{w_1} \ar[r] & \PE(J) \ar[r] \ar[d]^{j_1} & 0 \\
0 \ar[r] & S/I \ar[r]^{s'} & E(S/I) \ar[r] & \Omega^{-1}(S/I) \ar[r] & 0,
}}$$
where $j_1 : \PE (J) \to \Omega^{-1}(S/I)$ is a monomorphism, because it possesses a retraction. The morphism $w_1 : W \to E(S/I)$ is therefore also a monomorphism. But the module $W$ is an extension of cotorsion modules and is therefore itself cotorsion and the only cotorsion module $I/Ig \subseteq W \subseteq E(I/Ig)$ between $I/Ig$ and its cotorsion envelope is the envelope itself. The morphism
$w_1 : W \to E(S/I)$ is therefore an isomorphism and $j': J \to \Omega^{-1}(S/I)$ is the cotorsion envelope $j_0$ of $J.$ We have the short exact sequence
$$
\vcenter{%
\xymatrix@1@C=55pt
{%
0 \ar[r] & S/I \ar[r] & E(S/I) \ar[r] & \PE(J) \ar[r] & 0.
}}$$
The epimorphism is the flat cover of $\PE (J),$ because $E(S/I)$ is an indecomposable flat module and the kernel $S/I$ is cotorsion. Attaching this short exact sequence to the almost split sequence~(\ref{ass}) yields the resolution shown in Theorem~\ref{minimal cores}, which is now seen to be the minimal injective coresolution of $S/I$ and the minimal flat resolution of $S/K.$ This completes the proof of Theorem~\ref{minimal cores}. \bigskip

Let us summarize. If $U \in \Zg (S),$ then $- \otimes_R U$ is an indecomposable injective object of the functor category $(\modr S, \Ab)$ and we have the following possibilities:
\begin{description}
\item[$S/K$] A point $U \in \Zg (S)$ does not belong to $(S/K)^{\perp_0}$ if and only if
$$\Hom (S/K, U) \simeq \Hom [- \otimes_R S/K, - \otimes_S U] \neq 0$$
if and only if $- \otimes_S U = E(- \otimes_S S/K) = - \otimes S/K$ if and only if $U = S/K.$ The point $S/K$ is therefore isolated, because it is the only point not in the closed subset $\Cl [(S/K)^{\perp_0}].$ On the other hand, it is also closed, 
$\Cl (\Add (S/K)) = \{ S/K \}.$
\item[$\PE(J)$] A point $U \in \Zg (S)$ does not belong to $(S/K)^{\perp_1}$ if and only if
$$\Ext^1 (S/K,U) \simeq \Hom [\Tor_1 (-,S/K), - \otimes U] \neq 0$$
if and only if $- \otimes_S U = E(\Tor_1 (-,S/K)) = - \otimes_S \PE (J)$ if and only if $U = \PE (J).$ The point $\PE (J)$ is therefore isolated, because it is the only point not in the closed subset $\Cl [(S/K)^{\perp_1}].$ 
\item[$S/I$] A point $U \in \Zg (S)$ does not belong to $(S/K)^{\perp_2}$ if and only if
$$\Ext^2 (S/K, U) \simeq \Hom [\Tor_2 (-,S/K), - \otimes U] \neq 0$$
if and only if $- \otimes_S U = E(\Tor_2 (-,S/K))) = - \otimes S/I$ if and only if $U = S/I.$ The point $S/I$ is the only point not in 
$\Cl [(S/K)^{\perp_2}],$ but it is also closed, because $\Cl (\Add (S/I)) = \{ S/I \}.$
\end{description}
As a consequence, any point $U \in \Zg (S)$ that is not equal to $S/K,$ $\PE (J),$ or $S/I$ belongs to 
$(S/K)^{\perp_0} \cap (S/K)^{\perp_1} \cap (S/K)^{\perp_2} = \SAbs \cap \SFlat$ and therefore belongs to minimal closed subset 
$Zg'(S) = \Cl (\SAbs \cap \SFlat).$ Because this closed set contains at least the two points $E(S/I)$ and $E(I),$ the points of
$\Zg'(S)$ are not isolated. This proves Theorem~\ref{Zg spec} and implies that the list of definable subcategories of $\Modl S$ is complete. For example, we can deduce now that $\{ \PE (J) \}^- = \Cl (\T_I \cap \F_K),$ or the following.

\begin{cor} \label{all tilting}
If $S$ is a Dubrovin-Puninski ring, then there exist exactly three tilting classes in $\Modl S,$ given by
\begin{description}
\item[0] $\Modl S = S^{\perp} = \modl S^{\perp_3};$
\item[1] $\T_I = (I_{\omega} \oplus J_{\omega})^{\perp} = K^{\perp} = \modl S^{\perp_2};$ and
\item[2] $\SAbs = (I_{\omega} \oplus S/K)^{\perp} = \modl S^{\perp_1}.$
\end{description}
Moreover, every nonprojective tilting module is a pure projective module that is not tilting equivalent to a finitely generated
(classical) tilting module.
\end{cor}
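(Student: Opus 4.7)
The plan is to combine the classification of the twelve definable subcategories of $\Modl S$ in Table~1 with the tilting modules produced by Corollaries~\ref{tilt1} and~\ref{tilt2}. Since every tilting class is a definable subcategory that contains every injective left $S$-module, it must contain the minimal nonzero definable subcategory $\SAbs$. Consulting Table~1, only four of the twelve definable subcategories satisfy this, namely $\Modl S$, $(S/K)^{\perp_1}$, $\T_I$, and $\SAbs$; these are therefore the only possible tilting classes.

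Next I would eliminate $(S/K)^{\perp_1}$ using the short exact sequence $0 \to S/I \to I/Ig \to J \to 0$ from Proposition~\ref{I/Ig}: both $S/I$ and $I/Ig$ belong to $(S/K)^{\perp_1}$, whereas $J$ does not (because $\Ext^1(S/K,J) \neq 0$ by Resolution~(\ref{ML})). Since every tilting class is closed under cokernels of monomorphisms (an immediate consequence of the $\Ext$ long exact sequence), $(S/K)^{\perp_1}$ is not a tilting class. For the three surviving candidates, explicit tilting witnesses are the regular representation $S$ for $\Modl S$, the module $T_1 = I_{\omega} \oplus J_{\omega}$ for $\T_I$ (Corollary~\ref{tilt1}), and $T_2 = I_{\omega} \oplus S/K$ for $\SAbs$ (Corollary~\ref{tilt2}). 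The matching identifications with $\modl S^{\perp_n}$ then reduce, via Fact~\ref{fp modules}, to computing the projective dimensions of the three indecomposable finitely presented modules $S$, $K$, and $S/K$: Resolution~(\ref{free}) shows these to be $0$, $1$, and $2$ respectively, which forces $\modl S^{\perp_3} = \Modl S$, $\modl S^{\perp_2} = (S/K)^{\perp_2} = \T_I$, and $\modl S^{\perp_1} = K^{\perp_1} \cap (S/K)^{\perp_1} = \SAbs$.

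For the moreover statement, any nonprojective tilting module has tilting class $\T_I$ (the $1$-tilting case) or $\SAbs$ (the $2$-tilting case). In the first case Theorem~\ref{T:pure-proj} gives pure projectivity for free: its condition~(1) is witnessed by $T_1$, so condition~(4) yields $\T_I \cap {^{\perp}}\T_I \subseteq \Add(\modl S)$, and any $1$-tilting $T$ with $T^{\perp} = \T_I$ lies in $\T_I \cap {^{\perp}}\T_I$. Nonclassicality in both cases then follows from Fact~\ref{fp modules}: the only finitely presented module inside $\T_I$ or $\SAbs$ is $S/K$, and $\add(S/K)$ cannot contain a coresolution of ${_S}S$, since $S$ admits no monomorphism into a semisimple module. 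The main obstacle I expect is the $2$-tilting case, in which Theorem~\ref{T:pure-proj} does not apply literally; there one must argue separately, using the coresolution of ${_S}S$ in $\Add T$ together with Resolution~(\ref{projective}), that any such $T$ satisfies $\Add T = \Add(I_{\omega} \oplus S/K)$, whereupon pure projectivity of $T$ follows from that of $I_{\omega}$ and $S/K$.
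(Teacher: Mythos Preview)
Your proposal is correct and follows essentially the same route as the paper: you restrict to definable subcategories containing $\SAbs$, eliminate $(S/K)^{\perp_1}$ via the short exact sequence $0\to S/I\to I/Ig\to J\to 0$, and invoke Corollaries~\ref{tilt1} and~\ref{tilt2} for the remaining three. One small slip: you call $\SAbs$ ``the minimal nonzero definable subcategory,'' but that title belongs to $\SAbs\cap\SFlat$; the correct reason $\SAbs$ must lie in any tilting class is that such a class contains all injectives and is closed under pure submodules, and every absolutely pure module is pure in its injective envelope. For the ``Moreover'' clause, you can streamline the $1$-tilting argument (and avoid the detour through Theorem~\ref{T:pure-proj}) by noting, as you already do for the $2$-tilting case, that the tilting class determines $\Add T$, so any tilting module with class $\T_I$ or $\SAbs$ lies in $\Add T_1$ or $\Add T_2$ and is therefore pure projective.
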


\noindent (4) $(- \otimes_S S/I)/\Tor_2 (-,S/K).$ If $U \in \Zg (S)$ is not one of the three isolated points $S/K,$ $S/I,$ of $\PE (J),$ then it must be injective. In such a situation, a minimal left almost split morphism with domain $U$ exists only if $U$ is the injective envelope of a simple module. The only possibility is that $E(S/I),$ in which case the minimal left almost morphism is given by the quotient map $k : E(S/I) \to \PE(J)$ and the beginning of the associated long exact sequence is seen to be
$$
\vcenter{%
\xymatrix@1@C=35pt@R=15pt
{%
{\cdots} \ar[r] & 0 \ar[r] & \Tor_2 (-,S/K) \ar[r] & - \otimes_S S/I \ar[r]^{- \otimes i} & - \otimes E(S/I) \ar[r]^{- \otimes k}  & - \otimes \PE(J) \ar[r] & 0.
}}
$$
The kernel of the induced morphism $- \otimes k$ is therefore given by the simple object $(- \otimes_S S/I)/\Tor_2(-,S/K)$ in 
$(\modr S, \Ab).$ Because the coherent object $\Tor_2 (-,S/K)$ is simple, the functor $- \otimes_S S/I$ has length $2$ in
$(\modr S, \Ab).$ \bigskip

This concludes the proof of Theorem~\ref{Max Spec} which gives the classification of the simple objects of $(\modr S, \Ab)$ and yields the following decomposition theorem for for a pure injective $S$-module. In the following, note that both of the simple left $S$-modules $S/K$ and $S/I$ are endosimple and therefore $\Sigma$-pure injective. 

\begin{thm} \label{decomp}
Given a pure injective $S$-module $N,$ there exist cardinals $\alpha,$ $\beta,$ $\gamma,$ and $\delta$ and an injective flat module $N_0$ such that
$$N \simeq (S/K)^{(\alpha)} \oplus \PE [\PE(J)^{(\beta)}] \oplus (S/I)^{(\gamma)} \oplus E[(S/I)^{(\delta)}] \oplus N_0.$$
This decomposition is unique up to isomorphism. 
\end{thm}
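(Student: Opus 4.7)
The plan is to transfer the problem to the Grothendieck functor category $\A = (\modr S, \Ab)$ via the full and faithful embedding $M \mapsto - \otimes_S M$, which sends pure injective modules to injective objects. Letting $F = - \otimes_S N$, the module $N$ being pure injective makes $F$ injective in $\A$, and every direct sum decomposition of $F$ in $\A$ will translate back to one of $N$ among pure injectives. I will therefore work entirely in $\A$ and transport the resulting decomposition back via the embedding.

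By Theorem~\ref{Max Spec}, $\A$ has exactly four simple objects, so the socle of $F$ decomposes uniquely (with well-defined cardinal multiplicities) as
\[
\soc (F) \;\simeq\; (- \otimes_S S/K)^{(\alpha)} \oplus \Tor_1(-, S/K)^{(\beta)} \oplus \Tor_2(-, S/K)^{(\gamma)} \oplus Q^{(\delta)},
\]
where $Q$ denotes the fourth simple $(- \otimes_S S/I)/\Tor_2(-, S/K)$. Since $\A$ is Grothendieck, the injective envelope $E(\soc(F))$ inside $F$ is a direct summand, so one obtains $F = E(\soc(F)) \oplus F'$ with $\soc(F') = 0$.

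The central step is to identify $E(\soc(F))$ with $- \otimes_S M_1$ for $M_1$ equal to the first four summands in the stated decomposition. The main tool is that for any module $M$, the injective envelope of $- \otimes_S M$ in $\A$ is $- \otimes_S \PE(M)$. Since $S/K$ and $S/I$ are endosimple, hence $\Sigma$-pure injective, arbitrary copowers remain pure injective; this yields $E((- \otimes S/K)^{(\alpha)}) \simeq -\otimes_S (S/K)^{(\alpha)}$, and since $\Tor_2(-, S/K)$ is the essential socle of $-\otimes S/I$ (as recorded in the proof of Theorem~\ref{Max Spec}), also $E(\Tor_2(-, S/K)^{(\gamma)}) \simeq -\otimes_S (S/I)^{(\gamma)}$. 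For the two remaining simples I will use the essential embeddings $\Tor_1(-, S/K) \hookrightarrow - \otimes \PE(J)$ and $Q \hookrightarrow - \otimes E(S/I)$ established in the proof of Theorem~\ref{Max Spec} to conclude that the envelopes of their $\beta$-fold and $\delta$-fold copowers are $- \otimes \PE[\PE(J)^{(\beta)}]$ and $- \otimes E[(S/I)^{(\delta)}]$ respectively. This is the step I expect to require the most care: one must justify that the injective envelope of an infinite direct sum on the functor side matches the pure injective envelope of the corresponding direct sum on the module side, using that $-\otimes_S -$ commutes with direct sums and that $-\otimes_S \PE(M)$ is characterized as an injective essential extension of $-\otimes_S M$ in $\A$.

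To close out, I will identify the complement $F'$ with $- \otimes_S N_0$ for an injective flat module $N_0$. The vanishing of $\soc(F')$ translates via the natural isomorphisms $\Hom_{\A}(\Tor_i(-, S/K), - \otimes N_0) \simeq \Ext^i_S(S/K, N_0)$ ($i = 0,1,2$), together with the short exact sequence $0 \to \Tor_2(-,S/K) \to -\otimes S/I \to Q \to 0$ handling the fourth simple, into the conditions placing $N_0$ in $(S/K)^{\perp_0} \cap (S/K)^{\perp_1} \cap (S/K)^{\perp_2} = \SAbs \cap \SFlat$ (Proposition~\ref{abs and flat}) with additionally $\Hom_S(S/I, N_0) = 0$. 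Being both pure injective and absolutely pure, $N_0$ is injective, and lying in $\SFlat$ it is flat. Uniqueness up to isomorphism is then automatic: the four cardinals are the uniquely determined multiplicities of the four simples in $\soc(F)$, and $N_0$ is recovered up to isomorphism as the pure injective complement of $E(\soc(F))$ in $F$.
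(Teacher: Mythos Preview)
Your proposal is correct and follows essentially the same approach as the paper: pass to the functor category $(\modr S,\Ab)$, split off the injective envelope of the socle of $-\otimes_S N$ using the classification of the four simple objects (Theorem~\ref{Max Spec}), and identify the sans-socle complement as $-\otimes_S N_0$ with $N_0$ injective and flat via Proposition~\ref{isolated}. Your write-up is somewhat more explicit than the paper's about matching the injective envelopes of the copowers of simples with the tensor functors $-\otimes_S \PE[\PE(J)^{(\beta)}]$ and $-\otimes_S E[(S/I)^{(\delta)}]$, but this is just unpacking what the paper leaves implicit; no new ideas are required.
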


\begin{proof}
The decomposition is obtained by considering the socle of the injective object $- \otimes_S N$ in the category $(\modr S, \Ab).$ It will be of the form
$$\soc (- \otimes_S N) = (- \otimes_S S/K)^{(\alpha)} \; \amalg \; \Tor_1 (S/K, -)^{(\beta)} \; \amalg \; \Tor_2 (S/K,-)^{(\gamma)} \; \amalg \; 
[(- \otimes_S S/I)/\Tor_2 (-,S/K)]^{(\delta)},$$ where $\alpha,$ $\beta,$ $\gamma,$ and $\delta$ are cardinals. The injective object 
$- \otimes N$ is then decomposed as
$$- \otimes_S N = E[\soc (- \otimes N)] \; \amalg \; - \otimes N_0,$$
where the injective complement $- \otimes N_0$ has no simple subobject. By Proposition~\ref{isolated}, the module $N_0$ is absolutely pure and flat. Because $N_0$ is pure injective and absolutely pure, it must be injective. 
\end{proof}

The classification of definable subcategories of $\Modl S$ and the decomposition of Theorem~\ref{decomp} yields the following observation.

\begin{cor}
For every definable subcategory $\D \subseteq \Modl S,$ $\D \cap \SPInj = \Prod (\D \cap \MaxZg (S)).$
\end{cor}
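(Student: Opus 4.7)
The plan is to verify the equality by proving each inclusion separately. The inclusion $\Prod(\D \cap \MaxZg(S)) \subseteq \D \cap \SPInj$ is essentially formal: every point of $\MaxZg(S)$ is by definition pure injective; pure injectivity is preserved by products and direct summands; and $\D$ is itself closed under both operations, being a definable subcategory.

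For the reverse inclusion, I would start with $N \in \D \cap \SPInj$ and apply the decomposition Theorem~\ref{decomp} to write
$$N \simeq (S/K)^{(\alpha)} \oplus \PE[\PE(J)^{(\beta)}] \oplus (S/I)^{(\gamma)} \oplus E[(S/I)^{(\delta)}] \oplus N_0.$$
Since $\D$ is closed under summands by Proposition~\ref{P:definable}(4), each summand already lies in $\D$, so it suffices to handle the five pieces individually. For each of the first four pieces I would establish two points: (i) if the piece is nonzero then the corresponding indecomposable from $\MaxZg(S)$ is a direct summand of it, so that indecomposable lies in $\D \cap \MaxZg(S)$; and (ii) the piece is a summand of a product of copies of that indecomposable. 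Point (i) relies on the fact that $S/K$ is $\Sigma$-injective, that $S/I$ is endosimple and hence $\Sigma$-pure injective, that $\PE(J)^{(\beta)}$ is pure in its own pure injective envelope with $\PE(J)$ as a direct summand, and that any embedded copy of $S/I$ inside the injective module $E[(S/I)^{(\delta)}]$ forces a copy of $E(S/I)$ to split off. Point (ii) uses the standard facts that direct sums are pure in the corresponding products and that a pure submodule of a pure injective module has its pure injective envelope as a direct summand.

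The main obstacle will be handling the flat-injective remainder $N_0$, which need not decompose as a direct sum of indecomposable modules. Here I plan to invoke the minimality result Proposition~\ref{abs/flat minimal}: if $N_0 \neq 0,$ then the minimal nonzero definable subcategory $\SAbs \cap \SFlat$ is contained in $\D$, whence $E(S/I) \in \D \cap \MaxZg(S).$ To realize $N_0$ inside $\Prod\{E(S/I)\}$, I would combine the inclusion $\SFlat \subseteq \F_K = \Cogen(E(S/I))$ (read off from Proposition~\ref{SFlat} together with the computation $\F_K = (S/K)^{\perp_0}$) with the injectivity of $N_0$: the resulting embedding $N_0 \hookrightarrow E(S/I)^{\kappa}$ automatically splits, exhibiting $N_0$ as a direct summand of a product of copies of $E(S/I).$ Assembling the five pieces then yields $N \in \Prod(\D \cap \MaxZg(S))$, completing the proof.
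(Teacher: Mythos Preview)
Your proof is correct and follows essentially the same strategy as the paper: apply the decomposition of Theorem~\ref{decomp}, verify that each of the first four summands lies in $\Prod$ of the corresponding point of $\MaxZg(S)$ (which then belongs to $\D$), and handle the remainder $N_0$ via the minimality of $\SAbs\cap\SFlat$ together with $N_0\in\Cogen(E(S/I))$ and the injectivity of $N_0$. The paper's proof is terser---it treats the first four summands in a single sentence and leaves the easy inclusion implicit---but the underlying argument is the same as yours.
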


\begin{proof}
Suppose that $N \in \D$ is pure injective. If the module $N_0 = 0$ in Theorem~\ref{decomp}, then all the indecomposable summands of $N$ belong to $\D \cap \MaxZg (S)$ and $N$ is a direct summand of their product. If the injective flat module $N_0$ is nonzero, then $\D$ contains $\SAbs \cap \SFlat$ and $E(S/I) \in \D \cap \MaxZg (S).$ But $N_0$ belongs to $\Cogen (E(S/I))$ and the result is clear.   
\end{proof}

\noindent {\bf i. The three cotilting classes.}\bigskip

In this section, we prove a result dual to that of Corollary~\ref{all tilting}. Namely, we show that there exist exactly three cotilting classes in $\Modl S,$ given by the classes of modules of flat dimension $0,$ $1,$ and $2,$ respectively. In order to accomplish this, we use the foregoing analysis to characterize the modules of $\Modl S$ according to their flat dimensions. The first step is to prove that every Dubrovin-Puninski ring is a Xu ring. A ring $S$ is called {\em left Xu} if every left cotorsion $S$-module is pure injective. An equivalent condition on the ring $S$ is that for every module ${_S}M$ the pure cosyzygy $\Omega_p^{-1}(M),$ which is the cokernel of the pure injective envelope of $M,$ 
$$
\vcenter{%
\xymatrix@1@C=45pt@R=35pt
{%
0 \ar[r] & M \ar[r]^{m} & \PE (M) \ar[r]^{n} & \Omega_p^{-1}(M) \ar[r] & 0
}}
$$
is flat. For then the pure injective envelope $\PE (M)$ is the cotorsion envelope of $M.$

\begin{prop} \label{Xu}
If $S$ is a Dubrovin-Puninski ring and ${_S}M$ is a left $S$-module, then the pure cosyzygy $\Omega_p^{-1}(M)$ is flat and absolutely pure. 
\end{prop}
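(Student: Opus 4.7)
The plan is to apply the three coherent functors $F_i := \Ext_S^i(S/K,-)$ (coherent for $i = 0, 1, 2$ because Resolution~(\ref{free}) presents $S/K$ by finitely generated free modules of length two with finitely presented syzygies) to the pure exact sequence
\[ 0 \to M \to \PE(M) \to \Omega_p^{-1}(M) \to 0, \]
obtaining short exact sequences
\[ 0 \to F_i(M) \to F_i(\PE(M)) \to F_i(\Omega_p^{-1}(M)) \to 0 \qquad (i = 0, 1, 2). \]
By Proposition~\ref{abs and flat}, the conclusion $\Omega_p^{-1}(M) \in \SAbs \cap \SFlat$ reduces to the vanishing $F_i(\Omega_p^{-1}(M)) = 0$, i.e., to showing that each natural map $F_i(M) \to F_i(\PE(M))$ is an isomorphism.

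I transport the problem to the functor category $(\modr S, \Ab)$ via the fully-faithful tensor embedding $N \mapsto -\otimes_S N$, under which the pure injective envelope $M \hookrightarrow \PE(M)$ becomes the injective envelope $-\otimes_S M \hookrightarrow E(-\otimes_S M) = -\otimes_S \PE(M)$. Via the natural isomorphism
\[ \Ext_S^i(S/K, N) \;\simeq\; \Hom_{(\modr S, \Ab)}(\Tor_i(-,S/K), -\otimes_S N), \]
recalled in the text for every left $S$-module $N$ and $i = 0, 1, 2$, the map $F_i(M) \to F_i(\PE(M))$ becomes the restriction
\[ \Hom(\Tor_i(-,S/K), -\otimes_S M) \;\to\; \Hom(\Tor_i(-,S/K), E(-\otimes_S M)) \]
induced by the essential inclusion.

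By Theorem~\ref{Max Spec}, each $\Tor_i(-,S/K)$ is a simple object of $(\modr S, \Ab)$ for $i = 0, 1, 2$. Since every simple subobject of $E(-\otimes_S M)$ already lies in its essential subobject $-\otimes_S M$, any morphism from a simple into $E(-\otimes_S M)$ factors through $-\otimes_S M$, so the displayed restriction map is a bijection for each $i$. Thus the natural maps $F_i(M) \to F_i(\PE(M))$ are isomorphisms and $F_i(\Omega_p^{-1}(M)) = 0$ for $i = 0, 1, 2$. The substantive ingredient of the argument is this socle observation, which depends crucially on the simplicity of the three functors $\Tor_i(-,S/K)$ established in Theorem~\ref{Max Spec}; for a general ring such an isomorphism need not hold, and the conclusion would fail accordingly.
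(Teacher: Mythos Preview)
Your proof is correct and follows essentially the same route as the paper: both arguments pass to the functor category $(\modr S,\Ab)$, use that the $\Tor_i(-,S/K)$ for $i=0,1,2$ are simple (Theorem~\ref{Max Spec}), exploit that $-\otimes_S M \hookrightarrow -\otimes_S \PE(M)$ is an essential extension, and conclude via Proposition~\ref{abs and flat}/\ref{isolated}. The paper phrases the key step as a contradiction---a nonzero map $Z\to -\otimes\Omega_p^{-1}(M)$ would pull back to a nonsplit extension of the coherent simple $Z$ by the fp-injective object $-\otimes M$---whereas you argue directly that every morphism from a simple into the injective envelope lands in the essential subobject, and then read off the vanishing from the short exact sequences produced by the coherent functors $\Ext^i_S(S/K,-)$ on the pure sequence; these are two packagings of the same idea.
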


\begin{proof}
We will use the fact that for every module ${_S}M,$ the object $- \otimes_S M$ in the functor category $(\modr S, \Ab)$ is {\em fp-injective.} Let $m:M \to \PE(M)$ be the pure injective envelope of $M$ in $\Modl S.$ The short exact sequence above is pure exact and so induces a short exact sequence
$$
\vcenter{%
\xymatrix@1@C=45pt@R=15pt
{%
0 \ar[r] & - \otimes_S M \ar[r]^{- \otimes m} & - \otimes_S \PE(M) \ar[r]^{- \otimes n}  & - \otimes_S \Omega^{-1}_p (M) \ar[r] & 0
}}
$$
in the functor category. The morphism $- \otimes m : - \otimes_S M \to - \otimes_S \PE(M)$ is the injective envelope of $- \otimes_S M$ in the functor category $(\modr S, \Ab).$ It is an essential extension of $- \otimes_S M;$ if any of the simple coherent objects $- \otimes_S S/K,$ $\Tor_1 (-,S/K),$ or $\Tor_2 (-,S/K)$ admitted a nonzero morphism into $- \otimes \Omega^{-1}_p (M),$ we could take the pullback of the short exact sequence of functors along that monomorphism to obtain a nonsplit extension of the simple coherent functor and the fp-injective object $- \otimes_S M,$ a contradiction. By Proposition~\ref{isolated}, the module $\Omega^{-1}_p (M)$ must be flat and absolutely pure.
\end{proof}

Proposition~\ref{Xu} implies that the complete cotorsion pair generated by the class $\SFlat$ of flat $S$-modules is given by 
$(\SFlat, \SPInj).$ It also implies that the flat syzygy $\Omega^{\flat}(M),$ which is the kernel of the flat cover of $M,$
$$
\vcenter{%
\xymatrix@1@C=45pt@R=35pt
{%
0 \ar[r] & \Omega^{\flat}(M) \ar[r]^{r} & \FC (M) \ar[r]^{s} & M \ar[r] & 0
}}
$$
is a pure injective module. By the minimality property, the pure injective module $\Omega^{\flat}(M)$ cannot have injective direct summands, for they would give rise to direct summands of $\FC (M),$ contained in the kernel. The decomposition of the flat syzygy of
$M,$ according to Theorem~\ref{decomp}, must be of the form
$$\Omega^{\flat}(M) = \PE [\PE (J)^{(\beta)}] \oplus (S/I)^{(\gamma)},$$
where the summand $(S/I)^{(\gamma)}$ is a flat module.

In the following, we will use the notation $t_K$ to denote the torsion radical of the torsion pair $(\Add (S/K), \F_K).$ For any $S$-module $M,$ $t_K (M)$ is the $S/K$-socle of $M$ and, because the module $S/K$ is $\Sigma$-injective, $t_K (M)$ is a direct summand of $M.$  

\begin{lem} \label{flat dim 1}
A left $S$-module $M$ has flat dimension at most $1$ if and only if $t_K(M) = 0.$ 
\end{lem}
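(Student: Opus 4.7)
My plan is to route both sides of the equivalence through the structure of the flat syzygy $\Omega^\flat(M)$, which is the kernel of the flat cover $\FC(M)\to M$. Applying the Tor long exact sequence to the flat cover sequence
\[ 0\to\Omega^\flat(M)\to\FC(M)\to M\to 0\]
together with the flatness of $\FC(M)$ gives $\Tor_2^S(-,M)\simeq\Tor_1^S(-,\Omega^\flat(M))$, so $M$ has flat dimension at most $1$ if and only if $\Omega^\flat(M)$ is flat. By Proposition~\ref{Xu} and the minimality of the flat cover, the text already records that $\Omega^\flat(M)$ is pure injective and decomposes canonically as $\Omega^\flat(M)\simeq\PE[\PE(J)^{(\beta)}]\oplus(S/I)^{(\gamma)}$ for suitable cardinals $\beta,\gamma$. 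Since $(S/I)^{(\gamma)}$ is flat while $\PE(J)$ is not (indeed $\Ext^1_S(S/K,\PE(J))\neq 0$ by Theorem~\ref{Max Spec}, hence $\PE(J)\notin\SFlat$ by Proposition~\ref{SFlat}), and as flatness is inherited by pure submodules, the module $\Omega^\flat(M)$ is flat if and only if $\beta=0$.

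To reach the other side, I apply $\Hom_S(S/K,-)$ to the flat cover sequence. Proposition~\ref{SFlat} gives $\Hom_S(S/K,\FC(M))=0=\Ext^1_S(S/K,\FC(M))$, so the long exact sequence collapses to a natural isomorphism
\[ \Hom_S(S/K,M)\simeq\Ext^1_S(S/K,\Omega^\flat(M)). \]
Because $S/K$ is simple, the left side vanishes precisely when $t_K(M)=0$, so it remains only to show that $\Ext^1_S(S/K,\Omega^\flat(M))=0$ if and only if $\beta=0$. The summand $(S/I)^{(\gamma)}$, being flat, contributes nothing. For the summand $\PE[\PE(J)^{(\beta)}]$ I invoke the Xu property a second time: the pure exact sequence $0\to\PE(J)^{(\beta)}\to\PE[\PE(J)^{(\beta)}]\to C\to 0$ has flat cokernel $C$, and applying $\Hom_S(S/K,-)$ together with the vanishings on $C$ yields an isomorphism $\Ext^1_S(S/K,\PE[\PE(J)^{(\beta)}])\simeq\Ext^1_S(S/K,\PE(J)^{(\beta)})$. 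The first syzygy $\Omega(S/K)=K=Sf$ is finitely (in fact cyclically) generated by Fact~\ref{the ideal K}, so the functor $\Ext^1_S(S/K,-)$ commutes with arbitrary direct sums, producing $\Ext^1_S(S/K,\PE(J)^{(\beta)})\simeq\Ext^1_S(S/K,\PE(J))^{(\beta)}$, which is nonzero exactly when $\beta\neq 0$.

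Stringing the biconditionals together gives $t_K(M)=0$ if and only if $\Hom_S(S/K,M)=0$, if and only if $\Ext^1_S(S/K,\Omega^\flat(M))=0$, if and only if $\beta=0$, if and only if $\Omega^\flat(M)$ is flat, if and only if $M$ has flat dimension at most $1$. The principal technical obstacle will be the double invocation of the Xu property, which transfers $\Ext^1_S(S/K,-)$ across the pure injective envelope of $\PE(J)^{(\beta)}$; once that reduction is in place, the additivity of $\Ext^1_S(S/K,-)$ (resting on the cyclic generation of $K$) converts the vanishing of the extension group into the vanishing of the cardinal $\beta$.
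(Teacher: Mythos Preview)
Your proof is correct, but it follows a different path from the paper's.

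Both arguments begin from the same established decomposition $\Omega^\flat(M)\simeq\PE[\PE(J)^{(\beta)}]\oplus(S/I)^{(\gamma)}$ and reduce the question to whether $\beta=0$. The paper handles the implication $t_K(M)=0\Rightarrow\beta=0$ by an almost-split-sequence argument: assuming $\beta>0$, the canonical inclusion $\iota:\PE(J)\hookrightarrow\Omega^\flat(M)$ induces a map of short exact sequences from the almost split sequence $0\to\PE(J)\to E(I)\to S/K\to 0$ to the flat-cover sequence; since $t_K(M)=0$ forces the right-hand map $S/K\to M$ to vanish, the morphism of sequences is null-homotopic, and composing with the projection back to $\PE(J)$ splits the almost split sequence---a contradiction. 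The other direction is dispatched by noting that $S/K$ has flat dimension $2$.

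You instead push everything through the long exact $\Ext$-sequence of the flat cover, obtaining the single identification $\Hom_S(S/K,M)\simeq\Ext^1_S(S/K,\Omega^\flat(M))$, and then strip off $(S/I)^{(\gamma)}$ and the pure-injective hull using the Xu property and the fact that $S/K$ is $\mathrm{FP}_\infty$ (so $\Ext^1_S(S/K,-)$ commutes with direct sums). This is a clean homological reduction that treats both directions simultaneously and never invokes the almost split sequence explicitly. The paper's argument is arguably more conceptual---it exploits the Auslander--Reiten theory developed just above---while yours is more self-contained and relies only on the vanishing statements in Proposition~\ref{SFlat} and the classification of the point $\PE(J)$ in Theorem~\ref{Max Spec}. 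Either is perfectly adequate.
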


\begin{proof}
If $t_K (M) \neq 0,$ then $M$ has a direct summand isomorphic to $S/K,$ whose minimal flat resolution $S/K$ has length $2.$ The flat dimension of $M$ cannot therefore be at most $1.$

If $t_K (M) = 0,$ consider the flat cover of $M$ and let us prove that the cardinal $\beta = 0.$ This will imply that 
$\Omega^{\flat}(M) \in \Add (S/I)$ and yield a minimal flat resolution of length at most $1.$ If $\beta > 0,$ then there is a summand of $\Omega^{\flat}(M)$ isomorphic to $\PE (J),$ which, considered as a submodule of $\FC (M)$ is {\em not} a direct summand. The canonical morphism $\iota : \PE (J) \to \Omega^{\flat}(M)$ induces a morphism
$$
\vcenter{%
\xymatrix@1@C=45pt@R=35pt
{%
0 \ar[r] & \PE(J) \ar[r]^{j} \ar[d]^{\iota} & E(I) \ar[r]^{p'} \ar[d] & S/K \ar[r] \ar[d]^{t} & 0 \\
0 \ar[r] & \Omega^{\flat}(M) \ar[r] & \FC (M) \ar[r] & M \ar[r] & 0
}}
$$
from the almost split sequence with left term $\PE (J).$ By hypothesis, $t : S/K \to M$ is zero, so that this morphism of short exact sequences is null homotopic. This implies that the morphism $\iota$ factors through the injective envelope $E(I).$ Composing $\iota$ with the canonical projection $\pi : \Omega^{\flat}(M) \to \PE (J)$ would then yield a splitting of the almost split sequence, a contradiction.
\end{proof}

Lemma~\ref{flat dim 1} indicates that the torsion free class $\F_K,$ which was earlier identified as $\Cogen (E(S/I)),$ consists of the $S$-modules of flat dimension at most $1.$ Because $S/K$ has flat dimension $2,$ it also implies that every $S$-module has flat dimension at most $2.$ The almost split sequence used in the previous proof may be employed to obtain another characterization of $\F_K,$ for the modules in $\F_K$ are precisely those modules $M$ for which there exists no split epimorphism $q: M \to S/K.$ This implies that every such morphism factors as indicated by the dotted arrow in
$$
\vcenter{%
\xymatrix@C=45pt@R=35pt
{%
&&& M \ar[d]^{u} \ar@{.>}[dl] \\
0 \ar[r] & \PE(J) \ar[r]^{j} & E(I) \ar[r]^{p'} & S/K \ar[r] & 0.
}}
$$
This almost split sequence in the bottom row is also the minimal injective coresolution of $\PE (J),$ so that $M \in \F_K$ if and only if $\Ext^1 (M, \PE (J)) = 0.$ Because of the way it is situated as part of the minimal injective coresolution of $S/I,$ we get that 
\begin{equation} \label{K-tf}
\F_K = {^{\perp_1}}\PE (J) = {^{\perp_2}}S/I.
\end{equation}

A left $S$-module $C_n$ is an $n$-{\em cotilting} module if 
\begin{enumerate}
\item the injective dimension $\id (C_n) \leq n;$ 
\item $\Ext_S^i (C_n^{\aleph}, C_n) = 0,$ for all natural numbers $i > 0$ and cardinals $\aleph;$ and
\item there exists a finite coresolution of the minimal injective cogenerator ${_S}E_0$ of $\Modl S$ in $\Prod (C_n),$ the subcategory of summands of products of copies of $C.$
\end{enumerate} 
If $C_n$ is an $n$-cotilting module, then the associated {\em cotilting class} is defined to be the subcategory
$${^{\perp}}C_n = \{ M \in \Modl S \; | \; \Ext^i (M, C_n) = 0, \; \mbox{for all} \; i > 0  \}.$$
Two $n$-cotilting modules $C_n$ and $C_n'$ are equivalent if $\Prod (C_n) = \Prod (C_n').$ \bigskip

By~\cite{B03} and~\cite{St06}, every cotilting class $\C$ is a definable subcategory of $\Modl S$ that contains the definable subcategory $\SFlat.$ According to our classification of the definable subcategories of $\Modl S,$ there are only four such: $\Modl S,$ $\F_K,$ $S/K^{\perp_1},$ and $\SFlat$ itself. The category $\Modl S$ is the unique $0$-cotilting class associated to the $0$-cotilting module given by the minimal injective cogenerator $E_0 = S/K \oplus E(S/I).$ 

\begin{prop} \label{cotilt1}
The torsion free class $\F_K = {^{\perp_2}}S/I$ is a $1$-cotilting class with cotilting module $C_1 = E(S/I) \oplus \PE(J).$
\end{prop}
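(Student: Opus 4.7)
The plan is to verify the three axioms for a $1$-cotilting module obtained by dualizing Definition~\ref{D:tilting}: (CT1) $\id C_1 \leq 1$; (CT2) $\Ext^i(C_1^\lambda, C_1) = 0$ for all $i > 0$ and cardinals $\lambda$; and (CT3) the minimal injective cogenerator $E_0 = S/K \oplus E(S/I)$ admits a $\Prod C_1$-coresolution of length at most $1$. Condition (CT1) is immediate: $E(S/I)$ is injective, while the almost split sequence $0 \to \PE(J) \to E(I) \to S/K \to 0$ in the minimal resolution of Theorem~\ref{minimal cores} exhibits $\PE(J)$ as the first syzygy in an injective resolution of length $1$. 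For (CT2), since $\id C_1 \leq 1$ only the case $i = 1$ is relevant, and $\Ext^1(-, E(S/I)) = 0$ is automatic. Using the identity $\F_K = {}^{\perp_1}\PE(J)$ recorded in (\ref{K-tf}), the task reduces to showing $E(S/I), \PE(J) \in \F_K$, after which closure of $\F_K$ under products finishes the job. The first inclusion is trivial from $\F_K = \Cogen E(S/I)$, and the second follows because $\PE(J)$ is indecomposable pure injective and not isomorphic to $S/K$, so it cannot contain a copy of the injective module $S/K$ as a submodule without $S/K$ splitting off.

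The main obstacle is (CT3). The summand $E(S/I)$ of $E_0$ is handled by the trivial sequence $0 \to 0 \to E(S/I) \to E(S/I) \to 0$, so the real content is producing a sequence $0 \to C^1 \to C^0 \to S/K \to 0$ with $C^0, C^1 \in \Prod C_1$. The natural candidate is the almost split sequence $0 \to \PE(J) \to E(I) \to S/K \to 0$. The left term $\PE(J)$ already lies in $\Add C_1 \subseteq \Prod C_1$, so everything hinges on showing $E(I) \in \Prod C_1$. Here the key point is that $E(I)$ is both flat and injective: flatness, via Proposition~\ref{SFlat}, gives $E(I) \in (S/K)^{\perp_0} = \F_K = \Cogen C_1$, so $E(I)$ embeds into some product $C_1^\mu$; but then injectivity of $E(I)$ splits this embedding, placing $E(I)$ in $\Prod C_1$. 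Taking the direct sum of the almost split sequence with the trivial sequence for $E(S/I)$ yields the required coresolution of $E_0$.

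Finally, the cotilting class is computed directly: since $\id C_1 \leq 1$ one has ${}^\perp C_1 = {}^{\perp_1}C_1 = {}^{\perp_1}E(S/I) \cap {}^{\perp_1}\PE(J) = \F_K$, which confirms $\F_K$ as the cotilting class of $C_1$.
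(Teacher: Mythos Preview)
Your proof is correct and follows essentially the same route as the paper's: both verify the three cotilting axioms using the almost split sequence $0 \to \PE(J) \to E(I) \to S/K \to 0$ for (CT1) and (CT3), and the identification $\F_K = {^{\perp_1}}\PE(J)$ from (\ref{K-tf}) for (CT2) and the computation of the cotilting class. Your treatment is somewhat more explicit than the paper's --- in particular, you spell out why $E(I) \in \Prod C_1$ via the embedding into a product coming from $\F_K = \Cogen(E(S/I))$ together with injectivity of $E(I)$, whereas the paper simply asserts $E(I) \in \Prod(E(S/I))$ --- but the underlying argument is the same.
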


\begin{proof}
Let us verify that $C_1$ is a $1$-cotilting module. (1) The almost split sequence whose left term is $\PE (J)$ is an injective coresolution of $\PE (J)$ of length $1$ so that the injective dimension of $\PE(J),$ and therefore that of $C_1,$ is at most $1.$
(2) By Equation~(\ref{K-tf}), $\F_K = {^{\perp_1}}\PE(J) = {^{\perp}}\PE(J) = {^{\perp_1}}C_1,$ and it is easy to see that no product $C_1^{\aleph}$ of copies of $C_1$ has a submodule isomorphic to $S/K.$ (3) The injective indecomposable $E(I)$ belongs to $\Prod (E(S/I)) \subseteq \Prod (C_1)$ so that the minimal flat resolution of $S/K$ given in Theorem~\ref{minimal cores} is in
$\Prod (C_1).$
\end{proof}

\begin{prop} \label{cotilt2}
The definable subcategory $\SFlat \subseteq \Modl S$ is a $2$-cotilting class with cotilting module $C_2 = E(S/I) \oplus S/I.$
\end{prop}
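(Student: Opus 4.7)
The plan is to verify the three defining conditions of a $2$-cotilting module for $C_2 = E(S/I) \oplus S/I$, then identify the cotilting class ${^{\perp}}C_2$ as $\SFlat$ using the classification of the twelve definable subcategories of $\Modl S$.

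Condition (1), $\id(C_2) \leq 2$, reduces to bounding $\id(S/I) \leq 2$, which is immediate from Theorem~\ref{minimal cores}: the sequence
$$0 \to S/I \to E(S/I) \to E(I) \to S/K \to 0$$
is simultaneously the minimal injective coresolution of $S/I$ and the minimal flat resolution of $S/K$. For condition (3), this same sequence serves as a length-$2$ $\Prod(C_2)$-resolution of the summand $S/K$ of $E_0 = S/K \oplus E(S/I)$: the modules $S/I$ and $E(S/I)$ are summands of $C_2$, while $E(I)$ lies in $\Prod(E(S/I)) \subseteq \Prod(C_2)$ because $E(I) \in \F_K = \Cogen(E(S/I))$ and any embedding $E(I) \hookrightarrow E(S/I)^\aleph$ splits by injectivity of $E(I)$. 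Since the other summand $E(S/I)$ is trivially in $\Prod(C_2)$, taking the direct sum yields a $\Prod(C_2)$-resolution of $E_0$ of length~$2$.

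For condition (2), I would first observe that $C_2$ is flat by Fact~\ref{the ideal I}(4) and Theorem~\ref{Zg spec} (which places $E(S/I)$ in the closed flat set $\Zg'(S)$), while right coherence of $S$ (Fact~\ref{coherence}) ensures $\SFlat$ is closed under products, so every power $C_2^{\aleph}$ is flat. The stronger statement $\SFlat \subseteq {^{\perp}}C_2$ — which immediately yields (2) — follows from the Xu property (Proposition~\ref{Xu}): every pure injective is cotorsion, so $\Ext^1(F, S/I) = \Ext^1(F, \PE(J)) = 0$ for every flat $F$ (both $S/I$, which is endosimple and hence pure injective, and $\PE(J)$); dimension-shifting along $0 \to S/I \to E(S/I) \to \PE(J) \to 0$ then gives $\Ext^2(F, S/I) = 0$, and higher $\Ext$ vanishes by (1).

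The remaining task, and the main obstacle, is the reverse inclusion ${^{\perp}}C_2 \subseteq \SFlat$. Since every cotilting class is definable (by~\cite{B03,St06}) and contains $\SFlat$, the classification via Theorems~\ref{Zg spec} and~\ref{Max Spec} (Table~1) leaves only four candidates: $\SFlat$, $\F_K$, $(S/K)^{\perp_1}$, and $\Modl S$. I would eliminate the three larger ones by showing $S/K \notin {^{\perp}}C_2$ and $\PE(J) \notin {^{\perp}}C_2$. For $S/K$: applying $\Hom(-, S/I)$ to the free resolution~(\ref{free}) gives the complex $S/I \xrightarrow{f\cdot} S/I \xrightarrow{g\cdot} S/I$ in which $g$ annihilates $S/I$ (since $g \in I$), while $f$ acts as a nonzero, hence invertible, endomorphism of the simple module $S/I$; therefore $\Ext^2(S/K, S/I) \cong S/I \neq 0$, ruling out $\Modl S$ and $(S/K)^{\perp_1}$. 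For $\PE(J)$: if $\Ext^1(\PE(J), S/I)$ vanished, then $\mathrm{id}_{\PE(J)}$ would lift along the epimorphism $E(S/I) \to \PE(J)$, splitting the sequence $0 \to S/I \to E(S/I) \to \PE(J) \to 0$ and realizing $\PE(J)$ as a direct summand of the indecomposable $E(S/I)$, contradicting $\PE(J) \not\cong E(S/I)$ as distinct points of $\MaxZg(S)$. This eliminates $\F_K$, leaving $\SFlat = {^{\perp}}C_2$.
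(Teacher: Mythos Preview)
Your proof is correct and covers the same ground as the paper for conditions (1)--(3), but diverges in the identification of the cotilting class. One minor quibble: what you call ``the Xu property'' (pure injective $\Rightarrow$ cotorsion) holds for any ring and is not what Proposition~\ref{Xu} asserts; you are simply using the general fact, so there is no error, only a mislabeling.

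The substantive difference is in proving ${^{\perp}}C_2 \subseteq \SFlat$. The paper argues structurally via the flat-dimension machinery developed just before the proposition: from ${^{\perp_2}}(S/I) = \F_K$ (Equation~(\ref{K-tf})) every $M \in {^{\perp}}C_2$ has flat dimension at most $1$, and if it were exactly $1$ the flat syzygy $\Omega^{\flat}(M) \in \Add(S/I)$ would be nonzero, whence the minimal flat cover produces a nonsplit extension of $M$ by $S/I$, contradicting $M \in {^{\perp_1}}(S/I)$. You instead invoke the definability of cotilting classes to place ${^{\perp}}C_2$ among the four definable subcategories of Table~1 containing $\SFlat$, and eliminate the three larger ones by computing $\Ext^2(S/K,S/I) \neq 0$ and $\Ext^1(\PE(J),S/I) \neq 0$ directly. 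Your route is quicker once Table~1 and the definability theorem are in hand, and avoids the flat-cover analysis; the paper's route is more self-contained to subsection~i and does not need the external definability result.
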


\begin{proof}
(1) The minimal injective coresolution of $S/I$ given in Theorem~\ref{minimal cores} shows that the injective dimension of $S/I,$ and therefore that of $C_2,$ is at most $2.$ (2) The module $C_2$ is pure injective, and therefore cotorsion. It is also flat, so that for any cardinal $\aleph,$ the product $C_2^{\aleph} \subseteq \SFlat \subseteq {^{\perp}}C_2.$ (3) Because $E(I) \in \Prod (E(S/I)),$ the minimal injective coresolution~(\ref{minimal cores}) of $S/I$ is a resolution of $S/K$ in $\Prod (C_2).$  

Let us argue that ${^{\perp}}C_2 = {^{\perp}S/I} = {^{\perp_1}}S/I \cap {^{\perp_2}}S/I = \SFlat.$ Equation~(\ref{K-tf}) implies that ${^{\perp_2}}S/I = \F_K$ is the class of modules of flat dimension at most $1.$ If the flat dimension of a module $M \in {^{\perp}}C_2$ were to equal $1,$ we have seen how the kernel of its flat cover would have to be in $\Add (S/I),$ which would imply that $M$ does not belong to ${^{\perp_1}}S/I.$
\end{proof}

The definable subcategory $S/K^{\perp_1}$ contains the category $\SFlat,$ but is not a cotilting class ${^{\perp}}C$ for any module $C.$ This is because cotilting classes are closed under kernels of epimorphisms and the module $\PE (J),$ which does not belong to $S/K^{\perp_1}$ has injective dimension $1$ and every injective module belongs to $S/K^{\perp_1}.$\bigskip

\noindent {\bf j. The ten preenveloping subcategories of $\SPProj.$} \bigskip

Suppose that $M$ is an $S$-module of flat dimension at most $1.$ We have shown above that this is equivalent to the condition that
the torsion submodule $t_K(M) = 0$ and that it implies that the flat syzygy $\Omega^{\flat}(M) = (S/I)^{(\gamma)}$ is a direct sum of copies of $S/I.$ By the left almost split property of the morphism $s_{\omega}: S/I \to I_{\omega}/Ig,$ this yields a morphism of short exact sequences as depicted by
$$
\vcenter{%
\xymatrix@1@C=55pt@R=35pt
{%
         &                                & 0 \ar[d] & 0 \ar[d] & \\
0 \ar[r] & (S/I)^{(\gamma)} \ar[r]^{s_{\omega}^{(\gamma)}} \ar@{=}[d] & (I_{\omega}/Ig \ar[r])^{(\gamma)} \ar[d]^{h} & J_{\omega}^{(\gamma)} \ar[r] \ar[d] & 0  \\
0 \ar[r] & \Omega^{\flat}(M) \ar[r] & \FC (M) \ar[r] \ar[d] & M \ar[r] \ar[d] & 0  \\
         &                        & F \ar@{=}[r] \ar[d]   & F \ar[d]  \\
				 &                        & 0                     & 0.  
}}
$$
Because $I_{\omega}/Ig$ is essential over $S/I,$ the morphism $h : (I_{\omega}/Ig)^{(\gamma)} \to \FC (M)$ is a monomorphism and, because 
$I_{\omega}/Ig$ is absolutely pure, the middle column is a pure exact sequence. The cokernel $F$ is a pure epimorphic image of the flat module $\FC (M)$ so that it too is a flat module. This implies that the short exact sequence given by the column on the right is also pure exact. This leads to the following important point.

\begin{thm} \label{pure filtration}
Every left $S$-module $M$ possesses a filtration of pure submodules
$$0 \subseteq t_K (M) \subseteq M_2 \subseteq IM \subseteq M,$$
where $t_K (M) \in \Add (S/K),$ $M_2 \simeq J_{\omega}^{(\gamma)},$ the factor $IM/M_2$ is absolutely pure and flat, and
$M/IM \in \Add (S/I).$ 
\end{thm}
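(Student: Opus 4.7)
The plan is to reduce modulo $t_K(M)$ and apply the construction of the pushout diagram preceding the theorem to the quotient $N := M/t_K(M)$, which lies in $\F_K$ and therefore has flat dimension at most one by Lemma~\ref{flat dim 1}.

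First, since $S/K$ is $\Sigma$-injective, $t_K(M)$ is a direct summand of $M$, yielding the first pure inclusion with $t_K(M) \in \Add(S/K)$. For the next step, I would compute the kernel of the flat cover $FC(N) \to N$; using Proposition~\ref{Xu}, the decomposition in Theorem~\ref{decomp}, and the fact that minimality of the flat cover rules out injective-flat summands, this kernel is forced into the form $(S/I)^{(\gamma)}$. Applying the left-almost-split property of $s_\omega : S/I \to I_\omega/Ig$ componentwise gives a monomorphism $h : (I_\omega/Ig)^{(\gamma)} \to FC(N)$ extending the inclusion of $(S/I)^{(\gamma)}$, and since $I_\omega/Ig$ is absolutely pure (Proposition~\ref{I/Ig}), the snake lemma produces a pure exact sequence
$$\xymatrix@1@C=25pt{ 0 \ar[r] & J_{\omega}^{(\gamma)} \ar[r] & N \ar[r] & F \ar[r] & 0 }$$
with $F$ flat. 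Define $M_2 \subseteq M$ as the preimage of $J_\omega^{(\gamma)}$ in $M$; this yields $M_2/t_K(M) \simeq J_\omega^{(\gamma)}$ (the stated ``$M_2 \simeq J_\omega^{(\gamma)}$'' I read as describing the filtration factor, since injectivity of $t_K(M)$ would in any case split the extension).

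The hard part will be the containment $M_2 \subseteq IM$. My plan is to exploit $J_\omega = I_\omega f$ from Theorem~\ref{countably generated}: the image of a typical element $y \cdot e_\alpha \in J_\omega^{(\gamma)}$ in $N$ rewrites as $y'(f \cdot e_\alpha)$ with $y' \in I_\omega \subseteq I$, so it lies in $IN$; this gives $M_2 \subseteq IM + t_K(M)$. To absorb the $t_K(M)$, I would use Fact~\ref{the ideal J} to deduce $I + K = S$ (since $I+K$ is a two-sided ideal properly containing both $I$ and $K$), giving $I(S/K) = (I+K)/K = S/K$, hence $S/K \in \T_I$ and $t_K(M) \subseteq IM$.

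For the top of the filtration, $M/IM$ is annihilated by $I$ and hence a module over the division ring $S/I$, which places it in $\Add(S/I)$ and makes $IM \subseteq M$ pure. For the remaining factor I would identify $IM/M_2$ with $IF \subseteq F = M/M_2$, note that $IM/M_2$ is pure in the flat $F$ (the further quotient $M/IM$ being flat), so $IM/M_2$ is itself flat; for absolute purity, write $F = \varinjlim P_i$ with $P_i$ finitely generated projective, so $IF = \varinjlim I P_i$ is a direct limit of summands of powers $I^n$, which are absolutely pure by Proposition~\ref{pure and flat}, and left coherence of $S$ (Fact~\ref{coherence}) carries absolute purity to the colimit. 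The remaining inclusions $t_K(M) \subseteq M_2$ (summand, by injectivity) and $M_2 \subseteq IM$ (inherited from the pure $M_2 \subseteq M$, which holds because $M/M_2 = F$ is flat) are pure by standard arguments. The main obstacle throughout is the coordinate-level verification of $M_2 \subseteq IM$; once both $J_\omega = I_\omega f$ and $I + K = S$ are in hand, the rest is bookkeeping.
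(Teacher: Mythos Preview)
Your approach is correct, but it takes a more circuitous route than the paper. The paper applies the pushout construction not to $N = M/t_K(M)$ but to the smaller module $M' = IM/t_K(M)$, which already lies in $\T_I$ (since $I^2 = I$ and $t_K(M) \subseteq IM$). The resulting flat quotient $F = M'/J_\omega^{(\gamma)}$ is then \emph{equal} to $IM/M_2$ and satisfies $IF = F$, so $F \in \SFlat \cap \T_I = (S/K)^{\perp_0} \cap (S/K)^{\perp_1} \cap (S/K)^{\perp_2} = \SAbs \cap \SFlat$ immediately. This sidesteps both your verification that $M_2 \subseteq IM$ and your Lazard-type argument for the absolute purity of $IF$. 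Your detour through $N = M/t_K(M)$ works, but buys nothing and costs two extra paragraphs.

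One genuine wrinkle in your write-up: the identity $J_\omega = I_\omega f$ from Theorem~\ref{countably generated} is about \emph{right} multiplication by $f$, so writing an element as $y' f$ with $y' \in I_\omega$ does not directly exhibit it as lying in the \emph{left} submodule $I \cdot (\text{something})$; in particular, ``$y'(f \cdot e_\alpha)$'' is not an element of $J_\omega^{(\gamma)}$, since $f \notin J_\omega$. The conclusion you want is still true, but for the simpler reason that $J_\omega$ is a pure submodule of $J \in \T_I$, hence $J_\omega \in \T_I$ (the class is definable), so $I J_\omega = J_\omega$ and the image of any $S$-linear map out of $J_\omega^{(\gamma)}$ lands in $IN$. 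With that fix, your argument goes through.
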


\begin{proof}
Consider the module $M' = IM/t_K(M),$ which has flat dimension at most $1,$ because it belongs to $\F_K,$ to obtain a short exact sequence 
$$
\vcenter{%
\xymatrix@1@C=55pt@R=15pt
{%
0 \ar[r] & J_{\omega}^{(\gamma)} \ar[r] & IM \ar[r] & F \ar[r] & 0, 
}}
$$
where $F$ is a flat module that satisfies $IF = F$ and is therefore absolutely pure. Let $M_2$ be the preimage of $J_{\omega}^{(\gamma)}$ in $M.$ Both of the factors $IM/M_2$ and $M/IM$ are flat, so that $M_2$ and $IM$ are pure submodules of $M;$ the submodule $t_K (M)$ is a direct summand.
\end{proof}

The factors of the filtration given in Theorem~\ref{pure filtration} determine the least definable subcategory
$\D (M) \subseteq \Modl S$ that contains $M.$ For we can check that $S/K \in \D (M)$ if and only $t_K (M) \neq 0;$
$\PE (J) \in \D (M)$ if and only if $M_2/t_K(M) \neq 0;$ $S/I \in \D (M)$ if and only if $M/IM \neq 0;$ and $\D (M)$ contains the minimal definable subcategory $\SAbs \cap \SFlat$ if and only if $IM/t_K(M) \neq 0.$

Another consequence of Theorem~\ref{pure filtration} is that every left $S$-module is pure projective-by-flat. Indeed, the submodule
$M_2$ belongs to $\Add (S/K \oplus J_{\omega})$ and the factor $M/M_2$ is flat. This is used in the next result, which is a kind of dual to Theorem~\ref{Xu}.  

\begin{thm} \label{dual Xu}
For every left $S$-module $M,$ there exists a pure exact sequence
$$
\vcenter{%
\xymatrix@1@C=55pt@R=15pt
{%
0 \ar[r] & I' \ar[r] & P \ar[r] & M \ar[r] & 0, 
}}
$$
with $P$ pure projective and $I'$ absolutely pure and flat.
\end{thm}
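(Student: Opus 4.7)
The plan is to reduce to the case of a flat module via Theorem~\ref{pure filtration} and a pullback argument, and then to resolve a flat module by separately covering the absolutely pure flat submodule $IF$ and the quotient $F/IF \cong (S/I)^{(\delta)}$.

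By Theorem~\ref{pure filtration}, $M$ fits into a pure short exact sequence $0 \to M_2 \to M \to F \to 0$ with $M_2$ pure projective and $F$ flat. It suffices to construct the desired sequence $0 \to I'' \to P'' \to F \to 0$ for $F$ alone: forming the pullback $P := M \times_F P''$ yields a pure sequence $0 \to I'' \to P \to M \to 0$, because pullbacks along pure epimorphisms preserve pureness and $M \to F$ is pure (its cokernel $F$ being flat). The complementary sequence $0 \to M_2 \to P \to P'' \to 0$ is pure for the same reason and splits by the pure projectivity of $P''$, so $P \cong M_2 \oplus P''$ is pure projective.

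For $F$ flat, Theorem~\ref{pure filtration} degenerates (as $t_K(F)=0$ and the flat-cover analysis forces the $J_\omega$-piece to vanish) into a pure sequence $0 \to IF \to F \to (S/I)^{(\delta)} \to 0$, with $IF \in \T_I \cap \SFlat = \SAbs \cap \SFlat$ by Proposition~\ref{abs and flat}. The quotient is handled by the canonical pure sequence $0 \to I^{(\delta)} \to S^{(\delta)} \to (S/I)^{(\delta)} \to 0$ (its cokernel is flat), where $S^{(\delta)}$ is free and $I^{(\delta)}$ is absolutely pure and flat by Proposition~\ref{pure and flat} and closure of $\SAbs \cap \SFlat$ under coproducts. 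For $IF$, I would construct a pure epimorphism $\pi \colon P_0 \to IF$ with $P_0$ pure projective and $\ker \pi \in \SAbs \cap \SFlat$ as follows: by the minimality of $\SAbs \cap \SFlat$ (Proposition~\ref{abs/flat minimal}), $IF$ is a pure-epimorphic image of some $I^{(\alpha)}$; since $I$ is Mittag-Leffler, each of its countably generated pure submodules is a countably presented flat Mittag-Leffler module, hence pure projective, and lies in $\SAbs \cap \SFlat$; writing $I$ as the directed union of such submodules expresses $I$ as a pure-epimorphic image of their direct sum, giving the desired cover of $IF$ upon composition.

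To combine, form the pullback $P' := F \times_{(S/I)^{(\delta)}} S^{(\delta)}$; one pullback column is pure with $S^{(\delta)}$ pure projective and hence splits to give $P' \cong IF \oplus S^{(\delta)}$, while the other is a pure sequence $0 \to I^{(\delta)} \to P' \to F \to 0$. Set $P'' := P_0 \oplus S^{(\delta)}$ (pure projective), and map it to $P' \cong IF \oplus S^{(\delta)}$ via $\pi$ on the first summand and the identity on the second. The composite $P'' \to P' \to F$ is a composition of pure epimorphisms, hence a pure epimorphism, and its kernel $I'$ sits in a short exact sequence $0 \to \ker \pi \to I' \to I^{(\delta)} \to 0$. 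Since both $\SAbs$ and $\SFlat$ are closed under extensions, $I'$ is absolutely pure and flat, completing the construction. The main technical step is the pure projective cover of an object of $\SAbs \cap \SFlat$ by a direct sum of countably generated pure submodules of $I$, combining the $\Gen(I)$-presentation from minimality with the Mittag-Leffler structure of $I$; the rest is routine pullback bookkeeping together with the observation that a short exact sequence with flat cokernel is automatically pure.
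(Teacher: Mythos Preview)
Your proof is correct and follows the same overall architecture as the paper's: reduce to the flat case via the pure projective-by-flat decomposition of Theorem~\ref{pure filtration} and a pullback, then handle a flat module $F$ by separating the absolutely pure flat submodule $IF$ from the semisimple flat quotient $F/IF \cong (S/I)^{(\delta)}$, cover each, and reassemble by another pullback.

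The one place where your argument diverges from the paper is in the treatment of $IF$. The paper exploits the already-constructed projective module $I_{\omega}$: since $\T_I = \Gen(I_{\omega})$, there is an epimorphism $I_{\omega}^{(\delta)} \to IF$, automatically pure because $IF$ is flat, and its kernel is a pure submodule of $I_{\omega}^{(\delta)} \in \SAbs \cap \SFlat$, hence absolutely pure and flat---a one-line argument. You instead invoke the minimality of $\SAbs \cap \SFlat$ to get a pure epimorphism $I^{(\alpha)} \to IF$, and then further resolve $I$ by writing it as a directed union of its countably generated pure submodules (each pure projective, since $I$ is Mittag-Leffler and flat). This works, but it is a more roundabout route: you are in effect reconstructing something like $I_{\omega}$ on the fly rather than using the module already built in Theorem~\ref{countably generated}. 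The payoff of your version is that it does not rely on the specific construction of $I_{\omega}$, only on the general Mittag-Leffler property of $I$; the cost is an extra layer of composition and an additional extension argument for the kernel.
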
 
  
\begin{proof}
It suffices to prove the theorem for flat modules. For then we can express the module $M$ as a pure projective-by-flat module and take the pullback
$$
\vcenter{%
\xymatrix@1@C=55pt@R=35pt
{%
         &                                 & 0 \ar[d]             & 0 \ar[d] \\
         &                                 & I' \ar[d] \ar@{=}[r] & I' \ar[d] \\
0 \ar[r] & M_2 \ar[r] \ar[d] & M_2 \oplus P \ar[r] \ar[d] & P \ar[r] \ar[d] & 0 \\
0 \ar[r] & M_2 \ar[r] & M \ar[r] \ar[d] & F \ar[r] \ar[d] & 0 \\
         &                          & 0               & 0. 
}}
$$
The bottom row is pure exact with $F$ flat and $M_2$ pure projective. The module $P$ is pure projective, so that the middle row is a split exact sequence. The module $I'$ is absolutely pure and $M_2 \oplus P$ is pure projective, as required.

If $M = F$ is a flat module, then we use the filtration $0 \subseteq IF \subseteq F$ of Theorem~\ref{pure filtration} and work with the factors $IF$ and $F/IF = (S/I)^{(\gamma)}$ separately. The flat module $IF$ belongs to the $1$-tilting class $\T_I = \Gen (I_{\omega}),$ which gives rise to the morphism of short exact sequences in the bottom two rows of
$$
\vcenter{%
\xymatrix@1@C=55pt@R=35pt
{%
         & 0 \ar[d]            & 0 \ar[d]         & 0 \ar[d] \\
0 \ar[r] & I'' \ar[r] \ar[d]   & I' \ar[d] \ar[r] & I^{(\gamma)} \ar[r] \ar[d] & 0 \\
0 \ar[r] & I_{\omega}^{(\delta)} \ar[r] \ar[d] & I_{\omega}^{(\delta)} \oplus S^{(\gamma)} \ar[r] \ar[d] & S^{(\gamma)} \ar[r] \ar[d]^{(\gamma)} & 0 \\
0 \ar[r] & IF \ar[r] \ar[d] & F \ar[r] \ar[d] & (S/I)^{(\gamma)} \ar[r] \ar[d] & 0 \\
         & 0                & 0               & 0. 
}}
$$
The kernel of this morphism is the short exact sequence that appears in the top row. In every column, the short exact sequence is pure exact, because the third term is flat. In the first column, the kernel $I''$ is a pure submodule of
$I_{\omega}^{(\delta)}$ and is therefore absolutely pure and flat; in the third column, the kernel $I^{(\gamma)}$ is likewise absolutely pure and flat. It follows that the kernel $I'$ in the middle column is also absolutely pure and flat, as required.
\end{proof}

The short exact sequence found in Theorem~\ref{dual Xu} implies that if the module $M$ belongs to ${^{\perp}}\SAbs,$ then it must a direct summand of $P$ and therefore pure projective. \bigskip

The second author and Ph.\ Rothmaler~\cite{HR} proved that if $\C$ is a preenveloping subcategory of the category $\Add (\modl S) = \SPProj$ of pure projective modules, then the category $\tilde{\C}$ of objects that arise as pure epimorphic images of coproducts of objects in $\C$ is a definable subcategory of $\Modl S.$ The rule $\C \mapsto \tilde{\C}$ is then a bijective correspondence between the collection of preenveloping subcategories in $\SPProj$ and that of definable subcategories of $\Modl S$ that have enough pure projective modules; the inverse correspondence is given by 
$\D \mapsto \D \cap \SPProj.$

\begin{cor} \label{preenveloping}
Every definable subcategory $\D \subseteq \Modl S,$ save $\Add (S/I)$ and $\Add (S/J),$ has enough pure projective modules.
\end{cor}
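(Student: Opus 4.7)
The plan is to handle the twelve definable subcategories of $\Modl S$ listed in Table 1 case by case, grouping them into three families.

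First, for the two exceptional cases $\D=\Add(S/I)$ and $\D=\Add(S/J)=\Add(S/I\oplus S/K)$, I would show that the simple flat module $S/I$, which belongs to both, cannot be realized as a pure epimorphic image of a pure projective in $\D$. The key input is that $S/I$ is not pure projective: by Fact~\ref{the ideal I}(2) its annihilator $I$ is not finitely generated on the left, so $S/I$ is a simple module that is not finitely presented; but any simple pure projective module is finitely presented, since by Fact~\ref{fp modules} the indecomposable summands of sums of finitely presented $S$-modules are among $S$, $K$, $S/K$. Consequently $\Add(S/I)\cap\SPProj=0$ and $\Add(S/J)\cap\SPProj=\Add(S/K)$, as recorded in Table~1, and neither of those pure projective classes admits a nonzero morphism to the simple module $S/I$.

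Second, I would dispose of the trivial pair $\D=0$ and $\D=\Add(S/K)$. Since $S/K$ is finitely presented, every module in $\Add(S/K)$ is itself pure projective, and the identity map $M\to M$ provides the required pure epimorphism from a pure projective in $\D$.

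Finally, the remaining eight definable subcategories are precisely the intersections $\bigcap_{i\in E}(S/K)^{\perp_i}$ as $E$ runs over the subsets of $\{0,1,2\}$---with $E=\emptyset$ yielding $\Modl S$ and $E=\{0,1,2\}$ yielding the minimal nonzero subcategory $\SAbs\cap\SFlat$. For all eight I would invoke Theorem~\ref{dual Xu} uniformly: any $M\in\D$ fits into a pure exact sequence $0\to I'\to P\to M\to 0$ with $P$ pure projective and $I'\in\SAbs\cap\SFlat$. Each such $\D$ is an $\Ext$-orthogonal class, hence closed under extensions, and it contains $\SAbs\cap\SFlat$, so $I'\in\D$ forces $P\in\D\cap\SPProj$, exhibiting $M$ as a pure epimorphic image of a pure projective in $\D$. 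The substantive work is already housed in Theorem~\ref{dual Xu}; what remains is exactly the bookkeeping just indicated, and the main obstacle to watch is to verify in the negative direction that $S/I$ genuinely fails to be pure projective (without circularity, using Facts~\ref{the ideal I}(2) and~\ref{fp modules}).
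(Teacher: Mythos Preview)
Your proposal is correct and follows essentially the same strategy as the paper: for the eight definable subcategories containing $\SAbs\cap\SFlat$ you both invoke Theorem~\ref{dual Xu} and use closure under (pure) extensions to conclude $P\in\D$, and you both handle the four semisimple cases directly. You supply more detail than the paper does on two points the paper leaves implicit---why closure under extensions holds (you use the $\bigcap(S/K)^{\perp_i}$ description, the paper relies tacitly on definable subcategories being closed under pure extensions) and why $S/I$ fails to be pure projective---but the architecture is the same.
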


\begin{proof}
If a definable subcategory $\D \subseteq \Modl S$ contains $\SAbs \cap \SFlat,$ then it has enough pure projective modules by Theorem~\ref{dual Xu}: every module $M$ in $\D$ admits a pure epimorphism from a pure projective module $P,$ where the kernel $I'$ of the pure epimorphism lies in $\SAbs \cap \SFlat \subseteq \D.$ The pure projective module $P$ therefore belongs to $\D$ too. 

Among the four semisimple definable subcategories $\D \subseteq \Modl S$ that do not contain $\SAbs \cap \SFlat,$ it is readily verified that the classes $\Add (S/I)$ and $\Add (S/J)$ do not have enough pure projective modules, while $\Add (S/K)$ and $0$ do. 
\end{proof}

By the bijective correspondence between definable subcategories of $\Modl S$ with enough pure projective modules and preenveloping subcategories of $\Add (\modl S),$ we conclude that there exist exactly $10$ preenveloping subcategories of $\SPProj.$ They appear in the third column of Table 1. \bigskip

\noindent {\bf k. The six covariantly finite subcategories of $\modl S.$} \bigskip

An additive subcategory $\C_0 \subseteq \modl S$ is called {\em covariantly finite} if it is preenveloping in $\modl S.$ Every covariantly finite subcategory $\C_0 \subseteq \modl S$ gives rise to the preenveloping subcategory $\Add (\C_0)$ of $\SPProj.$
If $P \in \SPProj$ is pure projective, then there is a module $P'$ such that $P \oplus P' = \oplus_i A_i$ is a direct sum of finitely presented. It is then readily verified that the composition $P \to P \oplus P' = \oplus_i A_i \to \oplus_i (A_i)_{\C_0}$ is an
$\Add (\C_0)$-preenvelope of $P.$ The following is a sort of converse.

\begin{prop} \label{cov finite}
Suppose that a preenveloping subcategory $\C \subseteq \SPProj$ of pure projective modules is of the form $\C = \Add (\C_0),$ where $\C_0 \subseteq \modl S$ is an additive subcategory of finitely presented modules. Then $\C_0 = \C \cap \modl S$ is a covariantly finite subcategory of $\modl S.$
\end{prop}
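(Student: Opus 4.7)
The goal is twofold: first, to check the set-theoretic equality $\C_0 = \C \cap \modl S,$ and second, to produce for each finitely presented module $A \in \modl S$ a $\C_0$-preenvelope. Throughout, I will exploit two standard reductions: (i) any split monomorphism from a finitely generated module into a direct sum factors through a finite subsum (together with its retraction), and (ii) any $\C_0 \in \C_0 \subseteq \C$ may be used as a test object for the $\C$-preenveloping property.

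\medskip

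\textbf{Step 1 (equality $\C_0 = \C \cap \modl S$).} The inclusion $\C_0 \subseteq \C \cap \modl S$ is immediate from $\C_0 \subseteq \modl S$ and $\C_0 \subseteq \Add(\C_0) = \C.$ For the reverse, take a finitely presented module $M \in \C.$ By definition of $\Add,$ there exists a module $M'$ and a family $\{A_i\}_{i \in I}$ of objects of $\C_0$ such that $M \oplus M' \cong \bigoplus_{i \in I} A_i,$ with associated split monomorphism $\iota \colon M \hookrightarrow \bigoplus_i A_i$ and retraction $\pi.$ Since $M$ is finitely generated, $\iota(M)$ is contained in $\bigoplus_{i \in F} A_i$ for some finite $F \subseteq I.$ Composing $\pi$ with the canonical inclusion $\bigoplus_{i \in F} A_i \hookrightarrow \bigoplus_i A_i$ yields a retraction of the induced monomorphism $M \to \bigoplus_{i \in F} A_i,$ so $M$ is a direct summand of the finite sum $\bigoplus_{i \in F} A_i.$ Because $\C_0$ is an additive subcategory (closed under finite direct sums and direct summands), we conclude $M \in \C_0.$

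\medskip

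\textbf{Step 2 (covariantly finite).} Let $A \in \modl S.$ Since $A$ is finitely presented, $A$ is pure projective, so $A \in \SPProj.$ By hypothesis $\C$ is preenveloping in $\SPProj,$ hence there is a $\C$-preenvelope $\phi \colon A \to P$ with $P \in \C = \Add(\C_0).$ Choose a decomposition $P \oplus P' \cong \bigoplus_{i \in I} A_i$ with each $A_i \in \C_0,$ and let $\iota \colon P \hookrightarrow \bigoplus_i A_i$ and $\pi \colon \bigoplus_i A_i \to P$ be the associated split inclusion and retraction. As above, $\iota\phi(A)$ is finitely generated, so it lies inside $\bigoplus_{i \in F} A_i$ for some finite $F \subseteq I.$ Write $\phi_0 \colon A \to \bigoplus_{i \in F} A_i$ for the resulting factorisation; then $\phi = \pi \circ j \circ \phi_0,$ where $j \colon \bigoplus_{i \in F} A_i \hookrightarrow \bigoplus_i A_i$ is the canonical inclusion. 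The codomain of $\phi_0$ belongs to $\C_0$ since $\C_0$ is additive. The plan is to check that $\phi_0$ is the sought $\C_0$-preenvelope: given any $f \colon A \to C$ with $C \in \C_0 \subseteq \C,$ the $\C$-preenveloping property yields $g \colon P \to C$ with $g\phi = f,$ and then $h := g\pi j \colon \bigoplus_{i \in F} A_i \to C$ satisfies $h \phi_0 = g\pi j \phi_0 = g\phi = f,$ as required.

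\medskip

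\textbf{Main obstacle.} The argument is essentially formal once one has the key idea of absorbing the retraction data into a finite subsum via finite generation of the image. The only genuinely delicate point is ensuring that $\C_0,$ in being called an \emph{additive} subcategory of $\modl S,$ is interpreted as being closed under both finite direct sums and direct summands---without the latter, Step~1 still gives $M$ as a summand of a finite sum of objects in $\C_0$ but not necessarily an object of $\C_0$ itself. Under the standing convention of the paper (consistent with the notation $\Add(\C)$), this closure is automatic, and no further difficulty arises.
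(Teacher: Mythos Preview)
Your proof is correct and follows essentially the same route as the paper: take a $\C$-preenvelope $A \to P$ with $P \in \Add(\C_0)$, embed $P$ as a summand of a direct sum $\bigoplus_i A_i$ of objects of $\C_0$, use finite generation of $A$ to factor through a finite subsum, and verify that the resulting map is a $\C_0$-preenvelope. Your Step~1 is a more explicit version of the paper's terse uniqueness remark (``if $\Add(\C_0) = \Add(\C'_0)$ then $\C_0 = \C'_0$''), and the concern you flag about closure under direct summands is exactly the implicit assumption the paper's argument also relies on.
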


\begin{proof}
First note that $\C_0$ is unique, for if $\Add (\C_0) = \Add (\C'_0)$ and both $\C_0$ and $\C'_0$ are additive subcategories of $\modl S,$ then $\C_0 = \C'_0.$ Now the inclusion
$\C = \Add (\C_0) \subseteq \Add (\C \cap \modl S) \subseteq \C$
implies that $\C_0 = \C \cap \modl S.$ To see that $\C_0$ is covariantly finite consider the $\C$-preenvelope $A \to A_{\C}$ of a finitely presented module $A.$ Because $A_{\C} \in \Add (\C_0),$ there is a module $A'$ such that 
$A_{\C} \oplus A' = \oplus_i B_i,$ with $B_i \in \C_0.$ The composition $A \to A_{\C} \to \oplus_i B_i$ with the structural injection is another $\C$-preenvelope of $A.$ Because $A$ is finitely presented this $\C$-preenvelope factors through a finite direct sum of the $B_i,$ which belongs to $\C_0.$ This morphism serves as a $\C_0$-preenvelope of $A.$
\end{proof}

In the correspondence $\C \mapsto \tilde{\C}$ between the preenveloping categories of $\SPProj$ and the definable subcategories of $\Modl S$ with enough pure projective modules, the preenveloping subcategories of the form $\Add (\C_0),$ with $\C_0$ covariantly finite in $\modl S,$ correspond to the definable subcategories $\D \subseteq \Modl S$ that {\em have enough finitely presented modules,} in the sense that every module $D \in \D$ admits a pure epimorphism $\oplus_i A_i \to D,$ where every $A_i$ is a finitely presented $S$-module that belongs to $\D.$

Among the ten preenveloping subcategories of $\SPProj$ that appear in the third column of Table 1, six are readily identified to be of the form $\Add (\C_0),$ where $\C_0 \subseteq \modl S$ is an additive subcategory of finitely presented modules. Beside $0$ and $\Add (S/K) = \Add (\add (S/K)),$ there are four that contain the module $S.$ By Fact~\ref{fp modules}, they are determined by which of the two finitely presented indecomposable modules $K$ and $S/K$ they contain:
\begin{enumerate}
\item $\Add (S) = \SFlat \cap \SPProj = \SProj = \Add (\Sproj);$
\item $\Add (S \oplus K) = \F_K \cap \SPProj = \Add (\add (S \oplus K));$
\item $\Add (S \oplus S/K) = S/K^{\perp_1} \cap \SPProj  = \Add (\add (S \oplus S/K));$ and
\item $\SPProj = \Modl S \cap \modl S = \Add (\modl S).$
\end{enumerate}
By Proposition~\ref{cov finite}, these considerations yield a list of $6$ covariantly finite subcategories of $\modl S:$ $0,$
$\add (S/K),$ $\add (S),$ $\add (S \oplus K),$ $\add (S \oplus S/K),$ and $\modl S.$ To be sure that there are no others, it suffices to check that none of the other $4$ preenveloping subcategories of $\SPProj$ are not of the form 
$\C = \Add (\C \cap \modl S).$ Let us use Fact~\ref{fp modules} to show that in these four cases 
$\C \cap \modl S \subseteq \tilde{\C} \cap \modl S$ is either $0$ or $\add (S/K):$ 
\begin{itemize}
\item $\Add (I_{\omega}) \cap \modl S \subseteq (\SAbs \cap \SFlat) \cap \modl S = 0;$
\item $\Add (I_{\omega} \oplus S/K) \cap \modl S \subseteq \SAbs \cap \modl S = \add (S/K);$
\item $\Add (I_{\omega} \oplus J_{\omega}) \cap \modl S \subseteq [(S/K)^{\perp_1} \cap (S/K)^{\perp_2}] \cap \modl S = 0;$ and
\item $\Add (I_{\omega} \oplus J_{\omega} \oplus S/K) \cap \modl S \subseteq \T_I \cap \modl S = \add (S/K).$ 
\end{itemize}

If $\C = \Add (\C_0)$ is a preenveloping subcategory of $\SPProj$ associated to a covariantly finite subcategory, then a result of Lenzing implies that every object in the definable subcategory $\tilde{\C}$ is a direct limit of objects in $\C_0.$ We can express this as ${\displaystyle \tilde{\C} = \lim_{\to} (\C_0)}.$ Crawley-Boevey~\cite[\S 4]{CB94} and H.\ Krause~\cite[Proposition 3.11]{HK} showed that if a definable subcategory is of the form ${\dis \lim_{\to} (\C_0)},$ for some additive subcategory of $\modl S,$ then $\C_0$ is covariantly finite. This follows from the considerations above, but was first proved without using $\SPProj$ as the ambient category.
The four covariantly finite subcategories $\C_0$ of $\modl S$ that contain $S$ give rise in this way to the four definable subcategories ${\dis \lim_{\to} (\C_0)}$ of $\Modl S$ that contain $\SFlat.$ These four definable subcategories generate the following cotorsion pairs:
\begin{enumerate}
\item $(\SFlat, \SPInj) = (\SFlat, \Modl S \cap \SPInj);$
\item $(\F_K, \Prod (\PE (J) \times S/K \times E(S/I))) = (\F_K, \T_I \cap \SPInj);$
\item $(S/K^{\perp_1}, \Prod (S/I \times S/K \times E(S/I))) = (S/K^{\perp_1}, S/K^{\perp_1} \cap \SPInj);$ and
\item $(\Modl S, \SInj) = (\Modl S, \SAbs \cap \SPInj).$
\end{enumerate}
That $\SFlat^{\perp_1} = \SPInj$ follows from the fact that $S$ is a Xu ring. Because the definable categories on the left of each pair contain $\SFlat,$ the categories on the right are contained in $\SPInj.$ The categories on the right contain the category $\SInj$ of injective $S$-modules and are closed under products and direct summands. Thus they are completely determined by which of the two pure injective noninjective indecomposable modules $S/I$ and $\PE (J)$ they contain. Looking at the third cotorsion pair on this list, we see that $S/K^{\perp_1} = {^{\perp_1}}S/I.$

There are four cotorsion pairs in $\Modl S$ that are generated by a subcategory of $\modl S:$ 
\begin{enumerate}
\item $(\SProj, \Modl S) = (\SFlat \cap \SPProj, \Modl S);$
\item $(\Add (S \oplus K), \T_I) = (\F_K \cap \SPProj, \T_I);$
\item $(\Add (S \oplus S/K), S/K^{\perp_1}) = (S/K^{\perp_1} \cap \SPProj, S/K^{\perp_1});$ and
\item $(\Add (\modl S), \SAbs) = (\SPProj, \SAbs).$
\end{enumerate}
In each case, the category on the left is contained in $\modl S,$ so that the category on the right is a definable subcategory that contains $\SAbs.$ Proposition~\ref{dual Xu} implies that ${^{\perp_1}}\SAbs = \SPProj$ so that the category on the left is necessarily a subcategory of $\modl S.$ The right category in the second cotorsion pair is given by $K^{\perp_1} = S/K^{\perp_2} = \T_I.$ 

Among these two lists of cotorsion pairs, the only ones that are not associated to a tilting or cotilting class are the two that appear as part of the cotorsion triple $(S/K^{\perp_1} \cap \SPProj, S/K^{\perp_1}, S/K^{\perp_1} \cap \SPInj).$

\bibliographystyle{plain}
\bibliography{references}
\end{document}